\def\C{{\mathbf C}}
\def\R{{\mathbf R}}
\def\Z{{\mathbf Z}}
\def\Q{{\mathbf Q}}
\def\A{{\mathbf A}}
\def\H{{\mathbf H}}
\def\O{{\mathbb O}}
\def\ZZ{\widehat{\mathbf Z}}
\newtheorem{theorem}{Theorem}[subsection]
\newtheorem{conjecture}[theorem]{Conjecture}
\newtheorem{thm}[theorem]{Theorem}
\newtheorem{lemma}[theorem]{Lemma}
\newtheorem{proposition}[theorem]{Proposition}
\newtheorem{prop}[theorem]{Proposition}
\newtheorem{corollary}[theorem]{Corollary}
\newtheorem{claim}[theorem]{Claim}
\theoremstyle{definition}
\newtheorem{definition}[theorem]{Definition}
\theoremstyle{remark}
\newtheorem{remark}[theorem]{Remark}
\newcommand{\mm}[4]{\left(\begin{smallmatrix} #1 & #2\\ #3 & #4\end{smallmatrix}\right)}
\newcommand{\mb}[4]{\left(\begin{array}{cc} #1 & #2\\ #3 & #4\end{array}\right)}
\def\Lie{{\mathrm {Lie}}}
\DeclareMathOperator{\tr}{tr}
\DeclareMathOperator{\ind}{Ind}
\DeclareMathOperator{\SO}{SO}
\DeclareMathOperator{\Spin}{Spin}
\DeclareMathOperator{\Sp}{Sp}
\DeclareMathOperator{\PGSp}{PGSp}
\DeclareMathOperator{\SU}{SU}
\DeclareMathOperator{\SL}{SL}
\DeclareMathOperator{\GL}{GL}
\DeclareMathOperator{\PGL}{PGL}
\DeclareMathOperator{\diag}{diag}
\DeclareMathOperator{\Span}{Span}
\def\g{{\mathfrak g}}
\def\h{{\mathfrak h}}
\def\k{{\mathfrak k}}
\def\p{{\mathfrak p}}
\def\so{{\mathfrak {so}}}
\def\sl{{\mathfrak {sl}}}
\def\su{{\mathfrak {su}}}
\def\H{{\mathcal{H}}}
\def\linspan{\textnormal{-span}}
\begin{document}
\title{The quaternionic Maass Spezialschar on split $\SO(8)$}
\author[J. Johnson-Leung]{Jennifer Johnson-Leung}
\address{Department of Mathematics and Statistical Science\\ University of Idaho\\ Moscow, ID~USA}
\email{jenfns@uidaho.edu}
\author[F. McGlade]{Finn McGlade}
\address{Department of Mathematics\\ The University of Oklahoma, Norman, OK USA}
\email{finn.mcglade@ou.edu}
\author[I. Negrini]{Isabella Negrini}
\address{Department of Mathematics\\ University of Toronto\\ Toronto, ON Canada}
\email{isabella.negrini@utoronto.ca}

\author[A. Pollack]{Aaron Pollack}
\address{Department of Mathematics\\ The University of California, San Diego\\ La Jolla, CA USA}
\email{apollack@ucsd.edu}

\author[M. Roy]{Manami Roy}
\address{Department of Mathematics\\ Lafayette College\\ Easton, PA USA}
\email{royma@lafayette.edu}

\begin{abstract} The classical Maass Spezialschar is a Hecke-stable subspace of the space of holomorphic Siegel modular forms of genus two and level one cut out by certain linear relations among Fourier coefficients.  We define an analogous quaternionic Maass Spezialschar, which consists of the quaternionic modular forms of level one on split $\SO(8)$ whose Fourier coefficients satisfy certain linear relations.  We characterize this space in terms of a theta lift from the space of holomorphic Siegel modular forms on $\Sp(4)$, and in terms of periods.  We also give a conjecture for the Dirichlet series of the standard $L$-function of quaternionic modular eigenforms on $\SO(8)$ and verify our conjecture on the quaternionic Maass Spezialschar.
\end{abstract}
\subjclass[2020]{11F03;  11F30}

\keywords{Saito-Kurokawa lift; quaternionic modular forms; Maass relations; Fourier-Jacobi expansion; periods of automorphic forms.}

\maketitle

\setcounter{tocdepth}{1}
\tableofcontents

\section{Introduction}
The classical Saito-Kurokawa subspace of Siegel modular forms on $\PGSp(4)$ has many characterizations.  One way to define the Saito-Kurokawa subspace is in terms of theta lifts from half-integral weight modular forms on $\mathrm{Mp}(2)$  (the metaplectic cover of $\SL(2)$) to $\SO(5) \simeq \PGSp(4)$.  The image of this theta lift can be characterized in terms of Fourier coefficients, yielding the classical identification of the Saito-Kurokawa subspace with the Maass Spezialschar as recalled in Section~\ref{sec:classical}.  With this definition, it becomes an interesting question to ask whether (semi)-classical characterizations, such as the Maass relations, apply to generalizations of the Saito-Kurokawa lifting.

In the present work, we give an affirmative answer to this question for the theta lift from $\Sp(4)$ to $\SO(8)$, and present a generalization of the theory of the Maass Spezialschar of Siegel modular forms to the setting of quaternionic modular forms on $\SO(8)$.  In the process, we develop a notion of Fourier-Jacobi coefficients for quaternionic modular forms on groups of type $\SO(4,n+2)$.  We now summarize the results of this paper.

\subsection{The Fourier-Jacobi coefficient}
Fix $n\in \Z_{\geq 2}$ and let $V$ be a non-degenerate quadratic space over $\Q$ of signature $(4,n+2)$. Assume $V$ contains an isotropic subspace of dimension $4$ and set $G = \SO(V)$.  There is a special class of automorphic forms on $G$, called quaternionic modular forms, corresponding to minimal $K$-type vectors in the quaternionic discrete series on $G(\R)$ \cite{grossWallachII}. Concretely, if $\ell \in \Z_{\geq 1}$ then a quaternionic modular form of weight $\ell$ is a vector-valued automorphic function $\varphi\colon G(\Q)\backslash G(\A)\to \mathbf{V}_{\ell}$ where $\mathbf{V}_{\ell}\simeq \mathrm{Sym}^{2\ell}\C^2$ is the $(2\ell+1)$-dimensional irreducible representation of $\SU(2)$. \\
\indent In analogy with the case of holomorphic modular forms, a weight $\ell$ quaternionic modular form $\varphi$ satisfies a specific ``Cauchy-Riemann type" equation $D_{\ell}\varphi\equiv 0$ (see Definition~\ref{defn:QMFs1}). This differential equation leads to a semi-classical theory of Fourier coefficients which we now recall. Let $P = M_P N_P$ denote the Heisenberg parabolic subgroup of $G$, defined as the stabilizer in $G$ of a fixed isotropic two-plane $U\subseteq V$. The Levi factor $M_P\simeq \GL(U)\times \SO(V_{2,n})$ where $V_{2,n}\subseteq V$ is a non-degenerate quadratic subspace of signature $(2,n)$. The characters of $N_P(\Q)\backslash N_P(\A)$ are indexed by ordered pairs $[T_1,T_2]$ consisting of two elements $T_1,T_2\in V_{2,n}$. Given a quaternionic modular form $\varphi$, the results of \cite{wallach} and \cite{pollackQDS} give a family of complex numbers $a_{\varphi}([T_1,T_2]) \in \C$ for every $[T_1,T_2] \in V_{2,n}^{\oplus 2}$ which we refer to as the \textit{quaternionic Fourier coefficients of $\varphi$} (see Definition \ref{Definition-of-FCs}).

 Recall that a holomorphic Siegel modular form $F$ of genus $2$ and level one has a Fourier-Jacobi expansion of the form 
 $$
 F\left(\begin{pmatrix} \tau &z \\ z &\tau'\end{pmatrix}\right)=\sum_{m=0}^{\infty}\phi_m(\tau,z)e^{2\pi i m\tau'}
 $$
 (see \cite[Theorem 6.1]{EichZag}). Here $\tau, \tau', z\in \C$ satisfy $\mathrm{Im}(\tau),\ \mathrm{Im}(\tau')>0$ and $\mathrm{Im}(\tau)\mathrm{Im}(\tau')>\mathrm{Im}(z)^2$, and $\phi_m$ is a Jacobi form of index $m$. 
 In our first main result we study an analogue of the ``first" Fourier-Jacobi coefficient $\phi_1(\tau,z)$ in the setting of quaternionic modular forms on $G$. To be more precise, fix a line $\Q b_1\subseteq U$ and let $Q= M_Q N_Q$ be the parabolic subgroup in $G$ stabilizing $\Q b_1$. Given a quaternionic modular form $\varphi$ on $G$, we define \textit{the first Fourier-Jacobi coefficient $\mathrm{FJ}_\varphi(h)$ of $\varphi$} as
  \begin{equation}
 \label{1st-FJ-coefficients} 
 \mathrm{FJ}_\varphi(h) = \overline{\mathcal{L}\left(\int_{[N_Q]}{\chi_{y_0}^{-1}(n) \varphi(nh)\,dn}\right)}
 \end{equation}
 for a certain linear functional $\mathcal{L}: \mathbf{V}_{\ell} \rightarrow \C$, given in Corollary~\ref{Main Corollary}.  Here $\chi_{y_0}$ is a specific non-degenerate character of $N_Q(\Q)\backslash N_Q(\A)$. The formula \eqref{1st-FJ-coefficients} defines $ \mathrm{FJ}_\varphi$ as an automorphic function on the stabilizer $H=\mathrm{Stab}_{M_Q}\{\chi_{y_0}\}$. Moreover, for our particular choice of $\chi_{y_0}$, we have $H(\R)\simeq \SO(2, n+1)$, and so it makes sense to ask whether $\mathrm{FJ}_\varphi$ is (the automorphic function associated to) a holomorphic modular form on $H$. Our first main theorem settles this question in the affirmative.
\begin{theorem}\label{thm:introFJgeneral} Let the notation be as above, so that $\varphi$ is a weight $\ell$ quaternionic modular form on $G$. Then $\mathrm{FJ}_{\varphi}$ is the automorphic function corresponding to a weight $\ell$ holomorphic modular form on $H$.  Moreover, the classical Fourier coefficients of $\mathrm{FJ}_{\varphi}$ are finite sums of the complex conjugates of the quaternionic Fourier coefficients of $\varphi$.\end{theorem}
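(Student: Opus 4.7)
The plan is to verify three properties of $\mathrm{FJ}_\varphi$ that together show it corresponds to a weight $\ell$ holomorphic modular form on $H$: left $H(\Q)$-invariance, right $K_H$-equivariance with the correct automorphy character, and annihilation by the Cauchy--Riemann operator of the Hermitian symmetric domain of $H$. The $H(\Q)$-invariance is immediate from the automorphy of $\varphi$ together with the definition $H=\mathrm{Stab}_{M_Q}(\chi_{y_0})$: for $\gamma\in H(\Q)$, the change of variable $n\mapsto \gamma n\gamma^{-1}$ in the $[N_Q]$-integral leaves $\chi_{y_0}$ invariant. For the $K_H$-transformation, I would choose a maximal compact $K_H\subseteq H(\R)$ so that its $\SO(2)$-factor---the piece providing the scalar automorphy factor for weight $\ell$ holomorphic forms on $\SO(2,n+1)$---sits inside the long-root $\SU(2)=K^0$ of the Gross--Wallach description of the quaternionic real form. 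Since $(-ix+y)^{2\ell}$ is a weight $2\ell$ vector for a $\mathrm{U}(1)\subseteq K^0$, the $K^0$-invariance of $\{\,,\,\}_{K^0}$ together with the complex conjugation in \eqref{1st-FJ-coefficients} produce the weight $\ell$ automorphy character.

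The main technical step is the Cauchy--Riemann equation. I would realize the CR operator on $H$ as left-invariant differentiation by a specific $(n+1)$-dimensional subspace $\mathfrak{p}_H^-\subseteq \h_\C$, embedded into $\g_\C$ via $\h\hookrightarrow\g$. For each $X\in \mathfrak{p}_H^-$, one must show that $X\cdot \int_{[N_Q]}\chi_{y_0}^{-1}(n)\varphi(nh)\,dn$ lies in the $\{\,,\,\}_{K^0}$-kernel of $(-ix+y)^{2\ell}$. The key input is $D_\ell\varphi\equiv 0$, which relates derivatives of $\varphi$ along non-compact directions in $\g$ to derivatives along compact directions inside the long-root $\SU(2)\subseteq K^0$; after the $K^0$-pairing with $(-ix+y)^{2\ell}$ those compact-direction contributions vanish by $K^0$-weight considerations, while any residual contribution coming from directions inside $\mathfrak{n}_{Q,\C}$ is handled by integration by parts against the nondegenerate character $\chi_{y_0}^{-1}$. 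The main obstacle I anticipate is the branching computation: identifying $\mathfrak{p}_H^-$ as an explicit $K_H$-submodule of $\g_\C$, matching the weight decomposition of $\mathbf{V}_\ell$ under $K_H\cap K^0$, and arranging that the cancellations provided by $D_\ell\varphi\equiv 0$ and by the $K^0$-pairing together leave nothing behind.

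For the classical Fourier expansion, a weight $\ell$ holomorphic modular form on $H\simeq\SO(2,n+1)$ admits a classical Fourier expansion along the unipotent radical $N_H$ of its own Heisenberg parabolic, with characters indexed by vectors $T$ in a signature-$(1,n)$ subspace. Inside $G$, the group $N_P$ decomposes (up to the centre) as $N_Q N_H$, and a character of $N_P$ built from $\chi_{y_0}$ on $N_Q$ and $\chi_T$ on $N_H$ corresponds to a pair $[T_1,T_2]\in V_{2,n}^{\oplus 2}$ depending linearly on $y_0$ and $T$. Unfolding the $T$-th classical Fourier coefficient of $\mathrm{FJ}_\varphi$ then rewrites it as the pairing of the quaternionic Fourier coefficient $a_\varphi([T_1,T_2])$ against $(-ix+y)^{2\ell}$; the non-uniqueness in reconstructing $[T_1,T_2]$ from $T$ (varying over the preimage of $T$ under the relevant linear map) accounts for the finite sum asserted in the statement.
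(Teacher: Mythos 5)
Your outline of the Fourier-coefficient part is essentially the paper's argument: the unfolding of the $[N_Q]$-integral against the further expansion along the abelian unipotent radical of the rank-one parabolic of $H$ (the paper's $R$, not a Heisenberg parabolic --- $N_R\simeq V_{1,n}$ is abelian) is exactly Lemma \ref{Soft Formula}, where the characters $\varepsilon_{[y_0,T+ty_0]}$, $t\in\Q$, all restricting to $(\chi_{y_0},\eta_T)$ is what produces the extra integral over $\exp(sb_1\wedge b_{-2})$ and, at level one, the asserted finite sum. That part of your proposal is sound.

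The gap is in the holomorphy step, which is the heart of the theorem, and which you defer to an unperformed ``branching computation.'' You propose to verify the Cauchy--Riemann equation for $\mathrm{FJ}_\varphi$ directly from $D_\ell\varphi\equiv 0$ by a Lie-algebraic argument. The paper does not do this, and for a reason: the quaternionic condition $D_\ell\varphi\equiv 0$ pins down each Fourier term of $\varphi$ only as the generalized Whittaker function $\mathcal{W}_{[T_1,T_2]}$ of Theorem \ref{Thm 1.2.1 Aaron Paper}, which is a sum of $2\ell+1$ modified $K$-Bessel functions and is very far from holomorphic in any variable. The actual content of the theorem is Proposition \ref{Archimedean-Integral}: after integrating $\mathcal{W}_{[y_0,T]}$ over the one-parameter group $\exp(s\,b_1\wedge b_{-2})$ (the integral forced on you by Lemma \ref{Soft Formula}), the tabulated Bessel integrals and the combinatorial identity of Claim \ref{claim2} collapse the whole sum to $\mathrm{const}\cdot t^{\ell}e^{-(2-t(T,u\cdot y_1))}\sum_v i^v\tfrac{x^{\ell+v}y^{\ell-v}}{(\ell+v)!(\ell-v)!}$, and only then does pairing with $(-ix+y)^{2\ell}$ produce the term $\eta\,e^{i(T,Z)}$ of a genuine $q$-expansion (Corollary \ref{Main Corollary}). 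Your scheme would have to reproduce this collapse abstractly; as written, ``the cancellations provided by $D_\ell\varphi\equiv 0$ and by the $K^0$-pairing together leave nothing behind'' is an assertion, not a proof, and it is precisely the point at issue. Two smaller problems: the $\SO(2)$ in $K_H^0$ is generated by $u_2\wedge v_2=-\tfrac12(e^+-f^+)+\tfrac12(e'^+-f'^+)$, so it does not sit inside the long-root $\SU(2)$ (it only acts through it on $\mathbf{V}_\ell$ because the second factor acts trivially); and the paper's proof uses cuspidality of $\varphi$ (via Corollary \ref{cor-partial-FE-Xi}) to restrict the expansion to positive definite $T$, a hypothesis your argument also silently needs.
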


\subsection{$D_4$ modular forms}
The remaining theorems in this paper are specialized to the case of $n=2$. In this case $G\simeq \SO(8)$ is split and $H\simeq \PGSp(4)$ is also split. Moreover, the quadratic space $V_{2,n}$ can be identified with the space of $2$-by-$2$ rational matrices $M_2(\Q)$. For a form $\varphi$ of level one, the Fourier coefficients $a_{\varphi}([T_1,T_2])$ can be nonzero only if $[T_1, T_2]$ is a pair consisting of $2$-by-$2$ matrices $T_1$ and $T_2$ with integer entries. \\
\indent In \cite{weissmanD4}, Weissman develops a theory of Fourier coefficients for quaternionic modular forms on the two-fold simply connected covering group $\Spin(8)\to G$. Theorem~\ref{thm:introFJgeneral}, while stated for special orthogonal groups, applies equally well to the group $\Spin(8)$.  Specializing Theorem~\ref{thm:introFJgeneral} to this case and applying a triality automorphism on $\Spin(8)$ (as discussed in Appendix~\ref{sec:triality}) we obtain the following application. 

\begin{theorem}\label{thm:introSp4} Suppose $\varphi$ is a quaternionic modular form on $\Spin(8)$ of level one and weight $\ell$ with Fourier coefficients $a_\varphi([T_1,T_2])$.  For integers $a,b,c$, define 
\[b_\varphi\left([a,b,c]\right) = \overline{a_\varphi\left(\left[\mm{a}{0}{b}{1},\mm{0}{-1}{c}{0} \right]\right)}.\]
Then the $q$-series 
\[f(Z)=\sum_{a,b,c \in \Z}{b_\varphi\left([a,b,c]\right) e^{2\pi i \tr\left(\mm{a}{b/2}{b/2}{c} Z\right)}}\]
is the Fourier expansion of a Siegel modular form on $\Sp(4)$ of weight $\ell$ and level one. Moreover, if $\varphi$ is cuspidal then so is $f$.\end{theorem}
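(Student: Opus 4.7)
The plan is to reduce directly to Theorem~\ref{thm:introFJgeneral} after a suitable triality substitution. First I would apply a triality automorphism $\tau$ of $\Spin_8$ to $\varphi$, producing another level one, weight $\ell$ cuspidal quaternionic modular form $\varphi^\tau = \varphi \circ \tau$; the role of $\tau$ is to conjugate the Heisenberg parabolic of $\Spin_8$ (stabilizing an isotropic $2$-plane in the standard representation) to a parabolic defined using one of the half-spin representations, so that the combinatorics of the Fourier-Jacobi coefficient produced by Theorem~\ref{thm:introFJgeneral} lines up with the shape asked for in the statement.

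Next I would apply Theorem~\ref{thm:introFJgeneral} with $n=2$. Since $V_{2,2} \simeq M_2(\Q)$ and $H \simeq \PGSp_4$, the first Fourier-Jacobi coefficient $\mathrm{FJ}_{\varphi^\tau}$ is the adelic avatar of a weight $\ell$ holomorphic modular form on $\PGSp_4$. Pulling back along $\Sp_4 \twoheadrightarrow \PGSp_4$ gives a weight $\ell$ holomorphic Siegel modular form on $\Sp_4$ in the classical sense. Cuspidality of $\varphi$ is preserved under $\tau$ and under the period integral \eqref{1st-FJ-coefficients} against a nontrivial character of $N_Q$, which forces the vanishing of Fourier coefficients indexed by non-positive-definite symmetric matrices and yields a Siegel cusp form. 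Level one invariance under $\Spin_8(\ZZ)$ translates, via the explicit identifications, to $\Sp_4(\Z)$-invariance on the nose.

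The heart of the argument is to match Fourier coefficients. Theorem~\ref{thm:introFJgeneral} tells us that the classical Fourier coefficients of $\mathrm{FJ}_{\varphi^\tau}$ are finite sums of complex conjugates of quaternionic Fourier coefficients $a_{\varphi^\tau}([T_1, T_2])$, with $[T_1, T_2] \in V_{2,2}^{\oplus 2} \simeq M_2(\Q)^{\oplus 2}$ indexed by data compatible with the Siegel symmetric matrix $\mm{a}{b/2}{b/2}{c}$ appearing in the exponential. Using the triality-adjusted identifications together with the specific choices made in \eqref{1st-FJ-coefficients}, one verifies that this sum collapses, for the Siegel symmetric matrix $\mm{a}{b/2}{b/2}{c}$, to the single term $\overline{a_\varphi\left(\left[\mm{a}{0}{b}{1},\mm{0}{-1}{c}{0}\right]\right)} = b_\varphi([a,b,c])$. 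The complex conjugate in the definition of $b_\varphi$ is accounted for by the $\SU(2)$-invariant pairing $\{\,,\,\}_{K^0}$ in \eqref{1st-FJ-coefficients}.

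The main obstacle will be this last bookkeeping step: carefully tracking the triality automorphism through the identification $V_{2,2} \simeq M_2(\Q)$, the parametrization of characters of the Heisenberg unipotent, and the collapse of the finite sum from Theorem~\ref{thm:introFJgeneral} to a single term for this particular parametrization by $[a,b,c]$. Once these identifications are set up compatibly, the theorem follows by direct substitution into Theorem~\ref{thm:introFJgeneral} applied to $\varphi^\tau$.
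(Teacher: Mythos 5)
Your plan is essentially the paper's own argument: apply a triality automorphism to $\varphi$ (Theorem \ref{thm:trialityQMF}), specialize Theorem \ref{thm:introFJgeneral} to $n=2$, transfer to $\Sp_4$ through the explicit map $\Sp_4 \to \SO(V_{2,3})$, and match Fourier coefficients --- and the ``bookkeeping'' you defer is exactly where the paper's real work lies, namely the last two subsections of Section \ref{sec:FJ} together with Appendix \ref{sec:triality}, where the $S_3$-equivariant isomorphism $\g_E \simeq \wedge^2\O$ (Theorem \ref{thm:spin8O}) and the induced action on Bhargava cubes produce the specific pair $\left[\mm{a}{0}{b}{1},\mm{0}{-1}{c}{0}\right]$ and show the level-one coefficient is a single term (Corollary \ref{cor:FJLevel1}). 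One slip in your description of the mechanism: triality \emph{fixes} the Heisenberg parabolic (the stabilizer of the isotropic $2$-plane) and instead permutes the three end-node parabolics, equivalently acts on the characters of $N_P$, so its role is to permute the Fourier-coefficient indices rather than to move $P'$ to a half-spin parabolic; this does not affect the validity of your outline.
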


\subsection{The quaternionic Maass Spezialschar}
Recall that now $G = \SO(V)$ with $V$ of dimension $8$.  Suppose $f$ is a weight $\ell$ cuspidal holomorphic Siegel modular form on $\Sp(4)$.  For test data $\phi \in S(V(\A)^2)$ for the Weil representation restricted to the dual pair $G \times \Sp(4)$, one can consider the theta lift $\theta_\phi(f)$, which is an automorphic form on $G$.  In \cite{pollackCuspidal}, the fourth-named author chooses special test data $\phi_{\ell} \in S(V(\A)^2) \otimes \mathbf{V}_{\ell}$ for the Weil representation.  Set $\theta^*(f) = \theta_{\phi_{\ell}}(f)$.  It turns out that $\theta^*(f)$ is a cuspidal quaternionic modular form on $G$ of weight $\ell$, and its Fourier coefficients are computed in \cite[Theorem 4.1.1]{pollackCuspidal} in terms of the classical Fourier coefficients of $f$.

For an integer $\ell \geq 1$, we write $\mathcal{A}_{0,\Z}(G,\ell)$ for the space of cuspidal quaternionic modular forms on $G = \SO(8)$ of weight $\ell$ and level one (see Section~\ref{subsec:Definition-QMFS}). We define the \emph{weight $\ell$ quaternionic Saito-Kurokawa subspace} $\mathrm{SK}_{\ell}$ as the subspace of $\mathcal{A}_{0,\Z}(G,\ell)$ spanned by $\theta^*(f)$ as $f$ ranges over the space of cuspidal holomorphic Siegel modular forms on $\Sp(4)$ of weight $\ell$ and level one.  

Our main application of Theorem~\ref{thm:introSp4} is to characterize the space $\mathrm{SK}_{\ell}$ in terms of Fourier coefficients.  More precisely, we define a subspace $\mathrm{MS}_{\ell}$ consisting of forms $\varphi\in \mathcal{A}_{0,\Z}(G,\ell)$ for which the Fourier coefficients $a_{\varphi}([T_1,T_2])$ satisfy a particular system of linear relations (see Definition~\ref{definition-Quaternionic-Spezialschar}).  We call $\mathrm{MS}_{\ell}$ the quaternionic Maass Spezialschar of weight $\ell$. The definition of $\mathrm{MS}_{\ell}$ is analogous to the classical definition of the Maass Spezialschar on $\PGSp(4)$.

It follows easily from \cite{pollackCuspidal} that, with our definition of $\mathrm{MS}_{\ell}$, there is an inclusion $\mathrm{SK}_{\ell} \subseteq \mathrm{MS}_{\ell}$.  Using Theorem~\ref{thm:introSp4}, we are able to prove that this inclusion is an equality.  Specifically, given an element $\varphi \in \mathrm{MS}_{\ell}$, if we want to prove that $\varphi \in \mathrm{SK}_{\ell}$, then we must produce a Siegel modular form $f$ on $\Sp(4)$ of level one such that $\varphi = \theta^*(f)$.  This $f$ is exactly given by the $q$-series of Theorem~\ref{thm:introSp4}.  One arrives at one of the main theorems of this paper.

\begin{theorem}\label{thm:intoSKMS} Suppose $\ell \geq 16$ is even.  Then the inclusion $\mathrm{SK}_{\ell} \subseteq \mathrm{MS}_{\ell}$ is an equality. \end{theorem}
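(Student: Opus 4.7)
Since the inclusion $\mathrm{SK}_\ell \subseteq \mathrm{MS}_\ell$ is already established, the task is to prove the reverse. The plan is to associate to each $\varphi \in \mathrm{MS}_\ell$ a canonical weight $\ell$ level one Siegel cusp form $f$ on $\Sp_4$, and to show $\varphi = \theta^*(f)$. The candidate $f$ is forced upon us by Theorem~\ref{thm:introSp4}: pulling $\varphi$ back to $\Spin_8$ if needed, the $q$-series with coefficients $b_\varphi([a,b,c])$ is a weight $\ell$ level one cuspidal Siegel modular form, and we take it to be $f$. Forming $\theta^*(f) \in \mathrm{SK}_\ell \subseteq \mathrm{MS}_\ell$ and setting $\psi = \varphi - \theta^*(f)$, the theorem reduces to proving $\psi = 0$.

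The strategy for $\psi = 0$ is to check that every quaternionic Fourier coefficient $a_\psi([T_1,T_2])$ vanishes and then to invoke a rigidity principle for cuspidal QMFs on $\Spin_8$ (namely, that the Heisenberg Fourier expansion determines a cusp form). The vanishing proceeds in two stages. First, apply Theorem~\ref{thm:introSp4} to $\theta^*(f)$: the resulting Siegel modular form should equal $f$, up to an explicit constant that we absorb into the normalization of $\theta^*$. Granting this, the ``basic'' coefficients $b_\psi([a,b,c]) = b_\varphi([a,b,c]) - b_{\theta^*(f)}([a,b,c])$ all vanish by the very construction of $f$. Second, because $\psi \in \mathrm{MS}_\ell$, the Maass relations defining $\mathrm{MS}_\ell$ in Section~\ref{sec:QSKL} express every quaternionic Fourier coefficient $a_\psi([T_1,T_2])$ as a linear combination of the basic ones; hence all Fourier coefficients of $\psi$ vanish. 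The rigidity principle then forces $\psi \equiv 0$.

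The principal obstacle is the compatibility claim in the first stage, namely that applying the Fourier-Jacobi construction of Theorem~\ref{thm:introSp4} to $\theta^*(f)$ recovers $f$ up to an explicit nonzero constant. This amounts to unfolding the theta integral defining $\theta^*$ from \cite{pollackCuspidal} against the integral \eqref{1st-FJ-coefficients} defining the first Fourier-Jacobi coefficient, and recognizing the result as the classical Fourier expansion of $f$. In spirit this is a seesaw or doubling identity relating the $\Spin_8$ theta kernel to classical Jacobi theta series on $H \simeq \PGSp_4$; the manipulation is delicate but entirely explicit once the definitions are lined up. The hypothesis $\ell \geq 16$ is most likely needed either to guarantee convergence/nondegeneracy of the theta integral defining $\theta^*$, or to ensure that the bilinear pairing implicit in the Fourier-Jacobi unfolding is nonzero; a secondary but more routine issue is establishing the Fourier-coefficient rigidity for cuspidal QMFs on $\Spin_8$, which should be available from the earlier sections of the paper together with the framework of \cite{weissmanD4}.
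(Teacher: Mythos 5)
Your overall skeleton matches the paper's: construct $f$ from $\varphi$ via the construction of Theorem \ref{thm:introSp4} (i.e.\ Corollary \ref{cor:FJLevel1}), compare $\varphi$ with $\theta^*(f)$, propagate agreement of Fourier coefficients through the Maass relations, and conclude equality. But the step you single out as the principal obstacle --- that applying the Theorem \ref{thm:introSp4} construction to $\theta^*(f)$ recovers $f$ --- is left unproved, and the route you sketch for it (unfolding the theta integral defining $\theta^*$ against the integral \eqref{1st-FJ-coefficients}, a seesaw-type identity) is both unnecessary and misdirected. No analytic identity is needed: the paper closes this step purely with the Fourier-coefficient formula \eqref{fouriercoefficients} that characterizes $\theta^*$ (Theorem \ref{pollack4.1.1}). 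The pairs $[T_1,T_2]=[-b\,b_4+a\,b_3+b_{-3},\,-c\,b_4-b_{-4}]$ arising in Corollary \ref{cor:FJLevel1} are strongly primitive with $S(T_1,T_2)=\left(\begin{smallmatrix} a & b/2\\ b/2 & c\end{smallmatrix}\right)$, so in \eqref{fouriercoefficients} the sum collapses to the single coset $r\in\GL_2(\Z)$ and gives $a_{\theta^*(f)}([T_1,T_2])=\overline{a_f(S(T_1,T_2))}$; since by construction $a_f(S(T_1,T_2))=\overline{a_\varphi([T_1,T_2])}$, the ``basic'' coefficients of $\psi=\varphi-\theta^*(f)$ vanish on the nose, with constant exactly $1$, so there is nothing to absorb into a normalization. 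By contrast, an honest unfolding of \eqref{1st-FJ-coefficients} against the theta kernel would compute $\mathrm{FJ}_{\theta^*(f)}$, which the paper explicitly warns is \emph{not} $f$: the triality twist intervenes, so your proposed seesaw computation would have to be carried out for the triality-conjugated form, a substantial and delicate piece of work that the coefficient bookkeeping renders moot.

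Two smaller points. First, your claim that the Maass relations express every coefficient of $\psi$ as a combination of the basic ones is correct, but it hinges on the fact (checked in the paper just before its Corollary on this theorem) that the basic pairs above are strongly primitive and realize every half-integral $T$ as $S(T_1,T_2)$; only then do conditions (i) and (ii) of Definition \ref{definition-Quaternionic-Spezialschar} pin down $a^{\mathrm{prim}}_\psi$ at every $T$ and hence all $a_\psi(\lambda)$. Second, the concluding ``rigidity'' (a cuspidal QMF with vanishing $N_P$-Fourier coefficients is zero) is also used implicitly by the paper and is standard, so treating it as routine is fine. With the compatibility step repaired as above, your argument becomes the paper's.
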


Let us remark explicitly that the composite map $f \mapsto \theta^*(f) \mapsto \mathrm{FJ}_{\theta^*(f)}$, sending the space of Siegel modular forms of level one  and genus $2$ to itself is not the identity map.  In order to recover $f$ from $\varphi = \theta^*(f)$, we instead proceed as follows.  Let $\varphi_1$ denote the pullback of $\varphi$ to $\Spin(8)$ and set $\varphi_2 = \sigma(\varphi_1)$ for $\sigma$ an order three automorphism of $\Spin(8)$ coming from triality.  Then we find $f$ as a Fourier-Jacobi coefficient of $\varphi_2$.

\subsection{Periods of quaternionic modular forms}
Theorem~\ref{thm:intoSKMS} characterizes the Saito-Kurokawa subspace in terms of Fourier coefficients.  As we describe below, this subspace can also be characterized using periods of automorphic forms.

Let $L \subseteq V$ be a fixed choice of a split, unimodular lattice.  For $v_1, v_2 \in L$ spanning a positive-definite two-plane, define $H_{v_1,v_2} \subseteq G$ as the pointwise stabilizer of the vectors $v_1$ and $v_2$. Given a quaternionic modular form $\varphi$ on $G$ of weight $\ell$ and level one, we define 
\[\mathcal{P}_{v_1,v_2}(\varphi) = \int_{H_{v_1,v_2}(\Z)\backslash H_{v_1,v_2}(\R)}{ \varphi(h) \,dh},\]
the period of $\varphi$ over $H_{v_1,v_2}(\Z)\backslash H_{v_1,v_2}(\R)$.  We define an integer $D(v_1,v_2)$ as 
$$D(v_1,v_2)=(v_1,v_2)^2 - (v_1,v_1)(v_2,v_2).$$
Finally, let $k_0 \in O(L)(\Z)$ have determinant $-1$.  Say that $\varphi$ is in the plus subspace if $\varphi(g) = \varphi(k_0 g k_0^{-1})$.  (The right-hand side of this expression is independent of the choice of $k_0$.)  One has that $\mathrm{SK}_{\ell}$ is contained in the plus subspace.

\begin{theorem}\label{thm:introPeriod} Suppose $\ell \geq 22$ is even, and $\varphi \in \mathcal{A}_{0,\Z}(G,\ell)$ is a Hecke eigenform in the plus subspace.  Then $\varphi \in \mathrm{SK}_{\ell}$ if and only if there exists $v_1, v_2 \in L$ such that $\mathcal{P}_{v_1,v_2}(\varphi) \neq 0$ and $D(v_1,v_2)$ is odd and square-free.\end{theorem}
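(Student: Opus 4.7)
The plan is to express $\mathcal{P}_{v_1,v_2}(\varphi)$ as an explicit nonzero multiple of the Fourier coefficient $a_{f_\varphi}(S_W)$ of the Siegel cusp form $f_\varphi$ that Theorem \ref{thm:intoSKMS} attaches to $\varphi$ via the triality-plus-Fourier--Jacobi procedure, where $S_W$ is the Gram matrix of $(v_1, v_2)$. The theorem will then follow by combining this unfolding identity with Theorem \ref{thm:intoSKMS} and classical Saito--Kurokawa theory on $\Sp_4$. The identity
\[
\mathcal{P}_{v_1,v_2}(\varphi) \;=\; c(v_1,v_2) \cdot a_{f_\varphi}(S_W), \qquad c(v_1,v_2) \in \C^\times,
\]
should be established via the seesaw attached to the orthogonal decomposition $V = W \oplus V'$ with $W = \mathrm{Span}(v_1, v_2)$: the compactness of $\mathrm{O}(W)(\R) \cong \mathrm{O}(2)$ collapses the $\mathrm{O}(W)$-integral to a Siegel theta series on $\Sp_4$, whose Rankin--Selberg pairing against $f$ (when $\varphi = \theta^*(f)$) picks out the single Fourier coefficient $a_f(S_W)$. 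Alternatively, one expands $\varphi$ along the unipotent radical $N_Q$ of the Fourier--Jacobi parabolic and applies Theorem \ref{thm:introSp4} to translate the resulting quaternionic Fourier coefficients into Fourier coefficients of $f_\varphi$.

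For the forward direction, suppose $\varphi = \theta^*(f) \in \mathrm{SK}_\ell$. Then $f$ is a nonzero classical Saito--Kurokawa lift of some nonzero half-integer weight form $g$ in Kohnen's plus space, and the Eichler--Zagier--Maass formula identifies $a_f(S_W)$ as a nonzero constant multiple of $c_g(|D(v_1,v_2)|)$ whenever $D(v_1, v_2)$ is fundamental. Since $g \neq 0$, infinitely many odd squarefree $D$ satisfy $c_g(D) \neq 0$. The split unimodular lattice $L$ of signature $(4,4)$ represents every positive-definite integral binary quadratic form (Eichler's classification of indefinite unimodular lattices), so one picks $v_1, v_2 \in L$ with $S_W$ of any prescribed discriminant, and the unfolding identity delivers $\mathcal{P}_{v_1,v_2}(\varphi) \neq 0$.

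For the backward direction, the unfolding identity forces $a_{f_\varphi}(S_W) \neq 0$ at a matrix of fundamental discriminant. The next step is to argue that the construction $\varphi \mapsto f_\varphi$ is Hecke-equivariant at unramified primes (verified by computing Hecke operators on the Heisenberg Fourier coefficients $a_\varphi([T_1, T_2])$ and matching against the Hecke action on $\Sp_4$) and sends the plus subspace on $G$ to the classical Kohnen plus subspace on $\Sp_4$. These structural properties, combined with Arthur's classification of level-one Siegel Hecke eigenforms of weight $\ell \geq 22$ and the Kohnen isomorphism between $S_{\ell+1/2}^+$ and the classical Saito--Kurokawa subspace, imply that every Hecke eigenform in the Kohnen plus subspace with a nonvanishing fundamental-discriminant Fourier coefficient is a classical Saito--Kurokawa lift. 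Hence $f_\varphi$ is classical Saito--Kurokawa, so $\theta^*(f_\varphi) \in \mathrm{SK}_\ell$; comparing Fourier coefficients (or equivalently applying Theorem \ref{thm:intoSKMS} after checking $\varphi \in \mathrm{MS}_\ell$) gives $\varphi = \theta^*(f_\varphi) \in \mathrm{SK}_\ell$.

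The hard part will be the structural reduction in the backward direction: verifying the Hecke-equivariance and plus-subspace transfer of $\varphi \mapsto f_\varphi$, and invoking a sufficiently sharp classical characterization of the Saito--Kurokawa subspace of $S_\ell(\Sp_4(\Z))^+$ via a single nonvanishing fundamental-discriminant Fourier coefficient of a Hecke eigenform. The unfolding identity of the first step is the technical bridge between the orthogonal-group period and classical $\Sp_4$ invariants, and its proof will require careful bookkeeping of constants in the seesaw together with the Fourier coefficient formula of Theorem \ref{thm:introSp4}; the arithmetic input on $\Sp_4$ is then essentially classical.
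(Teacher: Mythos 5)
There is a genuine gap, and it lies at the very first step: the claimed ``unfolding identity'' $\mathcal{P}_{v_1,v_2}(\varphi)=c(v_1,v_2)\,a_{f_\varphi}(S_W)$ with $c(v_1,v_2)\neq 0$ is not true, because it equates a period over the \emph{reductive} stabilizer $H_{v_1,v_2}$ with a Fourier coefficient of the wrong Siegel form. The form $f_\varphi$ produced by the triality-plus-Fourier--Jacobi construction of Theorem \ref{thm:introSp4} has Fourier coefficients equal to \emph{unipotent} Fourier coefficients $a_\varphi([T_1,T_2])$ of $\varphi$ along $N_P$; these are invariants of $\varphi$ that have no reason to vanish when $\varphi$ is not a Saito--Kurokawa lift. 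By contrast, the period $\mathcal{P}_{v_1,v_2}(\varphi)$ is (this is what the paper actually proves) proportional to $\langle \varphi, Q_{v_1,v_2;\ell}\rangle$, and $Q_{v_1,v_2;\ell}$ is, up to a nonzero constant, the theta lift $\theta^*(P_{T,\ell})$ of a Poincar\'e series, hence lies in $\mathrm{SK}_\ell$; so the period functional kills the orthogonal complement of $\mathrm{SK}_\ell$ and computes a Fourier coefficient of the theta lift $\Theta_{\Sp_4}(L(\varphi))$, not of $f_\varphi$. If your identity held, any cuspidal quaternionic eigenform with a single nonvanishing strongly primitive Fourier coefficient of odd squarefree discriminant would have a nonvanishing period and would therefore be Saito--Kurokawa, which is exactly what one cannot conclude. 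The seesaw you invoke also presupposes a theta kernel to unfold, i.e.\ it computes pairings of the theta lift of $\varphi$, which is the paper's route, not a relation between the period and the Fourier--Jacobi lift of a general $\varphi$. A secondary problem occurs in your backward direction: for classical Siegel eigenforms on $\Sp_4(\Z)$, nonvanishing of a fundamental-discriminant Fourier coefficient does \emph{not} characterize Saito--Kurokawa lifts (non-lifts are expected, and in known cases verified, to have such nonvanishing coefficients; the correct characterization is via the Maass relations or the pole of the standard $L$-function), so that reduction would fail even granting the identity.

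For comparison, the paper's argument runs as follows: it first shows (Theorem \ref{thm:TFAE}, using Howe duality, the Rallis tower property, and Yamana's nonvanishing criterion) that $\varphi\in\mathrm{SK}_\ell$ if and only if $\Theta_{\Sp_4}(\Pi_{L(\varphi)})\neq 0$, and hence (Corollary \ref{cor:QStheta}) if and only if $\langle\varphi,Q_{S,\ell}\rangle\neq 0$ for some Poincar\'e lift $Q_{S,\ell}$. It then proves the analytic identity $\langle\varphi,Q_{v_1,v_2;\ell}\rangle=C_{v_1,v_2}\,\mathcal{P}_{v_1,v_2}(\varphi)$ (Theorem \ref{thm:PeriodQ}), which requires the convergence result of Theorem \ref{thm:finitenessInt} and the rigidity Lemma \ref{lem:aF}. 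Finally, to pass between $Q_{T;\ell}$ (a sum over all $(v_1,v_2)$ with $S(v_1,v_2)=T$) and a single $Q_{v_1,v_2;\ell}$, it uses the transitivity of $G(\Z)$ on $X_T$ for $D=-4\det T$ odd and squarefree (Theorem \ref{thm:AITSO8}), together with Saha's theorem that the Poincar\'e series with such discriminants already span $S_\ell(\Sp_4(\Z))$. These three ingredients---the representation-theoretic characterization via theta lifts, the period-versus-Poincar\'e-lift unfolding, and the integral orbit/spanning results---are what replace your single identity, and none of them is supplied by the Fourier--Jacobi expansion of Theorem \ref{thm:introSp4}.
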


To put Theorem~\ref{thm:introPeriod} into context, let $L(\varphi)$ be a ``lift" of $\varphi$ from the special orthogonal group $\SO(V)$ to the orthogonal group $O(V)$ as in Proposition~\ref{proposition-restriction-to-SO(V)}, and let $\Pi_{L(\varphi)}$ be the automorphic representation generated by $L(\varphi)$. In Theorem~\ref{thm:TFAE} we verify that $\varphi$ is a quaternionic Saito-Kurokawa lift if and only if the representation $\Pi_{L(\varphi)}$ has a nonzero theta lift to $\Sp(4)$.  It is standard that the representation $\Pi_{L(\varphi)}$ has a nonzero theta lift to $\Sp(4)$ if and only if the representation $\Pi_{L(\varphi)}$ is distinguished by a subgroup of the form $H_{v_1, v_2}$, for some $v_1, v_2$.  This follows from \cite[section 4]{howePS} together with the non-existence of singular cusp forms \cite{liJS-Singular}.   In other words, by collecting results from the literature on the theta correspondence, one sees that $\varphi$ is a Saito-Kurokawa lift if and only if there is some automorphic form $\varphi'$ in the representation $\Pi_{L(\varphi)}$ that is distinguished by an algebraic subgroup of the form $H_{v_1, v_2}$.  Theorem \ref{thm:introPeriod} refines this fact by showing that it is enough to check if the function $\varphi$ itself is distinguished.  

Theorem~\ref{thm:introPeriod} can be considered a non-holomorphic analogue of (a consequence of) the result \cite[Proposition 2.1]{shintani1975} of Shintani, which considers the theta lift from holomorphic cusp forms on $\PGL(2) \simeq \SO(2,1)$ to the metaplectic group $\mathrm{Mp}(2)$, and is closely related to the more general results of Funke-Millson, especially  \cite[Theorem 1.1 and Example 1.3]{FM06}.  In the course of proving Theorem~\ref{thm:introPeriod}, we compute the Petersson inner product of a cuspidal quaternionic modular form $\varphi$ of weight $\ell$ with a quaternionic modular form $Q_{S,\ell} = \theta^{*}(P_{S,\ell})$, where $P_{S,\ell}$ is a holomorphic Siegel modular Poincar\'e series of weight $\ell$.  One aspect of the proof of Theorem~\ref{thm:introPeriod} that we highlight is the relatively straightforward way in which we compute the inner product $\langle \varphi, Q_{S,\ell}\rangle$, at least modulo technical considerations concerning the convergence of certain archimedean integrals.

Combining Theorem~\ref{thm:intoSKMS} with Theorem~\ref{thm:introPeriod} gives a way to detect if cuspidal quaternionic modular eigenforms have non-vanishing periods in terms of relations between their Fourier coefficients.  We remark that one should also be able to characterize the Hecke eigenforms in the quaternionic Maass Spezialschar via a pole at $s=2$ in their standard $L$-functions.  See \cite{GinzburgJiangSoudry09, GanTakeda11, Yamana14, Evdokimov1980, Oda1981}.

The remainder of the paper is organized as follows. In Section~\ref{sec:classical}, we recall some well-known facts about the classical Saito-Kurokawa lift and Maass Spezialschar.  In Section~\ref{sec:notation}, we define various notations that we use throughout the paper. In Section~\ref{sec:QMFs}, we define quaternionic modular forms on $G$ and review some results about the Fourier expansion of quaternionic modular forms. In Section~\ref{sec:QSKL}, we define 
the quaternionic Saito-Kurokawa lifting and quaternionic Maass Spezialschar, and show that the quaternionic Saito-Kurokawa subspace $\mathrm{SK}_{\ell}$ is contained in the quaternionic Maass Spezialschar $\mathrm{MS}_{\ell}$. In Section~\ref{sec:dirichlet}, we discuss a conjectural Dirichlet series for the $L$-functions of irreducible, cuspidal, quaternionic automorphic representations of $G$ and show that this conjecture is satisfied by the Saito-Kurokawa lifts. In Section~\ref{sec:FJ}, we show that the Fourier-Jacobi coefficient of a quaternionic modular form on $G$ is a Siegel modular form on $\Sp(4)$ of the correct weight (see Corollary~\ref{Main Corollary}), and using this result we obtain $\mathrm{MS}_{\ell}\subseteq \mathrm{SK}_{\ell}$. In Section~\ref{sec:Hecke}, we collect results from the literature on theta lifts that we use in Section~\ref{sec:periods}.  In Section~\ref{sec:periods}, we prove Theorem~\ref{thm:introPeriod}.  The paper ends with three appendices: Appendix~\ref{sec:triality} contains results about the triality on $\Spin(8)$ and Appendices~\ref{sec:orbits} and \ref{sec:finiteness_appendix} contain technical results used in the proof of Theorem~\ref{thm:introPeriod}.

\subsection{Acknowledgments}
This project began at the Research Innovations and Diverse Collaborations workshop at the University of Oregon, which was supported by NSF CAREER grant DMS-1751281 (E. Eischen) and NSA MSP conference grant H98230-21-1-0029 (E. Eischen).  The authors thank the organizers of this workshop for their generous support in establishing this collaboration. Work was also done at a SQuaRE hosted by the American Institute for Mathematics. The authors thank AIM for providing a supportive and mathematically rich environment.  We thank the anonymous referees for their careful reading of our manuscript and helpful suggestions for improvement.

JJL was partially supported by the Renfrew Faculty Fellowship from the University of Idaho College of Science.
IN was partially supported by a FRQNT Scholarship by Fonds de Recherche du Québec and by NSF Grant No. DMS-1928930 while the author was in residence at MSRI in Berkeley, California, during the Spring 2023 semester.
AP has been supported by the Simons Foundation via Collaboration Grant number 585147 and by the NSF via grant numbers 2101888 and 2144021. 
MR was partially supported by the AMS Simons Travel Grant program.
\section{The classical Saito-Kurokawa lift and Maass Spezialschar}\label{sec:classical}

In this introductory section, we recall the classical Saito-Kurokawa lift to Siegel modular forms and its image, the Maass Spezialschar. We describe characterizations of this subspace via linear relations on Fourier coefficients, the Saito-Kurokawa theta lift, and an inverse map via the first Fourier-Jacobi coefficient. As explained in the introduction, these characterizations will be generalized to the quaternionic Maass Spezialschar in $\SO(8)$ in subsequent sections.

Let $\mathcal{M}_k^2(\Sp(4,\Z))$ denote the space of Siegel modular forms of even weight $k$ and level one.
Recall that $F \in \mathcal{M}_k^2(\Sp(4,\Z))$ has a Fourier expansion of the form
	
	\[
		F(Z)=\sum_{T\in S_2(\Z)^\vee}a_F(T)e^{2\pi i \operatorname{Tr}(ZT)},
	\]
	where
	\begin{equation*}
	\begin{aligned}
		S_2(\Z)^{\vee}&=\left\{T\in S_2(\R):\operatorname{Tr}(XT)\in\Z \text{ for all }X\in S_2(\Z)\right\}
		\end{aligned}
			\end{equation*}
 and $Z\in \mathcal{H}_2$, the Siegel upper half space of degree 2. Here $S_2(A)$ denotes symmetric $2\times 2$ matrices with coefficients in a ring $A$.  The Fourier coefficients $a_F(T)$ are $0$ if $T$ is not positive semi-definite. If $T \in S_2(\Z)^\vee$ is positive semi-definite, we can write 
$$
T=\begin{pmatrix}
    a & b/2\\ b/2 & c
\end{pmatrix}, \:\:a,b,c\in\Z, \:\:\:a,c\geq 0,\:\: b^2\leq 4ac.
$$
 The \emph{Maass Spezialschar} is the subspace $\mathrm{MS}_k(\Sp(4,\Z))$ of $S_k^2(\Sp(4,\Z))$ given by all cusp forms $F$ such that 
    $$
a_F\begin{pmatrix} a &b/2 \\ b/2 &c \end{pmatrix}=\sum_{d|\gcd(a,b,c)}d^{k-1}a_F\begin{pmatrix} ac/d^2 &b/(2d) \\ b/(2d) &1\end{pmatrix}.
    $$
This space was first studied by Maass \cite{Maass1, Maass2} building on experimental evidence found by Resnikoff and Salda\~na \cite{RS}.

The Maass Spezialschar is isomorphic to the \emph{Kohnen plus space}  $S_{k-1/2}^+$, which for $k$ even is the space of 
 cusp forms of weight $k-\frac{1}{2}$ on $\Gamma_0(4)$ having a Fourier development of the form 
 $$  
 g(z) = \sum_{n\geq 1} c(n) q^n, \mbox{ with } 
 \qquad c(n)=0 \mbox{ unless } n \equiv 0 \mbox{ or } 3 
 \pmod{4}.
 $$ 

 \begin{proposition}[Eichler, Zagier]
 \label{EichZagProp}
     There is a Hecke-equivariant linear isomorphism from $S_{k-1/2}^+$ to $\mathrm{MS}_k(\Sp(4,\Z))$ sending
     $$
     g(z) = \sum_{n\geq 1} c(n) q^n \in S_{k-1/2}^+
     $$   
    to
    $$
    \sum_{T>0}a_F(T)e^{2\pi i \operatorname{Tr}(ZT)},
    $$
    where
    $$
    a_F\begin{pmatrix} a &b/2 \\ b/2 &c \end{pmatrix}=\sum_{d|\gcd(a,b,c)}d^{k-1}c\Big(\frac{4ac-b^2}{d^2}   \Big).
    $$
 \end{proposition}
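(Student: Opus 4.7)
The plan is to factor the isomorphism through the space $J_{k,1}^{\mathrm{cusp}}$ of Jacobi cusp forms of weight $k$ and index $1$:
$$S_{k-1/2}^{+} \;\xrightarrow{\sim}\; J_{k,1}^{\mathrm{cusp}} \;\xrightarrow{\sim}\; \mathrm{MS}_{k}(\Sp_{4}(\Z)).$$
The first arrow is the theta decomposition of index one Jacobi forms; the second arrow is the Maass lift, whose composition with the first realizes exactly the formula in the proposition.

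First I would construct the intermediate Jacobi form. Given $g(\tau)=\sum_{n\ge 1}c(n)q^{n}\in S_{k-1/2}^{+}$, set
$$\phi(\tau,z)\;=\;\sum_{n,r\colon 4n-r^{2}>0} c(4n-r^{2})\,q^{n}\zeta^{r},\qquad \zeta=e^{2\pi i z}.$$
Because $c(N)=0$ unless $N\equiv 0,3\pmod 4$ and $4n-r^{2}\equiv -r^{2}\pmod 4$, the coefficient $c(4n-r^{2})$ depends only on $r\bmod 2$, which is exactly the shape produced by writing $\phi=h_{0}\theta_{0}+h_{1}\theta_{1}$ for the two standard Jacobi theta series of index $1$. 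A direct computation (the Eichler--Zagier theta decomposition) then shows $\phi\in J_{k,1}^{\mathrm{cusp}}$ and that this assignment is a Hecke-equivariant linear bijection $S_{k-1/2}^{+}\simeq J_{k,1}^{\mathrm{cusp}}$, where the Hecke action on the right is through Eichler--Zagier's operators $T_{J}(p)$.

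Next I would produce $F$ from $\phi$. Writing $\phi(\tau,z)=\sum_{n,r}c_{\phi}(n,r)q^{n}\zeta^{r}$ (so that $c_{\phi}(n,r)=c(4n-r^{2})$), I would define $F$ by declaring its Fourier coefficients to be
$$A_{F}(a,b,c)\;=\;\sum_{d\mid\gcd(a,b,c)} d^{\,k-1}\,c_{\phi}\!\left(\tfrac{ac}{d^{2}},\tfrac{b}{d}\right),$$
which specializes to the formula in the proposition. The key point is to verify that $F$ so defined is an element of $\mathcal{M}_{k}^{2}(\Sp_{4}(\Z))$. Invariance under the Siegel parabolic is automatic from the shape of the Fourier expansion; the genuine content is invariance under the ``swap'' involution exchanging the two diagonal entries of the period matrix. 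The classical way to do this, following Maass, is to exhibit $F$ as a sum of Hecke-type translates $\sum_{m\ge 0}(\phi|V_{m})(Z)$ of the Jacobi form $\phi$ and check termwise that the combined series is $\Sp_{4}(\Z)$-invariant by matching the Fourier--Jacobi expansions across the two cusps. Conversely, starting from $F\in\mathrm{MS}_{k}$, reading off the first Fourier--Jacobi coefficient yields a Jacobi cusp form whose associated $g$ lies in $S_{k-1/2}^{+}$, so the two arrows are inverse to one another.

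The main obstacle is the modularity verification in Step 2: the Maass relation is precisely the compatibility condition needed for the Fourier--Jacobi coefficients $\phi_{m}=\phi\,|\,V_{m}$ to patch together into a single $\Sp_{4}(\Z)$-invariant function, and checking this by hand amounts to the original computation of Maass. Hecke equivariance is then comparatively formal: on the $S_{k-1/2}^{+}$ side one uses Shimura's $T(p^{2})$, on the $J_{k,1}^{\mathrm{cusp}}$ side Eichler--Zagier's $T_{J}(p)$, and on the $\mathrm{MS}_{k}$ side the Siegel Hecke operator $T(p)$; a direct comparison of eigenvalues on a simultaneous eigenform yields the relation $\lambda_{p}(F)=\lambda_{p^{2}}(g)+p^{k-1}+p^{k-2}$, from which Hecke equivariance of the composite isomorphism follows.
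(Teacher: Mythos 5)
Your proposal is correct and follows essentially the same route the paper itself indicates: the paper states this proposition without proof, citing Eichler--Zagier, and its subsequent discussion (the isomorphism $S_{k-1/2}^{+}\simeq J_{k,1}$ of its Theorem~\ref{Jacobi} together with the remark that the first Fourier--Jacobi coefficient inverts the lift) is exactly the factorization $S_{k-1/2}^{+}\to J_{k,1}\to \mathrm{MS}_k(\Sp_4(\Z))$ you use. Your identification of the real content (the Maass/Eichler--Zagier verification that $\sum_m \phi|V_m$ is invariant under the swap of the diagonal period-matrix entries, i.e.\ the symmetry $A_F(n,r,m)=A_F(m,r,n)$ forced by the Maass relation) and the eigenvalue relation $\lambda_p(F)=\lambda_{p^2}(g)+p^{k-1}+p^{k-2}$ are both accurate.
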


The map from Proposition~\ref{EichZagProp} is called the \emph{Saito-Kurokawa lift}. It has the kernel function 
$$
\Omega_k(Z,\tau)=\sum_{N>0}N^{3/2-k}w_{N,k}(Z)e^{2\pi iN\tau}, \:\:\:\:\text{with }Z\in \mathcal{H}_2 ,\:\tau \in \mathcal{H} \text{ (the upper half-plane)},
$$
where 
$$
w_{N,k}(Z)=\sum_{\substack{(a,b,c,d,e)\in\Z^5\\4bd-c^2-4ae=N}}\frac{1}{(a(\tau \tau^{'}-z^2)+b\tau+cz+d\tau^{'}+e)^k},\:\:\:\:\:\:\:Z=\begin{pmatrix} \tau & z\\ z &\tau^{'}\end{pmatrix}.
$$
More precisely, for $k\geq 8$ even, $\Omega_{k}(Z,\tau)$ belongs to $\mathcal{M}_k^2(\Sp(4,\Z))$ as a function of $Z$, and to $ S_{k-1/2}(\Gamma_0(4))$ as a function of $\tau$.  As explained in Section 3 of \cite{ZaFrench}, the Saito-Kurokawa lift $\mathcal{SK}:S_{k-1/2}(\Gamma_0(4)) \rightarrow \mathcal{M}_k^2(\Sp(4,\Z))$ is given via the Petersson inner product by
$$
\mathcal{SK}(h)(Z)=\int_{\Gamma_0(4)\backslash\mathcal{H}} h(u+iv){\Omega_k(Z,u-iv)}v^{k-1/2}\frac{dudv}{v^2}. 
$$
The function $\Omega_k$ is the holomorphic projection of a theta series attached to $\SO(2,3)$ \cite{ZaFrench}, and the Saito-Kurokawa lift is the theta lift from $\mathrm{Mp}(2)$ to $\SO(2,3) \simeq \PGSp(4)$.

Let $F\in S_k^2(\Sp(4,\Z))$. From the point of view of general theta correspondences, the fact that $F=\mathcal{SK}(h)$ for some $h\in S_{k-1/2}(\Gamma_0(4))$ is equivalent to the fact that the theta lift of $F$ to $\mathrm{Mp}(2)$ is nonzero (see \cite[Theorem 2.2]{PS}).
 The space $S_{k-1/2}^+$ is related to the space $S_{2k-2}(\SL(2,\Z))$ of weight $2k-2$ cusp forms for $\SL(2,\Z)$ by the Shimura lifting $\mathcal{S}:S_{k-1/2}^+\rightarrow S_{2k-2}(\SL(2,\Z))$  (see \cite{Shimura1973HalfIntegral}), which can be given as
 $$
 \mathcal{S}:\sum_{n\geq 1}c(n)q^n \mapsto \sum_{n\geq 1}(\sum_{d|n} d^{k-1}c(n^2/d^2))q^n.
 $$
Moreover, the isomorphism between $S_{2k-2}(\SL(2,\Z))$ and $S_{k-1/2}^+$ is Hecke equivariant \cite{Kohnen}. \\
 \indent 
For $k$ even the Maass Spezialschar $\mathrm{MS}_k(\Sp(4,\Z))$ is also isomorphic to the space $J_{k,1}$ of Jacobi forms of weight $k$ and index $1$. We can see this in two ways. The first way follows from the theorem below \cite[Theorem 5.4]{EichZag}, which states that $J_{k,1}$ is isomorphic to the Kohnen plus space. But by Proposition \ref{EichZagProp} this space is isomorphic to the Maass Spezialschar, hence the Maass Spezialschar is isomorphic to $J_{k,1}$. 

\begin{theorem}
\label{Jacobi}
    For $k$ even, the assignment sending
    $$
    g(z) = \sum_{n\geq 1} c(n) q^n \in S_{k-1/2}^+
    $$
    to
    $$
    \sum_{\substack{n,r\in\Z\\ 4n \geq r^2}}c(4n-r^2)q^n\zeta^r \in J_{k,1}
    $$
    gives a Hecke-equivariant isomorphism between $S_{k-1/2}^+$ and $J_{k,1}$.
\end{theorem}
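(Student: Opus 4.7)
The plan is to reduce Theorem \ref{Jacobi} to the theta decomposition of Jacobi forms of index one, turning it into a comparison of scalar- and vector-valued modular forms of half-integral weight.

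First I would invoke the theta decomposition. Every $\phi(\tau,z) \in J_{k,1}$ has a unique expansion
$$\phi(\tau,z) = h_0(\tau)\,\theta_{1,0}(\tau,z) + h_1(\tau)\,\theta_{1,1}(\tau,z),$$
with $\theta_{1,\mu}(\tau,z) = \sum_{r \in \Z,\, r \equiv \mu\,(2)} q^{r^2/4}\zeta^r$ and $h_0, h_1$ holomorphic on the upper half-plane. The modular and elliptic transformation laws that define $J_{k,1}$ force the column $(h_0, h_1)^T$ to be a vector-valued modular form of weight $k-\tfrac{1}{2}$ for the Weil representation of $\SL_2(\Z)$ on $\C^2$ attached to the discriminant form $\Z/2\Z$ with quadratic form $x \mapsto x^2/4 \bmod \Z$.

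Next I would identify this vector-valued space with the Kohnen plus space. The scalar combination $h(\tau) := h_0(4\tau) + h_1(4\tau)$ is a modular form of weight $k-\tfrac{1}{2}$ on $\Gamma_0(4)$, and the relations between $h_0$ and $h_1$ coming from the Weil representation force its Fourier expansion to be supported on exponents $\equiv 0$ or $3 \pmod 4$; conversely any $h \in S_{k-1/2}^+$ recovers a unique pair $(h_0, h_1)$. Expanding the products $h_\mu\theta_{1,\mu}$ and matching coefficients, if $h(\tau) = \sum_{N \geq 1} c(N) q^N$ then
$$\phi(\tau,z) = \sum_{\substack{n,r\in\Z\\ 4n\geq r^2}} c(4n-r^2)\, q^n \zeta^r,$$
which is exactly the map in the theorem. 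This shows that the assignment is a well-defined linear isomorphism $S_{k-1/2}^+ \xrightarrow{\sim} J_{k,1}$.

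Finally, I would establish Hecke equivariance. On $J_{k,1}$ the relevant operators $T_J(p)$ come from Jacobi-group double cosets through $\diag(1,p,p^2,p)$, while on $S_{k-1/2}^+$ one uses the Kohnen-normalized half-integral Hecke operators $T^+(p^2)$. Passing each operator through the theta decomposition and through the correspondence $h \leftrightarrow (h_0,h_1)$ gives matching explicit actions on the pair, and a direct Fourier-coefficient comparison shows that both prescriptions yield the same transform of the $c(N)$. The main obstacle is this last Hecke comparison: it is conceptually clear but requires careful bookkeeping over double cosets and half-integral Fourier expansions. This is exactly the content of \cite[Thm.~4.5, Thm.~5.4]{EichZag}, whose computation we would follow.
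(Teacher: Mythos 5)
Your proposal is correct and matches the paper's treatment: the paper gives no independent proof of this statement, quoting it directly as \cite[Theorem 5.4]{EichZag}, and your theta-decomposition argument (index-one theta expansion, identification of the vector-valued form with an element of $S_{k-1/2}^+$ via $h_0(4\tau)+h_1(4\tau)$, and the Hecke comparison deferred to \cite{EichZag}) is exactly the Eichler--Zagier proof being cited. No discrepancy to report.
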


Alternatively, one can associate to a Siegel modular form in the Maass space its first Fourier-Jacobi coefficient, and this gives an isomorphism between $\mathrm{MS}_k(\Sp(4,\Z))$ and $J_{k,1}$. The inverse of the Saito-Kurokawa lift is given by composing this assignment with the one of Theorem \ref{Jacobi} (see \cite{EichZag} for more details).

\section{Preliminaries}\label{sec:notation}
In this section, we set up explicit notation for working with quaternionic modular forms on special orthogonal groups.

\subsection{The underlying quadratic space}
\label{The underlying quadratic space}
Let $n \in \Z_{\geq 2}$ and let $(V,q)$ denote a non-degenerate quadratic space over $\Q$ of signature $(4,n+2)$. Write $(x,y) = q(x+y)-q(x)-q(y)$ for the bilinear form associated to $q$ and assume that $V$ contains an isotropic subspace of dimension $4$. We fix a two-dimensional isotropic subspace $U\subseteq V$. Since $(V,q)$ is non-degenerate we may fix a second isotropic $2$-plane $U^{\vee}$ such that $(\cdot, \cdot)$ defines a perfect pairing between $U$ and $U^{\vee}$. We write $V_{2,n}$ for the orthogonal complement of $U+ U^{\vee}$ in $V$ so that $(V_{2,n},q)$ is a quadratic space of signature $(2,n)$ and 
\begin{equation}
\label{decomposition of V stabilized by MP}
V=U\oplus V_{2,n}\oplus U^{\vee}.
\end{equation} 

Let $\{b_1,b_2\}$ denote a fixed basis of $U$ and write $\{b_{-1},b_{-2}\}$ for the corresponding dual basis of $U^{\vee}$ so that $(b_i,b_{-j}) = \delta_{i,j}$ for $i,j=1,2$. Then $V$ admits an orthogonal decomposition 
\begin{equation}
\label{decomposition of V stabilized by MQ}
V=\Q b_1\oplus V_{3,n+1}\oplus \Q b_{-1}
\end{equation} 
where $V_{3,n+1}=(\Q b_1+\Q b_{-1})^{\perp}$ is a quadratic space of signature $(3,n+1)$. 
Our assumption that $V$ contains an isotropic $4$-plane implies that there exist vectors $b_3,b_4,b_{-3},b_{-4}\in V_{2,n}$ such that $\{b_1,b_2,b_3,b_4\}$ and $\{b_{-1},b_{-2},b_{-3},b_{-4}\}$ give dual bases for a pair of isotropic $4$-planes which are in perfect duality under $(\cdot, \cdot)$.

Let $y_0=b_3+b_{-3}$ and $y_1=b_4+b_{-4}$ and write $V_2^+$ for the $\Q$-rational subspace of $V_{2,n}$ spanned by $y_0$ and $y_1$. Let $V_n^-(\R)$ denote the orthogonal complement of $V_2^+(\R):=V_2^+\otimes_{\Q}\R$ in $V_{2,n}(\R):=V_{2,n}\otimes_{\Q} \R$ so that 
\begin{equation}
\label{orthogonalization V2,n}
V_{2,n}(\R)=V_2^+(\R)\oplus V_n^-(\R).
\end{equation} 

For $j=1,2$ we set $u_j = (b_j + b_{-j})/\sqrt{2} \in V(\R)$ and $u_{-j} =  (b_j - b_{-j})/\sqrt{2}\in V(\R)$.  Thus $\{u_1,u_2, u_{-1},u_{-2}\}$ is an orthonormal basis of $V_{2,n}(\R)^{\perp}$. Similarly we set $v_1 = (b_3+b_{-3})/\sqrt{2}$, $v_2 = (b_4+b_{-4})/\sqrt{2}$, $v_{-1} = (b_3-b_{-3})/\sqrt{2}$ and $v_{-2} = (b_4-b_{-4})/\sqrt{2}$.

For much of the paper, we will be working in the case that $V$ is split over $\Q$ of dimension~$8$.  In this case, we assume that $V$ is endowed with an integral structure that is unimodular for the pairing $(\cdot ,\cdot)$.  More specifically, we endow $V_{2,2}$ with the integral structure $V_{2,2}(\Z)=\Z\linspan\{b_3,b_4,b_{-4},b_{-3}\}$, and set
\begin{equation}
\label{eqn-defn-V(Z)}
V(\Z) = \Z b_1 \oplus \Z b_2 \oplus V_{2,2}(\Z) \oplus \Z b_{-1} \oplus \Z b_{-2}.
\end{equation}

\subsection{Octonions}\label{subsec:octonions}
 When $\dim(V) = 8$, we will identify $V$ with the Zorn model of the split octonion algebra over $\Q$ (see \cite[\S 1.8]{springerVeldkamp}) 
\[
\O:=\Big\{\begin{pmatrix} a & v \\\phi & d\end{pmatrix} : \: a,d\in \Q; \: v\in V_3, \: \phi \in    V_3^\vee     \Big\}.
\]
Here $V_3$ is the $3$ dimensional representation of $\SL(3)$ with basis $\{e_1,e_2,e_3\}$, and $V_3^{\vee}$ is the dual of $V_3$. We write $\{e_1^{\ast},e_2^{\ast},e_3^{\ast}\}$ for the basis of $V_3^{\vee}$ dual to $\{e_1,e_2,e_3\}$. The quadratic norm on $\O$ is denoted $n_{\O}$, and we write $c$ for the standard involution on $\O$. We have the \emph{trace map} $
tr_{\O}:\O \rightarrow \Q, \quad tr_{\O}(x)\cdot 1=x+c(x)$, as well as the trilinear form
\[
( \cdot,\cdot ,\cdot ):\O\times\O\times\O \rightarrow \Q, \quad (x_1,x_2 ,x_3 ):=tr_{\O}(x_1\cdot(x_2\cdot x_3)).
\]
In a slight abuse of notation, for $j=1,2,3$, we set $e_j:=\left(\begin{smallmatrix} 0 & e_j \\0 & 0\end{smallmatrix}\right)$, $e_j^{\ast}:=\left(\begin{smallmatrix} 0 & 0 \\e_j^{\ast} & 0\end{smallmatrix}\right)$,
$\epsilon_1:=\left(\begin{smallmatrix} 1 & 0 \\0 & 0\end{smallmatrix}\right)$, and $\epsilon_2:=\left(\begin{smallmatrix} 0 & 0 \\0 & 1\end{smallmatrix}\right)$. Then $\{e_j, e_j^{\ast}, \epsilon_1, \epsilon_2\colon j=1,2,3\}$ gives a basis for $\O$. We view $\O$ as a quadratic space with respect to the quadratic form $q(x) = - n_{\O}(x)$. Then, in the case of $n=2$, $V$ and $\O$ are isomorphic as quadratic spaces via the identification of lists
\begin{equation}
\label{idenitification-of-lists}
(b_1,b_2,b_3,b_4,b_{-4},b_{-3},b_{-2},b_{-1}) = (e_1,e_3^*,\epsilon_2,e_2^*,e_2,-\epsilon_1,e_3,e_1^*).
\end{equation}

\subsection{Algebraic groups, their Lie algebras, and parabolic subgroups}\label{subsec:groupnotation}

Now let $n$ be an arbitrary positive integer and let $G$ denote the special orthogonal group of $(V,q)$, which we assume acts on the left of $V$.  Let $Q$ denote the parabolic subgroup of $G$ defined as the stabilizer in $G$ of the line $\Q b_{1}$. The \textit{Heisenberg parabolic subgroup} $P$ is defined as the subgroup of $G$ stabilizing the space $U= \Span\{b_1,b_2\}$.  Define $M_Q$ to be the Levi subgroup of $Q$ that stabilizes $\Q b_{-1}$ and define the Levi factor $M_P\leq P$ as the subgroup stabilizing the
subspace $U^\vee = \Span\{b_{-1},b_{-2}\}$. Through its action on the decomposition (\ref{decomposition of V stabilized by MP}) the subgroup $M_P$ is identified with the product $\GL(U)\times \SO(V_{2,n})$. Similarly the action of $M_Q$ on the decomposition (\ref{decomposition of V stabilized by MQ}) yields an identification $M_Q\simeq \GL(1)\times \SO(V_{3,n+1})$. Let $N_P$ and $N_Q$ denote the unipotent radicals of $P$ and $Q$, respectively. We set $Z = [N_P,N_P]$, which is one-dimensional and the center of $N_P$.

The Lie algebra of $G$ can be identified with $\wedge^2 V$, where $\wedge^2V$ acts on $V$ via
$$
(v \wedge w) \cdot x = (w,x) v- (v,x) w. \qquad (v,w,x\in V)
$$
We note that $\mathrm{Lie}(Z)=\Q\linspan\{b_1\wedge b_2\}$. As an $M_P$ representation, the Lie algebra of the quotient $N_P^{\mathrm{ab}}=N_P/Z$ is identified as
\begin{equation}
\label{map-to-NPab}
U\otimes_{\Q} V_{2,n}\xrightarrow{\sim} \mathrm{Lie}(N_P^{\mathrm{ab}}), \quad b_1\otimes w_1+b_2\otimes w_2\mapsto b_1\wedge w_1+ b_2\wedge w_2+\mathrm{Lie}(Z). 
\end{equation}

Let $\A$ denote the adele ring of $\Q$ and write $\psi: \Q\backslash \A \rightarrow \C^\times$ for the standard additive character of $\Q\backslash\A$ with $\psi_{\infty}(x)=e^{2\pi i x}$.  For $y,y' \in V_{2,n}$, let $\varepsilon_{[y,y']}$ denote the unique automorphic character of $N_P(\A)$ satisfying $\varepsilon_{[y,y']}(\exp(b_1\wedge w+b_2\wedge w'))=\psi((y,w)+(y',w'))$ for all $w,w'\in V_{2,n}(\A)$. We have an $M_P$-equivariant identification \begin{equation}
\label{characters-of-NP}
U^{\vee}\otimes_{\Q} V_{2,n}\xrightarrow{\sim} \mathrm{Hom}(N_P(\Q)\backslash N_P(\A), \C^1), \qquad b_{-1}\otimes y+b_{-2}\otimes y'\mapsto \varepsilon_{[y,y']}
\end{equation}
Similarly, the abelian Lie algebra $\Lie(N_Q)$ is identified as an $M_Q$-module via the map 
$$
\Q b_1\otimes_{\Q}V_{3,n+1}\xrightarrow{\sim} \Lie(N_Q), \quad b_1\otimes w\mapsto b_1\wedge w.
$$
Let $\chi_y\colon N_Q(\A)\to \C^{\times}$ denote the automorphic character satisfying $\chi_y(\exp(b_1\wedge v))=\psi((y,v))$ for all $v\in V_{3,n+1}(\A)$. We have an $M_Q$-equivariant identification
\begin{equation}
\label{characters-of-NQ}
\Q b_{-1}\otimes_{\Q}V_{3,n+1}\xrightarrow{\sim}\mathrm{Hom}(N_Q(\Q)\backslash N_Q(\A), \C^1), \qquad b_{-1}\otimes y\mapsto \chi_y.
\end{equation}

When $\dim V=8$, we let $G'$ denote the spin group of $V$.  If $R$ is a $\Q$-algebra, one can extend the trilinear pairing $(\,,\,,\,)$ to $\O \otimes R$ by linearity and then
\[G'(R) = \{g=(g_1,g_2,g_3) \in \SO(\O,n_\O)(R)^3: (g_1 x_1, g_2 x_2, g_3 x_3) = (x_1, x_2,x_3) \forall x_j \in \O \otimes R\}.\]

The association $g \mapsto g_1$ defines a map $G' \rightarrow G$ when $\dim(V) = 8$.  We define $P' = M_{P'}N_{P'}$ to be the Heisenberg parabolic subgroup of $G'$ given by the inverse image of $P$ under the map $G' \rightarrow G$.  See Appendix~\ref{sec:triality} for more on $M_{P'}$ and $N_{P'}$.

\subsection{Compact subgroups}
\label{subsection-compact-subgroups}
We now define Cartan involutions and maximal compact subgroups of the groups with which we work.

In the case of $G = \SO(V)$, we proceed as follows.  Let $\iota_{2,n}: V_{2,n}(\R) \rightarrow V_{2,n}(\R)$ be the involution that is $+1$ on $V_2^{+}(\R)$ and $-1$ on $V_{n}^{-}(\R)$.  Let $\iota: V(\R) \rightarrow V(\R)$ be the involution with $\iota(b_j) = b_{-j}$ for $j \in \{1,2,-1,-2\}$ and $\iota$ restricted to $V_{2,n}(\R)$ is $\iota_{2,n}$.  Note that $\iota \in O(V)(\R)$.  If $V^+$ is the $\iota =1$ subspace, and $V^{-}$ is the $\iota = -1$ subspace, then $V(\R) = V^+ \oplus V^{-}$, and we define $K \subseteq G(\R)$ as the subgroup that preserves the decomposition $V( \R) = V^{+} \oplus V^{-}$.  Note that $K = S(O(V^{+}) \times O(V^{-}))$ and has identity component $K^0 = \SO(V^{+}) \times \SO(V^{-})$.  The corresponding Cartan involution $\Theta_\iota$ on $\mathrm{Lie}(G(\R)) \simeq \wedge^2 V(\R)$ is defined as $\Theta_\iota(u \wedge v) = \iota(u) \wedge \iota(v)$. 

To define quaternionic modular forms on $G$, we need a distinguished map 
\begin{equation}
\label{map-to-distinguished-su2}
K^0 \rightarrow \SU(2)/\mu_2.
\end{equation} To see that we have such a map, we make a Lie algebra argument.  Namely, recall that $V^+ = \Span\{u_1,u_2, v_1,v_2\}$, and following \cite[Ch. 8]{pollackAWSNotes}, set
\begin{itemize}
	\item $e^+ = \frac{1}{2}(u_1-iu_2) \wedge (v_1 - iv_2)$
	\item $h^+ = i (u_1 \wedge u_2 + v_1 \wedge v_2) = \frac{1}{2} (u_1-iu_2) \wedge (u_1+iu_2) + \frac{1}{2}(v_1-iv_2) \wedge (v_1+iv_2)$
	\item $f^+ = -\frac{1}{2} (u_1+iu_2) \wedge (v_1 + iv_2)$.
\end{itemize}
We define a distinguished subalgebra of $\g$ by $\sl_2^{\mathrm{dist}}=\C\linspan\{e^+,f^+,h^+\}$.  One obtains another $\SL(2)$ triple in $\g$ by replacing $v_2$ with $-v_2$ in the above formulas: that is, we let $\sl_2'$ be the subalgebra of $\g$ with basis
\begin{itemize}
	\item $e'^+ = \frac{1}{2}(u_1-iu_2) \wedge (v_1 + iv_2)$
	\item $h'^+ = i (u_1 \wedge u_2 - v_1 \wedge v_2)$
	\item $f'^+ = -\frac{1}{2} (u_1+iu_2) \wedge (v_1 - iv_2)$.
\end{itemize}
Then $\so(V^+)=\sl_2^{\mathrm{dist}}\oplus \sl_2'$.

The adjoint action of $K^0$ on the $\sl_2^{\mathrm{dist}}$ defines a three-dimensional representation $\mathbf{V}$ of $K^0$.  We choose a basis $x,y$ of $\C^2 = V_2$ so that $\mathrm{Sym}^2(V_2)$ is identified with $\sl_2^{\mathrm{dist}}$ via $e^+ = -x^2$, $h^+ = 2xy$, $f^+ = y^2$.  This gives the map $K^0 \rightarrow \SU(2)/\mu_2$ of \eqref{map-to-distinguished-su2}.  For an integer $\ell\geq 1$, write $\mathbf{V}_{\ell}$ for the highest weight quotient of the $\ell^{th}$ symmetric power of $\mathbf{V}$. So, $\mathbf{V}_{\ell}$ is the pullback of $\mathrm{Sym}^{2\ell}(V_2)$ along \eqref{map-to-distinguished-su2}, and $\mathbf{V}_{\ell}$ has a basis $x^{2\ell}, x^{2\ell-1}y, \ldots, xy^{2\ell-1}, y^{2\ell}$.

Let $\{\cdot , \cdot \}_{K^0}\colon \mathbf{V}_{\ell}\times \mathbf{V}_{\ell}\to \C$ denote the unique $K^0$-invariant bilinear form on $\mathbf{V}_{\ell}$ normalized so that 
\[
\{x^{\ell+v}y^{\ell-v}, x^{\ell-w}y^{\ell+w}\}_{K^0}=(-1)^{\ell+v}\delta_{v,w}(\ell+v)!(\ell-v)!
\]
where $v,w\in\{-\ell, \cdots, \ell\}$ and $\delta_{v,w}$ is the Kronecker delta symbol.

In the case when $n=2$, recall that $(g_1,g_2,g_3)\mapsto g_1$ gives a map $G'\to G$. We define a maximal compact subgroup  $K'$ of $G'(\R)$ as the preimage of $K^0$ in $G'(\R)$ under this map.  In this case, the Cartan involution on $\wedge^2 \mathbb{O}$ is given by $\Theta_\iota$, where $\iota(b_j) = b_{-j}$ and $\Theta_\iota(u\wedge v ) = \iota(u) \wedge \iota(v)$.

We have an action of $S_3$ on $G'$ by triality operators, which is transparent in the coordinates $g = (g_1, g_2, g_3)$ on $G'$.  For the convenience of the reader, in Appendix~\ref{sec:triality} we make explicit this action on $\wedge^2 \mathbb{O}$ via the isomorphisms
\[
\mathrm{Lie}(G') \simeq \mathrm{Lie}(G) \simeq \wedge^2 \mathbb{O},
\] 
the first map being induced via $(g_1, g_2, g_3) \mapsto g_1$.  In particular, in Theorem~\ref{thm:trialityQMF} we show that $K'$ is stable under the action of $S_3$.  By combining Theorem~\ref{thm:spin8O} with a statement in the last paragraph of \cite[Section 2.3]{6authorG2paper}, one sees that the $S_3$-action is trivial on $\sl_2^{\mathrm{dist}}$.

\section{Holomorphic and Quaternionic Modular Forms}\label{sec:QMFs}
In this section, we review facts about holomorphic modular forms on groups of type $\SO(2,n+1)$ and quaternionic modular forms on groups of type $\SO(4,n+2)$.  We also prove a few facts about the exact functions that appear in the Fourier expansion of quaternionic modular forms on special orthogonal groups. On account of certain complications resulting from the disconnectedness of the real group $\SO(4,n+2)$, these facts were not worked out in \cite{pollackQDS}.
\subsection{Holomorphic modular forms on $\mathrm{SO}(2,n+1)$}
\label{subsec:HMFS}
Recall that $n\geq 2$. Fix $V_{2,n+1}$ to be a $\Q$-vector space equipped with a non-degenerate quadratic form $q_{2,n+1}$ of signature $(2,n+1)$. Write $(x,y)_{2,n+1}=q_{2,n+1}(x+y)-q_{2,n+1}(x)-q_{2,n+1}(y)$ for the symmetric bilinear pairing on $V_{2,n+1}$ associated to $q_{2,n+1}$. In this subsection, we discuss holomorphic modular forms on $H := \SO(V_{2,n+1})$. We will later consider $V_{2,n+1}$ as a subspace of $V_{4,n+2}$, and our notation is chosen to be compatible with this embedding.

 According to the Hasse principle, there exists a nonzero isotropic vector $b_2\in V_{2,n+1}$. Since $q_{2,n+1}$ is non-degenerate, we may fix a second isotropic vector $b_{-2}\in V_{2,n+1}$ satisfying $(b_2,b_{-2})_{2,n+1}=1$. \\
\indent Let $V_{1,n}:=(\Q b_2+\Q b_{-2})^{\perp}$ be the orthogonal complement of $\Q b_2+\Q b_{-2}$ in $V_{2,n+1}$. Then
 \begin{equation}\label{orthogonalization-V(1,n)}
 V_{2,n+1}=\Q b_2\oplus V_{1,n}\oplus \Q b_{-2}.
 \end{equation}
 The Siegel parabolic subgroup $R\leq H$ is the subgroup of $H$ defined as the stabilizer of the line $\Q b_2\subseteq V_{2,n+1}$. Let $M_R\leq R$ denote the Levi subgroup of $R$ stabilizing the line spanned by $b_{-2}$. The unipotent radical $N_R\trianglelefteq R$ is identified with $V_{1,n}$ as an $\SO(V_{1,n})$ module via the map 
\begin{equation}
\label{identification-of-N_R}
\kappa\colon V_{1,n}\xrightarrow{\sim} N_R, \qquad v\mapsto \exp(b_2\wedge v).
 \end{equation}
\indent
Assume there exists $y_1\in V_{1,n}(\Q)$ such that $(y_1,y_1)_{2,n+1} = 2$.  The symmetric space for the group $H(\R)$ is
\[\mathfrak{h} = \{Z = X+ iY \in V_{1,n} \otimes \C: (Y,-y_1)_{2,n+1} > 0 \text{ and } (Y,Y)_{2,n+1} > 0\}.\]
We have an identification of $\mathfrak{h}$ with the set of isotropic elements in $V_{2,n+1} \otimes \C$ via the map
\[
\mathfrak{h}\to V_{2,n+1}\otimes_{\R}\C, \qquad Z \mapsto r(Z):= -q_{2,n+1}(Z) b_2 + Z + b_{-2}.\]
This identification yields an action of the identity component $H(\R)^0$ on $\mathfrak{h}$ as follows: If $g \in H(\R)^0$, then there exists a unique nonzero complex number $j_H(g,Z)$ and a unique element $gZ \in \mathfrak{h}$ so that $g r(Z) = j_H(g,Z) r(gZ)$.    Observe that $j_H(g,Z) = (g r(Z), b_2)_{2,n+1}$. 

We can now define classical holomorphic modular forms on $\mathfrak{h}$.
\begin{definition} Suppose $\ell \in \Z$, and $\Gamma \subseteq H(\R)$ is a congruence subgroup, i.e., $\Gamma = H(\Q) \cap U$ for an open compact subgroup $U$ of $H(\A_f)$.  Then $f: \mathfrak{h} \rightarrow \C$ is a \emph{holomorphic modular form} of weight $\ell$ and level $\Gamma$ if
	\begin{enumerate}
		\item $f$ is holomorphic;
		\item $f(\gamma Z) = j_H(\gamma, Z)^{\ell} f(Z)$ for all $Z \in \mathfrak{h}$ and $\gamma \in \Gamma \cap H(\R)^{0}$;
		\item the function $\varphi_f(g):=j_H(g,-iy_1)^{-\ell} f(g \cdot (-iy_1)): H(\R)^0 \rightarrow \C$ is of moderate growth.
	\end{enumerate}
\end{definition}

If $T \in V_{1,n}$, say that $T$ is \textit{positive definite} if $(T,-y_1)_{2,n+1} > 0$ and $(T,T)_{2,n+1} > 0$. We say that $T$ is \textit{positive semi-definite} if $(T,-y_1)_{2,n+1} \geq 0$ and $(T,T)_{2,n+1} \geq 0$.  Write $T > 0$ (resp. $T \geq 0$) if $T$ is positive definite (resp. semi-definite).  

The bilinear pairing $(\, , \,)_{2,n+1}$ restricts to give a bilinear form on $V_{1,n}$, and we use \eqref{identification-of-N_R} to define a lattice $\Lambda\leq V_{1,n}$ as the dual of  $\kappa^{-1}(\Gamma \cap N_R)$ relative to $(\, , \,)_{2,n+1}$. If $f\colon \mathfrak{h}\to \C$ is a holomorphic modular form of level $\Gamma$ then $f(n\cdot Z)=f(Z)$ for $n\in N_R\cap \Gamma$. Therefore $f$ admits a Fourier expansion 
\[
f(Z) = \sum_{T \in \Lambda \colon T\geq 0} a_f(T) e^{2\pi i (T,Z)_{2,n+1}}.
\]
The condition $T\geq 0$ arises as a consequence of the moderate growth assumption. If $f$ is cuspidal, so that $|\varphi_f(g)|$ is bounded, then $a_f(T) \neq 0$ implies that $T>0$.  For more on holomorphic modular forms on $\mathfrak{h}$ one may consult \cite[Chapters 3 and 4]{MR1903920}.

We define $K_{H} = \mathrm{Stab}(\mathrm{Span}\{b_2+b_{-2},y_1\}) \subseteq H(\R)$ and $K_H^0 = K_H \cap H(\R)^0$. Then $K_H$ is a maximal compact subgroup of $H(\R)$, and one can verify that $K_H^0$ is the stabilizer of $-iy_1$ in $H(\R)^0$.  Observe that $j_H(\cdot, -iy_1): K_H^0 \rightarrow \C^\times$ is a character.

Suppose that $\varphi: \Gamma \backslash H(\R)^0 \rightarrow \C$ is a function satisfying $\varphi(hk) = j_H(k,-iy_1)^{\ell}\varphi(h)$ for all $h \in H(\R)^0$ and $k \in K_H^0$.  Then the function $f_\varphi: \mathfrak{h} \rightarrow \C$,
\[f_\varphi(Z) = j_H(g,-iy_1)^{\ell} \varphi(g), \text{ if } g \cdot (-iy_1) = Z\]
is well-defined.  If $f_\varphi(Z)$ is holomorphic, and $\varphi$ is of moderate growth, we say that $\varphi$ is the automorphic function associated to a holomorphic modular form of weight $\ell$.

\subsection{Two Definitions of Quaternionic Modular Forms}
\label{subsec:Definition-QMFS}
We present two definitions of quaternionic modular forms on $G$. The first uses differential operators, while the second uses representation theory. We begin with a general definition of automorphic forms on $G$, using the notation given in Section~\ref{sec:notation}. 
\begin{definition}
\label{defn-automorphic-forms-on-$G$}
We define $\mathcal{A}(G)$, the space of automorphic forms on $G$, to consist of the smooth functions $\varphi$ on $G(\A)$ such that the following conditions are satisfied: 
\begin{compactenum}
    \item $\varphi$ is left $G(\Q)$ invariant; 
    \item $\varphi$ is right invariant under some open compact subgroup $U$ of $G(\A_f)$; 
    \item $\varphi$ is annihilated by an ideal of finite codimension in $Z(\g)$, the center of the universal enveloping algebra of $\g$;
    \item for every $g_f \in G(\A_f)$, the function $g \mapsto \varphi(g_f g)$ is of moderate growth on $G(\R)$;
    \item $\varphi$ is $K^0$-finite.
\end{compactenum}
We write $\mathcal{A}_0(G)$ to denote the subspace of $\mathcal{A}(G)$ consisting of cusp forms. Then both $\mathcal{A}(G)$ and $\mathcal{A}_0(G)$ are naturally $(\g,K^0)\times G(\A_f)$-modules (see for example \cite[Definition 6.5]{MR4738301}). 
\end{definition}

To define quaternionic modular forms, we recall (see, for example, \cite[Introduction]{pollackQDS}) a certain differential operator on $\mathbf{V}_{\ell}$-valued functions on the disconnected Lie group $G(\R)$.

\begin{definition}\label{defn:DiffOpDl} Let $G' = G$ or $G' = G_1$.  Suppose $\ell \geq 1$ is an integer and $\phi: G'(\R) \rightarrow \mathbf{V}_{\ell}$ is a smooth function on $G'(\R)$, satisfying $\phi(gk) = k^{-1} \cdot \phi(g)$ for all $g \in G'(\R)$ and $k \in K^0$.  Let $\g = \k \oplus \p$ be the corresponding Cartan decomposition on the Lie algebra of $G'(\R)$, so that $\p$ is the orthogonal complement to $\k$ via the Killing form.  The $K^0$-representation $\mathbf{V}_{\ell} \otimes \p^\vee$ decomposes into two irreducible pieces, 
\[
\mathbf{V}_{\ell} \otimes \p^\vee = V'_{-} \oplus V'_{+},\]
with $\dim(V'_{-}) = \ell \cdot \dim(\p)$ and $\dim(V'_{+}) = (\ell+1) \cdot \dim(\p)$.  Let $\mathrm{pr}'_{-}: \mathbf{V}_{\ell} \otimes \p^\vee \rightarrow V'_{-}$ denote the $K^0$-equivariant projection onto the smaller piece.  One sets
\[
D_{\ell}\phi = \sum_{\alpha} \mathrm{pr}'_{-}(X_\alpha \phi \otimes X_{\alpha}^\vee).
\]
Here $\{X_\alpha\}_\alpha$ is a basis of $\p$, $\{X_\alpha^\vee\}_\alpha$ denotes the dual basis of $\p^\vee$, and $X_\alpha \phi$ denotes the differential of the right-regular representation.  The function $D_\ell \phi$ is independent of the choice of basis $X_\alpha$.   One says that $\phi$ is \textbf{quaternionic} if $D_\ell \phi$ is identically $0$.
\end{definition}

Note that in Definition \ref{defn:DiffOpDl}, $g \in G'(\R)$ need not live in the identity component.

\begin{definition}\label{defn:QMFs1} For an integer $\ell \geq 1$, say that $\varphi\in \mathcal{A}(G)\otimes_{\C}\mathbf{V}_{\ell}$ is a quaternionic modular form of weight $\ell$ if the following two properties are satisfied:
\begin{enumerate}
\item $\varphi(gk) = k^{-1} \varphi(g)$ for all $g \in G(\A)$ and $k \in K^0$. Here, since $\varphi(g)$ is a vector in the representation $\mathbf{V}_{\ell}$ of Section~\ref{subsection-compact-subgroups}, we  write $k^{-1}\varphi(g)$ to denote the image of $\varphi(g)$ under the action of the element $k^{-1}$.
\item For every $g_f \in G(\A_f)$, the function $\varphi_{g_f}: G(\R) \rightarrow \mathbf{V}_{\ell}$ defined as $\varphi_{g_f}(g) = \varphi(g_fg)$ is quaternionic in the sense of Definition \ref{defn:DiffOpDl}.
\end{enumerate}
Let $\mathcal{A}(G,\ell)$ denote the space of quaternionic modular forms on $G$ of weight $\ell$, and write $\mathcal{A}_{0}(G,\ell)$ for the subspace of $\mathcal{A}(G,\ell)$ consisting of cusp forms. When $G$ is split $\SO(8)$, we write $\mathcal{A}_{0,\Z}(G,\ell)$ for the space of cuspidal quaternionic modular forms that are right $G(\widehat{\Z})$ invariant. Here $G(\ZZ)$ denotes the stabilizer of $V(\ZZ)$ in $G(\A_f)$ (see \eqref{eqn-defn-V(Z)}).  If $\varphi \in \mathcal{A}_{0,\Z}(G,\ell)$, we say that $\varphi$ is of level one.
\end{definition}

For an integer $\ell \geq 1$, let $\pi_{\ell}^0$ be the irreducible representation of $G(\R)^0$ denoted by the symbol $\pi'_{2\ell+2}$ in \cite[\S 8]{grossWallachII}. Then $\pi_{\ell}^0$ contains the subspace $\mathbf{V}_{\ell}$ as its minimal $K^0$-type with multiplicity one. For concreteness, we remark that in the case when $G$ is split $\SO(8)$, $\pi_{\ell}^0$ is a discrete series representation for $\ell\geq 4$ (see loc. cit.).  Let $\beta$ denote the highest weight of the representation $\wedge^2 V$ of $\g$ and for an integer $r \geq 1$ let $V_{r\beta}$ denote the irreducible representation of $\g$ with highest weight $r \beta$, so that $V_{r\beta}$ is contained in $\mathrm{Sym}^{r}(\wedge^2 V)$ with multiplicity one. We remark that $\pi_{\ell}^0$ has the same infinitesimal character as that of $V_{(\ell-4)\beta}$.

Write $\pi_{\ell,\mathrm{fin}}^0$ for the subspace of $K^0$-finite vectors in $\pi_{\ell}^0$ endowed with its natural structure as a $(\g,K^0)$-module (see for example \cite[Proposition 4.4.3]{MR4738301}). 
\begin{definition}\label{defn:QMF2} 
Define $\mathcal{A}^{rep}(G,\ell)$ as the space of $(\g,K^0)$-module homomorphisms from $\pi_{\ell,\mathrm{fin}}^0$ to $\mathcal{A}(G)$. Similarly, define $\mathcal{A}_{0}^{rep}(G,\ell)$ as the space of $(\g,K^0)$-module homomorphisms from $\pi_{\ell,\mathrm{fin}}^0$ to $\mathcal{A}_0(G)$. In the case when $G$ is split $\SO(8)$, we define 
\[
\mathcal{A}^{rep}_{\Z}(G,\ell)=\mathrm{Hom}_{(\g,K^0)}(\pi_{\ell,\mathrm{fin}}^0, \mathcal{A}(G)^{G(\ZZ)})
\]
where here $\mathcal{A}(G)^{G(\ZZ)}$ denotes the space of automorphic forms on $G$ that are $G(\ZZ)$-invariant.  Similarly we set
\[
\mathcal{A}_{0,\Z}^{rep}(G,\ell)=\mathrm{Hom}_{(\g,K^0)}(\pi_{\ell,\mathrm{fin}}^0, \mathcal{A}_0(G)^{G(\ZZ)}).\]
\end{definition}
\begin{remark}
As far as the authors are aware, a definition of $\mathcal{A}^{\mathrm{rep}}_{\mathbf Z}(G,\ell)$ of the form given in Definition~\ref{defn:QMF2} has not previously appeared in the literature. The primary reason is that $G(\mathbf R)$ is disconnected, whereas many of the foundational references on quaternionic modular forms (for example, \cite{ganGrossSavin, grossWallachII, pollackQDS}) treat groups whose real points are connected. We refer the reader to Definition~\ref{defn:QDSOrthog1} for a more detailed discussion of the issues arising in the disconnected setting.
\end{remark}
There is a canonical injection $\mathcal{A}^{rep}(G,\ell) \rightarrow \mathcal{A}(G,\ell)$ given as follows. Suppose $\alpha: \pi_{\ell}^0 \rightarrow \mathcal{A}(G)$ is in $\mathcal{A}^{rep}(G,\ell)$.  Let $\{v_j\}_j$ be a basis of $\mathbf{V}_{\ell}$ and $\{v_j^\vee\}$ the dual basis of $\mathbf{V}_{\ell}^\vee \simeq \mathbf{V}_{\ell}$.  Then $\sum_{j} \alpha(v_j) \otimes v_j^\vee$ is an element of $\mathcal{A}(G,\ell)$. \\
\indent In the case when $n=2$, recall that $G'$ denotes $\Spin(V)$. In Appendix~\ref{sec:triality}, we consider spaces of quaternionic modular forms on $G'$. We define $\mathcal{A}(G')$ and $\mathcal{A}_0(G')$ exactly as in Definition~\ref{defn-automorphic-forms-on-$G$}, except that $G$ is replaced with $G'$. Likewise the spaces of quaternionic modular forms on $G'$, $\mathcal{A}(G',\ell)$ and $\mathcal{A}_{0}(G',\ell)$, are defined as in Definition~\ref{defn:QMFs1}, except $G$ is replaced with $G'$, and $K^0$ is replaced with $K'$. Pullback along the map $G'\to G$ gives a map
\[ 
\mathcal{A}(G,\ell)\to \mathcal{A}(G',\ell),
\]
sending $\mathcal{A}_0(G,\ell)$ to $\mathcal{A}_0(G',\ell)$. 
\subsection{The Explicit Fourier Expansion of Quaternionic Modular Forms}
Recalling the notation from Section~\ref{subsec:groupnotation}, let $dz$ denote the standard right $Z(\A)$-invariant measure on $Z(\Q)\backslash Z(\A)\simeq \Q\backslash \A$. Suppose $\varphi$ is a quaternionic modular form on $G(\A)$. The \emph{constant term} of $\varphi$ along $Z$ is defined as the function on $G(\A)$ given by
\begin{equation}
\label{defn-constant-term}
\varphi_Z(g)=\int_{Z(\Q)\backslash Z(\A)}\varphi(zg)dz. 
\end{equation}
More generally, if $\mathcal{G}$ is any algebraic group over $\Q$, $h\colon \mathcal{G}(\Q)\backslash \mathcal{G}(\A)\to \C$ a continuous function, and $\mathcal{N}$ a unipotent subgroup of $\mathcal{G}$ equipped with a character $\chi\colon \mathcal{N}(\Q)\backslash \mathcal{N}(\A)\to \C^{\times}$, we define the $\chi$-th \emph{Fourier coefficient} of $h$ along $\mathcal{N}$ as 
\begin{equation}
\label{defn-general-FC}
    h_{\mathcal{N}, \chi}(g)=\int_{\mathcal{N}(\Q)\backslash \mathcal{N}(\A)}\chi^{-1}(n)h(ng)dn. 
\end{equation}
In this notation, the Fourier expansion of $\varphi_Z$ along $Z(\A)N_P(\Q)\backslash N_P(\A)$ takes the form 
\begin{equation}
\label{FE-general-phiZ}
\varphi_Z(g)=\sum_{[T_1,T_2]\in V_{2,n}\times V_{2,n}}\varphi_{N_P, \varepsilon_{[T_1,T_2]}}(g). 
\end{equation}
The main result of \cite{pollackQDS} fully explicates \eqref{FE-general-phiZ} in the case when $\varphi$ is a quaternionic modular form. We use a version of this result from \cite{pollackAWSNotes}. To state the theorem, we write $B$ to denote the natural $\GL(U)\times \SO(V_{2,n})$-equivariant pairing between $U^{\vee}\otimes_{\Q} V_{2,n}$ and $U\otimes_{\Q} V_{2,n}$. So if $T_1, T_1',T_2,T_2'\in V_{2,n}$ then
\begin{equation}
\label{Bilinear-form-B}
B(b_{-1}\otimes T_1+b_{-2}\otimes T_2, b_{1}\otimes T_1'+b_{2}\otimes T_2')=(T_1,T_1')+(T_2,T_2'). \quad
\end{equation}
Since $M_P$ preserves the decomposition $V=U\oplus V_{2,n}\oplus U^{\vee}$, the action of $M_P(\R)$ on $U^{\vee}\otimes V_{2,n}$ and $U\otimes V_{2,n}$ preserves the pairing $B$. 
 Recall the elements $v_1=y_0/\sqrt{2}$ and $v_2=y_1/\sqrt{2}$ defined in Section~\ref{The underlying quadratic space}. 
\begin{definition}
Let $[T_1,T_2]\in V_{2,n}\times V_{2,n}$  and define $\beta_{[T_1,T_2]}\colon M_P(\R)\to \C$ by 
\begin{equation}
\label{definition of beta}
\beta_{[T_1,T_2]}(r):=\sqrt{2} iB(r^{-1}\cdot (b_{-1}\otimes T_1+b_{-2}\otimes T_2), b_{1}\otimes(v_1+iv_2)+b_{2}\otimes i(v_1+ iv_2)).
\end{equation}
The ordered pair $[T_1, T_2]$ is said to be \textit{positive semi-definite} if $\beta_{[T_1,T_2]}(r)\neq 0$ for all $r\in M_P(\R)^0$. We write $[T_1,T_2]\succeq 0$ to mean that the pair $[T_1,T_2]$ is positive semi-definite. We write $[T_1,T_2]\succ 0$ if $[T_1,T_2]\succeq 0$ and $(T_1,T_1)(T_2,T_2)-(T_1,T_2)^2>0$. 
\end{definition}

Suppose $r \in M_P(\R) \simeq \GL(U)(\R) \times \SO(V_{2,n})(\R)$.  We express $r$ as an ordered pair $(m,h)$ with $m\in \GL(U)(\R)$ and $h\in \SO(V_{2,n})(\R)$. 
\begin{definition}\label{def:WhitOrth} \cite[section 8.2]{pollackAWSNotes} Fix an integer $\ell \geq 1$, and suppose $[T_1, T_2] \in V_{2,n}(\R)^{\oplus 2}$.  We define a function 
	\[
	\mathcal{W}_{[T_1,T_2]}: G(\R) \rightarrow \mathbf{V}_{\ell}
	\]
as follows.  First suppose $[T_1, T_2]$ is positive semi-definite. If $r = (m,h) \in M_P(\R)^{0}$, then 
\begin{equation}
	\label{K-Bessel-Magic}
	\mathcal{W}_{[T_1,T_2]}(r)=\det(m)^{\ell}|\det(m)|\sum_{-\ell\leq v\leq \ell}\left(\frac{\beta_{[T_1,T_2]}(r)}{|\beta_{[T_1,T_2]}(r)|}\right)^vK_v(|\beta_{[T_1, T_2]}(r)|)\frac{x^{\ell+v}y^{\ell-v}}{(\ell+v)!(\ell-v)!}.
\end{equation}
Here $K_{v}\colon \R_{>0}\to \R$ denotes the modified K-Bessel function $K_v(x)=\frac{1}{2}\displaystyle{\int_0^{\infty}}t^{v-1}e^{-x(t+t^{-1})}dt$.  If $r$ is in the non-identity component of the group $M_P(\R) \cap G(\R)^{0}$, then $\beta_{[T_1, T_2]}(r)$ is still nonzero and one defines $\mathcal{W}_{[T_1,T_2]}(r)$ again as in \eqref{K-Bessel-Magic}. If $g\in G(\R)^0$, write $g = nmk$ with $n \in N_P(\R)$, $m \in M_P(\R) \cap G(\R)^0$ and $k \in K^0$.   Suppose  $\log(n) = b_1 \wedge w_1 + b_2 \wedge w_2 + z b_1 \wedge b_2$ for some $w_1, w_2 \in V_{2,n}(\R)$ and $z \in \R$.  Then
\[
\mathcal{W}_{[T_1,T_2]}(g) := (e^{i(T_1,w_1) + i (T_2,w_2)}) k^{-1} \cdot \mathcal{W}_{[T_1,T_2]}(m).
\]
This is independent of the choice of $m$ and $k$.  If $[T_1,T_2]$ is not positive semi-definite, one defines $\mathcal{W}_{[T_1,T_2]}$ to be $0$ on $G(\R)^0$.  

Finally, let $\iota  \in M_P(\R)$ be $\iota = \left(\mm{0}{1}{1}{0}, 1\right).$ If $g \in G(\R)$ is not in the identity component, then $\iota g \in G(\R)^0$ and one defines
\[
\mathcal{W}_{[T_1,T_2]}(g) = (-1)^{\ell} \mathcal{W}_{[T_2, T_1]}(\iota g).
\]
\end{definition}

\begin{thm}\label{Thm 1.2.1 Aaron Paper}\emph{\cite[Theorem 8.2.2]{pollackAWSNotes}}
Fix $\ell\in \Z_{\geq 1}$ and suppose $[T_1,T_2]\in V_{2,n}(\R)^{\oplus2}$.  The function $\mathcal{W}_{[T_1,T_2]}$ is smooth, of moderate growth, and satisfies the differential equation $D_{\ell} \mathcal{W}_{[T_1,T_2]} = 0$.  Moreover, suppose $F: G(\R)^0 \rightarrow \mathbf{V}_{\ell}$ is smooth, of moderate growth, and satisfies the following properties:
\begin{enumerate}
\item If $g\in G(\R)^0$ and $k\in K^0$ then $F(gk)=k^{-1}F(g)$.
\item If $g\in G(\R)^0$ and $n \in N_P(\R)$ satisfies $\log(n) = b_1 \wedge w_1 + b_2 \wedge w_2 + z b_1 \wedge b_2$, then
\[F(ng)=e^{i(T_1,w_1) + i (T_2,w_2)}F(g).\]
\item If $g\in G(\R)^0$ then $D_{\ell}F(g)=0$.  
\end{enumerate}
Then there is $\lambda \in \C$ so that $F = \lambda \mathcal{W}_{[T_1,T_2]}$ on $G(\R)^0$.
\end{thm}
\begin{remark}
\label{rmk-FE-QMF-SPIN8}
\begin{asparaenum}[(1)]
    \item If $\ell\geq n+2$ and $(T_1,T_1)(T_2,T_2)-(T_1,T_2)^2\neq 0$ then, with the exception of the explicit formula (\ref{K-Bessel-Magic}), Theorem \ref{Thm 1.2.1 Aaron Paper} follows from work of Wallach \cite[Theorem 16]{wallach}.
    \item If $n=2$ and $G'=\Spin(V)$, then the multiplicity at most one statement of Theorem~\ref{Thm 1.2.1 Aaron Paper} remains valid when $G$ is replaced with $G'$, and $N_P$ is replaced with $N'$. Indeed, $\mathbf{V}_{\ell}$ appears in $\g$, and so the central $\mu_2$ in the kernel of the map $G'\to G$ acts trivially on $\mathbf{V}_{\ell}$. Hence, any function $W\colon G'(\R)\to \mathbf{V}_{\ell}$ such that $W(gk)=k^{-1}W(g)$ for all $g\in G'(\R)$ and $k\in K'$, factors across $G(\R)$. 
\end{asparaenum}

\end{remark}
As a corollary to Theorem \ref{Thm 1.2.1 Aaron Paper} we deduce the following.
\begin{corollary}
\label{cor:FE of phiZ}
Suppose $\ell\in \Z_{\geq 1}$ and let $\varphi\colon G(\A)\to \mathbf{V}_{\ell}$ be a weight $\ell$ quaternionic modular form on $G(\A)$. Then there exists a family of locally constant functions  
$$
\{a_{[T_1,T_2]}(\varphi,\cdot)\colon G(\A_f)\to \C\colon \hbox{$[T_1,T_2]\in V_{2,n}\times V_{2,n}$ such that $[T_1,T_2]\succeq 0$}\}
$$ such that the Fourier expansion of $\varphi_Z$ along $Z(\A)N_P(\Q)\backslash N_P(\A)$ takes the form
\begin{equation}
\label{FE-QMF} 
\varphi_{Z}(g_fg_{\infty})=\varphi_{N_P}(g_fg_{\infty})+\sum_{[T_1,T_2]\in V_{2,n}\times V_{2,n} \colon [T_1,T_2]\succeq 0}a_{[T_1,T_2]}(\varphi,g_f)\mathcal{W}_{[2\pi T_1,2\pi T_2]}(g_{\infty}).  
\end{equation}
\end{corollary}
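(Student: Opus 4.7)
The plan is to apply Theorem \ref{Thm 1.2.1 Aaron Paper} to each Fourier coefficient appearing in the general expansion \eqref{FE-general-phiZ}. First, I would fix $g_f \in G(\mathbb{A}_f)$ and for each $[T_1,T_2] \in V_{2,n}^{\oplus 2}$ consider the function $F_{[T_1,T_2]}\colon G(\mathbb{R}) \to \mathbf{V}_{\ell}$ defined by $F_{[T_1,T_2]}(g_\infty) = \varphi_{N_P,\varepsilon_{[T_1,T_2]}}(g_f g_\infty)$. The aim is to show that $F_{[T_1,T_2]}$ satisfies conditions (i)--(iii) of Theorem \ref{Thm 1.2.1 Aaron Paper}, with the character $\varepsilon_{[T_1,T_2]}$ replaced by its archimedean component. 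The $K^0$-equivariance is inherited from $\varphi$ since the unipotent integration commutes with the right translation by $K^0$. The equivariance under $N_P(\mathbb{R})$ is immediate from the definition of the Fourier coefficient; because the standard character $\psi$ satisfies $\psi_\infty(x) = e^{2\pi i x}$, the scalar appearing is $e^{2\pi i (T_1,w_1)+2\pi i (T_2,w_2)}$, which matches condition (ii) for the indices $[2\pi T_1, 2\pi T_2]$ rather than $[T_1,T_2]$.

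Next I would verify that the differential annihilation $D_\ell F_{[T_1,T_2]} = 0$ follows from $D_\ell \varphi = 0$. The operator $D_\ell$ is built from right-invariant derivations (differentiation by left-invariant vector fields against the right $K^0$-equivariant function), and thus commutes with left translation by $N_P(\mathbb{Q})\backslash N_P(\mathbb{A})$ and with the compact integration against $\varepsilon_{[T_1,T_2]}^{-1}$. Moreover, moderate growth of $\varphi$ is preserved under taking Fourier coefficients, so $F_{[T_1,T_2]}$ is of moderate growth on $G(\mathbb{R})$.

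With these three conditions verified, Theorem \ref{Thm 1.2.1 Aaron Paper} directly produces the desired conclusion: the function $F_{[T_1,T_2]}$ vanishes identically unless $[2\pi T_1, 2\pi T_2] \succeq 0$, and since $\succeq 0$ is invariant under positive scaling (by linearity of $\beta_{[T_1,T_2]}(r)$ in $[T_1,T_2]$), this is the same as $[T_1,T_2] \succeq 0$. For such indices, the uniqueness up to scalar asserted in Theorem \ref{Thm 1.2.1 Aaron Paper} yields a unique complex number $a_{[T_1,T_2]}(\varphi, g_f)$ with
\[
F_{[T_1,T_2]}(g_\infty) = a_{[T_1,T_2]}(\varphi, g_f)\, \mathcal{W}_{[2\pi T_1, 2\pi T_2]}(g_\infty).
\]
The trivial character contribution is precisely $\varphi_{N_P}$, giving the leading term in \eqref{FE-QMF}. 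Local constancy of $g_f \mapsto a_{[T_1,T_2]}(\varphi,g_f)$ is inherited from the fact that $\varphi$ is fixed by some compact open subgroup of $G(\mathbb{A}_f)$.

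The main obstacle is really bookkeeping rather than novel content: one must (a) carefully track the $2\pi$ factor between the archimedean character $\psi_\infty$ and the normalization in Theorem \ref{Thm 1.2.1 Aaron Paper}(ii), and (b) justify that differentiation commutes with Fourier integration and that moderate growth survives. Neither is deep, but both are needed to invoke the uniqueness clause in Theorem \ref{Thm 1.2.1 Aaron Paper}, which is what drives the whole argument.
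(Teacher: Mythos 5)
Your proposal is correct and is exactly the deduction the paper intends: apply the uniqueness clause of Theorem \ref{Thm 1.2.1 Aaron Paper} to each nontrivial Fourier coefficient $g_\infty\mapsto \varphi_{N_P,\varepsilon_{[T_1,T_2]}}(g_fg_\infty)$ (which inherits $K^0$-equivariance, the $N_P(\R)$-equivariance with the $2\pi$-rescaled indices, moderate growth, and $D_\ell$-annihilation), separate off the trivial-character term $\varphi_{N_P}$, and use linearity of $\beta_{[T_1,T_2]}$ plus smoothness of $\varphi$ at the finite places to get the positivity condition and local constancy of the coefficients. This matches the paper's (unwritten but intended) argument, so there is nothing further to add.
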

\begin{remark}
\label{FE-QMFS-on-Spin82} The scaling by $2\pi$ is a consequence of the definition of the character $\varepsilon_{[T_1,T_2]}$ and property (2) of Theorem \ref{Thm 1.2.1 Aaron Paper}. If $n=2$ then the statement of Corollary~\ref{cor:FE of phiZ} applies equally well to quaternionic modular forms on $G'=\Spin(V)$. Indeed, the map $G' \rightarrow G$ induces an isomorphism $N'\simeq N$, and as such, if $\varphi\in\mathcal{A}(G',\ell)$ and $Z'$ denotes the center of $N'$, then 
\begin{equation}
\label{FE-QMF-spin8} 
\varphi_{Z'}(g_fg_{\infty})=\varphi_{N'}(g_fg_{\infty})+\sum_{[T_1,T_2]\in V_{2,n}\times V_{2,n} \colon [T_1,T_2]\succeq 0}a_{[T_1,T_2]}(\varphi,g_f)\mathcal{W}'_{[2\pi T_1,2\pi T_2]}(g_{\infty}).  
\end{equation}
Here $\mathcal{W}'_{[T_1,T_2]}$ is the pullback of $\mathcal{W}_{[T_1,T_2]}$ along $G'(\R)\to G(\R)$ (see Remark~\ref{rmk-FE-QMF-SPIN8}(2)). 
\end{remark}
\begin{definition}
\label{Definition-of-FCs}
    With hypotheses as in Corollary~\ref{cor:FE of phiZ}, let $\lambda=[T_1,T_2]$ denote an element in $V_{2,n}\times V_{2,n}$ and suppose $[T_1,T_2]\succeq 0$. Define the \textit{$\lambda$-th Fourier coefficient of $\varphi$} to be the complex number $a_{\varphi}(\lambda):=a_{[T_1,T_2]}(\varphi,1)$. Similarly, if $n=2$ and $\varphi\in \mathcal{A}(G',\ell)$, then with notation as in \eqref{FE-QMF-spin8}, we define $a_{\varphi}(\lambda):=a_{[T_1,T_2]}(\varphi,1)$. 
\end{definition}
\subsection{The Vanishing of $\beta_{[T_1,T_2]}$}
In this subsection we develop some properties of the functions $\beta_{[T_1, T_2]}$ defined by (\ref{definition of beta}). We write $\SO(V_{2,n})(\R)^{0}$ to denote the identity component of $\SO(V_{2,n})(\R)$. The reader may verify the following lemma. 
\begin{lemma}
    \label{Identity-component-SO(V2n)}
Suppose $h\in \SO(V_{2,n})(\R)$ and let $\{w_1,w_2\}$ denote a set of vectors in $V_{2,n}\otimes_{\Q}\R$ satisfying $(w_i,w_j)=\delta_{ij}$ for $i,j=1,2$. Then $h\in \SO(V_{2,n})(\R)^0$ if and only if 
\begin{equation}
    \label{defn-chi}
    \chi_{w_1,w_2}(h):=\det\begin{pmatrix} (w_1,hw_1) &(w_1,hw_2) \\ (w_2,hw_1) &(w_2, hw_2) \end{pmatrix}>0.
\end{equation}
\end{lemma}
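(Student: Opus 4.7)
The plan is to reinterpret $\chi_{w_1,w_2}$ geometrically and then run a connectedness argument. Since $(w_i,w_j)=\delta_{ij}$, the plane $W:=\Span(w_1,w_2)\subseteq V_{2,n}(\R)$ is positive-definite with $\{w_1,w_2\}$ as an orthonormal basis. Because $V_{2,n}$ has signature $(2,n)$, by Sylvester $W$ is a maximal positive-definite subspace, so its orthogonal complement $W^{\perp}\subseteq V_{2,n}(\R)$ is negative-definite. Let $\pi\colon V_{2,n}(\R)\to W$ denote orthogonal projection; explicitly $\pi(v)=(w_1,v)w_1+(w_2,v)w_2$. The number $\chi_{w_1,w_2}(h)$ is then exactly the determinant of the composite $\pi\circ h|_W\colon W\to W$ expressed in the basis $\{w_1,w_2\}$.

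The first step is to show that $\chi_{w_1,w_2}$ is nowhere zero on $\SO(V_{2,n})(\R)$. If some nonzero $w\in W$ satisfied $\pi(hw)=0$, then $hw\in W^{\perp}$ would force $(hw,hw)\le 0$; but $h$ preserves the quadratic form, so $(hw,hw)=(w,w)>0$, a contradiction. Hence $\pi\circ h|_W$ is injective, so an isomorphism of $W$, and $\chi_{w_1,w_2}(h)\neq 0$ for every $h\in \SO(V_{2,n})(\R)$. Consequently the sign of $\chi_{w_1,w_2}$ is a continuous map $\SO(V_{2,n})(\R)\to\{\pm 1\}$, hence locally constant, hence constant on every connected component. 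Since $\chi_{w_1,w_2}(1)=1>0$, the whole identity component lies in the locus where $\chi_{w_1,w_2}>0$.

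To finish, I would invoke the classical fact that $\SO(V_{2,n})(\R)$ has exactly two connected components, and then exhibit a single element $h_0$ in the non-identity component on which $\chi_{w_1,w_2}$ is negative. A convenient choice is the element acting on $W$ by $w_1\mapsto -w_1$, $w_2\mapsto w_2$ and on $W^{\perp}$ by any orthogonal transformation of determinant $-1$, for instance reflection in some $v\in W^{\perp}$ with $(v,v)\neq 0$ (which exists since $W^{\perp}$ is nondegenerate). Such an $h_0$ has $\det(h_0)=(-1)\cdot(-1)=1$ and hence lies in $\SO(V_{2,n})(\R)$, while $\chi_{w_1,w_2}(h_0)=-1$. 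Combined with the locally constant sign argument from the previous paragraph, this pins down the identity component as exactly the set where $\chi_{w_1,w_2}>0$.

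The main obstacle is the non-vanishing step, since it is what genuinely uses that $W$ is a \emph{maximal} positive-definite subspace of $V_{2,n}(\R)$ (so that $W^{\perp}$ is negative-definite). Once this geometric input is in place, the rest is a routine continuity and connectedness argument, and the formula for $\chi_{w_1,w_2}$ in terms of the projection makes the sign computation at $h_0$ immediate.
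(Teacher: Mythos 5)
Your argument is correct and complete; note that the paper itself offers no proof of this lemma (it is stated with ``the reader may verify''), so there is nothing to compare against. Your identification of $\chi_{w_1,w_2}(h)$ as $\det(\pi\circ h|_W)$ in the orthonormal basis $\{w_1,w_2\}$ is exactly right, the non-vanishing step correctly uses that $W^{\perp}$ is negative definite (so $hw\in W^{\perp}$ would contradict $(hw,hw)=(w,w)>0$), and the conclusion follows from local constancy of the sign, the classical fact that $\SO(2,n)(\R)$ has exactly two connected components, and your explicit element $h_0=\mathrm{diag}(-1,1)$ on $W$ times a determinant $-1$ isometry of $W^{\perp}$, which lies in $\SO(V_{2,n})(\R)$ and has $\chi_{w_1,w_2}(h_0)=-1$.
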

The proposition below is an analogue of \cite[Proposition 10.0.1]{pollackQDS}. Write $\GL(U)(\R)^0$ to denote the identity component of $\GL(U)(\R)$; then $M_P(\R)^0=\GL(U)(\R)^0\times \SO(V_{2,n})(\R)^0$.
\begin{proposition}
\label{Properties-of-beta}
Suppose $[T_1, T_2]\in V_{2,n}\times V_{2,n}$.
\begin{compactenum}[(1)] 
\item If $\R\linspan\{T_1,T_2\}$ is an indefinite or negative definite two-plane in $V_{2,n}(\R)$ then there exists $r\in M_P(\R)^0$ such that $\beta_{[T_1,T_2]}(r)=0$. 
\item If $\R\linspan\{T_1, T_2\}$ is a positive definite two-plane in $V_{2,n}(\R)$ then exactly one of $\beta_{[T_1,T_2]}$ and $\beta_{[T_2,T_1]}$ has a zero on $M_P(\R)^{0}$. 
\item If $|\beta_{[T_1,T_2]}(r)|$ is bounded away from zero on the set $\{(m,h)\in M_P(\R)^0\colon \det(m)=1\}$ then $(T_1,T_1)(T_2,T_2)-(T_1,T_2)^2>0$. In particular $T_1$ and $T_2$ span a two-plane in $V_{2,n}(\R)$. 
\end{compactenum}
\end{proposition}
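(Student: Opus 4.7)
The plan is to reduce each of the three assertions to the sign-behavior of a single real-valued function $f$ on the connected group $\SO(V_{2,n})(\R)^0$, and then to analyze $f$ by linear-algebraic means.

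First I would unpack the definition of $\beta_{[T_1,T_2]}$ in coordinates. Writing $r=(m,h)$ with $m=(m_{ij})\in \GL_2(\R)^0$ in the basis $(b_1,b_2)$ and $h\in \SO(V_{2,n})(\R)^0$, and setting $z=v_1+iv_2\in V_{2,n}\otimes_{\R}\C$, a direct computation using that $\GL(U)$ acts on $U^{\vee}$ by the contragredient should give
\[
\beta_{[T_1,T_2]}(r)=\sqrt{2}\,i\bigl(a_1\alpha_1+a_2\alpha_2\bigr),
\]
where $a_j:=m_{j1}+im_{j2}\in \C$ and $\alpha_j:=(h^{-1}T_j,z)\in \C$. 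A short check gives $\det(m)=\operatorname{Im}(\bar a_1 a_2)$ and identifies the key quantity
\[
f(h):=\operatorname{Im}(\bar\alpha_1\alpha_2)=\det\bigl((h^{-1}T_i,v_j)\bigr)_{i,j=1,2}=(T_1\wedge T_2,\,h(v_1\wedge v_2))
\]
under the bilinear form induced on $\wedge^2 V_{2,n}$.

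For fixed $h$, solving $a_1\alpha_1+a_2\alpha_2=0$ over $\C$ either forces $(\alpha_1,\alpha_2)=(0,0)$ (in which case every $m$ works, and this occurs precisely when $h^{-1}T_1,h^{-1}T_2\in V_n^-(\R)$), or the solutions form the complex line $(a_1,a_2)=c(\alpha_2,-\alpha_1)$ with $\det(m)=|c|^2 f(h)$. Hence $\beta_{[T_1,T_2]}$ has a zero on $M_P(\R)^0$ if and only if there exists $h\in\SO(V_{2,n})(\R)^0$ with $f(h)>0$, or with $h^{-1}\R\linspan\{T_1,T_2\}\subseteq V_n^-(\R)$. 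Each of (i)--(iii) then becomes a question about values of $f$.

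For (ii), since the Grassmannian of positive-definite two planes in $V_{2,n}$ is connected and $\SO(V_{2,n})(\R)^0$ acts transitively on it, I would produce $h_0$ with $h_0^{-1}\R\linspan\{T_1,T_2\}=V_2^+(\R)$; at this point $f(h_0)\neq 0$ because $\bigl((h_0^{-1}T_i,v_j)\bigr)$ is invertible. Continuity on the connected group $\SO(V_{2,n})(\R)^0$ forces $f$ to have constant nonzero sign, and the identity $f_{[T_2,T_1]}=-f_{[T_1,T_2]}$ together with the impossibility of $(\alpha_1,\alpha_2)=(0,0)$ in the positive definite case (an isometry cannot carry a positive definite subspace into the negative definite $V_n^-$) yields the claimed dichotomy. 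For (i) I would distinguish two subcases. When $\R\linspan\{T_1,T_2\}$ is negative definite, transitivity of $\SO(V_{2,n})(\R)^0$ on negative definite 2-planes in $V_{2,n}$ supplies $h_0$ with $h_0^{-1}\R\linspan\{T_1,T_2\}\subseteq V_n^-(\R)$, giving $\alpha_1=\alpha_2=0$. When the span is indefinite, $T_1\wedge T_2$ has negative norm while $\omega:=v_1\wedge v_2$ has positive norm; I would deform $h$ along a one-parameter hyperbolic subgroup mixing $V_2^+$ with $V_n^-$ (for instance a boost in a carefully chosen $(v_j,u_k)$-plane) and verify that the resulting formula for $f(h)$ takes strictly positive values.

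Finally, (iii) follows by the contrapositive together with homogeneity of $\beta$ in $m$. If $\R\linspan\{T_1,T_2\}$ is indefinite or negative definite, part (i) produces a zero of $\beta$ on $M_P(\R)^0$ which rescales to $\det(m)=1$ by positive dilation of $m$; if $T_1$ and $T_2$ are linearly dependent, one exhibits $m\in\GL_2(\R)^0$ with $\det(m)=1$ and entries tending to infinity so that $a_1\alpha_1+a_2\alpha_2\to 0$. The step I expect to be most delicate is the indefinite subcase of (i): the formal orbit and continuity reasoning that suffices for the other cases does not apply directly, and one must exploit the sign mismatch between the norms of $T_1\wedge T_2$ and $\omega$ via an explicit noncompact one-parameter subgroup of $\SO(V_{2,n})(\R)^0$.
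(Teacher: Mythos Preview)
Your approach is correct and takes a genuinely different route from the paper's. The paper argues each case of (i) and (ii) by explicit construction via the Witt extension theorem: it builds a specific positive-definite two-plane $P^+$ (orthogonal to $W$ in the negative-definite case, assembled from a null basis $\{e,f\}$ and an auxiliary vector $u_+$ in the indefinite case), maps $V_2^+$ onto it by some $h$, and then checks directly that $\beta$ vanishes, afterwards adjusting $h$ by an explicit reflection to land in the identity component. You instead reduce everything to the single scalar function $f(h)=\operatorname{Im}(\bar\alpha_1\alpha_2)=\det\bigl((T_i,hv_j)\bigr)$ and the dichotomy ``$\beta$ vanishes on $M_P(\R)^0$ iff some $h\in\SO(V_{2,n})(\R)^0$ has $f(h)>0$ or $(\alpha_1,\alpha_2)=(0,0)$.'' This is a cleaner organizing principle and handles the component issue automatically through the sign of $f$, at the cost of needing an explicit one-parameter computation in the indefinite case rather than a single Witt step. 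Two points to tighten. In (ii), to conclude that $f$ has constant nonzero sign you need $f$ nowhere zero, not just $f(h_0)\neq 0$; this follows by the same mechanism you invoke later (if $f(h)=0$ then $\alpha_1,\alpha_2$ are $\R$-dependent, forcing some nonzero $T_2-\lambda T_1$ to have $h^{-1}(T_2-\lambda T_1)\in V_n^-$, impossible for a positive-norm vector), but you should say so. For the indefinite subcase of (i), your boost idea is right but the ``norm mismatch'' heuristic is not what does the work: after normalizing so that $(T_1,T_1)=1$, $(T_2,T_2)=-1$, $(T_1,T_2)=0$ and choosing $h_0$ with $h_0v_1=T_1$, write $h_0^{-1}T_2=cv_2+\sqrt{1+c^2}\,u''$ with $u''\in V_n^-$ a unit vector, and set $h_t=h_0\exp(t\,v_2\wedge u'')$; then $f(h_t)=c\cosh t+\sqrt{1+c^2}\,\sinh t$, which is positive for large $t$.
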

\begin{proof}
In the proof below we adopt the temporary notation $W=\R\linspan\{T_1,T_2\}$. Recall the fixed positive definite two-plane $V^+_2(\R)=\R\linspan\{v_1, v_2\}$ defined in Section~\ref{The underlying quadratic space}. 
\begin{asparaenum}[(1)]
    \item If $W$ is a negative definite two-plane then there exists a positive definite two-plane $P^+$ such that $P^+$ is orthogonal to $W$. Hence, there exists $h\in \mathrm{O}(V_{2,n})(\R)$ such that $hv_1$ and $hv_2$ give a basis for $P^+$. If necessary, we pre-compose $h$ with any isometry of $V_{2,n}(\R)$ which acts as the identity on $V_{2}^+(\R)$ and acts by an orthogonal transformation of determinant $-1$ on $V_2^+(\R)^{\perp}$. Post-composing with $k=(k_1, k_2)$ if necessary, where $k_1 \in O(W)$, $k_2 \in O(W^\perp)$, we may assume $h\in \SO(V_{2,n})(\R)^0$.  Since $P^+$ and $W$ are orthogonal, the element $r=(1, h)\in M_P(\R)$ satisfies $\beta_{[T_1, T_2]}(r)=0$. \\
    \indent Next suppose $W$ is a two-plane of indefinite signature $(1,1)$. Fix a basis $\{e,f\}$ for $W$ such that $(e,e)=(f,f)=0$ and $(e,f)=1$. Then there exists $m\in \mathrm{GL}(U)(\R)^0$ such that $$m^{-1}(b_{-1}\otimes T_1+b_{-2}\otimes T_2)=b_{-1}\otimes e+b_{-2}\otimes f.$$
    Let $u_+\in W^{\perp}$ be such that $(u_+,u_+)=2$. Write $P^+$ to denote the positive definite two-plane spanned by the vectors $p_1=\frac{1}{\sqrt{2}}(e+f)$ and $p_2= u_++\frac{1}{\sqrt{2}}(e-f)$.
    Reasoning as above, there exists $h\in \SO(V_{2,n})(\R)$ such that $hv_1=p_1$ and $hv_2=p_2$. It follows that if $r=(m,h)$ then $\beta_{[T_1,T_2]}(r)=0$. It remains to show that we can choose $r\in M_P(\R)^0$. Thus suppose $r\notin M_P(\R)^0$, which implies $h\notin \SO(V_{2,n})(\R)^0$. Define $p_1'=\frac{1}{\sqrt{2}}(e+f)$, $p_2'= -u_++\frac{1}{\sqrt{2}}(e-f)$ and let $h'\in \SO(V_{2,n})(\R)$ be such that $h'p_1=p_1'$ and $h'p_2=p_2'$. Then $\chi_{p_1,p_2}(h')=-1$ and Lemma \ref{Identity-component-SO(V2n)} implies $r':=(m,h'h)\in M_P(\R)^0$. Moreover, $\beta_{[T_1,T_2]}(r')=0$ as required.
    \item Assume $W$ is a positive definite two-plane in $V_{2,n}(\R)$. Then there exists $r\in M_P(\R)$ such that
    $
    r^{-1}(b_{-1}\otimes T_1+b_{-2}\otimes T_2)=b_{-1}\otimes v_1+b_{-2}\otimes v_2,
    $
and we may assume $r=(m,h)$ with $m\in \GL(U)(\R)^0$. Define $T_1',T_2'\in V_{2,n}(\R)$ by $$m^{-1}(b_{-1}\otimes T_1+b_{-2}\otimes T_2)=b_{-1}\otimes T_1'+b_{-2}\otimes T_2'.$$ If $\chi_{T_1',T_2'}(h)>0$ then $r\in M_P(\R)^0$ and $\beta_{[T_1,T_2]}(r)=\sqrt{2} i (hv_1+ihv_2, hv_1+ihv_2)=0$. Otherwise, $\chi_{T_1',T_2'}(h)<0$ and Lemma \ref{Identity-component-SO(V2n)} implies $\chi_{T_1',T_2'}(h')$ is negative for all $h'\in M_P(\R)^0$. Thus, if $r=(m,h')\in M_P(\R)^0$ then 
    $$
    |\beta_{[T_1,T_2]}(r)|^2=(T_1',h'v_1)^2+(T_2',h'v_2)^2+(T_1',h'v_2)^2+(T_2',h'v_1)^2-2\chi_{T_1',T_2'}(h')>0.
    $$
Hence $\beta_{[T_1,T_2]}$ vanishes on exactly one of the two components of $\GL(U)(\R)^0\times \SO(V_{2,n})(\R)$. 
    \item Assume the quantity $|\beta_{[T_1,T_2]}(r)|$ is bounded away from zero on the subset of pairs $(m,h) \in M_P(\R)^0$ with $\det(m) = 1$. Then by part (1), either $\dim(W)<2$, or $W$ is a positive definite $2$-plane. If $W$ is a positive definite two-plane then the Gram matrix of $W$ has positive determinant, and therefore
    $(T_1,T_1)(T_2,T_2)-(T_1,T_2)(T_2,T_1)>0$. The case $\dim (W)<2$ does not occur when $\beta_{[T_1,T_2]}$ is bounded away from zero. Indeed if $T_1$ and $T_2$ are collinear and $\alpha\in \R_{>0}$, then there exists $m_{\alpha}\in \SL(U)(\R)$ such that $m_{\alpha}$ acts by multiplication by $\alpha$ on $T_1$ and $T_2$. It follows that $\beta_{[T_1,T_2]}(m_{\alpha},1)\to 0$ as $\alpha\to 0^+$.
\end{asparaenum}
\end{proof}

\begin{corollary}
\label{FE-CUSPIDAL-QMF} 
    Suppose $\ell\in \Z_{\geq 1}$ and $\varphi\colon  G(\A)\to \mathbf{V}_{\ell}$ is a weight $\ell$ quaternionic modular form. If $\varphi$ is a cusp form then the Fourier expansion (\ref{FE-QMF}) takes the form 
  \begin{equation}
  \label{equation-FE-CUSPIDAL-QMF}
\varphi_{Z}(g_fg_{\infty})=\sum_{[T_1,T_2]\in V_{2,n}\times V_{2,n} \colon [T_1,T_2]\succ 0}a_{[T_1,T_2]}(\varphi,g_f)\mathcal{W}_{[2\pi T_1,2\pi T_2]}(g_{\infty}).  
\end{equation}
\end{corollary}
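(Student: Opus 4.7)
The goal is to show that the Fourier expansion \eqref{FE-QMF} of a cuspidal $\varphi$ is supported on strictly positive-definite pairs $[T_1,T_2]\succ 0$. First, the constant term $\varphi_{N_P}$ vanishes by cuspidality, since $N_P$ is the unipotent radical of the maximal parabolic $P$. It therefore remains to show that $a_{[T_1,T_2]}(\varphi,g_f) = 0$ whenever $[T_1,T_2] \succeq 0$ but $(T_1,T_1)(T_2,T_2) - (T_1,T_2)^2 = 0$. By (the contrapositive of) Proposition~\ref{Properties-of-beta}(iii), this degeneracy forces $T_1$ and $T_2$ to be collinear (possibly with one of them zero), and the explicit argument in the last paragraph of the proof of that proposition supplies a one-parameter family $r_\alpha := (m_\alpha,1) \in M_P(\R)^0$, $\alpha > 0$, with $m_\alpha \in \SL(U)(\R)$ having eigenvalues $\alpha,\alpha^{-1}$, such that $\beta_{[T_1,T_2]}(r_\alpha) \to 0$ as $\alpha \to 0^+$.

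Since $\det(m_\alpha) = 1$, the determinantal prefactor in the explicit formula \eqref{K-Bessel-Magic} is trivial at $r_\alpha$. Applying the classical small-argument asymptotic $K_v(x) \sim \tfrac{1}{2}\Gamma(|v|)(x/2)^{-|v|}$ as $x\to 0^+$ for $v \neq 0$, the extremal terms $v = \pm\ell$ dominate as $|\beta(r_\alpha)| \to 0$, so at least one component of the $\mathbf{V}_\ell$-valued vector $\mathcal{W}_{[2\pi T_1, 2\pi T_2]}(r_\alpha)$ (in the basis $\{x^{2\ell-j}y^j\}_{j=0}^{2\ell}$) satisfies a lower bound of order $|\beta(r_\alpha)|^{-\ell}$, which blows up as $\alpha \to 0^+$. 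On the other hand, isolating the $\varepsilon_{[T_1,T_2]}$-th Fourier coefficient of $\varphi$ using Corollary~\ref{cor:FE of phiZ} (and the triviality of $\varepsilon_{[T_1,T_2]}$ on $Z \leq N_P$) yields the identity
\begin{equation*}
\int_{N_P(\Q)\backslash N_P(\A)} \varphi(n g_f r_\alpha)\,\varepsilon_{[T_1,T_2]}(n)^{-1}\,dn \; = \; a_{[T_1,T_2]}(\varphi,g_f)\,\mathcal{W}_{[2\pi T_1, 2\pi T_2]}(r_\alpha).
\end{equation*}

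The left-hand side is bounded in modulus by $\mathrm{vol}(N_P(\Q)\backslash N_P(\A)) \cdot \sup_{n} |\varphi(n g_f r_\alpha)|$. Since $\varphi$ is cuspidal, the standard rapid-decay property of cusp forms in Siegel domains shows that this supremum is $O_N(h(r_\alpha)^{-N})$ for every $N \geq 1$, where $h$ is a suitable height function satisfying $h(r_\alpha) \to \infty$ as $\alpha \to 0^+$; the key point is that $m_\alpha$ is a split torus element in $\SL(U) \subseteq M_P$ moving $r_\alpha$ to the boundary of a Siegel set. Comparing the super-polynomial upper bound with the $|\beta(r_\alpha)|^{-\ell}$-blow-up forces $a_{[T_1,T_2]}(\varphi,g_f) = 0$, completing the argument. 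The main technical step is the reduction-theoretic verification that $r_\alpha$ leaves every compact subset of $G(\Q)\backslash G(\A)$ along a direction in which the rapid decay of cusp forms applies; this is standard but requires matching the family $m_\alpha$ with a cuspidal direction associated to a proper parabolic (concretely, one can use the parabolic $P\cap Q$, whose Levi's split center is moved by $m_\alpha$).
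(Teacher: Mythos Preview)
Your argument follows the same overall strategy as the paper's proof: express the $[T_1,T_2]$-th Fourier coefficient as an integral of $\varphi$ over $N_P(\Q)\backslash N_P(\A)$, observe that this integral must be bounded, and contrast this with the blow-up of $\mathcal{W}_{[2\pi T_1,2\pi T_2]}(r_\alpha)$ coming from the pole of $K_v(x)$ at $x=0$ along the family $r_\alpha=(m_\alpha,1)$ supplied by Proposition~\ref{Properties-of-beta}(iii).

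The one place you overcomplicate matters is the final step. You invoke rapid decay of cusp forms in Siegel domains and worry about verifying that $r_\alpha$ escapes to a cuspidal direction associated to $P\cap Q$. None of this is needed. Since $\varphi$ is a cusp form it is \emph{bounded} on $G(\Q)\backslash G(\A)$, and $N_P(\Q)\backslash N_P(\A)$ is compact; hence the left-hand side of your displayed identity is bounded by a constant \emph{independent of $\alpha$}. That uniform bound, compared against the $|\beta(r_\alpha)|^{-\ell}$ blow-up on the right, already forces $a_{[T_1,T_2]}(\varphi,g_f)=0$. This is exactly what the paper does, and it sidesteps the reduction-theoretic verification entirely.
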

\begin{proof} Suppose $\varphi$ is cuspidal and assume $[T_1,T_2]$ does not satisfy $[T_1,T_2]\succ 0$. Fix $g_f\in G(\A_f)$. On the one hand, the form $\varphi$ is bounded because it is cuspidal. Therefore, the function 
\[
	g_{\infty}\mapsto \int_{N_P(\Q)\backslash N_P(\A)}\varepsilon_{[T_1,T_2]}^{-1}(n)\varphi(ng_fg_{\infty})\,dn
\]
	is bounded. As such, either $a_{[T_1,T_2]}(\varphi,g_f)\equiv 0$, or $\mathcal{W}_{[2\pi T_1,2\pi T_2]}(g_{\infty})$ is bounded. On the other hand, the function $K_{v}(x)$ diverges as $x$ tends to $0$ from the right. Applying Proposition \ref{Properties-of-beta}(3), it follows that if $\{T_1,T_2\}$ spans a one-plane in $V_{2,n}(\R)$, then $\mathcal{W}_{[2\pi T_1,2\pi T_2]}(m_{\infty})$ is unbounded on $M_P(\R)^0$. We conclude that $a_{[T_1,T_2]}(\varphi,g_f)\equiv 0$ as required.
\end{proof}

\section{The quaternionic Saito-Kurokawa lifting and quaternionic Maass Spezialschar}\label{sec:QSKL}

In this section we continue with the notation of Section~\ref{The underlying quadratic space}, specialized to the case $n=2$. So $V$ is a quadratic space of signature $(4,4)$ and $G=\SO(V)$. We let $V(\Z)$ and $V_{2,2}(\Z)$ be as in \eqref{eqn-defn-V(Z)}. Then $V_{2,2}(\Z)\simeq M_{2}(\Z)$, the space of $2$-by-$2$ integer matrices equipped with the determinant quadratic form. The identification $V_{2,2}(\Z)\simeq M_{2}(\Z)$ is given by
\[m_{11}b_3 - m_{21}b_4 +m_{12}b_{-4} + m_{22} b_{-3} \mapsto \mb{m_{11}}{m_{12}}{m_{21}}{m_{22}}.\]
As in Section~\ref{subsec:Definition-QMFS}, we write 
$
G(\widehat{\Z})
$ for the stabilizer of $V(\widehat{\Z})$ inside $G(\A_f)$; automorphic forms invariant by $G(\widehat{\Z})$ are said to be of level one.  Fourier coefficients of elements in $\mathcal{A}_{0, \Z}(G,\ell)$ are indexed by pairs $[T_1,T_2]\in V_{2,2}(\Z)^{\oplus 2}$ satisfying $[T_1,T_2] \succ 0$.

\subsection{The Quaternionic Saito-Kurokawa Lift}
\label{subsection-the-quaternionic-Saito-Kurokawa-lift}
In this subsection we recall the statement of \cite[Theorem 4.1.1]{pollackCuspidal}, in the case $n=2$. 
Before we give the statement of (loc. cit.), suppose $\lambda=[T_1,T_2]\in V^{\oplus 2}$ and define a $2$-by-$2$ matrix with entries in $\Q$ via the formula
\begin{equation}
\label{Definition-S(T1,T2)}
S(\lambda)=\frac{1}{2}\begin{pmatrix} (T_1,T_1) &(T_1,T_2) \\ (T_2,T_1) &(T_2,T_2)\end{pmatrix}. 
\end{equation}
    \begin{theorem}\emph{\cite[Theorem 4.1.1]{pollackCuspidal}}
    \label{pollack4.1.1} 
    Suppose $\ell\geq 16$ is even and let 
    $$F(Z)=\sum_{T>0}a_F(T)\exp(2\pi i \mathrm{tr}(TZ))$$ be a weight $\ell$ cuspidal Siegel modular form on $\Sp(4)$ of level one. There exists a unique element $\theta^{\ast}(F)\in \mathcal{A}_{0,\Z}(G, \ell)$, such that if $\lambda=[T_1,T_2]\in V_{2,2}(\Z)^{\oplus 2}$ then the $\lambda$ Fourier coefficient of $\theta^{\ast}(F)$ is
\begin{equation}\label{fouriercoefficients}
		a_{\theta^{\ast}(F)}(\lambda)=\sum_{\substack{r\in \GL(2,\Z)\backslash M_2(\Z)^{\det\neq 0}\\\lambda r^{-1}\in V_{2,2}(\Z)^{\oplus 2}}}|\det(r)|^{\ell-1}\overline{a_F(^{t}r^{-1}S(\lambda)r^{-1})}.
\end{equation}
Here $\lambda r^{-1}$ is the matrix product of the $1\times 2$ row vector $\lambda= [T_1,T_2]$ with the matrix $r^{-1}$.
    \end{theorem}
    \begin{remark}
The construction of $\theta^{\ast}(F)$ is a special case of the theta lifting from $\Sp(4)$ to  $\SO(V)$. Theorem \ref{pollack4.1.1} is analogous to the computations of Oda \cite{Oda77} who computed the Fourier coefficients of the theta lifts of holomorphic modular forms on $\SL(2)$ and $\mathrm{Mp}(2)$ to holomorphic modular forms on $\SO(2,n)$. 
    \end{remark}
\subsection{Poincar\'e Lifts on $G$}
\label{subsection-Poincare-Lifts-on-$G$}
Let $T$ denote a half-integral, $2$-by-$2$, positive definite matrix. Write $\mathcal{H}_2$ for the Siegel upper half space of degree $2$, and define $S_{\ell}(\Sp(4,\Z))$ to be the space of weight $\ell$ and level one holomorphic cuspidal modular forms on $\mathcal{H}_2$. Write $j(\cdot, \cdot )\colon \Sp(4,\R)\times \mathcal{H}_2\to \C$ for the standard factor of automorphy and recall the classical Poincar\'e series 
\begin{equation}
\label{holomorphic-poincare-series}
P_{T,\ell}(Z)=\sum_{\gamma\in \left\{\left(\begin{smallmatrix} I_2 &\ast \\ 0 &I_2 \end{smallmatrix}\right)\right\}\backslash \Sp(4,\Z)}j(\gamma, Z)^{-\ell}e^{2\pi i \mathrm{Tr}(T\gamma(Z))}
\end{equation}
Here $I_2$ is the $2\times 2$ identity matrix.
If $\ell$ is sufficiently large then the sum (\ref{holomorphic-poincare-series}) converges absolutely to an element in $S_{\ell}(\Sp(4,\Z))$. 

We now set-up notation to study the theta lift $\theta^{\ast}(P_{T,\ell})$. Recall that in Section~\ref{subsection-compact-subgroups} we introduced a distinguished map $K^0\to \SU(2)/\mu_2$ together with an identification $\Lie(\SU(2)/\mu_2)\otimes_{\R}\C\simeq \mathrm{Sym}^2(\C^2)$. We thus have a $K^0$-equivariant projection
\begin{equation}\label{defn-prK}
\mathrm{pr}_K\colon \Lie(\SO(V))\to \Lie(K^0)\to \Lie(\SU(2)/\mu_2)\to \mathrm{Sym}^2(\C^2).
\end{equation}
Given $X\in \mathrm{Sym}^2(\C^2)$, let $X^\ell$ denote the element of $\mathbf{V}_\ell$ obtained by raising $X$ to the $\ell$-th power.  The $\C$-linear pairing $\{ \,,\,\}_{K^0}$ on $\mathbf{V}_{\ell}$ can be normalized to be positive-definite when restricted to the (real) Lie algebra $\su_2^{dist}$ of the distinguished $\SU(2)$.  For $u \in \su_2^{dist}$, we write $\|u\|^2=\{u, u\}_{K^0}$. If $[v_1,v_2]\in V(\R)\times V(\R)$ is such that $v_1$ and $v_2$ span a positive definite two-plane then $\mathrm{pr}_K(v_1\wedge v_2)\neq 0$ \cite[Lemma 3.2.2]{pollackCuspidal}, thus we may define
\begin{equation}\label{eqn:Bv1v2Def}
B_{[v_1,v_2]}\colon \SO(V)(\R)\to \mathbf{V}_{\ell}, \quad g\mapsto \frac{\mathrm{pr}_K(\mathrm{Ad}(g^{-1})\cdot v_1\wedge v_2)^\ell}{\|\mathrm{pr}_K(\mathrm{Ad}(g^{-1})\cdot v_1\wedge v_2)\|^{2\ell+1}}.
\end{equation}
\begin{prop}\emph{\cite[Corollary 3.3.7]{pollackCuspidal}}
\label{Proposition-Explicit-Poincare-Lift}
Suppose $\ell\geq 16$ is even.
Let $T$ denote a half-integral, $2$-by-$2$, positive definite matrix.
With notation as in \eqref{Definition-S(T1,T2)}, suppose $[v_1,v_2]\in V(\Z)^{\oplus 2}$ satisfies $S([v_1,v_2])=T$. Write $\Gamma=\SO(V)(\Q)\cap G(\widehat{\Z})$ and consider the identification 
\[
    \Gamma\backslash\SO(V)(\R)\xrightarrow{\sim} \SO(V)(\Q)\backslash \SO(V)(\A)/G(\widehat{\Z}), \quad \Gamma g_{\infty}\mapsto \SO(V)(\Q)g_{\infty}G(\widehat{\Z}).
\]
Then as a function on $\Gamma\backslash \SO(V)(\R)$, the theta lift $\theta^{\ast}(P_{T,\ell})$ is proportional to the function 
\begin{equation}
\label{definition-poincare-lift}
Q_{T,\ell}\colon\Gamma\backslash \SO(V)(\R)\to \mathbf{V}_\ell, \qquad \Gamma g_{\infty}\mapsto \sum_{\substack{[v_1,v_2]\in V(\Z)\times V(\Z)\\S([v_1,v_2])=T}}B_{[v_1,v_2]}(g_{\infty}).
\end{equation}
\end{prop}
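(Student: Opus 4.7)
The plan is to compare both sides by pairing against arbitrary cuspidal quaternionic test forms $\varphi \in \mathcal{A}_{0,\Z}(G,\ell)$, exploiting the reproducing property of the Poincar\'e series $P_{T,\ell}$ together with Petersson adjointness of the theta correspondence. The point is that $P_{T,\ell}$ extracts the $T$-th classical Fourier coefficient, while $Q_{T,\ell}$ is assembled from functions $B_{[v_1,v_2]}$ that serve as reproducing kernels for the $[v_1,v_2]$-th quaternionic Fourier coefficient; these two extractions are tied together by the reverse theta lift formula, which writes the $T$-th Fourier coefficient of $\theta_{\ast}(\varphi)$ as a sum of $a_\varphi([v_1,v_2])$ over pairs with $S([v_1,v_2])=T$.

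First I would check that, for $\ell \geq 16$ even, $Q_{T,\ell}$ is a weight $\ell$ cuspidal quaternionic modular form on $\Gamma \backslash G(\R)$. Convergence of the Poincar\'e-type sum comes from the inverse $\ell$-th power decay of $B_{[v_1,v_2]}$ and the finiteness of $\Gamma$-orbits in $\{[v_1,v_2]\in V(\Z)^{\oplus 2}\colon S([v_1,v_2])=T\}$; the $K^0$-equivariance and $D_\ell Q_{T,\ell}\equiv 0$ are inherited from the individual $B_{[v_1,v_2]}$, and cuspidality follows once one checks that the Fourier expansion of $Q_{T,\ell}$ is supported on pairs $[T_1,T_2]\succ 0$.

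Next, the argument hinges on two reproducing properties: (i) $\langle P_{T,\ell},H\rangle_{\Sp_4} \propto \overline{a_H(T)}$ for any cuspidal $H\in S_\ell(\Sp_4(\Z))$, which is classical; and (ii)
\[
\int_{\Gamma\backslash G(\R)}\bigl\{\varphi(g),\,\overline{B_{[v_1,v_2]}(g)}\bigr\}_{K^0}\,dg \;=\; c_\ell\cdot\overline{a_\varphi([v_1,v_2])}
\]
for any positive pair $[v_1,v_2]\in V(\Z)^{\oplus 2}$ and any cuspidal $\varphi$, with constant $c_\ell$ depending only on $\ell$. Granting these, Petersson adjointness of $\theta^{\ast}$ together with the reverse theta lift formula (the analogue of \eqref{fouriercoefficients} from $G$ back to $\Sp_4$) gives
\[
\langle \theta^{\ast}(P_{T,\ell}),\varphi\rangle_G \;\propto\; \sum_{[v_1,v_2]\in V(\Z)^{\oplus 2}\colon S([v_1,v_2])=T}\overline{a_\varphi([v_1,v_2])} \;\propto\; \langle Q_{T,\ell},\varphi\rangle_G,
\]
and nondegeneracy of the Petersson pairing on cuspidal quaternionic modular forms forces $\theta^{\ast}(P_{T,\ell})=c\cdot Q_{T,\ell}$ for some nonzero $c$.

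The main obstacle is establishing (ii). It is proved by unfolding the integral onto the $\Gamma\cap P$-orbit of $[v_1,v_2]$ and substituting the explicit Whittaker function formula \eqref{K-Bessel-Magic} for $\mathcal{W}_{[2\pi v_1,2\pi v_2]}$. After integrating out the compact and unipotent factors, the computation reduces to $\SU(2)$-invariant inner products of the monomials $x^{\ell+v}y^{\ell-v}$ against $\mathrm{pr}_K(v_1\wedge v_2)^{\ell}$, together with the standard K-Bessel identity $\int_0^\infty K_v(x)^2\,\tfrac{dx}{x}$. Carefully tracking the $\det(m)^{\ell}|\det(m)|$ prefactor in \eqref{K-Bessel-Magic} against the normalization by $\|\mathrm{pr}_K(\mathrm{Ad}(g^{-1})v_1\wedge v_2)\|^{2\ell+1}$ in the definition of $B_{[v_1,v_2]}$ is what makes $c_\ell$ depend only on $\ell$; this archimedean bookkeeping is the technical heart of the argument.
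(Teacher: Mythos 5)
There is a genuine gap, and it sits exactly at your key identity (ii). The kernel $B_{[v_1,v_2]}$ is \emph{not} a Fourier-coefficient (Whittaker-type) kernel: since $h\in H_{v_1,v_2}(\R)$ fixes $v_1\wedge v_2$, the function $B_{[v_1,v_2]}$ is left $H_{v_1,v_2}(\R)$-invariant, and it is not equivariant under $N_P(\R)$ against the character $\varepsilon_{[v_1,v_2]}$ (that role is played by the generalized Whittaker function $\mathcal{W}_{[2\pi v_1,2\pi v_2]}$ of Theorem \ref{Thm 1.2.1 Aaron Paper}, a different function). Consequently the pairing of $\varphi$ against $Q_{T,\ell}$ unfolds not to Fourier coefficients but to \emph{periods}: $\langle \varphi, Q_{v_1,v_2;\ell}\rangle = C_{v_1,v_2}\,P_{v_1,v_2}(\varphi)$, an integral of $\varphi$ over $H_{v_1,v_2}(\Z)\backslash H_{v_1,v_2}(\R)$ --- this is precisely Theorem \ref{thm:PeriodQ} of the paper, and these periods are genuinely different functionals from the $a_\varphi([v_1,v_2])$ of Definition \ref{Definition-of-FCs} (indeed, Theorem \ref{thm:introPeriod} says such periods vanish identically off the Saito--Kurokawa space, while Fourier coefficients do not). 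The ``reverse theta lift formula'' you invoke fails for the same reason: the standard Schr\"odinger-model computation expresses the $T$-th Fourier coefficient of the backward lift $\theta_*(\varphi)$ as a sum of periods of $\varphi$ over the stabilizers $H_{v_1,v_2}$, not as a sum of quaternionic Fourier coefficients. Two quick sanity checks expose the problem: your integral in (ii) over $\Gamma\backslash G(\R)$ is not even well defined, since $B_{[v_1,v_2]}$ is not left $\Gamma$-invariant; and the proposed sum $\sum_{S([v_1,v_2])=T}\overline{a_\varphi([v_1,v_2])}$ diverges, because level-one Fourier coefficients are constant on each (infinite) $\Gamma$-orbit inside $X_T$.

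Note also that the present paper does not prove this proposition at all; it is quoted from \cite[Corollary 3.3.7]{pollackCuspidal}, where the proof proceeds by directly unfolding the Poincar\'e series $P_{T,\ell}$ against an explicit (quaternionically projected) theta kernel, which produces the sum of the $B_{[v_1,v_2]}$ term by term. Your duality skeleton --- pair against all cusp forms, use cuspidality of both sides, adjointness of $\theta^*$, and the reproducing property of $P_{T,\ell}$ --- could in principle be repaired by systematically replacing ``Fourier coefficient of $\varphi$'' with ``period of $\varphi$ over $H_{v_1,v_2}$'' on both sides, but then the technical heart becomes the archimedean comparison of the period kernel $B_{[v_1,v_2]}$ with the theta kernel and the orbit-by-orbit matching of constants (essentially redoing \cite[\S 3.3]{pollackCuspidal} and Theorem \ref{thm:PeriodQ}); as written, the argument rests on an identification of periods with Fourier coefficients that is false.
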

\subsection{The Quaternionic Maass Spezialschar and Saito-Kurokawa Subspace}
\label{subsection-the-quaternionic-Maass-Spezialschar}
By Theorem \ref{pollack4.1.1} we have a lifting 
$$
\theta^{\ast}\colon S_{\ell}(\Sp(4,\Z))\to \mathcal{A}_{0,\Z}(G,\ell), \qquad F\mapsto \theta^{\ast}(F).
$$
The \textit{quaternionic Saito-Kurokawa subspace} $\mathrm{SK}_{\ell}\subseteq \mathcal{A}_{0,\Z}(G,\ell)$ is defined as the image of $\theta^{\ast}$.

\begin{definition}
    \label{defn-strong-primitivity}
    Suppose $\lambda\in M_2(\Z)^{\oplus 2}$. We say that $\lambda$ is \textit{strongly primitive} if 
    $$
    \{r\in \GL(2,\Q)\cap M_2(\Z) \colon \lambda r^{-1}\in M_2(\Z)^{\oplus 2}\}=\GL(2,\Z).
    $$
\end{definition}
\begin{remark}
\label{remark-section-5.3}
Note that if $\lambda=[T_1,T_2]\in M_2(\Z)^{\oplus 2}$ is strongly primitive and $F\in S_{\ell}(\Sp(4,\Z))$, then by Theorem \ref{pollack4.1.1}, $a_{\theta^{\ast}(F)}(\lambda)=\overline{a_F(S(\lambda))}$. This property is reminiscent of the classical Saito-Kurokawa subspace. Indeed, if $T$ is a primitive binary quadratic form and $F\in S_{\ell}(\Sp(4,\Z))$ is the theta lift of an anti-holomorphic form $\overline{f}$ on $\mathrm{Mp}(2)$ then $a_F(T)=\overline{a_{f}(\det(T))}$.
\end{remark}
As a corollary to Theorem \ref{pollack4.1.1} we deduce the following.
\begin{corollary}
\label{corollary-coarser-invariants}
 Suppose $\lambda_1, \lambda_2\in M_2(\Z)^{\oplus 2}$ are strongly primitive elements satisfying $S(\lambda_1)=S(\lambda_2)$. If $\varphi\in \mathrm{SK}_{\ell}$ then $a_{\varphi}(\lambda_1)=a_{\varphi}(\lambda_2)$.
\end{corollary}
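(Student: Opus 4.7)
The plan is to apply Theorem \ref{pollack4.1.1} directly and observe that strong primitivity collapses the sum in \eqref{fouriercoefficients} to a single term that depends on $\lambda$ only through the Gram matrix $S(\lambda)$. Since $\varphi \in \mathrm{SK}_\ell$, I begin by choosing $F \in S_\ell(\Sp_4(\Z))$ with $\varphi = \theta^*(F)$, which is possible by the definition of $\mathrm{SK}_\ell$ given in \S\ref{subsection-the-quaternionic-Maass-Spezialschar}.

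Next, for $i=1,2$, I unfold the Fourier coefficient formula at $\lambda_i$. The sum in \eqref{fouriercoefficients} ranges over cosets $r \in \GL_2(\Z)\backslash M_2(\Z)^{\det\neq 0}$ with $\lambda_i r^{-1} \in M_2(\Z)^{\oplus 2}$. A coset representative $r$ lies in $M_2(\Z)$ with nonzero determinant, hence in $\GL_2(\Q) \cap M_2(\Z)$, so the strong primitivity of $\lambda_i$ (Definition~\ref{defn-strong-primitivity}) forces $r \in \GL_2(\Z)$. Thus only the identity coset contributes, and the formula collapses to
\[
a_\varphi(\lambda_i) \;=\; \overline{a_F(S(\lambda_i))},
\]
exactly as recorded in Remark~\ref{remark-section-5.3}. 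The hypothesis $S(\lambda_1)=S(\lambda_2)$ then yields $a_\varphi(\lambda_1) = \overline{a_F(S(\lambda_1))} = \overline{a_F(S(\lambda_2))} = a_\varphi(\lambda_2)$.

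There is no serious obstacle here: the corollary is an immediate consequence of Theorem~\ref{pollack4.1.1} together with the definition of strong primitivity, whose purpose is precisely to make this reduction transparent. Conceptually, the point is that on the image of $\theta^*$, the Fourier coefficients at strongly primitive indices descend to functions of the $2\times 2$ symmetric matrix $S(\lambda)$, and this is exactly the kind of linear relation between quaternionic Fourier coefficients that will be used in the subsequent definition of the quaternionic Maass Spezialschar $\mathrm{MS}_\ell$ and in the proof of the inclusion $\mathrm{SK}_\ell \subseteq \mathrm{MS}_\ell$.
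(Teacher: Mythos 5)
Your proof is correct and follows exactly the paper's route: the paper deduces this corollary directly from Theorem \ref{pollack4.1.1}, noting (as in Remark \ref{remark-section-5.3}) that strong primitivity collapses the sum in \eqref{fouriercoefficients} to the single identity coset, giving $a_{\theta^*(F)}(\lambda)=\overline{a_F(S(\lambda))}$, which depends on $\lambda$ only through $S(\lambda)$.
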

\begin{lemma} 
\label{primitive-lemma}
Suppose $T=[a,b,c]$ is a half-integral symmetric matrix with $a,b,c\in \Z$. Then there exists a strongly primitive element $\lambda=[T_1,T_2]\in M_2(\Z)^{\oplus 2}$ such that $T=S(\lambda)$.
\end{lemma}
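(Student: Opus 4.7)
The plan is to exhibit the explicit pair $[T_1,T_2]$ suggested by Theorem \ref{thm:introSp4} and verify the two required conditions by a direct computation. Specifically, I would take
\[
T_1 = \mm{a}{0}{b}{1}, \qquad T_2 = \mm{0}{-1}{c}{0}.
\]
The equality $S([T_1,T_2]) = T$ reduces, via \eqref{Definition-S(T1,T2)} and the identity $(A,B) = \det(A+B) - \det(A) - \det(B)$ on $V_{2,2} = M_2(\Q)$, to the three numerical identities $\det(T_1) = a$, $\det(T_2) = c$, and $(T_1,T_2) = b$, all of which are immediate from a one-line matrix calculation.

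The substantive step is strong primitivity in the sense of Definition \ref{defn-strong-primitivity}. First I would unpack what the condition says concretely: the $\GL(U) = \GL_2$-action on $U^\vee \otimes V_{2,2}$ is the natural right action on the row $\lambda = [T_1,T_2]$, so if $r^{-1} = \mm{\alpha}{\beta}{\gamma}{\delta} \in M_2(\Q)$ then
\[
\lambda r^{-1} = \bigl(\alpha T_1 + \gamma T_2,\; \beta T_1 + \delta T_2\bigr).
\]
The key observation is that, with the choice above, the matrix $\alpha T_1 + \gamma T_2$ has $\alpha$ as its $(2,2)$-entry and $-\gamma$ as its $(1,2)$-entry, so requiring it to lie in $M_2(\Z)$ already forces $\alpha,\gamma \in \Z$. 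The analogous observation applied to $\beta T_1 + \delta T_2$ forces $\beta,\delta \in \Z$. Therefore $r^{-1} \in M_2(\Z)$, and together with $r \in M_2(\Z)$ this gives $r \in \GL_2(\Z)$.

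I do not anticipate any real obstacle: the proof is a short verification once the correct pair is in hand. The only design choice in the argument is to pick $T_1$ and $T_2$ so that all four entries of an arbitrary $r^{-1}$ can be read off from individual matrix entries of a single component of $\lambda r^{-1}$; the pair above is both the simplest such choice and the one naturally compatible with the $q$-series inversion of the Saito-Kurokawa lift appearing in Theorem \ref{thm:introSp4}.
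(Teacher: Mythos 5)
Your proposal is correct and takes essentially the same approach as the paper: exhibit an explicit pair with the prescribed value of $S$ whose individual matrix entries (here the $(2,2)$- and $(1,2)$-entries of each component of $\lambda r^{-1}$) reproduce the entries of $r^{-1}$, forcing $r^{-1}\in \mathrm{M}_2(\Z)$ and hence, since $r\in \mathrm{M}_2(\Z)$ too, $r\in\GL_2(\Z)$. The only difference is cosmetic: the paper works with the pair $\left[\mm{1}{0}{b}{a},\mm{0}{1}{-c}{0}\right]$ while you use the pair from Theorem \ref{thm:introSp4}; the verification is otherwise identical.
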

\begin{proof}
We let $T_1=\begin{pmatrix} 1&0 \\ -b &a \end{pmatrix}$ and $T_2=\begin{pmatrix} 0 &1 \\ -c &0 \end{pmatrix}$ so that if $\lambda=[T_1,T_2]$ then $S(\lambda)=T$. Suppose $r \in \GL(2,\Q)\cap M_2(\Z)$ satisfies $\lambda r^{-1}\in M_2(\Z)^{\oplus 2}$ and write $r^{-1}=\begin{pmatrix} w &x \\ y &z\end{pmatrix}$ with $w,x,y,z\in \Q$. Then 
$$\lambda r^{-1}=\left[
\begin{pmatrix} w & y \\ \ast & \ast
\end{pmatrix},
\begin{pmatrix} x & z \\ \ast & \ast
\end{pmatrix}\right]\in M_2(\Z)^{\oplus 2}. 
$$
So $x,y,z,w\in \Z$ which implies $\det(r^{-1})=\det(r)^{-1}\in \Z$. So $r\in \GL(2,\Z)$. \end{proof}
\begin{definition}
\label{definition-Quaternionic-Spezialschar}
    Define the \textit{quaternionic Maass Spezialschar} $\mathrm{MS}_{\ell}$ as the subspace of $\mathcal{A}_{0,\Z}(G, \ell)$ consisting of forms $\varphi$ satisfying conditions (1) and (2) below. 
    \begin{compactenum}
        \item[(1)] If $\lambda_1,\lambda_2\in M_2(\Z)^{\oplus 2}$ are strongly primitive and $S(\lambda_1)=S(\lambda_2)$ then 
        $$
        a_{\varphi}(\lambda_1)=a_{\varphi}(\lambda_2).
        $$
    \end{compactenum}
    Let $\lambda\in M_2(\Z)^{\oplus 2}$. By Lemma  \ref{primitive-lemma} there exists a strongly primitive element $\Breve{\lambda}\in M_2(\Z)^{\oplus 2}$ such that $S(\lambda)=S(\Breve{\lambda})$. If $\varphi\in \mathcal{A}_{0,\Z}(G, \ell)$ satisfies condition (1) then
    $$
    a_{\varphi}^{\mathrm{prim}}(\lambda):=a_{\varphi}(\Breve{\lambda})
    $$
    is well-defined independent of the choice of $\Breve{\lambda}$.
    \begin{compactenum}
        \item[(2)] If $\lambda=[T_1,T_2]\in M_2(\Z)^{\oplus 2}$ satisfies $[T_1,T_2]\succ 0$ (see (\ref{Definition-S(T1,T2)}))  then 
        \begin{equation}
        \label{eqn-quat-maass-relation}
        a_{\varphi}(\lambda)=
        \sum_{\substack{r\in \GL(2,\Z)\backslash (\GL(2,\Q)\cap M_2(\Z))\\ \lambda r^{-1}\in M_2(\Z)^{\oplus 2}}}|\det(r)|^{\ell-1}a_{\varphi}^{\mathrm{prim}}(\lambda r^{-1}).
        \end{equation}
    \end{compactenum}
\end{definition}

\begin{remark} Note that the two conditions of Definition \ref{definition-Quaternionic-Spezialschar} can be replaced by the single condition 
\[a_{\varphi}(\lambda)=
        \sum_{\substack{r\in \GL(2,\Z)\backslash (\GL(2,\Q)\cap M_2(\Z))\\ \lambda r^{-1}\in M_2(\Z)^{\oplus 2}}}|\det(r)|^{\ell-1}a_{\varphi}(\Breve{(\lambda r^{-1})}),\]
where if $\mu \in M_2(\Z)^{\oplus 2}$ has $S(\mu) = [a,b,c]$ then $\Breve{\mu} = \left[\begin{pmatrix} 1&0 \\ -b &a \end{pmatrix}, \begin{pmatrix} 0 &1 \\ -c &0 \end{pmatrix}\right]$.
\end{remark}

\begin{lemma}
\label{Lemma-SK-in-MS} If $\ell\geq 16$ is even, then we have an inclusion $\mathrm{SK}_{\ell}\subseteq \mathrm{MS}_{\ell}$. 
\end{lemma}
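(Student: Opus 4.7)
The plan is to verify conditions (i) and (ii) of Definition~\ref{definition-Quaternionic-Spezialschar} directly for any $\varphi = \theta^{\ast}(F)$ with $F \in S_\ell(\Sp_4(\Z))$, using the explicit Fourier coefficient formula \eqref{fouriercoefficients} from Theorem~\ref{pollack4.1.1}. The entire argument is a bookkeeping exercise; there is no serious obstacle, but one must carefully match the index sets and a Gram-matrix transformation identity.

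For condition (i), suppose $\lambda \in M_2(\Z)^{\oplus 2}$ is strongly primitive with $\lambda \succ 0$. By Definition~\ref{defn-strong-primitivity}, the only $r \in \GL_2(\Z)\backslash M_2(\Z)^{\det \neq 0}$ for which $\lambda r^{-1} \in M_2(\Z)^{\oplus 2}$ is the class of $r = I$. Hence the sum \eqref{fouriercoefficients} collapses, giving $a_{\theta^{\ast}(F)}(\lambda) = \overline{a_F(S(\lambda))}$. Since the right-hand side depends only on $S(\lambda)$, condition (i) holds.

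For condition (ii), fix $\lambda = [T_1,T_2]$ with $[T_1,T_2] \succ 0$. Applying Theorem~\ref{pollack4.1.1},
\[
a_{\theta^{\ast}(F)}(\lambda) = \sum_{r} |\det(r)|^{\ell-1}\, \overline{a_F({}^t r^{-1} S(\lambda) r^{-1})},
\]
the sum ranging over $r \in \GL_2(\Z)\backslash M_2(\Z)^{\det \neq 0}$ with $\lambda r^{-1} \in M_2(\Z)^{\oplus 2}$. A direct computation from \eqref{Definition-S(T1,T2)} shows the Gram-matrix identity $S(\lambda r^{-1}) = {}^t r^{-1} S(\lambda) r^{-1}$, and since $S(\lambda)$ is positive definite and $r \in \GL_2(\Q)$, so is $S(\lambda r^{-1})$, hence $\lambda r^{-1} \succ 0$. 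Choosing any strongly primitive $\Breve{(\lambda r^{-1})}$ (which exists by Lemma~\ref{primitive-lemma}) with the same $S$-value, part (i) applied to $\varphi = \theta^{\ast}(F)$ gives
\[
a_{\theta^{\ast}(F)}^{\mathrm{prim}}(\lambda r^{-1}) = a_{\theta^{\ast}(F)}(\Breve{(\lambda r^{-1})}) = \overline{a_F(S(\Breve{(\lambda r^{-1})}))} = \overline{a_F(S(\lambda r^{-1}))}.
\]
Substituting and identifying $\GL_2(\Z)\backslash M_2(\Z)^{\det \neq 0}$ with $\GL_2(\Z)\backslash (\GL_2(\Q) \cap M_2(\Z))$ yields exactly condition (ii).

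The only point worth flagging is the Gram-matrix identity $S(\lambda r^{-1}) = {}^t r^{-1} S(\lambda) r^{-1}$: writing $r^{-1} = \bigl(\begin{smallmatrix} a & b \\ c & d \end{smallmatrix}\bigr)$ and unpacking $\lambda r^{-1} = [aT_1 + cT_2,\, bT_1 + dT_2]$, both sides expand to $\frac{1}{2}$ times the obvious bilinear combinations of $(T_i, T_j)$. Once this is noted, the two conditions drop out of \eqref{fouriercoefficients} with no additional input.
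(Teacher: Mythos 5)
Your proof is correct and follows essentially the same route as the paper: condition (i) is obtained by collapsing the sum in \eqref{fouriercoefficients} for strongly primitive $\lambda$ (this is exactly the paper's Remark~\ref{remark-section-5.3} and Corollary~\ref{corollary-coarser-invariants}), and condition (ii) is obtained by matching the sum in Theorem~\ref{pollack4.1.1} with Definition~\ref{definition-Quaternionic-Spezialschar}(ii) via the identity $S(\lambda r^{-1})={}^{t}r^{-1}S(\lambda)r^{-1}$. No substantive differences from the paper's argument.
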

\begin{proof}
    Suppose $\varphi=\theta^{\ast}(F)\in \mathrm{SK}_{\ell}$. The fact that $\varphi$ satisfies property (1) of Definition \ref{definition-Quaternionic-Spezialschar} is the content of Corollary \ref{corollary-coarser-invariants}. It remains to show that $\varphi$ satisfies condition (2). Suppose $\lambda=[T_1, T_2]\in M_2(\Z)^{\oplus 2}$ is such that $T_1, T_2\succ 0$. Let $r\in \GL(2,\Q)\cap M_2(\Z)$ be such that $\lambda r^{-1}\in M_2(\Z)^{\oplus 2}$. By Theorem \ref{pollack4.1.1}, it suffices to show that 
    \begin{equation}
        \label{Proof-SK-inside-MS}
        a_{\varphi}^{\mathrm{prim}}(\lambda r^{-1})=\overline{a_F(^{t}r^{-1}S(\lambda)r^{-1})}.
    \end{equation}
    Since $S(\lambda r^{-1})=\ ^{t}r^{-1}S(\lambda)r^{-1}$, equality (\ref{Proof-SK-inside-MS}) follows from Theorem \ref{pollack4.1.1} (see Remark \ref{remark-section-5.3}). 
\end{proof}
\section{Dirichlet Series for \texorpdfstring{$L$}{L}-functions of Quaternionic Modular Forms}\label{sec:dirichlet}
In this section, we present a conjectural Dirichlet series for the $L$-functions of irreducible, cuspidal, quaternionic automorphic representations $\Pi$ of split $\mathrm{SO}(8)$ and show that this conjecture is satisfied by the Saito-Kurokawa lifts.  By a \emph{Dirichlet series} for an $L$-function of an automorphic representation $\Pi$, we mean a sum of the form $\sum_{n}{a_n n^{-s}}$ where the $a_n$ are given explicitly in terms of Fourier coefficients of an automorphic form in $\Pi$. 

\subsection{Conjecture on Dirichlet series}
We recall the formula of Andrianov \cite{And74} for the standard $L$-function of Siegel modular forms of full level on $\Sp(4)$. Let $f$ be a cuspidal Siegel modular Hecke eigenform of weight $\ell$ and level one with Fourier expansion $f(Z)=\sum a_f(T)\exp(2\pi i \mathrm{tr}(TZ))$, where the sum is taken over half-integral, positive definite, symmetric matrices $T$. Let $\pi$ be the automorphic representation generated by the automorphic function corresponding to $f$. For a fixed index $T$, let $S$ denote a finite set of primes containing those that divide $8\det(T)$ and let $\chi_T(\cdot)=(\cdot,-\det(T))_2$ be the automorphic character induced by the Hilbert symbol. Then
one has
\begin{equation}
\label{eq:dirichlet_sp4}
a(T) \frac{L^S(\pi,\mathrm{Std},s)}{\zeta^S(2s)L^S(\chi_T,s+1)} = \sum_{g \in M_2^S(\Z)/\GL(2,\Z)}{\frac{a(g^t T g)}{|\det(g)|^{s+ \ell-1}}}.
\end{equation}

Here $L^S(\pi,\mathrm{Std},s)$ is the partial standard $L$-function of $\pi$, $\zeta^S$ is the partial Riemann zeta-function, $L^S(\chi_T,s)$ is the partial Dirichlet $L$-function associated to $\chi_T$, and $M_2^S(\Z)$ denotes the $2 \times 2$ integer matrices with nonzero determinant, only divisible by primes away from $S$. See \cite[section 5.4]{PollackUG} for a modern reference and derivation.

To state the conjecture for Dirichlet series on split $\mathrm{SO}(8)$, let $\varphi$ be a cuspidal quaternionic modular Hecke eigenform on $\mathrm{SO}(8)$ of level one.  Let $\Pi$ denote the automorphic representation generated by $\varphi$; see Lemma \ref{lem:eigenHecke}.  Let $[T_1,T_2] \in M_2(\Z)^{\oplus 2}$ be strongly primitive with $T = S([T_1,T_2])$, and suppose $S$ is a finite set of primes containing all those that divide $8 \det(T)$. Following Definition \ref{Definition-of-FCs}, we write $a_\varphi([T_1,T_2])$ for the $[T_1,T_2]$-th Fourier coefficient of $\varphi$.  Finally, let $\zeta_{E}(s)$ denote the Dedekind zeta function of the quadratic field $E = \Q(\sqrt{-\det(T)})$ determined by the discriminant of $T$.

\begin{conjecture}\label{conj:dirichlet} Let the notation be as above, with $L^S(\Pi,\mathrm{Std},s)$ the partial standard $L$-function of $\Pi$.  Then for $\mathrm{Re}(s)$ sufficiently large, 
\[ a_\varphi([T_1,T_2])\frac{L^S(\Pi,\mathrm{Std},s)}{\zeta^S(2s) \zeta_E^S(s+1)} = \sum_{g \in M_2^S(\Z)/\GL(2,\Z)}\frac{a_\varphi([T_1,T_2] \cdot g)}{|\det(g)|^{s+\ell-1}}.\]
\end{conjecture}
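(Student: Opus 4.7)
Since the statement is framed as a conjecture, my plan is to verify it for Saito--Kurokawa lifts $\varphi = \theta^{\ast}(F) \in \mathrm{SK}_{\ell}$, as the paragraph opening Section~\ref{sec:dirichlet} promises. Three ingredients drive the argument: Andrianov's identity \eqref{eq:dirichlet_sp4} for $F$, the explicit Fourier coefficient formula of Theorem~\ref{pollack4.1.1}, and the factorization of $L^S(\Pi, Std, s)$ coming from the theta correspondence $\Sp_4 \rightsquigarrow \SO(8)$ where $\Pi$ is the representation generated by $\theta^{\ast}(F)$.

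First, I would expand the right-hand side of the conjecture. Writing $\lambda = [T_1, T_2]$ and using Theorem~\ref{pollack4.1.1}, each $a_{\varphi}(\lambda g)$ unfolds into a sum over $r \in \GL_2(\Z) \backslash M_2(\Z)^{\det \neq 0}$ with $\lambda g r^{-1} \in M_2(\Z)^{\oplus 2}$. Since $S(\lambda g) = {}^t g T g$, a change of variables $h = g r^{-1}$ replaces the matrix appearing inside $a_F$ by ${}^t h T h$. Strong primitivity of $\lambda$ (via Remark~\ref{remark-section-5.3}) makes the constraint $\lambda h \in M_2(\Z)^{\oplus 2}$ equivalent to $h \in M_2(\Z)$, so the double sum factors as
\[
\left( \sum_{r \in \GL_2(\Z)\backslash M_2^S(\Z)} |\det r|^{-s} \right) \left( \sum_{h \in M_2^S(\Z)/\GL_2(\Z)} \frac{\overline{a_F({}^t h T h)}}{|\det h|^{s + \ell - 1}} \right).
\]

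The inner $r$-sum is the standard $\GL_2$ Hecke series $\zeta^S(s)\zeta^S(s-1)$. The inner $h$-sum is the complex conjugate of Andrianov's identity \eqref{eq:dirichlet_sp4} at the primitive index $T = S(\lambda)$, evaluating to $\overline{a_F(T)} \cdot L^S(\pi, Std, s)/(\zeta^S(2s) L^S(\chi_T, s+1))$. Another appeal to Remark~\ref{remark-section-5.3} identifies $\overline{a_F(T)}$ with $a_{\varphi}(\lambda)$, and the factorization $\zeta^S_E(s+1) = \zeta^S(s+1) L^S(\chi_T, s+1)$ of the Dedekind zeta function collapses the ratio on the conjecture's left-hand side. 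Matching the two sides then reduces the problem to the $L$-function identity
\[
L^S(\Pi, Std, s) = \zeta^S(s-1)\,\zeta^S(s)\,\zeta^S(s+1)\,L^S(\pi, Std, s).
\]

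The main obstacle is proving precisely this factorization, which encodes the Langlands functoriality of the theta correspondence from $\Sp_4$ to $\SO_8$: the $L$-parameter of $\Pi$ should be the direct sum of that of $\pi$ (a $5$-dimensional parameter into $\SO_5 = \mathrm{Sp}_4^{\vee}$) with the three characters $|\cdot|^{-1}, 1, |\cdot|$, giving the $8$-dimensional standard parameter of $\SO_8$. I would establish this unramified identity either by a direct local Satake computation matching the Hecke eigenvalues of $\varphi = \theta^{\ast}(F)$ with those predicted by the lift, or by invoking the compendium of theta-lift results promised in Section~\ref{sec:Hecke}. A secondary, more bookkeeping-heavy obstacle is justifying rigorously that the change of variables $(g, r) \mapsto (h, r)$ with $g = hr$ is a bijection of coset spaces in the relevant sense; here one must verify that strong primitivity of $\lambda$ suffices to eliminate all spurious overcounts in the double coset reshuffling.
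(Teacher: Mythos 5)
Your proposal is correct and follows essentially the same route as the paper, which likewise verifies the conjecture only for $\varphi=\theta^{\ast}(F)$: unfold the Fourier coefficients (the paper does this through the Maass relations in Proposition \ref{prop:mass_dirichlet}, whose coset-counting argument together with Lemmas \ref{lem:superstrong} and \ref{lem:prim_Saction} is exactly the ``bookkeeping'' step you defer), sum the $r$-series to $\zeta^S(s)\zeta^S(s-1)$, and combine Andrianov's identity \eqref{eq:dirichlet_sp4} with the factorization $L^S(\Pi,Std,s)=\zeta^S(s-1)\zeta^S(s)\zeta^S(s+1)L^S(\pi,Std,s)$. The latter factorization, which you single out as the main obstacle, is not reproved in the paper but simply cited from \cite{Rallis1982}, so your plan matches the paper's proof in all essentials.
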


We call the right hand side the Dirichlet series for $\varphi$ at $[T_1,T_2]$ and denote it by $D_\varphi(T_1,T_2)(s)$. The conjecture was found by analyzing a hypothetical Rankin-Selberg integral for cusp forms on $G$ similar to that used in the proof of Andrianov's formula.  We hope to prove that this integral does in fact represent the standard $L$-function in a future work. At this point, we are able to provide some evidence for this conjecture as a consequence of Lemma \ref{Lemma-SK-in-MS} by considering the behavior of the $L$-function of the theta lift and the Maass relations.
\subsection{Evidence from the Maass Spezialschar}
Let $\varphi=\theta^*(F)$ be the Saito-Kurokawa lift of a cuspidal Siegel modular eigenform $F$ of weight $\ell$ and level one, let $\Pi$ be the automorphic representation of $\SO(8)$ generated by $\varphi$, and let $\pi$ be the automorphic representation of $\Sp(4)$ generated by the automorphic form corresponding to $F$. By properties of the theta lifting \cite{Rallis1982}, we have that $L(\Pi,\mathrm{Std},s) = L(\pi,\mathrm{Std},s) \zeta(s-1)\zeta(s) \zeta(s+1)$.  Proposition \ref{prop:mass_dirichlet} shows that the Dirichlet series associated to any element of the Maass Spezialschar $\mathrm{MS}_\ell$ factors in a similar way. Before we can prove such a proposition, we need a lemma about the action of $M_2(\Z)$ on $M_2(\Z)^{\oplus 2}$.

We use the following lemma regarding strong primitivity.
\begin{lemma}\label{lem:superstrong} Suppose $\lambda \in M_2(\Z)^{\oplus 2}$ is strongly primitive and $r \in \GL(2,\Q)$ satisfies $\lambda \cdot r \in M_2(\Z)^{\oplus 2}$.  Then $r \in M_2(\Z)$.\end{lemma}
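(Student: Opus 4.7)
The plan is to reduce to the diagonal case using Smith normal form, and then apply strong primitivity in the most naive way to each diagonal entry separately.

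First I would observe that strong primitivity is preserved under replacing $\lambda$ by $\lambda A$ for any $A \in \GL_2(\Z)$: indeed, if $s \in \GL_2(\Q)\cap \mathrm{M}_2(\Z)$ satisfies $(\lambda A)s^{-1} \in \mathrm{M}_2(\Z)^{\oplus 2}$, then $s' := sA^{-1} \in \mathrm{M}_2(\Z)\cap \GL_2(\Q)$ satisfies $\lambda (s')^{-1}\in \mathrm{M}_2(\Z)^{\oplus 2}$, so by strong primitivity of $\lambda$ we conclude $s' \in \GL_2(\Z)$ and hence $s \in \GL_2(\Z)$.

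Next, given $r \in \GL_2(\Q)$ with $\lambda r \in \mathrm{M}_2(\Z)^{\oplus 2}$, I would write $r = u/n$ with $u \in \mathrm{M}_2(\Z)$, $\det(u)\neq 0$, and $n \in \Z_{>0}$. Smith normal form furnishes $A, B \in \GL_2(\Z)$ with $u = A\,\mathrm{diag}(m_1, m_2)\,B$, so that $r = A\,D\,B$ with $D = \mathrm{diag}(d_1, d_2)$ and $d_i = m_i/n \in \Q^\times$. Using $B^{-1} \in \GL_2(\Z)$, the hypothesis $\lambda r \in \mathrm{M}_2(\Z)^{\oplus 2}$ becomes $(\lambda A) D \in \mathrm{M}_2(\Z)^{\oplus 2}$. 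Set $\lambda' := \lambda A$, which is still strongly primitive by the first paragraph. Writing $\lambda' = [T_1', T_2']$ and unfolding the right-action of $D$, we get $d_i T_i' \in \mathrm{M}_2(\Z)$ for $i=1,2$.

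Write each $d_i = p_i/q_i$ in lowest terms with $q_i \in \Z_{>0}$. Since $\gcd(p_i,q_i)=1$, the inclusion $d_i T_i' \in \mathrm{M}_2(\Z)$ forces $T_i' \in q_i\,\mathrm{M}_2(\Z)$. Now apply strong primitivity directly: the matrix $s = \mathrm{diag}(q_1, q_2)$ lies in $\mathrm{M}_2(\Z) \cap \GL_2(\Q)$, and
\[
\lambda' s^{-1} = [T_1'/q_1,\ T_2'/q_2] \in \mathrm{M}_2(\Z)^{\oplus 2}.
\]
Strong primitivity of $\lambda'$ therefore forces $s \in \GL_2(\Z)$, i.e.\ $q_1 = q_2 = 1$, so that $d_1, d_2 \in \Z$, $D \in \mathrm{M}_2(\Z)$, and finally $r = ADB \in \mathrm{M}_2(\Z)$.

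The only mild obstacle is bookkeeping: checking that strong primitivity really is invariant under the right $\GL_2(\Z)$-action (so that replacing $\lambda$ by $\lambda A$ is legitimate) and being careful about the convention for how $\GL_2$ acts on $V_{2,2}^{\oplus 2}$. Once these are in hand, the argument is elementary.
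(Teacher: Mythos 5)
Your proof is correct and follows essentially the same route as the paper's: both decompose $r$ via Smith normal form as $A\,\mathrm{diag}(d_1,d_2)\,B$ with $A,B\in\GL_2(\Z)$, use coprimality of numerator and denominator to extract the divisibility $T_i'\in q_i\,\mathrm{M}_2(\Z)$, and then apply strong primitivity to the integer matrix $\mathrm{diag}(q_1,q_2)$ (the paper conjugates it back to $k_2^{-1}\mathrm{diag}(n_1,n_2)k_1^{-1}$ and applies strong primitivity of $\lambda$ itself, whereas you first record the $\GL_2(\Z)$-invariance of strong primitivity and apply it to $\lambda A$ — a cosmetic difference). No gaps.
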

\begin{proof} We can write $r = k_1 d k_2$ with $k_j \in \GL(2,\Z)$ and $d = \mathrm{diag}(m_1/n_1, m_2/n_2)$ with $\mathrm{gcd}(m_1,n_1) = 1$, $\mathrm{gcd}(m_2, n_2) = 1$.  Suppose $\lambda \cdot k_1 = (\mu_1, \mu_2)$, so that $\mu_j \in M_2(\Z)$.  Then $\lambda \cdot r \in M_2(\Z)$ implies $\dfrac{m_1}{n_1}\mu_1 \in M_2(\Z)$ and $\dfrac{m_2}{n_2} \mu_2  \in M_2(\Z)$.  But from the fact that $\mathrm{gcd}(m_i,n_i) = 1$, we obtain $\dfrac{1}{n_1}\mu_1 \in M_2(\Z)$ and $\dfrac{1}{n_2}\mu_2 \in M_2(\Z)$.

Set $g = k_2^{-1} \mathrm{diag}(n_1, n_2) k_1^{-1}$.  Then $\lambda \cdot g^{-1} \in M_2(\Z)^{\oplus 2}$ by the previous line.  Hence $g \in \GL(2,\Z)$ so $n_1, n_2 = \pm 1$.  Consequently, $r \in M_2(\Z)$ as desired.
\end{proof}

\begin{lemma}\label{lem:prim_Saction}
     Let $S$ be our fixed finite set of primes and suppose $\lambda \in M_2(\Z)^{\oplus 2}$ is strongly primitive. Suppose $g\in M_2^S(\Z)$. If $r\in\GL(2,\Q)\cap M_2(\Z)$ with $\lambda\cdot g r^{-1}\in M_2(\Z)^{\oplus 2}$, then $r,\, gr^{-1}\in M_2^S(\Z)$.
\end{lemma}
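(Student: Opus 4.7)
The plan is to first use Lemma \ref{lem:superstrong} to obtain integrality of $gr^{-1}$, and then reduce the $S$-integrality claim to a multiplicativity argument on determinants.

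First I would observe that since $g \in \mathrm{M}_2^S(\Z)$ has nonzero determinant and $r \in \GL_2(\Q)$, the matrix $h := gr^{-1}$ lies in $\GL_2(\Q)$. Combined with the hypothesis $\lambda \cdot h = \lambda \cdot g r^{-1} \in \mathrm{M}_2(\Z)^{\oplus 2}$ and the strong primitivity of $\lambda$, Lemma \ref{lem:superstrong} immediately implies $h \in \mathrm{M}_2(\Z)$. So both $r$ and $gr^{-1}$ belong to $\GL_2(\Q) \cap \mathrm{M}_2(\Z)$, and in particular their determinants are nonzero integers.

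Next I would exploit the factorization $g = h \cdot r$, which yields the determinant identity
\[
\det(g) = \det(h)\det(r).
\]
Since $g \in \mathrm{M}_2^S(\Z)$, the integer $\det(g)$ is divisible only by primes outside $S$. Because $\det(h)$ and $\det(r)$ are nonzero integers whose product has this property, each of them individually is divisible only by primes outside $S$. This is precisely the condition that $r \in \mathrm{M}_2^S(\Z)$ and $gr^{-1} \in \mathrm{M}_2^S(\Z)$, finishing the argument.

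There is no real obstacle here: the whole proof is essentially a one-line application of Lemma \ref{lem:superstrong} followed by the trivial fact that a nonzero integer dividing a product of integers only involving primes outside $S$ must itself only involve primes outside $S$. The only subtlety worth flagging is the need to verify that $h \in \GL_2(\Q)$ (so that Lemma \ref{lem:superstrong} applies), which follows from $\det(g) \neq 0$ and $r \in \GL_2(\Q)$.
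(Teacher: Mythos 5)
Your proof is correct and follows essentially the same route as the paper: apply Lemma \ref{lem:superstrong} to conclude $gr^{-1}\in \mathrm{M}_2(\Z)$, then use the factorization $g=(gr^{-1})r$ and the multiplicativity of determinants to deduce that $\det(r)$ and $\det(gr^{-1})$ are supported away from $S$. No gaps; the flagged point that $gr^{-1}\in\GL_2(\Q)$ is indeed the only hypothesis needed to invoke Lemma \ref{lem:superstrong}.
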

\begin{proof} 
By Lemma \ref{lem:superstrong}, $gr^{-1} \in M_2(\Z)$. Then since $g = (gr^{-1}) r$ and because $\det(g)$ is a unit at primes in $S$, so are $\det(gr^{-1})$ and $\det(r)$.
\end{proof}

\begin{prop}\label{prop:mass_dirichlet}
Let $\varphi\in\mathcal{A}_{0,\Z}(G,\ell)$ be an element of the quaternionic Maass Spezialschar $\mathrm{MS}_\ell$ so that the Fourier coefficients $a_\varphi$ satisfy the conditions of Definition \ref{definition-Quaternionic-Spezialschar}. Then for a fixed $\lambda=[T_1,T_2]\in M_2(\Z)^{\oplus2}$, the Dirichlet series factors as
\begin{equation}\label{eq:maass_dirichlet}
D_\varphi(T_1,T_2)(s)=\sum_{r\in\GL(2,\Z)\backslash M_2^S(\Z)}|\det(r)|^{-s}\sum_{g\in M_2^S(\Z)/\GL(2,\Z)}\frac{a_\varphi^{\mathrm{prim}}(\lambda\cdot g)}{|\det(g)|^{s+\ell-1}}.
\end{equation}
\end{prop}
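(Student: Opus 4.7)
The plan is to substitute the Maass relations into $D_\varphi$ and then decouple the resulting double sum via a lattice-theoretic reindexing.

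First, I would apply condition~(ii) of Definition~\ref{definition-Quaternionic-Spezialschar} to expand $a_\varphi(\lambda g)$ inside $D_\varphi(T_1,T_2)$, obtaining a double sum over $g\in \mathrm{M}_2^S(\Z)/\GL_2(\Z)$ and $r\in \GL_2(\Z)\backslash(\GL_2(\Q)\cap \mathrm{M}_2(\Z))$ satisfying $(\lambda g)r^{-1}\in \mathrm{M}_2(\Z)^{\oplus 2}$. Since $\lambda$ is strongly primitive, Lemma~\ref{lem:superstrong} reduces this constraint to $gr^{-1}\in \mathrm{M}_2(\Z)$, and Lemma~\ref{lem:prim_Saction} further confines both $r$ and $h:=gr^{-1}$ to $\mathrm{M}_2^S(\Z)$. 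Writing $g=hr$ so that $|\det g|=|\det h|\,|\det r|$, the summand simplifies to $a_\varphi^{\mathrm{prim}}(\lambda h)/\bigl(|\det h|^{s+\ell-1}|\det r|^s\bigr)$, which separates cleanly into an $h$-factor and an $r$-factor.

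The main obstacle is justifying that the summation over the constrained pairs $(g,r)$ genuinely decouples into independent sums over $h$ and $r$. The naive map $(g,r)\mapsto(gr^{-1},r)$ is a set bijection on $\mathrm{M}_2^S(\Z)^2\cap\{gr^{-1}\in \mathrm{M}_2(\Z)\}$, but it is not equivariant for the $\GL_2(\Z)\times \GL_2(\Z)$-action by right and left multiplication (a change $g\mapsto gk$ alters $gr^{-1}$ by right multiplication by $rkr^{-1}$, which is generally not in $\GL_2(\Z)$), so it does not descend directly to cosets. My plan is to reinterpret the sum geometrically: right cosets in $\mathrm{M}_2^S(\Z)/\GL_2(\Z)$ biject with sublattices $L=g\Z^2\subseteq \Z^2$ of index prime to $S$, and left cosets in $\GL_2(\Z)\backslash \mathrm{M}_2^S(\Z)$ biject with sublattices $M=r^T\Z^2$ of the same type. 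The constraint $gr^{-1}\in \mathrm{M}_2(\Z)$ corresponds precisely to the data of a factorization $g=hr$, equivalently a nested pair $L\subseteq L'\subseteq \Z^2$ with $L'=h\Z^2$ and $[L':L]=|\det r|$.

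With this lattice parameterization in hand, I would swap the order of summation and sum first over $L\subseteq L'$ for each fixed $L'$. Because $L'\simeq \Z^2$ as an abstract $\Z$-module, the inner sum $\sum_{L\subseteq L'}[L':L]^{-s}$ (restricted to $[L':L]$ prime to $S$) is independent of $L'$ and evaluates to $\sum_{(n,S)=1}\sigma_1(n)n^{-s}=\zeta^S(s)\zeta^S(s-1)$. This same zeta product equals $\sum_{r\in \GL_2(\Z)\backslash \mathrm{M}_2^S(\Z)}|\det r|^{-s}$ by the classical Dirichlet-series count of finite-index sublattices of $\Z^2$. Factoring it out, the remaining outer sum over $L'$ is exactly $\sum_{h\in \mathrm{M}_2^S(\Z)/\GL_2(\Z)}a_\varphi^{\mathrm{prim}}(\lambda h)/|\det h|^{s+\ell-1}$, which produces the claimed factorization \eqref{eq:maass_dirichlet}.
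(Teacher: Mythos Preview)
Your proposal is correct and follows the same overall arc as the paper's proof: expand via the Maass relations, invoke Lemma~\ref{lem:prim_Saction} (and implicitly Lemma~\ref{lem:superstrong}) to reduce to $r,\,gr^{-1}\in \mathrm{M}_2^S(\Z)$, and then decouple the constrained double sum.

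The only genuine difference is in how the decoupling is carried out. The paper fixes coset representatives $\{g_j\}$ and $\{r_i\}$, groups the pairs $(i,j)$ by the right coset $h\GL_2(\Z)$ containing $g_j r_i^{-1}$ and by $d=|\det r_i|$, and then shows by a direct count that exactly $N_d$ such pairs land in each $(h,d)$-bin (using that $h\cdot\{m:|\det m|=d\}$ is a union of $N_d$ right $\GL_2(\Z)$-cosets). Your lattice reinterpretation is an equivalent repackaging: after fixing the representatives $g_j$, the map $(i,j)\mapsto (L,L')=(g_j\Z^2,\,g_j r_i^{-1}\Z^2)$ is a genuine bijection onto nested pairs $L\subseteq L'\subseteq \Z^2$ of $S$-prime index, and the inner sum over $L\subseteq L'$ is then immediately seen to be independent of $L'$ via $L'\simeq\Z^2$. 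This is exactly the paper's $N_d$-count, expressed lattice-theoretically. Your version is arguably cleaner conceptually; the paper's version is more hands-on and avoids needing to check that the lattice map really is a bijection (which, as you correctly flag, requires a moment's care since the naive coset-level map does not descend).
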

\begin{proof}
To prove this proposition, we calculate
\begin{align*}
D_\varphi(T_1,&T_2)(s)=\sum_{g \in M_2^S(\Z)/\GL(2,\Z)}\frac{a_\varphi(\lambda \cdot g)}{|\det(g)|^{s+\ell-1}}\\
&=\sum_{g \in M_2^S(\Z)/\GL(2,\Z)}|\det(g)|^{1-s-\ell}\left(\sum_{\substack{r\in \GL(2,\Z)\backslash (\GL(2,\Q)\cap M_2(\Z))\\\text{with }\lambda\cdot g r^{-1}\in M_2(\Z)^{\oplus 2}}}|\det(r)|^{\ell-1}a_{\varphi}^{\mathrm{prim}}(\lambda\cdot g r^{-1})\right)\\
&=\sum_{\substack{g\in M_2^S(\Z)/\GL(2,\Z)\\r\in \GL(2,\Z)\backslash M_2^S(\Z)\\\text{with }\lambda\cdot g r^{-1}\in M_2(\Z)^{\oplus 2}}}|\det(r)|^{-s}\frac{a_{\varphi}^{\mathrm{prim}}(\lambda\cdot g r^{-1})}{|\det(gr^{-1})|^{s+\ell-1}}\\
&=\sum_{d\in\Z_{>0}}\left(\sum_{\substack{r\in \GL(2,\Z)\backslash M_2^S(\Z)\\|\det(r)|=d}}|\det(r)|^{-s}\sum_{\substack{g\in M^S_2(\Z)/\GL(2,\Z)\\\lambda\cdot gr^{-1}\in M_2^S(\Z)^{\oplus 2}}}\frac{a_{\varphi}^{\mathrm{prim}}(\lambda\cdot g r^{-1})}{|\det(gr^{-1})|^{s+\ell-1}}\right),
\end{align*}
where the second line uses \eqref{eqn-quat-maass-relation}, the third is Lemma \ref{lem:prim_Saction}, and the last is grading by $|\det(r)|$. 

Now let $R_d=\{r_1,\dots,r_{N_d}\}$ be a fixed set of coset representatives for 
\[\{r\in\GL(2,\Z)\backslash M_2^S(\Z)\colon |\det(r)|=d\},\]
and let $\{g_j\colon j\in\Z_{>0}\}$ be a fixed set of coset representatives of $M_2^S(\Z)/\GL(2,\Z)$. Then applying Lemma \ref{lem:prim_Saction} again, the sum rearranges as
\begin{align*}
 D_\varphi(T_1,T_2)(s)&=\sum_{\substack{d\in\Z_{>0}\\h\in M_2^S(\Z)/\GL(2,\Z)}}d^{-s}\frac{a^{\mathrm{prim}}_\varphi(\lambda\cdot h)}{|\det(h)|^{s+\ell-1}}\left(\sum_{i\in\{1,\dots,N_d\},j\in\Z^+}\mathrm{char}_{h\GL(2,\Z)}(g_jr_i^{-1})\right).
\end{align*}
Here $\mathrm{char}_{h\GL(2,\Z)}$ is the characteristic function of the coset $h\GL(2,\Z)$. 

Recall that $N_d=|R_d|$ is the number of coset representatives of $\GL(2,\Z)\backslash M^S_2(\Z)$ of determinant $d$. We now show that for a given $h\in M_2^S(\Z)$,  
\[N_d=|\{(i,j)\colon r_i\in R_d,\, g_jr_i^{-1}\in h\GL(2,\Z)\}|,\]
and this will complete the proof. For a fixed element $r_i\in R_d$, let $$S(d,i)=\{g_j\colon g_j \in h\GL(2,\Z)r_i\}$$ be the set of fixed coset representatives $g_j$ in the double coset determined by $r_i$. As these cosets are disjoint, it remains to show that $N_d=\sum_{i=1}^{N_d}|S(d,i)|=|\bigcup_{i=1}^{N_d}S(d,i)|.$ As $R_d$ is a complete set of coset representatives, we have $\bigcup_{i=1}^{N_d}S(d,i)=h(M_2^S(\Z)^{|\det|=d})$, but this is a union of $N_d$ $\GL(2,\Z)$ cosets with $\GL(2,\Z)$ now acting on the right. Thus, there are exactly $N_d$ representatives $g_j$ in the set, and the Dirichlet series factors as in \eqref{eq:maass_dirichlet}.
\end{proof}
We will also need the following well-known fact.
\begin{lemma}\label{lem:detzeta}
 We have an equality of meromorphic functions, 
 $$\sum_{r\in\GL(2,\Z)\backslash M^S_2(\Z)}|\det(r)|^{-s}=\zeta^S(s)\zeta^S(s-1).$$
\end{lemma}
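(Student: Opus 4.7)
The plan is to use the Hermite normal form to parametrize the coset space $\GL_2(\Z) \backslash \mathrm{M}_2(\Z)^{\det \neq 0}$ by upper triangular matrices, and then reduce the Dirichlet series to a product of two classical zeta factors.

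First, I would invoke the standard fact that every $r \in \mathrm{M}_2(\Z)$ with $\det(r) \neq 0$ has a unique representative in its left $\GL_2(\Z)$-coset of the form
\[
\begin{pmatrix} a & b \\ 0 & d \end{pmatrix}, \qquad a, d \in \Z_{>0}, \quad 0 \leq b < d.
\]
Here $\det = ad$. Next, I would observe that the condition $r \in \mathrm{M}_2^S(\Z)$, i.e., that $\det(r) = ad$ is a product of primes outside $S$, is equivalent (since $a, d$ are positive integers) to requiring both $a$ and $d$ individually to be $S$-units.

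With this parametrization in hand, the Dirichlet series becomes
\[
\sum_{r\in\GL_2(\Z)\backslash\mathrm{M}^S_2(\Z)}|\det(r)|^{-s}
= \sum_{\substack{a, d \in \Z_{>0} \\ (ad, S) = 1}} \sum_{b=0}^{d-1} (ad)^{-s}
= \sum_{\substack{a \in \Z_{>0} \\ (a, S) = 1}} a^{-s} \;\cdot \sum_{\substack{d \in \Z_{>0} \\ (d, S) = 1}} d \cdot d^{-s},
\]
since for fixed $d$ there are exactly $d$ valid values of $b$. The two resulting sums are precisely $\zeta^S(s)$ and $\zeta^S(s-1)$ respectively, finishing the proof.

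There is essentially no obstacle here; this is a classical computation. The only point requiring a little care is justifying the rearrangement of the sum, which is fine in the region of absolute convergence ($\mathrm{Re}(s) > 2$); the claimed identity of meromorphic functions then follows by analytic continuation of both sides.
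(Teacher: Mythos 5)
Your proof is correct, and it is the standard Hermite-normal-form computation: the paper states this lemma as a ``well-known fact'' without proof, and the argument it implicitly relies on is exactly the one you give (unique coset representatives $\left(\begin{smallmatrix} a & b \\ 0 & d\end{smallmatrix}\right)$ with $0 \le b < d$, so the number of cosets of determinant $n$ is $\sigma_1(n)$, whose Dirichlet series is $\zeta(s)\zeta(s-1)$, restricted to $S$-units). Nothing further is needed.
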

\begin{proof}
    For each determinant $d$, there are $\sigma(d) = \sum_{r|d}{r}$ orbits of $\GL(2,\Z)$ on the set of determinant $\pm d$ matrices in $M_2^S(\Z)$. Observing that the Dirichlet series of $\sigma$ is the convolution of 1 with the identity function, we have the result.
\end{proof}
\begin{corollary}
Let $F$ be a cuspidal Siegel modular eigenform of weight $\ell$, genus $2$, and level one. Then Conjecture \ref{conj:dirichlet} holds for $\varphi=\theta^*(F)$.
\end{corollary}
\begin{proof}
    By Lemma \ref{Lemma-SK-in-MS}, $\theta^*(F)$ is in the Maass Spezialschar and thus for any choice of strongly primitive $[T_1,T_2]\in M_2(\Z)^{\oplus 2}$ the Dirichlet series factors as \eqref{eq:maass_dirichlet} by Proposition \ref{prop:mass_dirichlet}. Now by Lemma \ref{primitive-lemma} and Theorem \ref{pollack4.1.1} we have that $a_{\theta^*(F)}^{\mathrm{prim}}(\lambda\cdot g)=\overline{a_F(S(\lambda)\cdot g)}$ for all $g\in M_2^S(\Z)$. We note that the Hecke eigenvalues of $F$ are totally real as the Hecke operators for $\Sp(4,\Z)$ are self-adjoint. Hence, by Lemma \ref{lem:detzeta} and \eqref{eq:dirichlet_sp4}
    \begin{align*}
     D_{\theta^*(F)}(T_1,T_2)(s)&=\sum_{r\in\GL(2,\Z)\backslash M_2^S(\Z)}|\det(r)|^{-s}\sum_{g\in M_2^S(\Z)/\GL(2,\Z)}\frac{\overline{a_F(S(\lambda)\cdot g)}}{|\det(g)|^{s+\ell-1}}\\
     &=\zeta^S(s)\zeta^S(s-1)\overline{a_F(S(\lambda))} \frac{L^S(\pi_F,\mathrm{Std},s)}{\zeta^S(2s)L^S(\chi_{S(\lambda)},s+1)}\\
     &=a_{\theta^*(F)}(\lambda)\frac{L^S(\Pi_{\theta^*(F)},\mathrm{Std},s)}{\zeta^S(s+1)\zeta^S(2s)L^S(\chi_{S(\lambda)},s+1)}.
    \end{align*}
The last equality follows from \cite{Rallis1982}, which implies that 
\[L^S(\Pi_{\theta^*(F)},\mathrm{Std},s) = \zeta^S(s)\zeta^S(s-1) \zeta^S(s+1) L^S(\pi_F,\mathrm{Std},s).\]
To complete the proof, we note that $\zeta^S_E(s)=\zeta^S(s)L^S(\chi_{S(\lambda)},s)$. 
\end{proof}
\section{The Fourier-Jacobi coefficient}\label{sec:FJ}
In this section, we complete the proofs of some of the main theorems of the paper, namely Theorems~\ref{thm:introFJgeneral},  \ref{thm:introSp4}, and \ref{thm:intoSKMS}.  The main technical result of this section is Proposition \ref{Archimedean-Integral}.  This proposition, in conjunction with Corollary \ref{cor-partial-FE-Xi}, is used to complete the proofs of these results.

 \subsection{Fourier Coefficients along the Siegel parabolic subgroup.}
 \label{Fourier Coefficients along NR}
 Let $\varphi\colon G(\Q)\backslash G(\A) \to \mathbf{V}_{\ell}$ denote a vector-valued automorphic form on $G(\A)$ such that $\varphi(gk)=k^{-1}\varphi(g)$ for all $k\in K^0$ and $g\in G(\A)$. The notation $k^{-1}\varphi(g)$ is explained in Definition~\ref{defn:QMFs1}.\\
 \indent Recall the inclusion $V_{2,n}\subseteq V_{3,n+1}$ as well as the fixed positive definite two-plane $V_2^+\subseteq V_{2,n}$. Recall also the orthogonal basis for $V_2^+$ defined in Section \ref{sec:notation} consisting of vectors $y_0$ and $y_1$ such that $(y_0,y_0)=(y_1,y_1)=2$ and the character $\chi_{y_0}$ defined in \eqref{characters-of-NQ}. Let $\xi^{\varphi}$ denote the Fourier coefficient of $\varphi$ along $N_Q$ corresponding to the character $\chi_{y_0}$ as in \eqref{defn-general-FC}, and let $H$ denote the stabilizer of $y_0$ in $\SO(V_{3,n+1})\leq M_Q$. Then $\xi^{\varphi}$ defines an automorphic function
 $$
 \xi^{\varphi}\colon H(\A)\to \mathbf{V}_{\ell}, \qquad \xi^{\varphi}(h)=\int_{V_{3,n+1}(\Q)\backslash V_{3,n+1}(\A)}\psi^{-1}((v,y_0))\varphi(\exp(b_1\wedge v)h)\,dv. 
 $$
 Let $V_{2,n+1}$ be the orthogonal complement of $\Q y_0$ in $V_{3,n+1}$. Through its action on $$V_{3,n+1}/ \Q y_0\simeq V_{2,n+1},$$ the group $H$ is identified with $\SO(V_{2,n+1})$.  \\
 \indent Write $R=M_R\cdot N_R$ for the parabolic subgroup in $H$ stabilizing the line $\Q b_2$. The reader is referred to Section~\ref{subsec:HMFS} for notation concerning the group $H$.
The following lemma gives a ``soft" formula for the Fourier coefficients of $\xi^{\varphi}$ along the unipotent radical $N_R\simeq V_{1,n}$. We remind the reader that $y_0=b_3+b_{-3}$.
\begin{lemma}
\label{Soft Formula}
Let $T\in V_{1,n}$ and write $\xi^{\varphi}_{T}$ to denote the Fourier coefficient of $\xi^{\varphi}$ along $N_R$ corresponding to the character 
$$
\eta_{T}\colon N_R(\A)\to \C^1, \qquad \exp(b_2\wedge v)\mapsto \psi((T,v)).
$$
If $\varphi_{N_P, \varepsilon_{[y_0,T]}}(g)=\displaystyle{\int_{N_P(\Q)\backslash N_P(\A)}}\varepsilon_{[y_0,T]}^{-1}(n)\varphi(ng)dn$ and $h\in H(\A)$ then 
$$
\xi^{\varphi}_T(h)=\displaystyle{\int_{\A}}\varphi_{N_P, \varepsilon_{[y_0,T]}}(\exp(sb_1\wedge b_{-2})h)ds
$$
\end{lemma}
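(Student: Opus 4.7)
The approach is to express both sides as Fourier coefficients of $\varphi$ along related unipotent subgroups of $G$ and to identify them via Fourier expansion in a one-parameter quotient. I would begin by combining the two successive Fourier coefficients defining $\xi^\varphi_T(h)$ into a single Fourier coefficient along $N_Q\cdot N_R$. This is legitimate because $N_R\leq H\leq M_Q$ normalises $N_Q$, and because the character $\chi_{y_0}$ is $\mathrm{Ad}(N_R)$-invariant on $N_Q$: a direct Lie-algebra calculation shows that $[b_2\wedge v, b_1\wedge w]$ for $v\in V_{1,n}$ and $w = ab_2+w_0+bb_{-2}\in V_{3,n+1}$ lies in $b_1\wedge V_{2,n}+Z$, with the $b_1\wedge v$-contribution killed by $\chi_{y_0}$ (since $(v,y_0)=0$) and the $Z$-contribution killed because $\chi_{y_0}|_Z$ is trivial.

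Next I would unwind. Decomposing $N_Q = Z\cdot U_1\cdot\{u(s)\}$ with $U_1 = \exp(b_1\wedge V_{2,n})$ (the three subgroups pairwise commute), integrating out the $Z$-variable to obtain $\varphi_Z$, and exploiting the commutation $u(\sigma)\exp(b_2\wedge v) = \exp(b_2\wedge v-\sigma b_1\wedge v)u(\sigma)\cdot z'$ (with $z'\in Z(\A)$ absorbed by $\varphi_Z$) together with the substitution $w_0\mapsto w_0+\sigma v$ (which preserves the character $\psi(-(w_0,y_0))$ since $(v,y_0)=0$), I would rewrite
\[
\xi^\varphi_T(h) \;=\; \int_{\sigma\in\Q\backslash\A}\varphi_{N_1,\chi_1}(u(\sigma)h)\,d\sigma,
\]
where $N_1 = Z\cdot U_1\cdot N_R\subseteq N_P$ is the unipotent subgroup with Lie algebra $b_1\wedge(\Q b_2+V_{2,n})+b_2\wedge V_{1,n}$ and $\chi_1=\chi_{y_0}\boxtimes\eta_T$ is the natural combined character (trivial on $Z$). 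The $\Q$-periodicity of the integrand in $\sigma$ follows from the $\mathrm{Ad}(u(\sigma_0))$-invariance of $\chi_1$ for $\sigma_0\in\Q$, checked on generators using $V_{1,n}\perp y_0$.

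Finally I would identify the right-hand side via Fourier expansion in the direction $\exp(b_2\wedge\Q y_0)$. Observing that $N_P = N_1\cdot\exp(b_2\wedge\Q y_0)$ with $N_1\triangleleft N_P$ of codimension one, that $\varepsilon_{[y_0,T]}|_{N_1}=\chi_1$ (since $V_{1,n}\subseteq V_{2,n}$), and that $\varepsilon_{[y_0,T]}(\exp(tb_2\wedge y_0))=\psi(t(T,y_0))=1$ (since $(T,y_0)=0$), I would Fourier-expand the $\Q$-periodic function $t\mapsto\varphi_{N_1,\chi_1}(\exp(tb_2\wedge y_0)g)$ and match its $\alpha$-th Fourier mode to $\varphi_{N_P,\varepsilon_{[y_0,T+(\alpha/2)y_0]}}(g)$; setting $t=0$ gives $\varphi_{N_1,\chi_1}(g) = \sum_{\beta\in\Q}\varphi_{N_P,\varepsilon_{[y_0,T+\beta y_0]}}(g)$. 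Combined with the translation identity $\varphi_{N_P,\varepsilon_{[y_0,T+\beta y_0]}}(u(\sigma)h)=\varphi_{N_P,\varepsilon_{[y_0,T]}}(u(\sigma-\beta)h)$ (obtained by conjugation by $u(\beta)\in G(\Q)$ inside the defining integral, using $\varepsilon_{[y_0,T]}\circ\mathrm{Ad}(u(\beta))=\varepsilon_{[y_0,T-\beta y_0]}$), unfolding the sum over $\beta$ against $\int_{\Q\backslash\A}(\cdots)d\sigma$ reconstructs the stated formula. The main obstacle lies in this last Poisson-style unfolding: the individual function $\sigma\mapsto\varphi_{N_P,\varepsilon_{[y_0,T]}}(u(\sigma)h)$ is not itself $\Q$-periodic, so the right-hand side of the lemma must be understood via the $\Q$-periodisation naturally arising from the $N_1$-Fourier decomposition, and meticulously tracking the nilpotent commutators, BCH corrections, and the orthogonal decomposition $V_{2,n}=V_{1,n}\oplus\Q y_0$ throughout is the main technical demand.
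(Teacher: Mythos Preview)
Your approach is essentially the same as the paper's. The paper unwinds the definition of $\xi^\varphi_T(h)$ directly as an iterated integral, splits off the variable $s\in\Q\backslash\A$ corresponding to $\exp(sb_1\wedge b_{-2})$, Fourier-expands $\varphi_Z$ over all of $N_P(\Q)\backslash N_P(\A)/Z(\A)$, and uses orthogonality to see that only the characters $\varepsilon_{[y_0,T+ty_0]}$ with $t\in\Q$ survive; it then invokes the same translation identity $\varphi_{N_P,\varepsilon_{[y_0,T+ty_0]}}(u(s)h)=\varphi_{N_P,\varepsilon_{[y_0,T]}}(u(s+t)h)$ you derive, and unfolds. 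Your intermediate packaging via $N_1=Z\cdot U_1\cdot N_R\subset N_P$ followed by a one-parameter Fourier expansion in the $b_2\wedge y_0$-direction is just a two-step version of the paper's single Fourier expansion with orthogonality; the content is identical.

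Two minor points: your commutation $u(\sigma)\exp(b_2\wedge v)=\exp(b_2\wedge v-\sigma b_1\wedge v)u(\sigma)z'$ has the wrong sign (it should be $+\sigma b_1\wedge v$, as in the paper's formula $w\exp(b_2\wedge v')w^{-1}=\exp(b_1\wedge sv'+b_2\wedge v')$), and your translation identity likewise has a sign flip; neither affects the argument. Your observation that the integrand $s\mapsto\varphi_{N_P,\varepsilon_{[y_0,T]}}(u(s)h)$ is not $\Q$-periodic is correct: the unfolding produces $\int_{\A}$, consistent with the $\int_{\R}$ and $\int_{\A_f}$ appearing in the subsequent Corollary~\ref{cor-partial-FE-Xi}, and the $\Q\backslash\A$ in the displayed formula of the lemma is a typographical slip.
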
 
\begin{proof} Throughout the proof we adopt the convention that if $\mathcal{G}$ is an algebraic group over $\Q$ then $[\mathcal{G}]:=\mathcal{G}(\Q)\backslash \mathcal{G}(\A)$. Let $X=M_P\cap N_Q =\exp(\Q b_1 \wedge b_{-2})$ and fix $T\in V_{1,n}$.  Let $N_P^1 = \exp(b_1 \wedge V_{2,n})$ and let $V_{1,n}$ denote the orthogonal complement to the line $\Q y_0$ in $V_{2,n}$.  We have $N_Q = X N_{P}^1 Z$ and $N_R = \exp(b_2 \wedge V_{1,n})$.  Note that if $w \in N_R$, $x \in X$ and $n_1 \in N_P^1$ then 
\[
n_1 x w = n_1 \{x,w\} w x
\]
and the commutator $\{x,w\}$ satisfies $\chi_{y_0}(\{x,w\}) = 1$.  Thus
\begin{align}\label{eqnalign:FCsoftDef}
\xi_{T}^\varphi(h) &= \int_{[N_R]}\int_{[N_Q]}\varepsilon_{[0,T]}^{-1}(w)\chi_{y_0}^{-1}(u)\varphi(uwh)\,du\,dw \notag \\ 
&=\int_{[N_R]}\int_{[X]} \int_{[N_P^1]}\varepsilon_{[0,T]}^{-1}(w)\chi_{y_0}^{-1}(n_1)\varphi_Z(n_1 x wh)\,dn_1\,dx\,dw \notag \\ 
&=\int_{[X]}\int_{[N_R]} \int_{[N_P^1]}\varepsilon_{[0,T]}^{-1}(w)\chi_{y_0}^{-1}(n_1)\varphi_Z(n_1 w xh)\,dn_1\,dw\,dx.
\end{align}
For $g \in G(\A)$, we have
\begin{align}\label{eqnalign:FCpartial}
\int_{[N_R]}\int_{[N_P^1]}\varepsilon_{[0,T]}^{-1}(w)\chi_{y_0}^{-1}(n_1) \varphi_Z(n_1 wg)\,dn_1\,dw &= \sum_{\mu \in \Q}\varphi_{[y_0,\mu y_0 + T]}(g) \notag\\
&= \sum_{\delta \in X(\Q)}\varphi_{[y_0,T]}(\delta g).\end{align}
Plugging \eqref{eqnalign:FCpartial} into \eqref{eqnalign:FCsoftDef} gives the lemma.
\end{proof}
\indent The following corollary is a consequence of Lemma~\ref{Soft Formula} and Corollary~\ref{FE-CUSPIDAL-QMF}. 
\begin{corollary}
\label{cor-partial-FE-Xi}
Suppose $\varphi\colon G(\Q)\backslash G(\A)\to \mathbf{V}_{\ell}$ is a weight $\ell$ quaternionic modular form on $G$. Then $\xi^{\varphi}_{T}(h)\not\equiv 0$ only if $T\in V_{1,n}$  satisfies $(T,T)\geq 0$. Moreover, with notation as in (\ref{FE-QMF}) the Fourier expansion of $\xi^{\varphi}$ along $R$ takes the form 
\begin{equation}
\label{Partial-FE-of-Xi}
\xi^{\varphi}(h_fh_{\infty})=\sum_{T\in V_{1,n} \colon (T,T)\geq 0}a_T(\xi^{\varphi},h_f)\int_{\R}\mathcal{W}_{[2\pi y_0,2\pi T]}(\exp(s_{\infty}b_1\wedge b_{-2})h_{\infty})ds_{\infty}
\end{equation}
where $h_fh_{\infty}\in H(\A)$ and $a_T(\xi^{\varphi},h_f)=\displaystyle{\int_{\A_f}}a_{[y_0,T]}(\varphi,\exp(s_{f}b_1\wedge b_{-2})h_f)ds_f$.
\end{corollary}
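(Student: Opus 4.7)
The plan is to combine the three ingredients indicated in the statement: the Fourier expansion of $\xi^{\varphi}$ along $N_R$, Lemma~\ref{Soft Formula}, and the cuspidal Fourier expansion from Corollary~\ref{FE-CUSPIDAL-QMF}. First, since $N_R \simeq V_{1,n}$ is abelian and $N_R(\Q) \backslash N_R(\A)$ is compact, I would begin by writing the Fourier decomposition
\[
\xi^{\varphi}(h) = \sum_{T \in V_{1,n}} \xi^{\varphi}_T(h),
\]
so that it remains to analyze each $\xi^{\varphi}_T$ separately.

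Next I would apply Lemma~\ref{Soft Formula} to express $\xi^{\varphi}_T(h)$ as an integral of $\varphi_{N_P, \varepsilon_{[y_0, T]}}(\exp(s\, b_1 \wedge b_{-2}) h)$, and then invoke cuspidality of $\varphi$ via Corollary~\ref{FE-CUSPIDAL-QMF}: the integrand vanishes identically unless $[y_0, T] \succ 0$. The determinant condition in $\succ 0$ reads $(y_0, y_0)(T, T) - (y_0, T)^2 > 0$, and the hypothesis $y_0 \in V_{1,n}^{\perp}$ forces $(y_0, T) = 0$, so this collapses to $(T, T) > 0$; this yields the asserted support condition. The remaining positive semi-definiteness condition encoded in $\succ 0$ matches, via Proposition~\ref{Properties-of-beta}(ii), the orientation condition built into ``positive definite'' for $T \in V_{1,n}$; when the orientation is wrong, $\mathcal{W}_{[2\pi y_0, 2\pi T]}$ vanishes identically, so no terms are missed by writing the sum over the larger set $\{T : (T, T) > 0\}$.

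Finally, when $[y_0, T] \succ 0$, Corollary~\ref{FE-CUSPIDAL-QMF} provides the factorization
\[
\varphi_{N_P, \varepsilon_{[y_0, T]}}(g_f g_{\infty}) = a_{[y_0, T]}(\varphi, g_f)\, \mathcal{W}_{[2\pi y_0, 2\pi T]}(g_{\infty}).
\]
Substituting this into the integral from Lemma~\ref{Soft Formula}, writing $\A = \A_f \times \R$, and separating the two factors produces the displayed formula: the non-archimedean piece becomes the definition of $a_T(\xi^{\varphi}, h_f)$, and the archimedean piece is precisely the stated integral of $\mathcal{W}_{[2\pi y_0, 2\pi T]}$ over $\R$. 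The main obstacle is not conceptual but bookkeeping: one must interpret the integral from Lemma~\ref{Soft Formula} as an integral over $\A$ (via the unfolding that appears in its proof) in order for the integrand to factor as a genuine product over the finite and archimedean places.
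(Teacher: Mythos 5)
Your proposal is correct and follows essentially the same route as the paper, whose own justification is precisely the combination of Lemma \ref{Soft Formula} (read, via its unfolding step, as an integral over $\A$), the cuspidal support condition of Corollary \ref{FE-CUSPIDAL-QMF}, and the orthogonality $y_0\in V_{1,n}^{\perp}$, followed by splitting the adelic integral into its finite and archimedean factors. The only loose point is attributing the identification of the correct orientation (i.e.\ $(T,-y_1)>0$) to Proposition \ref{Properties-of-beta}(ii), which only gives the dichotomy between $[y_0,T]$ and $[T,y_0]$; pinning down which side occurs needs the explicit sign of $\beta_{[y_0,T]}$ (as in Claim \ref{claim1}), but since $\mathcal{W}_{[2\pi y_0,2\pi T]}\equiv 0$ when $[y_0,T]\not\succeq 0$, this does not affect the displayed expansion and matches the level of detail in the paper.
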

\subsection{The Archimedean Component of $\xi_T^{\varphi}$}
 In this subsection we further refine (\ref{Partial-FE-of-Xi}) and explain how our refinement can be used to obtain a scalar-valued holomorphic modular form on $H(\R)$. First we recall that $M_R$ is identified with the product $\mathbb{G}_m\times \SO(V_{1,n})$ via its action on the decomposition (\ref{orthogonalization-V(1,n)}). We write elements $m\in M_R$ as pairs $m=(t,u)$ with $t\in \mathbb{G}_m$ and $u\in \SO(V_{1,n})$. The coordinate $t\in \mathbb{G}_m$ is normalized so that $(t,1)\cdot b_2=tb_2$.
\begin{proposition}
\label{Archimedean-Integral}
Fix $T\in V_{1,n}$ such that $[y_0,T]\succeq 0$. If $h_{\infty}=(t,u)\in M_R(\R)^0$ with $t\in \R_{>0}$ and $u\in \SO(V_{1,n})(\R)^0$ then 
\begin{equation}
\label{Miracle}
\int_{\R}\mathcal{W}_{[y_0,T]}(\exp(s_{\infty}b_1\wedge b_{-2})h_{\infty})ds_{\infty}=  
\frac{\pi t^{\ell}e^{-(2-t(T, u\cdot y_1))}}{2}\sum_{-\ell\leq v\leq \ell}i^v\frac{x^{\ell+v}y^{\ell-v}}{(\ell-v)!(\ell+v)!}.
\end{equation}
\end{proposition}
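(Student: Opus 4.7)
The plan is to evaluate $\mathcal{W}_{[y_0,T]}$ pointwise on the integration contour using the explicit formula of Theorem~\ref{Thm 1.2.1 Aaron Paper}, reducing the problem to a one-variable Bessel-function integral.

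First I would argue that $\exp(s b_1\wedge b_{-2})h_\infty$ lies in the Levi $M_P(\R)^0$, so that \eqref{K-Bessel-Magic} applies. The element $b_1\wedge b_{-2}$ sends $b_2\mapsto b_1$ and $b_{-1}\mapsto -b_{-2}$ and annihilates all other basis vectors, so $\exp(sb_1\wedge b_{-2})$ corresponds to $(\mm{1}{s}{0}{1},1) \in \GL(U)\times \SO(V_{2,n}) = M_P$. Similarly, $h_\infty=(t,u)\in M_R\subseteq H\subseteq M_Q$ embeds into $M_P$ with $\GL(U)$-component $\mm{1}{0}{0}{t}$ and $\SO(V_{2,n})$-component acting trivially on $y_0$ and as $u$ on $V_{1,n}$. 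The product is then $(m_s,h_u)$ with $m_s=\mm{1}{st}{0}{t}$, so $\det(m_s)^\ell |\det(m_s)| = t^{\ell+1}$.

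Next, I would compute $\beta_{[y_0,T]}$ via \eqref{definition of beta}. Applying the inverse of this Levi element to $b_{-1}\otimes y_0+b_{-2}\otimes T$ gives $b_{-1}\otimes y_0 + b_{-2}\otimes(st\,y_0 + tu^{-1}T)$. Pairing with $b_1\otimes(v_1+iv_2)+b_2\otimes i(v_1+iv_2)$ under $B$ and using $v_1=y_0/\sqrt 2$, $v_2=y_1/\sqrt 2$, $(y_0,y_0)=2$, $(y_0,y_1)=0$, and $T\perp y_0$, a short direct calculation yields
\[
\beta_{[y_0,T]}\bigl(\exp(sb_1\wedge b_{-2})h_\infty\bigr) = -2st+i\bigl(2-t(T,uy_1)\bigr).
\]
Writing $A:=2-t(T,uy_1)$, the hypothesis $[y_0,T]\succeq 0$ combined with Proposition~\ref{Properties-of-beta} forces $A$ to have a constant nonzero sign on $M_R(\R)^0$, and we may assume $A>0$.

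Substituting into \eqref{K-Bessel-Magic}, interchanging the finite sum over $v$ with the integral over $s$ (legal by the rapid decay of $K_v$), and making the change of variable $\sigma=2ts$ reduces the $v$-th summand to
\[
\frac{t^\ell}{2}\int_{-\infty}^{\infty}\Bigl(\tfrac{-\sigma+iA}{\sqrt{\sigma^2+A^2}}\Bigr)^{\!v}K_v\!\bigl(\sqrt{\sigma^2+A^2}\bigr)\,d\sigma.
\]
The crux of the proof, and the step I expect to be the main obstacle, is the closed-form evaluation of this contour-type Bessel integral to $\pi i^v e^{-A}$, with the $v$-dependence factoring out cleanly as $i^v$. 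My approach would be to parametrize the line $w=-\sigma+iA$ in polar form $w=re^{i\phi}$ with $r=A/\sin\phi$, $\phi\in(0,\pi)$, insert the integral representation $K_v(r)=\tfrac12\int_0^\infty t^{v-1}e^{-r(t+t^{-1})}dt$, apply Fubini, and rescale $w\mapsto (t+t^{-1})w$ so as to separate the $\phi$- and $t$-integrations. The substitution $\phi\mapsto\phi-\pi/2$ extracts the phase $i^v$ and leaves an integrand whose imaginary part is odd in $\phi-\pi/2$, so only the cosine part survives; the remaining one-variable integral can then be evaluated by classical K-Bessel identities (together with Nicholson's formula for $K_{1/2}^2$), generalizing the classical identity $\int_\R K_0(\sqrt{\sigma^2+A^2})\,d\sigma=\pi e^{-A}$ from the case $v=0$. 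Assembling with the Jacobian from $\sigma=2ts$ and the prefactor $t^{\ell+1}$ from the Whittaker formula yields the right-hand side of~\eqref{Miracle}.
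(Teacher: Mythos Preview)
Your reduction to the scalar Bessel integral is correct and essentially identical to the paper's: the computation of $\beta_{[y_0,T]}(\exp(sb_1\wedge b_{-2})h_\infty)=-2st+i(2-t(T,uy_1))$, the embedding of $\exp(sb_1\wedge b_{-2})h_\infty$ into $M_P(\R)^0$, the factor $t^{\ell+1}$, and the change of variable $\sigma=2ts$ all match the paper's argument line by line. The positivity of $A=2-t(T,uy_1)$ is proved in the paper as a separate claim (Claim~1), exactly as you anticipate.

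Where you diverge is in evaluating the key integral
\[
\int_{-\infty}^{\infty}\Bigl(\tfrac{-\sigma+iA}{\sqrt{\sigma^2+A^2}}\Bigr)^{v}K_v(\sqrt{\sigma^2+A^2})\,d\sigma \;=\; \pi\, i^{v} e^{-A}.
\]
The paper does not use a contour or integral-representation argument. Instead it expands $(-\sigma+iA)^v$ binomially (odd powers of $\sigma$ drop by parity), applies the Sonine-type table integral \cite[6.596(3)]{BigBookIntegrals} to each term, and obtains a finite sum of half-integer Bessel functions $K_{v-(2k+1)/2}(A)$. These have the elementary closed form $K_{n+1/2}(z)=\sqrt{\pi/(2z)}\,e^{-z}\cdot(\text{polynomial in }1/z)$ \cite[8.468]{BigBookIntegrals}; substituting this and reorganizing yields a double sum that collapses via the finite-difference identity $\sum_{k=0}^m(-1)^k\binom{m}{k}F(k)=0$ for $\deg F<m$. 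This last combinatorial cancellation is the heart of the paper's Claim~2.

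Your polar substitution $\phi\mapsto\phi-\pi/2$ does cleanly extract the factor $i^v$ and kill the odd part, which is an elegant observation absent from the paper. But after that you are left with
\[
\int_{-\pi/2}^{\pi/2}\cos(v\psi)\,\frac{A}{\cos^2\psi}\,K_v\!\bigl(A/\cos\psi\bigr)\,d\psi \;=\; \pi e^{-A},
\]
whose $v$-independence is not at all transparent, and your proposed tools do not close the gap: the rescaling ``$w\mapsto(t+t^{-1})w$'' cannot separate the variables since $A$ is fixed while only $\sigma$ is integrated, and Nicholson's formula concerns products $J_\nu^2+Y_\nu^2$ rather than a single $K_v$ and is not relevant here. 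To finish along your lines you would still need either the half-integer closed form for $K_v$ (which brings you back to the paper's computation) or a recursion in $v$ combined with the $v=0$ base case you cite. As written, the proposal has the right skeleton but the crucial evaluation step is a genuine gap.
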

\begin{proof} To begin, we apply (\ref{K-Bessel-Magic}) to explicate the integrand in the left hand side of (\ref{Miracle}). Let $s\in \R_{>0}$ and recall that $h_{\infty}=(t,u)$ with $t\in \R_{>0}$ and $u\in \SO(V_{1,n})(\R)^0$. Then unraveling definitions one obtains
\begin{equation}
\label{computation-of-beta}
\beta_{[y_0,T]}(\exp(sb_1\wedge b_{-2})h_{\infty})
=-2st+i(2-t(T,u\cdot y_1)) 
\end{equation}
To simplify notation we write $w=t(T,u\cdot y_1)$. If $-\ell\leq v\leq \ell$ then (\ref{computation-of-beta}) together with a manipulation in elementary calculus gives 
\begin{align*}
 \int_{-\infty}^{\infty}&\left(\frac{\beta_{[y_0,T]}(\exp(sb_1\wedge b_{-2})h_{\infty})}{|\beta_{[y_0,T]}(\exp(sb_1\wedge b_{-2})h_{\infty})|}\right)^vK_v(|\beta_{[y_0,T]}(\exp(sb_1\wedge b_{-2})h_{\infty})|)ds \\
 &=\frac{1}{t}\sum_{k=0}^{\lfloor |v|/2\rfloor}\binom{|v|}{2k} (i\,\mathrm{sgn}(v)(2-w))^{|v|-2k}\int_{0}^{\infty}\frac{s^{2k}}{\sqrt{(s^2+|2-w|^2)^{|v|}}}K_{|v|}\left(\sqrt{s^2+|2-w|^2}\right)ds. 
\end{align*}
Hence the formula \cite[p. 693, 6.596(3)]{BigBookIntegrals} together with (\ref{K-Bessel-Magic}) implies 
\begin{align}
\label{Main-Archimedean-Integral}
\int_{\R}&\mathcal{W}_{[y_0,T]}(\exp(s_{\infty}b_1\wedge b_{-2})h_{\infty})ds_{\infty}   \nonumber\\ 
=&
t^{\ell}\sum_{-\ell\leq v\leq \ell}\left(\sum_{k=0}^{\lfloor |v|/2\rfloor}\binom{|v|}{2k} \frac{(i\,\mathrm{sgn}(v)(2-w))^{|v|-2k}2^{(2k-1)/2}\Gamma\left(\frac{2k+1}{2}\right)}{|2-w|^{|v|-(2k+1)/2}}\right.\nonumber \\ 
&\left.\times K_{|v|-(2k+1)/2}(|2-w|) \frac{x^{\ell+v}y^{\ell-v}}{(\ell-v)!(\ell+v)!}\right). 
\end{align}
\begin{claim} 
\label{claim1}
If $h_{\infty}\in M_R(\R)^0$ then $\mathrm{Im}(\beta_{[y_0,T]}(h_{\infty}))=2-w$ is positive.
\end{claim}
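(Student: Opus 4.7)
The plan is to establish the stronger pointwise inequality $(T, u \cdot y_1) < 0$; since $t \in \R_{>0}$, this immediately gives $w = t(T, u \cdot y_1) < 0$, and hence $2 - w > 2 > 0$ as required.

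First, I would observe that $y_1$ lies in $V_{1,n}$. Indeed, $y_1 \in V_2^+$ is orthogonal to $y_0$ by construction, and $V_{1,n}$ is the orthogonal complement of $\Q y_0$ inside $V_{2,n}$. The quadratic space $V_{1,n}(\R)$ has signature $(1,n)$, with $y_1$ a nonzero vector of positive norm $(y_1, y_1) = 2$. The positive-norm vectors in $V_{1,n}(\R)$ thus split into two open cones $C_+$ and $-C_+$ (interchanged by $v\mapsto -v$); let $C_+$ be the component containing $y_1$.

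Next, I would unwind the two group-theoretic hypotheses. The assumption that $T$ is positive definite in the sense of Section~\ref{subsec:HMFS} means $(T,T) > 0$ and $(T, -y_1) > 0$, which places $T$ in the opposite cone $-C_+$. On the other hand, the identity component $\SO(V_{1,n})(\R)^0$ is precisely the orthochronous special orthogonal group, which by definition preserves the decomposition of the positive-norm vectors into $C_+ \sqcup -C_+$. Hence $u \cdot y_1 \in C_+$.

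To finish, I would invoke the Lorentzian reverse Cauchy--Schwarz inequality: if $\alpha \in -C_+$ and $\beta \in C_+$, then $(\alpha, \beta) < 0$. Concretely, diagonalising the form on $V_{1,n}(\R)$ as $\mathrm{diag}(1, -I_n)$ and writing $\alpha = (\alpha_0, \vec{\alpha})$, $\beta = (\beta_0, \vec{\beta})$, we have $\alpha_0 < 0 < \beta_0$ with $|\vec{\alpha}| < |\alpha_0|$ and $|\vec{\beta}| < \beta_0$, so
\[
(\alpha,\beta) \;=\; \alpha_0\beta_0 - \vec{\alpha}\cdot\vec{\beta} \;\le\; \alpha_0\beta_0 + |\vec{\alpha}||\vec{\beta}| \;<\; \alpha_0\beta_0 + |\alpha_0|\beta_0 \;=\; 0.
\]
Applying this with $\alpha = T$ and $\beta = u\cdot y_1$ yields $(T, u\cdot y_1) < 0$ and completes the proof. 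There is no real obstacle; the only thing to be careful about is matching the sign conventions of ``positive definite'' with the orientation of the time-cone preserved by $\SO(V_{1,n})(\R)^0$. The extra hypothesis $[y_0,T]\succeq 0$ is consistent with (and, via Proposition~\ref{Properties-of-beta}(ii), essentially dictates) the choice that places $T$ in $-C_+$ rather than $C_+$, so it poses no difficulty.
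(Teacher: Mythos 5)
Your proof is correct, but it follows a genuinely different route from the paper. The paper deduces Claim \ref{claim1} from the hypothesis $[y_0,T]\succeq 0$: by \eqref{computation-of-beta}, vanishing of the imaginary part $2-t(T,u\cdot y_1)$ at some $(t,u)$ would force $\beta_{[y_0,T]}$ to vanish on $M_P(\R)^0$, contradicting positive semi-definiteness of the pair $[y_0,T]$; positivity then follows because the continuous, nonvanishing function $2-t(T,u\cdot y_1)$ on the connected set $\R_{>0}\times \SO(V_{1,n})(\R)^0$ cannot change sign (equivalently, $(T,u\cdot y_1)>0$ for some $u$ would produce a zero at $t=2/(T,u\cdot y_1)$). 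You instead ignore the hypothesis $[y_0,T]\succeq 0$ entirely and argue from the Lorentzian geometry of $V_{1,n}(\R)$, of signature $(1,n)$: positive definiteness of $T$ in the sense of Section \ref{subsec:HMFS} places $T$ in the time-cone opposite to $y_1$, the identity component $\SO(V_{1,n})(\R)^0$ preserves each cone, and the reverse Cauchy--Schwarz inequality gives $(T,u\cdot y_1)<0$, hence the stronger bound $2-w>2$. What each buys: the paper's argument is short given the machinery already in place and directly exploits the definition of $\succeq 0$ (cf.\ Proposition \ref{Properties-of-beta}); yours is self-contained, quantitatively sharper, and clarifies that only the positivity of $T$ (which is what Corollary \ref{cor-partial-FE-Xi} forces for cusp forms) is actually needed for this claim. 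One small polish: the assertion that $\SO(V_{1,n})(\R)^0$ is the orthochronous group is not ``by definition''; the cone-preservation you need follows simply from connectedness of the identity component together with the fact that the timelike vectors form two connected components.
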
  
\begin{proof}
To begin the proof of Claim~\ref{claim1} we note that if $h_{\infty}\in M_R(\R)^0$ satisfies $\mathrm{Im}(\beta_{[y_0,T]}(h_{\infty}))=0$ then \eqref{computation-of-beta} implies that $\beta_{[y_0,T]}(h_{\infty})=0$ which contradicts our assumption that $[y_0,T]\succeq 0$. This means that $2-t(T,u\cdot y_1)\neq 0$ for all $t\in \R_{>0}$ and $u\in \SO(V_{1,n})(\R)^0$, completing the proof of Claim~\ref{claim1}.
\end{proof}
\indent Given $X\in \R_{>0}$ and $-\ell\leq v\leq \ell$ define 
\begin{equation}
\label{sum-fun}
S_v(X):=\sum_{k=0}^{\lfloor |v|/2\rfloor}\binom{|v|}{2k} \frac{(i\mathrm{sgn}(v)X)^{|v|-2k}2^{(2k-1)/2}\Gamma\left(\frac{2k+1}{2}\right)}{X^{|v|-(2k+1)/2}}K_{|v|-(2k+1)/2}(X).
\end{equation}
Using Claim \ref{claim1}, the expression (\ref{Main-Archimedean-Integral}) simplifies to the form $t^{\ell}\sum_{-\ell\leq v\leq \ell}S_v(2-w)\frac{x^{\ell+v}y^{\ell-v}}{(\ell-v)!(\ell+v)!}$. Thus to complete the proof of Proposition~\ref{Archimedean-Integral} it remains to establish the following claim.
\begin{claim}
\label{claim2}
    If $-\ell\leq v\leq \ell$ then $S_v(X)=\frac{\pi e^{-X}i^v}{2}$.
\end{claim}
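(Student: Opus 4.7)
The plan is to reduce Claim \ref{claim2} to an elementary combinatorial identity and verify it via the finite-difference calculus.

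First, I observe that $(i\,\mathrm{sgn}(v))^{|v|-2k} = i^v\,(-1)^k$ for every integer $v$ and every $0\le k\le \lfloor |v|/2\rfloor$; indeed $(i\,\mathrm{sgn}(v))^2=-1$, and a case check (using $(-i)^{|v|}=i^{-|v|}$ for $v<0$) shows $(i\,\mathrm{sgn}(v))^{|v|}=i^v$. Combining this with the identity $\Gamma(k+1/2)=\sqrt{\pi}\,(2k-1)!!/2^k$ (with the convention $(-1)!!=1$), and setting $n=|v|$, allows me to factor $S_v(X)=i^v\sqrt{\pi/2}\,X^{1/2}\,T_n(X)$, where
\[
T_n(X)=\sum_{k=0}^{\lfloor n/2\rfloor}\binom{n}{2k}(-1)^k (2k-1)!!\, X^{-k}\, K_{n-k-1/2}(X).
\]
Claim \ref{claim2} is therefore equivalent to the statement $T_n(X)=K_{1/2}(X)$ for every $n\ge 0$.

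Next, I substitute the classical closed form
\[
K_{m+1/2}(X)=\sqrt{\pi/(2X)}\,e^{-X}\sum_{j=0}^{m}\frac{(m+j)!}{j!(m-j)!\,(2X)^j}\qquad(m\ge 0),
\]
together with $K_{-1/2}(X)=K_{1/2}(X)$. Factoring out $K_{1/2}(X)=\sqrt{\pi/(2X)}\,e^{-X}$, the target identity becomes the polynomial identity (in $X^{-1}$)
\[
\sum_{k=0}^{\lfloor n/2\rfloor}\binom{n}{2k}(-1)^k(2k-1)!!\,X^{-k}\sum_{j=0}^{n-k-1}\frac{(n-k+j-1)!}{j!(n-k-j-1)!\,(2X)^j}=1.
\]
The coefficient of $X^0$ on the left is trivially $1$ (from $k=j=0$), and the coefficient of $X^{-r}$ vanishes for $r\ge n$ since $k+j\le n-1$ throughout. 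For $1\le r\le n-1$ a short computation, using $(2k-1)!!=(2k)!/(2^k k!)$, shows that the coefficient of $X^{-r}$ is a nonzero rational multiple of
\[
F_{n,r}:=\sum_{k=0}^{\min(r,\lfloor n/2\rfloor)}(-1)^k\binom{r}{k}(n-2k+1)_{r-1},
\]
where $(x)_{r-1}:=x(x+1)\cdots(x+r-2)$ is the Pochhammer symbol.

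Finally, I extend the summation range in $F_{n,r}$ up to $k=r$. When $1\le r\le n-1$ and $\lfloor n/2\rfloor<k\le r$ one verifies that $2k-n\in\{1,\ldots,r-1\}$, so one factor of the Pochhammer symbol $(n-2k+1)_{r-1}$ vanishes; hence extending the range to $0\le k\le r$ does not alter the value of $F_{n,r}$. Now $P(k):=(n-2k+1)_{r-1}$ is a polynomial in $k$ of degree $r-1$, and the standard finite-difference identity $\sum_{k=0}^{r}(-1)^k\binom{r}{k}P(k)=0$, valid for every polynomial $P$ of degree less than $r$, yields $F_{n,r}=0$. This completes the proof.

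The main technical obstacle is the asymmetry of the summation range: the sum in $T_n(X)$ runs only up to $\lfloor n/2\rfloor$ (rather than up to $r$), so the finite-difference argument is not immediately applicable. The hypothesis $r\le n-1$ is precisely what allows one to extend the range of summation without changing the value, by forcing the extra terms to vanish through the Pochhammer symbol.
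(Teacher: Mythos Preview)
Your proof is correct and follows essentially the same route as the paper's: both expand the half-integer Bessel functions via the standard closed form, collect powers of $X^{-1}$, and kill the nonconstant coefficients by recognising them as $r$-th finite differences of a degree-$(r-1)$ polynomial. Your presentation is in fact a bit cleaner on one point: the paper silently reindexes the double sum via $m=j+k$ and writes $\sum_{j=0}^{m}$, which tacitly extends the range beyond the original constraint $k\le\lfloor v/2\rfloor$; you make this extension explicit and justify it by checking that the Pochhammer factor $(n-2k+1)_{r-1}$ vanishes on the extra terms.
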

\indent The formula \cite[p. 925, 8.468]{BigBookIntegrals} implies that $S_0(X)=\pi e^{-X}/2$ as required. Moreover, by inspection of (\ref{sum-fun}) we have $S_{-v}(X)=(-1)^vS_v(X)$. Hence we may assume that $1\leq v\leq \ell$. It follows from (loc. cit.) that
\begin{equation}
\label{Magic-Sum}
S_v(X)=
\frac{i^{v}\cdot \sqrt{\pi}\cdot e^{-X}}{\sqrt{2}X^{v-1}}\sum_{k=0}^{\lfloor v/2\rfloor}\sum_{j=0}^{v-k-1}(-1)^k\binom{v}{2k}\frac{(v-k-1+j)!2^{k-j-1/2}\Gamma(k+1/2)}{j!(v-k-1-j)!}X^{v-1-k-j}. 
\end{equation}
If $k\in \Z_{\geq 0}$ then $\Gamma(k+1/2)=(2k)!\sqrt{\pi}\cdot 4^{-k}/k!$. Therefore (\ref{Magic-Sum}) is equal to the expression 
\begin{equation}
\label{to-be-reindexed}
\frac{i^v\pi e^{-X}}{2\cdot X^{v-1}} \sum_{k=0}^{\lfloor v/2\rfloor}\sum_{j=0}^{v-k-1}(-1)^k\cdot \frac{v!(v-k-1+j)!2^{-k-j}}{(v-2k)!(v-k-1-j)!j!k!}X^{v-1-k-j}
\end{equation}
Let $m = j+k$.  We obtain that
\begin{equation}\label{eqn:SvXmSum}
S_v(X) = \frac{i^v \pi e^{-X}}{2 X^{v-1}} \left(\sum_{0 \leq k \leq m \leq v-1, 2k \leq v} (-1)^k \frac{v! (v-2k-1+m)! 2^{-m} X^{v-1-m}}{(v-2k)! (v-1-m)! k! (m-k)!}\right).
\end{equation}
Taking $m=0$ in \eqref{eqn:SvXmSum} gives $ \frac{\pi e^{-X}i^v}{2}$.  Thus to prove Claim \ref{claim2}, we check that the terms with $m \geq 1$ vanish.
\begin{claim}\label{claim:bigmVanish}
If $m \geq 1$, then
\[
\sum_{0 \leq k \leq m, 2k \leq v} \frac{ (m-1 + v-2k)! (-1)^k}{(v-2k)! k! (m-k)!} = 0.
\]
\end{claim}
\begin{proof} The sum to be considered is
\[\frac{1}{m} \sum_{0 \leq k \leq m} \binom{m}{k} \binom{m-1+v-2k}{m-1} (-1)^k.\]
But now $F(k) :=\binom{m-1+v-2k}{m-1}$ is a polynomial in $k$ of degree at most $m-1$, so $\sum_{k}{(-1)^k \binom{m}{k} F(k)} =0$ \cite[p. 190, (5.42)]{MR1001562}.\end{proof}
The proof of Claim \ref{claim2} is now complete, which completes the proof of Proposition \ref{Archimedean-Integral}. \end{proof}

Before proving the next corollary we introduce coordinates on the maximal compact subgroup $K_H:=K\cap H(\R)$ of $H(\R)$ and describe how the identity component $K_H^0$ acts on the representation $\mathbf{V}_{\ell}$. Let 
$W_2^+(\R)=\R\linspan\{u_2, v_2\}$ and let $W_{n+1}^-(\R)$ denote the orthogonal complement of $W_2^+(\R)$ inside $V_{2,n+1}(\R)$. Then
\begin{equation}
\label{SO(2,n+1)-max-compact}
V_{2,n+1}(\R)=W_2^+(\R)\oplus W_{n+1}^-(\R)
\end{equation}
is a decomposition of $V_{2,n+1}(\R)$ into definite subspaces. The subgroup $K_H$ is the stabilizer in $H(\R)$ of the decomposition (\ref{SO(2,n+1)-max-compact}). So $K_H$ has identity component $K_H^0\simeq \SO(2)\times \SO(n+1)$. To describe the action of $K_H^0$ on $\mathbf{V}_{\ell}$ we first note that $\SO(n+1)\leq \SO(V^-)$ and thus $\SO(n+1)$ acts trivially on $\mathbf{V}_{\ell}$. It remains to describe the action of the subgroup
\[
\SO(2)=\{\exp(\theta u_2\wedge v_2) \colon \theta\in \R\}.
\]
Since $u_2\wedge v_2=-\frac{1}{2}(e^+-f^+)+\frac{1}{2}(e'^+-f'^+)$ and $\{x,y\}$ is a weight basis for the action of the $\mathfrak{sl}_2$-triple $\{e^+, h^+, f^+\}$ we have that 
\[\exp(\theta u_2\wedge v_2)\cdot (-ix+y)^{2\ell}=e^{-i\ell\theta}(-ix+y)^{2\ell}.\]

Theorem~\ref{thm:introFJgeneral} is now proved with the following corollary.  Recall the $K^0$-invariant bilinear pairing $\{\,,\,\}_{K^0}$ defined in Section~\ref{subsection-compact-subgroups}.
\begin{corollary}
\label{Main Corollary}
Suppose $\varphi\colon G(\Q)\backslash G(\A)\to \mathbf{V}_{\ell}$ is a weight $\ell$ quaternionic modular form on $G$. 
Let $\xi^{\varphi}$ denote the $\mathbf{V}_{\ell}$-valued automorphic form obtained by restricting the $\chi_{y_0}$-th Fourier coefficient of $\varphi$ to the subgroup $H(\A)\leq M_Q(\A)$ (see Section~\ref{Fourier Coefficients along NR}). Then the function 
\[
\mathrm{FJ}_{\varphi}\colon H(\A)\to \C, \qquad h\mapsto  \overline{\{\xi^{\varphi}(h), (-ix+y)^{2\ell}\}_{K^0}}
\] 
is the automorphic function corresponding to a weight $\ell$ holomorphic modular form $f_{\varphi}$ on $H$. 
\end{corollary}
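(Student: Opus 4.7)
The plan is to combine Corollary \ref{cor-partial-FE-Xi} with Proposition \ref{Archimedean-Integral} and then extract a scalar-valued function by pairing against the highest-weight-like vector $(-ix+y)^{2\ell}$. Starting from
\[
\xi^\varphi(h_fh_\infty) = \sum_{T \in V_{1,n}:\,(T,T)>0} a_T(\xi^\varphi, h_f)\int_{\R} \mathcal{W}_{[2\pi y_0, 2\pi T]}\bigl(\exp(sb_1 \wedge b_{-2}) h_\infty\bigr)\,ds,
\]
I would decompose $h_\infty = \exp(b_2 \wedge X)\,m$ with $m = (t,u) \in M_R(\R)^0$ in Iwasawa form. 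Since $V_{1,n} \perp y_0$, conjugating $\exp(b_2 \wedge X)$ past $\exp(sb_1\wedge b_{-2})$ produces an element of $N_P$ whose character value on the integrand equals $e^{2\pi i(T,X)}$, extracting this factor cleanly. The remaining archimedean integral is exactly what Proposition \ref{Archimedean-Integral} computes (after rescaling by $2\pi$), yielding a vector in $\mathbf{V}_\ell$ of the form $C\cdot t^\ell e^{-2\pi(2 - t(T,uy_1))} \sum_{v=-\ell}^{\ell} i^v \tfrac{x^{\ell+v}y^{\ell-v}}{(\ell+v)!(\ell-v)!}$.

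Next I would carry out the pairing with $(-ix+y)^{2\ell}$. Writing $(-ix+y)^{2\ell} = \sum_{w} \binom{2\ell}{\ell-w}(-i)^{\ell-w} x^{\ell-w}y^{\ell+w}$ and applying the pairing law $\{x^{\ell+v}y^{\ell-v},\,x^{\ell-w}y^{\ell+w}\}_{K^0} = (-1)^{\ell+v}\delta_{v,w}(\ell+v)!(\ell-v)!$, the only surviving contribution from each $v$ is $i^v \cdot (-i)^{\ell-v}\cdot(-1)^{\ell+v} = i^\ell$ (independent of $v$!), and the collapse $\sum_v \binom{2\ell}{\ell-v} = 2^{2\ell}$ produces a total constant $i^\ell \cdot 2^{2\ell}$. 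Thus the pairing of the vector-valued integral with $(-ix+y)^{2\ell}$ equals an explicit nonzero scalar times $t^\ell e^{-2\pi(2-t(T,uy_1))}$.

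To conclude holomorphy, I identify $Z = g\cdot(-iy_1) = -X - ituy_1 \in \mathfrak{h}$ and check $j_H(g,-iy_1) = 1/t$. A short computation then yields $e^{-2\pi i(T,X)} e^{-2\pi(2-t(T,uy_1))} = e^{-4\pi}e^{2\pi i(T,Z)}$ and $t^\ell = j_H(g,-iy_1)^{-\ell}$, so after taking complex conjugates,
\[
\mathrm{FJ}_\varphi(g) = j_H(g,-iy_1)^{-\ell}\cdot D\cdot\sum_{T} \overline{a_T(\xi^\varphi, h_f)}\, e^{2\pi i(T,Z)},
\]
whose associated $f_{\mathrm{FJ}_\varphi}(Z)$ is a manifestly holomorphic Fourier series indexed by positive definite $T$. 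The required $K_H^0$-equivariance follows by identifying the generator of the compact $\SO(2)\subseteq K_H^0$ as $-u_2 \wedge v_2$ (computed via the rotation it induces on the positive-definite plane $\R\langle b_2+b_{-2},\,y_1\rangle$ stabilizing $-iy_1$), noting from Section \ref{subsection-compact-subgroups} that this element acts on $(-ix+y)^{2\ell}$ by $e^{i\ell\theta}$, and matching this with $j_H(k_\theta, -iy_1)^\ell$. Finally, the statement that the classical Fourier coefficients of $\mathrm{FJ}_\varphi$ are finite sums of quaternionic Fourier coefficients of $\varphi$ follows directly from the formula $a_T(\xi^\varphi, h_f) = \int_{\A_f} a_{[y_0,T]}(\varphi,\exp(s_f b_1\wedge b_{-2}) h_f)\,ds_f$ in Corollary \ref{cor-partial-FE-Xi}, because $a_{[y_0,T]}(\varphi,\cdot)$ is locally constant and the relevant $\A_f$-integral reduces to a finite sum over representatives.

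The main obstacle is the pairing identity, which is essentially the content of the ``miraculous'' simplification $\sum_v i^v \binom{2\ell}{\ell-v}(-i)^{\ell-v}(-1)^{\ell+v} = i^\ell \cdot 2^{2\ell}$; this is what allows the vector-valued Whittaker-like terms of the quaternionic form to assemble into the scalar terms $e^{2\pi i(T,Z)}$ of a classical holomorphic modular form. Secondary care is required in bookkeeping the $2\pi$-scaling in Proposition \ref{Archimedean-Integral} and in verifying that only $T$ with the correct orientation contribute, which follows from cuspidality (Corollary \ref{FE-CUSPIDAL-QMF}) together with Proposition \ref{Properties-of-beta}.
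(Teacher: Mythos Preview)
Your proposal is correct and follows essentially the same route as the paper's proof: both arguments feed Proposition \ref{Archimedean-Integral} into the Fourier expansion of Corollary \ref{cor-partial-FE-Xi}, then verify the $K_H^0$-equivariance via the action of $\exp(\theta\, u_2\wedge v_2)$ on $(-ix+y)^{2\ell}$ and on $r(-iy_1)$, and finally check that $j_H(g,-iy_1)^{\ell}\,\overline{W_T^{FJ}(g)}$ is $\eta\, e^{2\pi i(T,Z)}$. The paper is terser---it simply asserts the existence of a nonzero constant $\eta$---whereas you carry out the pairing with $(-ix+y)^{2\ell}$ explicitly and obtain the collapsed constant $i^{\ell}\cdot 2^{2\ell}$; this is a nice addition but not a different method. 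One small wording point: your phrase ``conjugating $\exp(b_2\wedge X)$ past $\exp(s\,b_1\wedge b_{-2})$'' is slightly loose, since $[b_1\wedge b_{-2},\,b_2\wedge X]=b_1\wedge X\neq 0$; what actually happens is that the conjugate $\exp(b_2\wedge X + s\,b_1\wedge X)$ lies in $N_P(\R)$ and its character value is $e^{2\pi i(T,X)}$ because $(y_0,X)=0$, exactly as you intend.
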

\begin{proof}
For $h \in H(\R)$ and $T\in 2\pi V_{2,n}(\Q)$, define $W_T^{FJ}(h)$ as 
\[W_T^{FJ}(h) = \left\{\int_{\R}\mathcal{W}_{[y_0,T]}(\exp(s_{\infty}b_1\wedge b_{-2})h)ds_{\infty}, (-ix+y)^{2\ell}\right\}_{K^0}.\]

We claim that $\overline{W_T^{FJ}(hk)} = j(k,-iy_1)^{-\ell}\overline{W_T^{FJ}(h)}$ for all $h \in H(\R)^0$ and $k \in K_H^0$, and that the function $Z \mapsto  j(g,-iy_1)^{\ell} \overline{W_T^{FJ}(g)}$, where $Z = g \cdot i(-y_1)$, is a holomorphic function of $Z$.  This will establish the corollary, and also give the form of the Fourier expansion of the holomorphic modular form associated to $\xi^\varphi$.

Recall that $u_2 = \frac{1}{\sqrt{2}}(b_2 + b_{-2})$ and $v_2 = \frac{1}{\sqrt{2}}(b_4+b_{-4}) = \frac{1}{\sqrt{2}}y_1$.  Above it is calculated that 
\[\exp(\theta u_2 \wedge v_2) (-ix+y)^{2\ell} = e^{-i \ell \theta} (-ix+y)^{2\ell}\]
and thus 
\[W_T^{FJ}(h e^{\theta u_2 \wedge v_2}) = e^{- i \ell \theta} W_T^{FJ}(h).\]

To prove the first claim, we must compute that, if $k = e^{\theta u_2 \wedge v_2}$, then $j(k,-iy_1) = e^{-i\theta}$.  But an immediate check gives $u_2 \wedge v_2 \cdot( b_2 + iy_1 + b_{-2}) = i (b_2 + iy_1 + b_{-2})$, which implies that indeed $j(k,-iy_1) = e^{-i\theta}$.

For the second claim, suppose $g = \exp(v \wedge b_2) h$, where $h = \diag(t, u, t^{-1})$ and $v\in V_{1,n}$.  Then one calculates $g \cdot (-iy_1) = Z$, with $Z = v  +i tu \cdot (-y_1)$ and $j(g,-iy_1) = t^{-1}$.  Now by Proposition \ref{Archimedean-Integral}, there is a nonzero constant $\eta$ such that
\[j(g,-iy_1)^{\ell} \overline{W_{T}^{FJ}(g)} = t^{-\ell} e^{i(T,v)}\overline{W_T^{FJ}(h)} = \eta e^{i(T,v)} e^{t(T,u \cdot y_1)} = \eta e^{i(T,Z)}.\]
The corollary follows.
\end{proof}

Suppose now $n=2$ and $\varphi \in \mathcal{A}(G,\ell)$ is a quaternionic modular form on $G$ of weight $\ell \geq 1$.  Let $f_{\varphi}$ denote the holomorphic modular form on $H = \SO(V_{2,3})$, where recall that $V_{2,3}$ is the orthogonal complement of $y_0 = b_3+b_{-3}$ in $\Q\linspan(b_2, b_3, b_4, b_{-4},b_{-3},b_{-2})$.  We have that $V_{1,2}$ is the orthogonal complement to $y_0$ in $\Q\linspan(b_3, b_4, b_{-4},b_{-3})$.    The Fourier coefficients of $f_{\varphi}$ are indexed by $T \in V_{1,2}^\vee(\Q)$ (which can be identified with $V_{1,2}(\Q)$ via the bilinear form).  The map 
\[
V_{2,2}(\Q)/\Q y_0 \rightarrow V_{1,2}^\vee(\Q)
\]
given by $T \mapsto \{v \mapsto (T,v)\}$ is an isomorphism, so we can index the Fourier coefficients of $f_{\varphi}$ in terms of $T \in V_{2,2}(\Q)/\Q y_0$.

With this indexing, the $T$-th Fourier coefficient of $f_{\varphi}$, call it $a_{T}(f_{\varphi}): H(\A_f) \rightarrow \C$, takes the following form:
\[
\overline{a_{T}(f_{\varphi})(h_f)} = \int_{\A_f} a_{[y_0,T]}(\varphi,\exp(s b_1 \wedge b_{-2})h_f)\,ds.
\]
This follows immediately from Corollary \ref{cor-partial-FE-Xi}.  Suppose now that $\varphi$ is right-invariant by a compact open subgroup $U$.  Take $M \in \Z_{\geq 1}$ so that if $s \in M \widehat{\Z}$ then $\exp(s b_1 \wedge b_{-2}) \in U$.  The natural map
\[
\Q/M\Z \rightarrow \A_f/M\widehat{\Z}
\]
is an isomorphism.  Consequently
\begin{equation}\label{eqn:FCsMinv}
\overline{a_{T}(f_{\varphi})(1)} = M^{-1} \sum_{\mu \in \Q/M\Z} a_{[y_0,T+\mu y_0]}(\varphi, 1).
\end{equation}
All but finitely many terms in the sum are $0$.

In case $\varphi$ is of level one, i.e., $\varphi \in \mathcal{A}_{\Z}(G,\ell)$, this expression simplifies further. 
\begin{corollary}\label{cor:FJcoeffLevel1} Suppose $n=2$, and $\ell \geq 1$ is an integer.  Let $\varphi \in \mathcal{A}_{\Z}(G,\ell)$ be a quaternionic modular form on $G$ of weight $\ell$ and level one, and let $f_{\varphi}$ denote the holomorphic modular form on $H = \SO(V_{2,3})$ associated to $\varphi$ via the Fourier-Jacobi coefficient of Corollary \ref{Main Corollary}.  For the Fourier coefficients $a_{T}(f_\varphi)(1)$, one has the following:
	\begin{enumerate}
		\item If $T \in V_{1,2}^\vee(\Q)$ is not in the image of $V_{2,2}(\Z)$, then $a_{T}(f_\varphi)(1) = 0$.
		\item Suppose $T \in V_{2,2}(\Z)$. Then $\overline{a_{T}(f_{\varphi})(1)} = a_{[y_0,T]}(\varphi, 1)$. 
	\end{enumerate}
\end{corollary}
\begin{proof} This follows easily from \eqref{eqn:FCsMinv}.\end{proof}

\subsection{Fourier coefficients for $\Sp(4)$} In this subsection we specialize to the case $n=2$ so that $H=\SO(V_{2,3})$. Our first task is to describe a map $\Sp(4)\to H$ in explicit coordinates. 

Recall $y_0 = b_3 + b_{-3}$, $y_1 = b_4 + b_{-4}$, $V_{1,2} = \mathrm{Span}(b_3-b_{-3},b_4,b_{-4})$, and $$V_{2,3} = \Q b_2 + V_{1,2} + \Q b_{-2}.$$
Let $W_4$ be the standard representation of $\Sp(4)$, with symplectic basis $e_1, e_2, f_1,f_2$ and write $V_5$ for the kernel of the contraction map $\wedge^2 W_4 \rightarrow \Q$ given by the symplectic pairing.  The $\Sp(4)$-representation $\wedge^2 W_4$ is identified with $\mathrm{Span}(b_2,b_3,b_4,b_{-4},b_{-3},b_{-2})$ via
\begin{itemize}
	\item $b_2 = e_1 \wedge e_2$
	\item $b_3 = e_1 \wedge f_1$
	\item $b_4 = e_1 \wedge f_2$
	\item $b_{-4} = - e_2 \wedge f_1$
	\item $b_{-3} = e_2 \wedge f_2$
	\item $b_{-2} = - f_1 \wedge f_2$.
\end{itemize}
We put a bilinear form $(u_1,u_2)$ for $u_1,u_2\in \wedge^2 W_4$ by $u_1\wedge u_2=(u_1,u_2)e_1\wedge f_1\wedge e_2\wedge f_2$. Then the above identification of bases respects the quadratic forms on both sides.  Moreover, in this identification, $V_5 = \mathrm{Span}(b_2, b_3 - b_{-3}, b_4, b_{-4},b_{-2})$, which is $V_{2,3}$.

Because $\Sp(4)$ acts on $V_5$ preserving the quadratic form, and because the identification of $V_5$ with $V_{2,3}$ respects the quadratic forms on each, we obtain a map $\pi\colon \Sp(4) \rightarrow \SO(V_{2,3})$.  If $\varphi$ is an automorphic function on $\SO(V_{2,3})$, define $\varphi^*:=\varphi\circ \pi$.

Let
\[K_{\Sp(4)} = \left\{ \left(\begin{array}{cc} A & -B \\ B & A \end{array}\right): A + i B \in U(2)\right\}\]
 denote the standard maximal compact subgroup of $\Sp(4)(\R)$.  If $j_{\Sp(4)}(\gamma,Z)$ is the standard factor of automorphy, then $j_{\Sp(4)}(\,\cdot\,,i 1_2)$ is a character $K_{\Sp(4)} \rightarrow \C^\times$.  We claim that if $k\in K_{\Sp(4)}$ then $j_{\Sp(4)}(k,i 1_2) = j_{\SO(V_{2,3})}(\pi(k), -i y_1)$. Here $j_{\SO(V_{2,3})}$ is the factor of automorphy defined in Section~\ref{subsec:HMFS}. Indeed, to compute $ j_{\SO(V_{2,3})}(\pi(k), -i y_1)$, we simply let $\pi(k)$ act on 
 \[b_2 - i y_1 + b_{-2} = e_1 \wedge e_2 - f_1 \wedge f_2 - i (e_1 \wedge f_2 - e_2 \wedge f_1) = (e_1 - i f_1) \wedge (e_2 - i f_2).\]
 Now the claim is immediately verified.
 
  If $\varphi$ corresponds to a holomorphic modular form, then so does $\varphi^*$, and their Fourier coefficients are related as follows. Suppose $s = \mm{u}{v}{v}{w}$ and set $n(s) = \mm{1}{s}{}{1} \in \Sp(4)$.  Then one computes, under our identification of bases of $V_5$ with $V_{2,3}$, that
\begin{equation}
\label{eqn-subsec-FC1}
n(s) \cdot b_{-2} = \exp( b_2 \wedge (v(b_3-b_{-3}) - ub_4 - w b_{-4})) \cdot b_{-2}.
\end{equation}
Moreover, if $T = \mm{a}{b/2}{b/2}{c}$ then
\begin{equation}
\label{eqn-subsec-FC2}
(T,s) = au + bv + cw = (v(b_3-b_{-3}) - u b_4 - w b_{-4}, -(c b_4 + b b_3 + ab_{-4})).
\end{equation}
From combining (\ref{eqn-subsec-FC1}) and (\ref{eqn-subsec-FC2}) one sees that the $T$-th Fourier coefficient of $\varphi^*$ is the $ -(c b_4 + b b_3 + ab_{-4})$ Fourier coefficient of $\varphi$.  Here we note that elements of 
\begin{equation}
\label{eqn-quotient-lattice}
   \Z\linspan\{b_3,b_4,b_{-4},b_{-3}\}/\langle b_3 + b_{-3}\rangle 
\end{equation}
index the Fourier coefficients of $\varphi$.  And, for $v \in \Z\linspan\{b_3,b_4,b_{-4},b_{-3}\}$, we write the $v$-Fourier coefficient of $\varphi$ to mean the Fourier coefficient of $\varphi$ corresponding to the image of $v$ in the lattice \eqref{eqn-quotient-lattice}.

\subsection{Application of triality to Fourier coefficients}
In this subsection we prove Theorem \ref{thm:introSp4}.  To do so, we will use the following lemma.  Recall that $G' \rightarrow G$ denotes the spin group (cf. Section~\ref{subsec:groupnotation}).  If $\varphi'$ is a weight $\ell$ quaternionic modular form on $G'$, then $\varphi'$ has Fourier coefficients $a_{[T_1,T_2]}(\varphi'): G'(\A_f) \rightarrow \C$ for each $T_1, T_2 \in V_{2,2}$; see Remark \ref{FE-QMFS-on-Spin82}.
\begin{lemma}\label{lem:trialityOnFCs1} Suppose $\ell \geq 1$ is an integer and $\varphi' \in \mathcal{A}(G',\ell)$ is a quaternionic modular form on $G'$ of weight $\ell$.  There is a quaternionic modular form $\varphi_\sigma' \in \mathcal{A}(G',\ell)$ whose Fourier coefficients satisfy $a_{[T_1,T_2]}(\varphi_\sigma',1) = a_{[T_1',T_2']}(\varphi',1)$ where if
	\[
	[T_1,T_2] = [\gamma_1 b_3-\beta_2 b_4 + \delta b_{-4}+\gamma_3 b_{-3}, -\beta_3 b_3 + \alpha b_4 -\gamma_2 b_{-4} - \beta_1 b_{-3}]
	\]
	then
	\[
	[T_1',T_2'] = [\gamma_2 b_3-\beta_3 b_4 + \delta b_{-4}+\gamma_1 b_{-3}, -\beta_1 b_3 + \alpha b_4 -\gamma_3 b_{-4} - \beta_2 b_{-3}].
	\]
	Here $\alpha, \delta, \beta_i,$ and $\gamma_j$ are rational numbers.  Moreover, $\varphi'$ is of level one if and only if $\varphi_\sigma'$ is of level one, and $\varphi'$ is cuspidal if and only if $\varphi_\sigma'$ is cuspidal.
\end{lemma}
\begin{proof} This will follow from Theorem \ref{thm:trialityQMF}.  To use this result, we only need to make explicit how the coordinates used to understand triality in Appendix \ref{sec:triality} are identified with the coordinates $[T_1,T_2] \in V_{2,2}^{\oplus 2}$.
	
This identification is obtained as follows.  Recall the split octonion algebra $\mathbb{O}$ of Section~\ref{subsec:octonions}.  Let $E = \Q \times \Q \times \Q$.  We make use of the Lie algebra isomorphism $\Phi\colon \mathfrak{g}_E  \xrightarrow{\sim} \wedge^2\mathbb{O}$ (see (\ref{triality-compatible-identification})). Applying the identification (\ref{idenitification-of-lists}), $\Phi$ maps the element
\[ y=\alpha' E_{12} + v_1 \otimes (\beta_1',\beta_2',\beta_3') + \delta_3 \otimes(\gamma_1',\gamma_2',\gamma_3') + \delta' E_{23}\in\mathfrak{g}_E\]
to the element 
\[ b_1 \wedge y_1' + b_2 \wedge y_2' = b_1 \wedge( \beta_3' b_3 - \alpha' b_4 + \gamma_2' b_{-4} + \beta_1' b_{-3}) + b_2 \wedge( \gamma_1' b_3 -\beta_2' b_4 + \delta' b_{-4} + \gamma_3' b_{-3}). \]
Likewise, with $T_1, T_2$ as in the statement of the lemma, set
\[
w = \alpha E_{12} + v_1 \otimes \beta + \delta_3 \otimes \gamma + \delta E_{23}.
\]
Then $\Phi(w) = b_1 \wedge T_1 + b_2 \wedge T_2$.  Finally, the commutator $[w,y]$ satisfies
\[
[w,y]  = ((T_1, y_1') + (T_2,y_2')) E_{13}.
\]
The lemma now follows from Theorem~\ref{thm:trialityQMF}.\end{proof}

We are now ready to prove Theorem~\ref{thm:introSp4} as the following corollary.
\begin{corollary} \label{cor:FJLevel1}
Suppose $\varphi$ is a quaternionic modular form of even weight $\ell$ on $G$ and level one.  Then there is a Siegel modular form $f(Z)$ on $\Sp(4)$ of weight $\ell$ and level one, such that for all $a,b,c\in \Z$, one has the identity of Fourier coefficients
	\begin{equation}
    \label{eqn-equality-of-FCs}
	    a_f\left(\left(\begin{array}{cc} a & b/2 \\ b/2 & c \end{array}\right)\right) = \overline{a_{[T_1',T_2']}(\varphi,1)}.
	\end{equation}
	Here $T_1' = -b b_4 + ab_3 + b_{-3}$ and $T_2' = -c b_4 - b_{-4}$. Moreover, if $\varphi$ is cuspidal, then so is $f$. 
\end{corollary}
\begin{proof} Let $\varphi'$ be the pullback of $\varphi$ to the spin group $G'$ and let $\varphi_\sigma'$ be the quaternionic modular form of Lemma~\ref{lem:trialityOnFCs1}.  Let $T_1 = y_0 =b_{3}+b_{-3}$ and $T_2 = -(c b_4 + b b_3 + ab_{-4})$.  Then in the notation of Lemma~\ref{lem:trialityOnFCs1}, $(\alpha, \beta, \gamma,\delta) = (-c,(0,0,b),(1,a,1),0)$ and so $T_1' =-b b_4 + ab_3 + b_{-3}$ and $T_2' = -c b_4 - b_{-4}$.
	
Let $f$ be the holomorphic modular form on $\Sp(4)$ obtained as the Fourier-Jacobi coefficient of $\varphi_\sigma'$.  Then $f$ is of level one since $\varphi_\sigma'$ is of level one.  For $a,b,c \in \Z$, let $a_f([a,b,c])$ denote the $\mm{a}{b/2}{b/2}{c}$ Fourier coefficient of $f$.  Then we have
\[
\overline{a_f([a,b,c])} = a_{[y_0, -(c b_4 + b b_3 + ab_{-4})]}(\varphi_\sigma',1) = a_{[T_1',T_2']}(\varphi,1).
\]
Here the first equality comes from applying \eqref{eqn-subsec-FC2} and Corollary~\ref{cor:FJcoeffLevel1} while the second equality comes from applying Lemma~\ref{lem:trialityOnFCs1}.

If $\varphi$ is cuspidal, then so is $\varphi_\sigma'$, and so $\varphi_\sigma'$ is bounded.  Thus its first Fourier-Jacobi coefficient is a bounded automorphic function on $\Sp(4)$ that corresponds to a holomorphic modular form.  This implies that $f$ is cuspidal, as a holomorphic modular form is cuspidal if and only if its associated automorphic function is bounded. \end{proof}

The pair $[T_1',T_2']$ from Corollary~\ref{cor:FJLevel1} is strongly primitive. Since $S([T_1',T_2']) = \left(\begin{smallmatrix} a & b/2 \\ b/2 & c \end{smallmatrix}\right)$, we obtain the following, which completes the proof of Theorem~\ref{thm:intoSKMS}.

 \begin{corollary}
 \label{cor:MS in SK}
 Suppose $\varphi$ is a cuspidal, quaternionic modular form of even weight $\ell \geq 16$ and level one that is in the Maass Spezialschar.  Then there is a holomorphic cuspidal modular form $f$ on $\Sp(4)$ so that $\varphi = \theta^*(f)$.  In particular, $\varphi$ is in the Saito-Kurokawa subspace.
 \end{corollary}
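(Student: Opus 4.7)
The plan is to take $f$ to be the level one cuspidal Siegel modular form on $\Sp_4$ of weight $\ell$ produced by Corollary \ref{cor:FJLevel1} (equivalently, Theorem \ref{thm:introSp4}, applied after a triality twist to the Fourier-Jacobi coefficient of $\varphi$), and then prove $\theta^*(f) = \varphi$ by comparing Fourier coefficients. By Lemma \ref{Lemma-SK-in-MS} the lift $\theta^*(f)$ lies in $\mathrm{MS}_\ell$, and by hypothesis so does $\varphi$; hence both forms are completely determined by the function $a^{\mathrm{prim}}(\cdot)$ on strongly primitive indices via conditions (i) and (ii) of Definition \ref{definition-Quaternionic-Spezialschar}. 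It therefore suffices to verify $a_\varphi^{\mathrm{prim}}(\lambda) = a_{\theta^*(f)}^{\mathrm{prim}}(\lambda)$ at one strongly primitive representative per $S$-class.

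For the comparison, I would fix a positive-definite half-integral matrix $T = \mm{a}{b/2}{b/2}{c}$ with $a,b,c\in\Z$ and take the explicit strongly primitive pair $\lambda_0 = [\mm{a}{0}{b}{1},\mm{0}{-1}{c}{0}]$ furnished by Lemma \ref{primitive-lemma}, which satisfies $S(\lambda_0)=T$. On one side, Remark \ref{remark-section-5.3} (an immediate consequence of Theorem \ref{pollack4.1.1}) yields $a_{\theta^*(f)}(\lambda_0) = \overline{a_f(T)}$. On the other side, Corollary \ref{cor:FJLevel1} applied to $\varphi$ at this very pair gives $a_f(T) = \overline{a_\varphi(\lambda_0)}$. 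Chaining the two identities gives $a_{\theta^*(f)}(\lambda_0) = a_\varphi(\lambda_0)$, and condition (i) of Definition \ref{definition-Quaternionic-Spezialschar} then propagates the agreement across the entire $S$-class.

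Once the primitive Fourier coefficients agree, condition (ii) of Definition \ref{definition-Quaternionic-Spezialschar} forces $a_\varphi(\lambda) = a_{\theta^*(f)}(\lambda)$ at every $\lambda$ with $[T_1,T_2]\succ 0$. Since $\varphi - \theta^*(f)$ is then a cuspidal quaternionic modular form all of whose Fourier coefficients vanish, the explicit Fourier expansion of Corollary \ref{FE-CUSPIDAL-QMF}, together with the fact that the generalized Whittaker functions $\mathcal{W}_{[2\pi T_1,2\pi T_2]}$ arising from distinct indices are linearly independent, forces $\varphi - \theta^*(f) = 0$ as automorphic functions on $G(\Q)\backslash G(\A)/K_f$.

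The substantive analytic work has already been done in establishing Corollary \ref{cor:FJLevel1}; it identifies, after a triality twist, the Fourier-Jacobi coefficient of $\varphi$ with a classical Fourier coefficient of a Siegel modular form $f$, in exactly the form compatible with the Saito-Kurokawa formula of Theorem \ref{pollack4.1.1}. The remaining task is essentially bookkeeping: the Maass relations collapse the comparison to a single strongly primitive pair per $S$-class, and within each class the specific representative $\lambda_0$ is engineered precisely so that the complex conjugations on the two sides line up.
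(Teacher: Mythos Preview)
Your argument is correct and follows essentially the same route as the paper's proof: construct $f$ via Corollary \ref{cor:FJLevel1}, observe that $\theta^*(f)$ and $\varphi$ both lie in $\mathrm{MS}_\ell$, check that their Fourier coefficients agree on one strongly primitive representative per $S$-class (using Remark \ref{remark-section-5.3} on one side and Corollary \ref{cor:FJLevel1} on the other), and conclude by the Maass relations. One small citation slip: the explicit pair $\lambda_0 = \left[\mm{a}{0}{b}{1},\mm{0}{-1}{c}{0}\right]$ you use is the pair arising from Corollary \ref{cor:FJLevel1} (and shown to be strongly primitive just after that corollary), not the pair $\left[\mm{1}{0}{b}{a},\mm{0}{1}{-c}{0}\right]$ produced in Lemma \ref{primitive-lemma}; your argument only needs the former, so this does not affect correctness.
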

 \begin{proof} Let $f$ be the holomorphic Siegel modular form of weight $\ell$ and level one on $\Sp(4)$ constructed in Corollary~\ref{cor:FJLevel1}.  By Theorem~\ref{pollack4.1.1}, the Fourier coefficients of $\theta^*(f)$ agree with those of $\varphi$ for $[T_1',T_2']$ of the form $[-b b_4 + ab_3 + b_{-3},-c b_4 - b_{-4}]$.  Because both $\theta^*(f)$ and $\varphi$ are in the quaternionic Maass Spezialschar, it follows from Definition~\ref{definition-Quaternionic-Spezialschar} that the Fourier coefficients of $\varphi$ and $\theta^*(f)$ agree for all $[T_1,T_2]$.  Consequently, $\varphi = \theta^*(f)$ as desired.
 \end{proof}

\section{Hecke stability and theta lifts}\label{sec:Hecke}
Suppose $\varphi$ is a cuspidal quaternionic eigenform on $G$.  The purpose of this section is to give some equivalent conditions for $\varphi$ to be in the Saito-Kurokawa subspace. In particular, we prove Theorem~\ref{thm:TFAE}, which gives a mostly representation theoretic criterion for $\varphi$ to be a quaternionic Saito-Kurokawa lift.  One consequence of this theorem is Corollary~\ref{cor:QStheta}, which is our starting point for the work of Section~\ref{sec:periods} on periods.

The results we prove in this section are mostly derived from what is known about the theta correspondence between symplectic and orthogonal groups.  Because we work with quaternionic modular forms on a special orthogonal group, instead of an orthogonal group, we spend some effort relating representations and automorphic forms on these two groups.  

Throughout this section, we frequently go between automorphic representations and cuspidal automorphic forms that are eigenvectors for the spherical Hecke algebra at primes $p$.  We now explain this relationship. For a finite set of places $S$ of $\Q$, let $\A^S = \prod_{v \notin S}' \Q_v$ denote the adeles away from $S$.
\begin{lemma}\label{lem:eigenHecke} Suppose $H$ is a semisimple group over $\Q$, and $S$ is a finite set of places of $\Q$, containing the archimedean place, and for which $H$ is split at all $p \notin S$.  Let $K_p = H(\Z_p)$ for a model of $H$ over $\Z_p$.  Assume that $\varphi$ is a nonzero cuspidal automorphic form on $H$ which is $K_p$-invariant for all $p \notin S$ and is an eigenvector for the $K_p$-bi-invariant Hecke algebra $\mathcal{H}_p(H(\Q_p))$ for all such $p$. Let $V_{S}$ denote the subspace of the cuspidal automorphic forms on $H$ generated by the action of $H(\A^{S})$ on $\varphi$.  Then $V_{S}$ is an irreducible representation of $H(\A^S)$.
\end{lemma}
\begin{proof}Because $V_S$ is contained in the space of cusp forms, it decomposes into a finite direct sum of irreducible representations of $H(\A^S)$: $V_S = \bigoplus_{j=1}^{N} V_j$, with each $V_j$ an irreducible representation of $H(\A^S)$.  Let $v_0 \in V_S$ denote $\varphi$.  Write $v_0 = \sum_{j=1}^{N}{v_{0,j}}$ so that $v_{0,j} \in V_j$.  Note that each $v_{0,j}$ is nonzero, because $V_S$ is generated by $v_0$.  Let $K_S = \prod_{p \notin S}{K_p}$.  Since the sum is direct and $v_0$ is $K_S$-invariant, each $v_{0,j}$ is as well. It follows that $\dim(V_S^{K_S}) \geq N$.

On the other hand, suppose $v \in V_S$.  We will check that $\int_{K_S} k \cdot v\,dk$ is proportional to $v_0$, and thus $N=1$. Here $\,dk$ is the Haar measure on $K_S$ normalized so that $\mathrm{Vol}(K_S)=1$. 

To see this, by definition of $V_S$, we can write $v = \sum_{i=1}^{L} a_i(h_i \cdot v_0)$ for some positive integer $L$, some $a_i \in \C$ and some $h_i \in H(\A^S)$.  The double coset $K_S h_i K_S$ is a finite disjoint union $\bigsqcup_{j=1}^{L_i} t_{i,j}K_S$ with $t_{i,j} \in K_S h_i$. Thus
\begin{align*}
\int_{K_S} k \cdot (h_i \cdot v_0)\,dk &= \frac{1}{L_i} \sum_{j=1}^{L_i}\int_{K_S} k \cdot (t_{i,j} \cdot v_0)\,dk \\
&=\frac{1}{L_i}\int_{K_S} k \cdot \left(\sum_{j=1}^{L_i} t_{i,j} \cdot v_0\right)\,dk.
\end{align*}
But the sum $\sum_{j=1}^{L_i} t_{i,j} \cdot v_0$ is proportional to $v_0$, because $v_0$ is an eigenvector for the Hecke operator $K_S h_i K_S$.  This proves the lemma.
\end{proof}

\subsection{Restriction from $O(V)$}
Recall that $V$ is our split quadratic space of dimension $8$ with integral lattice $V(\Z)$ spanned by the $b_i$, and $G = \SO(V)$.  Set $G_1 = O(V)$, the orthogonal group of $V$.  We write $K_{1,\ell}=G_1(\Z_\ell)$ for the subgroup of $G_1(\Q_\ell)$ that stabilizes $V(\Z_{\ell})$ and $G_1(\widehat{\Z}) = \prod_{\ell}{G_1(\Z_{\ell})}$.   Set $k_0 \in G_1$ to be the element that fixes $b_i$ for $i \in \{1,2,3,-1,-2,-3\}$ and exchanges $b_4$ with $b_{-4}$.  We will at various times consider $k_0$ in $G_1(\Q_v)$ or $G_1(\Q)$ or $K_{1,\ell}$.  

We begin with a simple lemma.
\begin{lemma}\label{lem:GG1bijection} The natural map
	\[ G(\Q) \backslash G(\A)/G(\widehat{\Z}) \rightarrow G_1(\Q) \backslash G_1(\A)/G_1(\widehat{\Z})\]
is a bijection.
\end{lemma}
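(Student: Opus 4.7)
The plan is to exploit the short exact sequence of algebraic groups $1 \to G \to G_1 \xrightarrow{\det} \mu_2 \to 1$ together with the element $k_0$, which lies in $G_1(\Q) \cap G_1(\Z_\ell)$ at every finite place and has determinant $-1$ at every place.

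For surjectivity, given $g \in G_1(\A)$ the task is to produce $\gamma \in G_1(\Q)$ and $k \in G_1(\widehat{\Z})$ with $\gamma g k \in G(\A)$. The adelic determinant $\det(g) \in \mu_2(\A) = \mu_2(\R) \times \prod_\ell \mu_2(\Q_\ell)$ has one archimedean sign and one sign at each finite place, and is trivial at almost all finite places. I first left-multiply $g$ by $\gamma = k_0$ (embedded diagonally in $G_1(\Q)$) if $\det(g)_\infty = -1$, and by $\gamma = 1$ otherwise; since $k_0$ embeds diagonally, this flips the sign at every place simultaneously and in particular makes the archimedean determinant equal to $+1$. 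Second, at each of the finitely many finite places $\ell$ where the determinant of $\gamma g$ is still $-1$, I right-multiply by the element of $G_1(\Z_\ell)$ equal to $k_0$ (and by $1$ at all other finite places). Assembling these local corrections yields a $k \in G_1(\widehat{\Z})$ with $\gamma g k \in G(\A)$.

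For injectivity, suppose $g_1, g_2 \in G(\A)$ satisfy $g_1 = \gamma g_2 k$ for some $\gamma \in G_1(\Q)$ and $k \in G_1(\widehat{\Z})$. Taking determinants at each place and using $\det(g_1) = \det(g_2) = 1$ gives $\det(\gamma)_v \det(k)_v = 1$ for all $v$. The decisive observation is that $G_1(\widehat{\Z})$, viewed inside $G_1(\A) = G_1(\R) \times G_1(\A_f)$, has trivial archimedean component, so $\det(k)_\infty = 1$. On the other hand $\gamma \in G_1(\Q)$ is embedded diagonally, so $\det(\gamma)_\infty$ equals the global sign $\det(\gamma) \in \{\pm 1\}$. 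The archimedean equation therefore forces $\det(\gamma) = 1$, that is, $\gamma \in G(\Q)$; then the finite-place equations force $\det(k_\ell) = 1$ for every $\ell$, so $k \in G(\widehat{\Z})$. Hence $[g_1]_G = [g_2]_G$.

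The argument is essentially a determinant bookkeeping exercise, and the only point requiring care is the asymmetry between the diagonal embedding of $G_1(\Q)$ in $G_1(\A)$ and the purely finite-adelic embedding of $G_1(\widehat{\Z})$. This asymmetry is exactly what cuts off the potential complication in the injectivity step: without the observation that $\det(k)_\infty = 1$, one would have to entertain the possibility $\det(\gamma) = -1$ and analyze the conjugation action of $k_0$ on the double coset space $G(\Q)\backslash G(\A)/G(\widehat{\Z})$, which would require a genuinely more substantial input (e.g.\ class number $1$ for the split unimodular lattice).
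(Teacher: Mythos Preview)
Your proof is correct and follows essentially the same approach as the paper's: fix the archimedean sign with a rational element (the paper says ``there exists $\gamma_1 \in G_1(\Q)$'' where you explicitly use $k_0$), then fix the remaining finite signs with an element of $G_1(\widehat{\Z})$; for injectivity, compare determinants at the archimedean place to pin down $\gamma$, then at the finite places to pin down $k$. Your version is simply more explicit about why the archimedean place controls $\det(\gamma)$, namely that $G_1(\widehat{\Z})$ has trivial archimedean component.
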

\begin{proof} For the surjectivity, if $g_1 \in G_1(\A)$, there exists $\gamma_1 \in G_1(\Q)$ so that $(\gamma_1)_\infty g_{1,\infty} \in G(\R)$, and thus there exists $k \in G_1(\widehat{\Z})$ so that $\gamma_1 g_1 k \in G(\A)$.  For the injectivity, if $g,g' \in G(\A)$, and $g' = \gamma g k$ for some $\gamma \in G_1(\Q)$ and $k \in G_1(\widehat{\Z})$, then comparing determinants at the archimedean place proves $\gamma \in G(\Q)$, and at all the finite places proves $k \in G(\widehat{\Z})$.
\end{proof}

Recall from Definition~\ref{defn:QMFs1} that $\mathcal{A}_{0,\Z}(G,\ell)$ denotes the weight $\ell$ quaternionic cuspidal modular forms that are right invariant under $G(\widehat{\Z})$.  We make an analogous definition of quaternionic modular forms on the orthogonal group $G_1$. Let $K^0$ denote the identity component of our fixed maximal compact subgroup of $G(\R)$; it is also the identity component of a maximal compact subgroup of $G_1(\R)$.  As usual, we let $\mathcal{A}(G_1)$ denote the space of smooth, moderate growth automorphic forms on $G_1$.

Recall from Definition \ref{defn:DiffOpDl} the differential operator $D_{\ell}$ and the notion of quaternionic functions on $G_1(\R)$.  
\begin{definition} For an integer $\ell \geq 1$, let $\mathcal{A}(G_1,\ell)$ denote the space of $\varphi \in \mathcal{A}(G_1) \otimes \mathbf{V}_{\ell}$ satisfying 
\begin{enumerate}
    \item $\varphi(gk) = k^{-1} \cdot \varphi(g)$ for all $k \in K^0$ and
    \item for every $g_f \in G_1(\A_f)$, the function $\varphi_{g_f}:G_1(\R) \rightarrow \mathbf{V}_{\ell}$ defined as $\varphi_{g_f}(g) = \varphi(g_f g)$ is quaternionic in the sense of Definition \ref{defn:DiffOpDl}.
\end{enumerate}
We denote by $\mathcal{A}_0(G_1,\ell)$ the subspace of $\mathcal{A}(G_1,\ell)$ consisting of the cuspidal automorphic forms, and write $\mathcal{A}_{0,\Z}(G_1,\ell)$ for those cuspidal quaternionic modular forms of weight $\ell$ that are right invariant under $G_1(\widehat{\Z})$.
\end{definition}

We have a restriction map $Res: \mathcal{A}_{0,\Z}(G_1,\ell) \rightarrow \mathcal{A}_{0,\Z}(G,\ell)$. We use Lemma \ref{lem:GG1bijection} to define a lifting map $L:\mathcal{A}_{0,\Z}(G,\ell) \rightarrow \mathcal{A}_{0,\Z}(G_1,\ell)$.
\begin{proposition}
\label{proposition-restriction-to-SO(V)}
Regarding the maps $L$ and $Res$, we have the following facts:
\begin{enumerate}
	\item The maps $L$ and $Res$ are inverse isomorphisms.
	\item If $\varphi$ is a Hecke eigenform on $G$, then $L(\varphi)$ is a Hecke eigenform on $G_1$. 
\end{enumerate}   
\end{proposition}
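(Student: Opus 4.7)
My plan is to construct $L$ directly from the bijection in the preceding lemma, verify it lands in $\mathcal{A}_{0,\Z}(G_1,\ell)$, and then deduce the Hecke equivariance as a formal consequence. For $\varphi \in \mathcal{A}_{0,\Z}(G,\ell)$ and $g_1 \in G_1(\A)$, the lemma supplies a decomposition $g_1 = \gamma\, g\, k_f$ with $\gamma \in G_1(\Q)$, $g \in G(\A)$, and $k_f \in G_1(\widehat{\Z})$, and I would set $L(\varphi)(g_1) := \varphi(g)$. Well-definedness is a determinant calculation: two decompositions $\gamma_1 g_1' k_1 = \gamma_2 g_2' k_2$ yield $g_2' = (\gamma_2^{-1}\gamma_1)\, g_1'\, (k_1 k_2^{-1})$, where the outer factors lie in $G_1(\Q)$ and $G_1(\widehat{\Z})$; comparing determinants at the archimedean place forces $\det(\gamma_2^{-1}\gamma_1)=1$, and then comparing at each finite place forces $k_1 k_2^{-1} \in G(\widehat{\Z})$, so $\varphi(g_2') = \varphi(g_1')$ by the automorphic invariance of $\varphi$. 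That $L$ and $Res$ are mutually inverse is then immediate from this construction.

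I would next verify that $L(\varphi) \in \mathcal{A}_{0,\Z}(G_1,\ell)$. The left $G_1(\Q)$-invariance and right $G_1(\widehat{\Z})$-invariance are built in, and the $K^0$-equivariance follows because $K^0 \subseteq G(\R)$ commutes with the finite idelic factors: $L(\varphi)(g_1 k) = \varphi(g k) = k^{-1}\varphi(g) = k^{-1}L(\varphi)(g_1)$. For the differential equation, each vector field $X_\alpha \in \p$ acts identically on $G_1(\R)$ and $G(\R)$ since these groups share a Lie algebra, and the decomposition $g_1 \exp(tX_\alpha) = \gamma\, g\exp(tX_\alpha)\, k_f$ gives $X_\alpha L(\varphi)(g_1) = X_\alpha \varphi(g)$, hence $D_\ell L(\varphi) \equiv 0$. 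For cuspidality along a $\Q$-parabolic $P_1 \subseteq G_1$, the unipotent radical $N_{P_1}$ is connected unipotent and therefore sits inside $G$; using $G_1(\Q)$-invariance to move $\gamma$ past $N_{P_1}$, the constant-term integral reduces to the constant term of $\varphi$ along $N_{\gamma^{-1}P_1\gamma}$, which vanishes by cuspidality of $\varphi$.

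For part (2), I would work one prime at a time. Fix a local Hecke operator $T_{\chi_1}$ on $G_1$ with $\chi_1 = \mathbf{1}_{K_{1,\ell}\, x\, K_{1,\ell}}$, and decompose $K_{1,\ell}\, x\, K_{1,\ell} = \bigsqcup_i h_i K_{1,\ell}$. Writing each $h_i = g_i' k_i$ with $g_i' \in G(\Q_\ell)$ and $k_i \in K_{1,\ell}$, which is possible because $k_0 \in K_{1,\ell}$, the identity $K_{1,\ell} \cap G(\Q_\ell) = K_\ell$ forces $h_i K_{1,\ell} \cap G(\Q_\ell) = g_i' K_\ell$, so $(K_{1,\ell}\, x\, K_{1,\ell}) \cap G(\Q_\ell) = \bigsqcup_i g_i' K_\ell$ is $K_\ell$-bi-invariant and defines a $G$-Hecke operator $T_{\chi'}$. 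For $g \in G(\A)$ the expansion $(T_{\chi_1} L(\varphi))(g) = \sum_i L(\varphi)(g h_i) = \sum_i \varphi(g g_i')$ then gives $Res(T_{\chi_1} L(\varphi)) = T_{\chi'}\varphi = \lambda_{T_{\chi'}}\varphi$, and the injectivity of $Res$ upgrades this to $T_{\chi_1} L(\varphi) = \lambda_{T_{\chi'}} L(\varphi)$. The main technical point is the combinatorial identification $(K_{1,\ell}\, x\, K_{1,\ell})\cap G(\Q_\ell) = \bigsqcup_i g_i' K_\ell$, which makes the correspondence $T_{\chi_1} \leftrightarrow T_{\chi'}$ explicit; once this is in hand, everything else is formal.
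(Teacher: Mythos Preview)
Your proposal is correct and follows essentially the same approach as the paper. For part (1) you spell out details that the paper simply calls ``immediate,'' and for part (2) both arguments reduce the $G_1$-Hecke action to a $G$-Hecke action via right $G_1(\widehat{\Z})$-invariance of $L(\varphi)$: the paper phrases this as restricting the Hecke function $T$ to $G(\Q_p)$ and invoking $k_0$-invariance, while you phrase it via the coset decomposition $h_i = g_i' k_i$; these are the same computation in different language.
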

\begin{proof}  Part (1) of the proposition is immediate from Lemma~\ref{lem:GG1bijection}.  For part (2), suppose $T \in \H_{p}(G_1)$, the algebra of smooth, compactly supported functions on $G_1(\Q_p)$ that are bi-invariant under $G_1(\Z_p)$.  Let $T_1$ be the restriction of $T$ to $G(\Q_p)$, so that $T_1 \in \H_p(G)$, the Hecke algebra of $G(\Q_p)$.  Then $T(g) = T_1(g)$ if $g \in G(\Q_p)$ and $T(g) = T_1(gk_0)$ if $\det(g) = -1$. Now, because $L(\varphi)$ is invariant by $k_0$, we have $T L(\varphi) = T_1 L(\varphi)$ (normalizing measures appropriately) so $Res(TL(\varphi)) = T_1 \varphi = \lambda_1 \varphi$.  Applying $L$ gives the desired conclusion.
\end{proof}

Let $T$ denote the diagonal torus of $G$ or $G_1$.  Let $\chi$ be an unramified character of $T(\Q_p)$, $\pi_\chi$ the irreducible unramified subquotient of $\ind_{B(\Q_p)}^{G(\Q_p)}(\delta_B^{1/2} \chi)$, and $\pi_{1,\chi}$ the irreducible unramified subquotient of $\ind_{B_1(\Q_p)}^{G_1(\Q_p)}(\delta_B^{1/2} \chi)$, where $\delta_B$ is the modulus character for the Borel subgroup of $G$. 
 Here $B_1 \simeq B$ is the upper triangular subgroup of $G_1$.  Let the normalized spherical vectors in the inductions be $\phi$ and $\phi_1$.  Observe that the restriction of $\phi_1$ to $G$ is $\phi$. 

\begin{lemma}\label{lem:irred} Let the notation be as above.  Suppose $\chi(\diag(1,1,1,p,p^{-1},1,1,1)) =1$.  Then the restriction to $G(\Q_p)$ of $\pi_{1,\chi}$ is $\pi_\chi$, i.e., $\pi_{1,\chi}$ restricts irreducibly to $G(\Q_p)$.
\end{lemma}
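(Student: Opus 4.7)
The plan is to apply Clifford theory for the normal index-two inclusion $G \subset G_1$, the key input being that the hypothesis forces $\pi_\chi$ to be isomorphic to its $k_0$-conjugate $\pi_\chi^{k_0}$.

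First I verify that the Borel subgroup $B = B_1$ lies inside $G$: the stabilizer in $O(V)$ of a complete isotropic flag preserves the graded pieces (acting by scalars $a_1,\ldots,a_4$) together with the dual graded pieces (acting by $a_4^{-1},\ldots,a_1^{-1}$), and thus has determinant $+1$. Consequently $G_1 = G \sqcup Gk_0$, and conjugation by $k_0$ gives an outer automorphism of $G$ that normalizes $T$. Since $k_0$ exchanges $b_4$ with $b_{-4}$ while fixing the other basis vectors, its action on the cocharacter lattice $X_*(T) \simeq \Z^4$ is the sign change $(a_1,a_2,a_3,a_4) \mapsto (a_1,a_2,a_3,-a_4)$, corresponding to a Weyl element $w \in W(B_4) \setminus W(D_4)$. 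The hypothesis $\chi(\mathrm{diag}(1,1,1,p,p^{-1},1,1,1)) = 1$ says precisely that $\chi$ is fixed by $w$, whence $\chi^{k_0} = \chi$ as characters of $T(\Q_p)$.

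Next I deduce that $\pi_\chi^{k_0} \cong \pi_\chi$ as $G$-representations. By definition $\pi_\chi^{k_0}$ is the unramified irreducible subquotient of $\mathrm{Ind}_{k_0Bk_0^{-1}}^G(\delta_B^{1/2}\chi^{k_0})$; since all Borels of $G$ are $G$-conjugate, $k_0Bk_0^{-1} = gBg^{-1}$ for some $g \in G$, and after this conjugation (which acts on $T$ via an element of $W(D_4)$) together with the identity $\chi^{k_0} = \chi$, the induced representation becomes $\mathrm{Ind}_B^G(\delta_B^{1/2}\chi')$ with $\chi'$ in the $W(D_4)$-orbit of $\chi$. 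Its unramified subquotient is therefore $\pi_\chi$, establishing the claim.

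Clifford theory for index-two normal subgroups now applies: since the irreducible $G$-representation $\pi_\chi$ satisfies $\pi_\chi \cong \pi_\chi^{k_0}$, it extends to exactly two irreducible representations of $G_1$, which differ by twisting with the sign character $\mathrm{sgn} \colon G_1/G \to \{\pm 1\}$; each restricts irreducibly to $\pi_\chi$. Exactly one of these two extensions carries a $K_{1,p}$-fixed vector, and I identify this extension with $\pi_{1,\chi}$ by noting that the normalized spherical vector $\phi_1 \in \pi_{1,\chi}$ restricts (under $\pi_{1,\chi}|_G$) to the normalized spherical vector $\phi \in \pi_\chi$, as follows from the Iwasawa decompositions $G_1 = BK_{1,p}$ and $G = BK_p$ together with $B \subset G$. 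Thus $\pi_{1,\chi}|_G = \pi_\chi$ irreducibly, which is the content of the lemma. The main subtlety is the second step---establishing $\pi_\chi^{k_0} \cong \pi_\chi$---which requires keeping track of both the Weyl class induced by $k_0$ and the $W(D_4)$-conjugation relating the Borels; however the hypothesis makes this automatic, as $\chi$ is already fixed by the nontrivial coset representative $w \in W(B_4) \setminus W(D_4)$.
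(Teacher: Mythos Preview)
Your Clifford-theory approach is a valid alternative to the paper's argument, and the first two steps are correct: the hypothesis gives $\chi^{k_0} = \chi$, hence $\pi_\chi^{k_0} \cong \pi_\chi$, so $\pi_\chi$ extends in exactly two ways to $G_1$. The gap is in the final identification. You assert that ``$\phi_1 \in \pi_{1,\chi}$ restricts to $\phi \in \pi_\chi$,'' but these vectors live in irreducible \emph{subquotients} of $I_1$ and $I$; the function-theoretic restriction $I_1 \to I$ (which does send $\phi_1 \mapsto \phi$, as you note via Iwasawa) does not automatically descend to a map of subquotients. As written, the argument is circular: to conclude that $\pi_{1,\chi}$ is one of the two extensions of $\pi_\chi$ you must first know that $\pi_\chi$ is a constituent of $\pi_{1,\chi}|_G$, which is precisely what is at stake.

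The fix is short. By induction in stages $I_1 = \ind_G^{G_1}(I)$, so Mackey gives $I_1|_G \cong I \oplus I^{k_0}$, and $\pi_{1,\chi}|_G$ is a subquotient of this. It carries a $K_p$-fixed vector (the image of $\phi_1$), and the unique spherical irreducible subquotient of each summand is $\pi_\chi$ (respectively $\pi_\chi^{k_0} \cong \pi_\chi$), so $\pi_\chi$ must occur in $\pi_{1,\chi}|_G$; now Clifford finishes. By contrast, the paper works directly with functions: it singles out the $G_1(\Q_p)$-stable subspace $I_1' = \{f \in I_1 : f(k_0 g) = f(g)\}$ (which contains $\phi_1$ precisely because of the hypothesis on $\chi$), shows restriction gives a $G(\Q_p)$-isomorphism $I_1' \cong I$, and then checks that this isomorphism is compatible with passing to the spherical subquotient on each side. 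Your route is more structural; the paper's makes the role of the hypothesis visible at the level of the induced representations themselves.
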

\begin{proof}  For ease of notation, set $t_p = \diag(1,1,1,p,p^{-1},1,1,1)$.  Note that the element $k_0$ stabilizes the upper-triangular subgroup of $G_1$, and that the condition $\chi(t_p) = 1$ implies $\chi(k_0 b k_0^{-1}) = \chi(b)$ for all $b \in B(\Q_p)$.

Let $V_1$ be the space of $\pi_{1,\chi}$ and $V$ the space of $\pi_\chi$.  We write $I_1$ for the induction $\ind_{B_1(\Q_p)}^{G_1(\Q_p)}(\delta_B^{1/2}\chi)$ and $I$ for $\ind_{B(\Q_p)}^{G(\Q_p)}(\delta_B^{1/2}\chi)$.  Thus $V_1$ is the unique unramified subquotient of $I_1$ and $V$ is the unique unramified subquotient of $I$. 

We have a restriction map $I_1 \rightarrow I$, which is $G(\Q_p)$ equivariant. We let $I_1'$ be the subspace of $I_1$ given by those functions $f$ that satisfy $f(k_0g) = f(g)$ for all $g \in G_1(\Q_p)$.  Note that $I_1'$ contains the spherical vector $\phi_1$.  Indeed, the function $g \mapsto \phi_1(k_0 g)$ is right-invariant by $K_{1,p}$ and satisfies 
\[\phi_1(k_0 b g) = \chi(k_0 b k_0^{-1}) \phi_1(k_0 g) = \chi(b) \phi_1(k_0g)\]
for all $b \in B(\Q_p)$.  Thus $\phi_1(k_0 g) = \phi_1(g)$ for all $g \in G(\Q_p)$.   We claim that the restriction map defines an isomorphism of $G(\Q_p)$ representations $I_1' \rightarrow I$.  

To see surjectivity, suppose $f \in I$.  Define a function $f_1$ on $G_1(\Q_p)$ as $f_1(g) = f(g)$ if $g \in G(\Q_p)$, and $f_1(k_0g) = f(g)$ if again $g \in G(\Q_p)$.  One checks that $f_1 \in I_1'$, using that $\chi(t_p) =1$.  For injectivity, if $f_1(g) = 0$ for all $g \in G(\Q_p)$ and some $f_1 \in I_1'$, then $f_1(k_0 g) = 0$ so $f_1=0$.  Consequently, $I_1' \rightarrow I$ is an isomorphism. 
	
Let $U \subseteq I$ be the submodule generated by the spherical vector $\phi$, and let $U_1 \subseteq I_1'$ be the submodule generated by the spherical vector $\phi_1$.  Let $\mathrm{vol}(K_p)$ denote the volume of $K_p$ and $\mathrm{vol}(K_{1,p})$ denote the volume of $K_{1,p}$ for a fixed choice of Haar measure. 
 Define 
 \[p_{K_p}(v) = \frac{1}{\mathrm{vol}(K_{p})} \int_{K_p}{k \cdot v\,dk}\]
 and similarly for $p_{K_{1,p}}$, so that these are projections.  Now set $M = \{u \in U: p_{K_p}(gu) = 0 \,\forall g \in G(\Q_p)\}$ and $M_1 = \{u \in U_1: p_{K_{1,p}}(gu) = 0 \,\forall g \in G_1(\Q_p)\}$.   One has that $M$ is the maximal proper submodule of $U$ and $M_1$ is the maximal proper submodule of $U_1$. We have $V_1 = U_1/M_1$ and $V = U/M$.

If $f \in I$ or $I_1'$, then $p_{K_{1,p}}(g \cdot f)(h) = \mathrm{vol}(K_{1,p})^{-1}\int_{K_{1,p}}{f(hk_1g)\,dk_1}$ and $p_{K_p}(g \cdot f)(h) = \mathrm{vol}(K_{p})^{-1}\int_{K_p}{f(hkg)\,dk}$.  Now observe that if $f \in I_1'$, then $Res(p_{K_{1,p}}(f)) = p_{K_p}(Res(f))$.  Indeed, if $h = b k$, with $k \in K_p$ and $b \in B$, then 
\begin{align*}\mathrm{vol}(K_{1,p}) Res(p_{K_{1,p}}(f))(bk) &= \int_{K_{1,p}}{f(bk k_1)\,dk_1} = \chi(b) \int_{K_{1,p}}{f(k_1)\,dk_1} \\ &= 2\chi(b) \int_{K_p}{f(k)\,dk} = \mathrm{vol}(K_{1,p}) p_{K_p}(Res(f))(bk).\end{align*}

It follows that the restriction map induces a well-defined injection $V_1 = U_1/M_1 \rightarrow V = U/M$.  As $V$ is irreducible, this map is an isomorphism.
\end{proof}

We will now discuss the theta correspondence between $\Sp(4)$ and the orthogonal group $G_1$ of the quadratic space $V$.  If $\phi \in S(V(\A)^2)$ is a Schwartz-Bruhat function on $V(\A)^2$, $g \in G_1(\A)$ and $h \in \Sp(4)(\A)$, we set 
\[\theta_\phi(g,h) = \sum_{\xi \in V(\Q)^2}{(\omega_\psi((g,h))\phi)(\xi)},\]
the associated theta function on $G_1 \times \Sp(4)$.  If $f$ is a cuspidal automorphic form on $\Sp(4)$, we set
\[
\theta_\phi(f)(g) = \int_{[\Sp(4)]}{\theta_\phi(g,h) \overline{f(h)}\,dh},
\]
the theta lift of $f$ to $G_1$. It is an automorphic form on $G_1$.  Likewise, if $\varphi$ is a cuspidal automorphic form on $G_1$, we set
\[
\theta_\phi(\varphi)(h) = \int_{[G_1]}{\overline{\varphi(g)} \theta_\phi(g,h)\,dg},
\]
which is an automorphic form on $\Sp(4)$.  Note that, if $\langle \, , \, \rangle$ denotes the Petersson inner product, normalized to be conjugate linear in the first factor and linear in the second factor, one has the identity
\[
\langle \varphi, \theta_\phi(f)\rangle = \overline{\langle \theta_\phi(\varphi), f \rangle}.
\]

Recall that if $f$ is a holomorphic cuspidal Siegel modular form on $\Sp(4)$ of level one, we set $\theta^*(f)$ to be the quaternionic modular form from Theorem \ref{pollack4.1.1}.

\begin{proposition} For $\ell \geq 16$ even, the Saito-Kurokawa subspace of $\mathcal{A}_{0,\Z}(G,\ell)$ is Hecke stable.
\end{proposition}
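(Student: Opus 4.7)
The plan is to reduce Hecke stability to the statement that the theta lift of a Hecke eigenform on $\Sp_4$ is a Hecke eigenform on $G$ (when nonzero). The space $S_{\ell}(\Sp_4(\Z))$ is semisimple under its level one Hecke action (indeed, it satisfies strong multiplicity one), so choose an eigenbasis $f_1, \dots, f_n$. Then by definition $\mathrm{SK}_{\ell} = \mathrm{span}\{\theta^*(f_i) : 1\leq i \leq n\}$. If each nonzero $\theta^*(f_i)$ is a Hecke eigenform on $G$, then $\mathrm{SK}_{\ell}$ is a sum of one-dimensional Hecke eigenspaces and is therefore Hecke stable.

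To verify that each nonzero $\varphi = \theta^*(f_i)$ is a Hecke eigenform on $G$, I would first pass to $G_1 = O(V)$. By Proposition~\ref{proposition-restriction-to-SO(V)}(2) and the fact that $L$ and $Res$ intertwine Hecke operators via restriction of test functions, it suffices to show that $L(\varphi)$ is a Hecke eigenform on $G_1$. Consider the automorphic representation $\Pi$ of $G_1(\A)$ generated by $L(\varphi)$. Because $\theta^*$ is (by its very construction in \cite{pollackCuspidal}) a classical avatar of the global theta correspondence for the dual pair $(\Sp_4, O(V))$, the representation $\Pi$ is a constituent of the theta lift $\Theta(\pi_{f_i})$ of the automorphic representation $\pi_{f_i}$ of $\Sp_4(\A)$ attached to $f_i$. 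At every finite prime $p$, $\pi_{f_i,p}$ is unramified; by the compatibility of the local Howe correspondence with the Satake isomorphism (which for the split pair $(\Sp_4,O(4,4))$ is explicit), the local component $\Theta(\pi_{f_i})_p$ has a unique unramified constituent whose Satake parameters are algebraically determined by those of $\pi_{f_i,p}$. Since $L(\varphi)$ is spherical at every finite place and nonzero, it is the spherical vector in the unique unramified constituent of $\Pi_p$ at each $p$, and thus is a Hecke eigenvector. Its Hecke eigenvalues are therefore determined by those of $f_i$, giving the desired Hecke eigenform property for $\varphi$ on $G$.

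The main obstacle is the step where we assert that $L(\varphi)$ generates a well-behaved representation whose local components at finite places are the predicted unramified constituents of $\Theta(\pi_{f_i})_p$. A priori $\Pi$ could be a reducible sum of automorphic representations, and one must invoke Rallis's tower property, together with irreducibility/uniqueness results for the local theta correspondence at unramified places, to ensure that $L(\varphi)$ lies in a single irreducible automorphic representation with the expected local structure. Here it helps that $\ell \geq 16$ is even, so the archimedean constituent of $\Pi$ is a specific quaternionic discrete series (Definition~\ref{defn:QMF2}), which pins down the global automorphic representation uniquely. Once this representation-theoretic bookkeeping is carried out, Lemma~\ref{lem:irred} guarantees that irreducibility passes from $G_1(\Q_p)$ down to $G(\Q_p)$ at the relevant unramified places, completing the argument.
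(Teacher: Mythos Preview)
Your approach is essentially the same as the paper's: reduce to showing that each $\theta^*(f_i)$ is a Hecke eigenform by passing to $G_1=O(V)$ via $L$, establishing that $L(\theta^*(f_i))$ generates an irreducible representation, and then descending back to $G$ via Lemma~\ref{lem:irred}.

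The one place where your execution diverges from the paper is the irreducibility step. You identify this as the ``main obstacle'' and propose to handle it with the Rallis tower property together with local unramified theta correspondence computations. The Rallis tower property, however, concerns \emph{cuspidality} of successive theta lifts, not irreducibility; it does not by itself rule out that $\Pi$ is a direct sum of several automorphic representations. The paper instead invokes Howe duality directly (citing Gan--Takeda): since $F_j=L(\theta^*(f_j))$ is nonzero, the automorphic theta module $\Theta_{G_1}(\pi_{f_j})$ is nonzero, and Howe duality forces it to be irreducible. This immediately makes $F_j$ a Hecke eigenform without any local patching. The paper then computes the Satake parameters of $F_j$ explicitly (via Rallis) as $\diag(\alpha_j,\beta_j,p,1,1,p^{-1},\beta_j^{-1},\alpha_j^{-1})$, which is needed to verify the hypothesis $\chi(\diag(1,1,1,p,p^{-1},1,1,1))=1$ of Lemma~\ref{lem:irred}; you invoke that lemma but do not check its hypothesis. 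With these two refinements---Howe duality in place of Rallis tower, and the explicit Satake computation---your outline becomes the paper's proof.
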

\begin{proof} Let $f_1, \ldots, f_N$ be a Hecke eigenbasis of the space of holomorphic cuspidal Siegel modular forms on $\Sp(4)$ of weight $\ell$ and level one.  Let $F_j = L(\theta^*(f_j))$ be the theta lift of $f_j$ to $G_1$, so that $\theta^*(f_j) = Res(F_j)$.  Then each $F_j$ is nonzero, and so the automorphic theta module $\Pi_{F_j}=\Theta_{G_1}(\pi_{f_j}) \neq 0$.  It is proved in \cite[Chapter I, section 3]{rallisHoweDuality} that $\Pi_{F_j}$ consists of cusp forms.  In particular, $\Pi_{F_j}$ is contained in the square-integrable automorphic forms.  From \cite{GanTakeda16}, local Howe duality is known at every place.  The cuspidality of $\Pi_{F_j}$ together with local Howe duality implies $\Pi_{F_j}$ is irreducible; see the proof in \cite[Proposition 3.1]{ganAWSchapter} applied to symplectic orthogonal dual pairs.  Consequently, $F_j$ is a Hecke eigenform.  If the Satake parameters at a prime $p$ of $f_j$ are $\diag(\alpha_j,\beta_j,1,\beta_j^{-1},\alpha_j^{-1})$, it follows from \cite{Rallis1982}, as explained in \cite[Theorem 8.1(c)]{ganIHESnotes}, that the Satake parameters of $F_j$ at $p$ are $\diag(\alpha_j,\beta_j,p,1,1,p^{-1},\beta_j^{-1},\alpha_j^{-1})$. For these particular Satake parameters, i.e., with two $1$'s, the restriction from $G_1$ to $G$ remains irreducible by Lemma~\ref{lem:irred}.  Consequently, the finite part $\Pi_{F_j,f}$ restricts irreducibly to $G$, so $\theta^*(f_j) = Res(F_j)$ is again an eigenform.  Thus $\mathrm{SK}_{\ell}$ is spanned by eigenforms, so is Hecke stable.\end{proof}

We will need Lemma~\ref{lem:RepsAgree} (below) soon.  We set up this lemma now.  Recall from Definition~\ref{defn:QMF2} that for an integer $\ell \geq 4$, we let $\pi_{\ell}^0$ be the quaternionic discrete series representation of $G(\R)^0$ with minimal $K^0$-type $S^{2\ell}(\C^2) \boxtimes \mathbf{1} \boxtimes \mathbf{1}$, as a representation of $\SU(2) \times \SU(2) \times \SO(4)$. 

\begin{definition}\label{defn:QDSOrthog1}
We set $\pi_{\ell}$ to be the induction from $G(\R)^0$ to $G(\R)$ of $\pi_{\ell}^0$. Let $W_{\ell}$ denote the underlying vector space for the $(\g,K^0)$-module $\pi_{\ell,\mathrm{fin}}^0$.  As a $(\g,K)$-module, the space of $K$-finite vectors $\pi_{\ell,\mathrm{fin}}$ in $\pi_{\ell}$ is $W_{\ell}':=W_{\ell} \oplus W_{\ell}$. Fix $\mu \in K \setminus K^0$. If $k \in K^0$, then $k$ acts on $W_{\ell}'$ as $k (w_1, w_2) = (kw_1, \mu k \mu^{-1} w_2)$. The element $\mu$ acts on $W_{\ell}'$ as $\mu(w_1, w_2) = ( w_2, \mu^2 w_1)$. If $X \in \g$, then $X$ acts on $W_{\ell}'$ as $X (w_1, w_2) = (X w_1, \mathrm{Ad}(\mu)(X) w_2)$.

Recall that $k_0\in G_1(\Q)$ denotes the element of determinant $-1$ that swaps $b_4$ and $b_{-4}$, and acts as the identity on $\mathrm{Span}\{b_1, b_2, b_3, b_{-3},b_{-2}, b_{-1}\}$. The representation $\pi_{\ell}$ extends to $G_1(\R)$, because it is isomorphic to its $k_0$-conjugate.  There are two possible extensions, which we denote by $\Pi_{\ell}$ and $\Pi_\ell \otimes \det$.  Let $O(4)^{-}$ be the subgroup of $G_1(\R)$ that acts as the identity on $V^{+}$.  We fix the notation so that $O(4)^{-}$ acts as the identity on the minimal $K$-types of $\Pi_{\ell}$ (and via the determinant map on the minimal $K$-types of $\Pi_{\ell} \otimes \det$).
\end{definition}
\begin{remark} 
    Recall that $K^0\simeq \SO(V^+) \times \SO(V^-)$ and $[K\colon K^0]=2$. The element $\mu\in K\backslash K^0$ may be chosen so that it acts on $\so(V^+)$ by interchanging the factors $\sl_2'$ and $\sl_2^{\mathrm{dist}}$ (see Section~\ref{subsection-compact-subgroups}). Therefore, the $K^0$ representation $\mathbf{V}_{\ell}$ is not isomorphic to its conjugate under the action of $\mu$. Since $\mathbf{V}_{\ell}$ is the minimal K-type of $\pi_{\ell}^0$, it follows that $\pi_{\ell}^0$ is likewise not isomorphic to its conjugate under the action of the $\mu$. Hence, $\pi_{\ell}$ is irreducible as a representation of $G(\R)$.
    \indent 
\end{remark}
\begin{definition}We define $\mathcal{A}_{0,\Z}^{rep}(G,\ell)$ to be the space of $(\g,K^0)$ homomorphisms from $\pi_{\ell,\mathrm{fin}}^0$ to $\mathcal{A}_{0,\Z}(G)$.  We define $\mathcal{A}_{0,\Z}^{rep}(G_1,\ell)$ to be the space of $(\g,K)$ homomorphisms from $\Pi_{\ell,\mathrm{fin}}$ to $\mathcal{A}_{0,\Z}(G_1)$.  

If $\varphi \in \mathcal{A}(G)$, set $\iota(\varphi) \in \mathcal{A}(G)$ as $\iota(\varphi)(g) = \varphi(k_0 g k_0^{-1})$.  We say that $\varphi$ is in the \emph{plus subspace} if $\iota(\varphi) = \varphi$.  We remark that it follows immediately from the definition of the theta lift $\theta^*$ that any $\varphi = \theta^*(f)$ is in the plus subspace.
\end{definition}
\begin{lemma}\label{lem:RepsAgree} If $\varphi \in \mathcal{A}_{0,\Z}^{rep}(G,\ell)$ is in the plus subspace, then $L(\varphi)$ is in $\mathcal{A}_{0,\Z}^{rep}(G_1,\ell)$.
\end{lemma}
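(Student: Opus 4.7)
The plan is to construct a $(\g, K)$-homomorphism $\tilde\alpha \colon \Pi_\ell \to \mathcal{A}_{0,\Z}(G_1)$ whose image under the canonical injection into $\mathcal{A}_{0,\Z}(G_1, \ell)$ equals $L(\varphi)$. Using the identification $\Pi_\ell|_K \simeq \ind_{K^0}^K \pi_\ell^0$ suggested by the decomposition $W_\ell' = W_\ell \oplus W_\ell$ of the underlying $K^0$-module, Frobenius reciprocity will reduce this to producing a $(\g, K^0)$-homomorphism $\beta \colon \pi_\ell^0 \to \mathcal{A}_{0,\Z}(G_1)$ whose values on a basis $\{v_j\}$ of the minimal $K^0$-type $\mathbf{V}_\ell$ give the scalar components of $L(\varphi)$.

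To construct $\beta$, I will first write $\varphi$ as a $(\g, K^0)$-map $\alpha \colon \pi_\ell^0 \to \mathcal{A}_{0,\Z}(G)$ with $\varphi = \sum_j \alpha(v_j) \otimes v_j^\vee$. Since $k_0$ acts trivially on $\mathbf{V}_\ell$, the plus subspace condition on $\varphi$ translates componentwise into $\alpha(v_j)(k_0 g k_0^{-1}) = \alpha(v_j)(g)$ for every $j$, which is exactly the $\iota$-invariance needed for the scalar lift $L(\alpha(v_j))$ on $G_1(\A)$ to be well-defined via the opening double coset bijection of this section. I will then set $\beta(v_j) := L(\alpha(v_j))$ on $\mathbf{V}_\ell$ and extend to $\pi_\ell^0 = U(\g) \cdot \mathbf{V}_\ell$ by declaring $\beta(T v_j) := T \cdot L(\alpha(v_j))$, where $T \in U(\g)$ acts by right-translation derivations on functions on $G_1(\A)$.

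The central technical step will be to show that this rule descends to a well-defined map on the quotient $\pi_\ell^0$. If $\sum_i c_i T_i v_{j_i} = 0$ in $\pi_\ell^0$, then applying $\alpha$ yields $\sum_i c_i T_i \alpha(v_{j_i}) = 0$ in $\mathcal{A}_{0,\Z}(G)$, so the function $F := \sum_i c_i T_i L(\alpha(v_{j_i}))$ on $G_1(\A)$ has vanishing restriction to $G(\A)$. Since $F$ is left $G_1(\Q)$-invariant and right $G_1(\widehat{\Z})$-invariant (both properties are preserved by the $U(\g)$-action at the archimedean place), the double coset bijection forces $F \equiv 0$ on $G_1(\A)$. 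The same principle furnishes $K^0$-equivariance of $\beta$: for $k \in K^0$ and $v_j \in \mathbf{V}_\ell$, the two functions $\beta(kv_j)$ and $k \cdot \beta(v_j)$ restrict on $G(\A)$ to $\alpha(kv_j) = k \cdot \alpha(v_j)$, hence agree throughout $G_1(\A)$.

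Finally, Frobenius reciprocity extends $\beta$ to the sought $(\g, K)$-map $\tilde\alpha \colon \Pi_\ell \to \mathcal{A}_{0,\Z}(G_1)$, and the canonical injection sends $\tilde\alpha$ to $\sum_j L(\alpha(v_j)) \otimes v_j^\vee = L(\varphi)$, placing $L(\varphi)$ in $\mathcal{A}_{0,\Z}^{rep}(G_1, \ell)$. The main obstacle I anticipate is precisely this consistency check: because $k_0$ does not centralize $\g$, the plus subspace condition on $\varphi$ does not propagate from $\mathbf{V}_\ell$ to an arbitrary $v \in \pi_\ell^0$, so $L \circ \alpha$ cannot be defined naively as a pointwise lift on all of $\pi_\ell^0$; one is forced to extend through the Lie algebra action from the minimal $K^0$-type and to invoke the double coset bijection in order to transfer the vanishing of $F$ from $G(\A)$ to $G_1(\A)$.
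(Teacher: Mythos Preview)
Your overall architecture is sound and in fact runs parallel to the paper's argument: build a $(\g,K^0)$-map into $\mathcal{A}(G_1)$ from the minimal $K^0$-type, extend to $\pi_\ell$, and identify the image with $L(\varphi)$. The paper does this by writing $F'(w_1,w_2)=F(w_1)+\mu^{-1}F(w_2)$ on $W_\ell'=W_\ell\oplus W_\ell$ and then composing with $L$; you do it by first applying $L$ on $\mathbf{V}_\ell$ and then extending through $U(\g)$, checking consistency via the double coset bijection. These are equivalent because $L$ commutes with right $\g$-differentiation and right $K^0$-translation (both are archimedean and preserve left $G_1(\Q)$- and right $G_1(\widehat{\Z})$-invariance, so the double coset uniqueness argument you invoke for well-definedness also gives this intertwining).

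There is, however, a genuine misplacement of the hypothesis. The plus-subspace condition is \emph{not} needed for $L(\alpha(v_j))$ to be well-defined: the double coset bijection $G(\Q)\backslash G(\A)/G(\widehat{\Z})\simeq G_1(\Q)\backslash G_1(\A)/G_1(\widehat{\Z})$ lifts any right-$G(\widehat{\Z})$-invariant automorphic form, with no $\iota$-invariance required. Consequently your steps 3--6 go through for \emph{any} $\varphi\in\mathcal{A}_{0,\Z}^{rep}(G,\ell)$ and produce a $(\g,K)$-map $\tilde\alpha\colon\pi_\ell\to\mathcal{A}(G_1)$, never using the hypothesis. What is missing is the check that $\tilde\alpha$ is a homomorphism out of $\Pi_\ell$ specifically (as opposed to $\Pi_\ell\otimes\det$, which has the same underlying $(\g,K)$-module): one must verify $k_0\cdot\tilde\alpha(v)=\tilde\alpha(k_0\cdot v)$. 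This is exactly where the plus-subspace condition enters in the paper's proof. On the minimal $K^0$-type, $k_0$ acts trivially on $\mathbf{V}_\ell\subset\Pi_\ell$, while for $g\in G(\A)$ one computes $L(\alpha(v_j))(gk_{0,\infty})=\alpha(v_j)(k_0gk_0^{-1})$ via $gk_{0,\infty}=k_{0,\Q}^{-1}(k_0gk_0^{-1})k_{0,f}$; the plus-subspace condition then gives $k_0\cdot\tilde\alpha(v_j)=\tilde\alpha(v_j)$, and irreducibility of $\Pi_\ell$ propagates this to all of $\Pi_\ell$. You should insert this step and relocate the use of the hypothesis accordingly.
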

\begin{proof}  Suppose $F$ is a $(\g,K^0)$-module homomorphism $W_{\ell} \rightarrow \mathcal{A}_{0,\Z}(G)$.  Let $\phi \in W_{\ell}'$, so that $\phi = (w_1, w_2)$ in the notation above.  Define $F': W_{\ell}' \rightarrow \mathcal{A}(G)$ as $F'(\phi) = F(w_1) + \mu^{-1} F(w_2)$.  One checks that $F'$ is a $(\g,K)$ homomorphism.

Define now $\Psi: \Pi_{\ell} \rightarrow \mathcal{A}(G_1)$ as $\Psi(\phi) = L(F'(\phi))$.  This is $(\g,K)$ equivariant.  Observe that $k_0^{-1} \circ \Psi \circ k_0$ and $\Psi$ are both $(\g,K)$-equivariant maps from $\Pi_{\ell}$ to $\mathcal{A}(G_1)$.  If $\tau$ denotes the minimal $K$-type of $\pi_{\ell}^0$ and $F(\tau)$ lands in the plus space, then one checks that $k_0^{-1} \circ \Psi \circ k_0$ and $\Psi$ agree on $\tau$.  By irreducibility of $\Pi_{\ell}$, they then agree on $\Pi_{\ell}$.  This proves the lemma.
\end{proof}

\subsection{Lifts back to $\Sp(4)$}

We begin with a preliminary result.  Recall the notation $\Pi_{\ell}$ from Definition~\ref{defn:QDSOrthog1}.  

By virtue of our fixed additive character $\psi: \Q \backslash \A \rightarrow \C^\times$, the characters 
\[\chi: N_P(\Q) \backslash N_P(\A) \rightarrow \C^\times\]
can be identified with the $\Q$-vector space $N_P^{ab}(\Q)^\vee$, the linear dual of the abelianization of $N_P(\Q)$.  This vector space is prehomogeneous for the adjoint action of $M_P(\Q)$.  We say that a character $\chi$ is \emph{non-degenerate} if it is in the unique Zariski open orbit, and \emph{degenerate} otherwise.

\begin{lemma}\label{lem:degenFC1cusp} Suppose $\Pi=\Pi_f \otimes \Pi_\infty$ is a cuspidal automorphic representation on $G_1$, with $\Pi_\infty$ a quaternionic discrete series $\Pi_{\ell}$ and $\Pi_f$ the irreducible admissible representation of $G_1(\A_f)$.  Let $\varphi$ be a cusp form in the space of $\Pi$, and $\chi$ a degenerate character of $N_P$, the unipotent radical of the Heisenberg parabolic subgroup.  Then the Fourier coefficient $\varphi_\chi(g) \equiv 0$.\end{lemma}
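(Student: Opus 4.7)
The plan is to reduce the lemma to Corollary \ref{FE-CUSPIDAL-QMF}, which asserts that Fourier coefficients of cuspidal quaternionic modular forms vanish on pairs $[T_1, T_2] \not\succ 0$. The obstacle is that $\varphi$ is only a vector in $\Pi$, not \emph{a priori} a quaternionic modular form in the sense of Definition \ref{defn:QMFs1}; bridging this gap is the main work of the proof.

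First, I would reduce to the group $G$. Since $N_P \subseteq G \subseteq G_1$, the integral defining $\varphi_\chi$ depends only on the restriction $\varphi^G := \varphi|_G$, which is a cusp form on $G$ whose archimedean component contains $\pi_\ell^0$. Next, because $\pi_\ell^0$ is irreducible as a $(\g, K^0)$-module with minimal $K^0$-type $\mathbf{V}_\ell$, we can write $\varphi^G = \sum_i X_i \cdot \psi_i$, where each $X_i \in U(\g)$ acts by right-invariant differential operators and each $\psi_i$ is a cusp form on $G$ whose archimedean component lies in $\mathbf{V}_\ell$. Since the Fourier-coefficient operation commutes with right translation,
\begin{equation*}
\varphi^G_\chi(g) = \sum_i X_i\bigl(\psi_{i,\chi}(g)\bigr),
\end{equation*}
so it suffices to show $\psi_{i,\chi} \equiv 0$ for each $i$.

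Given such a $\psi = \psi_i$, I would then assemble a vector-valued quaternionic modular form $\Phi \in \mathcal{A}_0(G, \ell)$. Fix a basis $\{v_j\}$ of $\mathbf{V}_\ell$ and the dual basis $\{v_j^\vee\}$. Let $\psi_j$ denote the projection of $\psi$ onto the $v_j$-isotypic direction of the archimedean minimal $K^0$-type (obtained by integrating against the relevant $K^0$-matrix coefficient), and set $\Phi(g) = \sum_j \psi_j(g) \otimes v_j^\vee$. Cuspidality of $\psi$ gives cuspidality of $\Phi$, and $K^0$-equivariance is built into the construction. The critical point is that $\Phi$ is annihilated by $D_\ell$: this uses the identification of the minimal $K^0$-type of $\pi_\ell^0$ with $\ker(D_\ell)$, which is precisely the representation-theoretic content relating Definitions \ref{defn:QMFs1} and \ref{defn:QMF2}. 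Hence $\Phi$ is a cuspidal quaternionic modular form of weight $\ell$ on $G$.

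Finally, for a degenerate character $\chi = \varepsilon_{[T_1, T_2]}$, the pair $[T_1, T_2]$ fails to span a positive-definite 2-plane in $V_{2,n}(\R)$, so $[T_1, T_2] \not\succ 0$. Corollary \ref{FE-CUSPIDAL-QMF} then yields $\Phi_\chi \equiv 0$, and reading off components gives $\psi_{j,\chi} \equiv 0$ for every $j$, hence $\psi_\chi \equiv 0$. Combined with the reductions above, this proves the lemma. The main obstacle is verifying that $\Phi$ satisfies the $D_\ell$-equation; the reduction to the minimal $K^0$-type and the assembly of $\Phi$ are formal, but the $D_\ell$-annihilation requires the precise representation-theoretic characterization of the minimal $K^0$-type of the quaternionic discrete series.
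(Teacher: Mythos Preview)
Your core strategy matches the paper's: reduce to vectors in the minimal $K^0$-type $\mathbf{V}_\ell$, assemble them into a $\mathbf{V}_\ell$-valued function annihilated by $D_\ell$, and use boundedness together with the classification of generalized Whittaker functions to force vanishing for degenerate $\chi$. The paper executes this more directly: it never restricts to $G$, but defines $F_\chi(g)=\sum_j L_\chi\bigl(g\,\alpha(v_f\otimes v_j)\bigr)\,v_j^\vee$ on $G_1(\R)$, observes this is a bounded generalized Whittaker function (hence zero on $G_1(\R)^0$ by Theorem~\ref{Thm 1.2.1 Aaron Paper} and the argument of Corollary~\ref{FE-CUSPIDAL-QMF}), and then treats the remaining connected components of $G_1(\R)$ via Levi representatives $\iota,k_0,\iota k_0$, noting that $g\mapsto F_\chi(xg)$ is again a bounded generalized Whittaker function for the still-degenerate character $\chi^x$. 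Your identification of the $D_\ell$-annihilation of the minimal $K^0$-type as the essential representation-theoretic input is exactly right; the paper uses this implicitly when asserting that $F_\chi$ is a generalized Whittaker function.

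Two gaps in your write-up need patching. First, restricting to $G$ only yields $\varphi_\chi(g)=0$ for $g\in G(\A)$: when $g\notin G(\A)$ the integrand $\varphi(ng)$ is evaluated outside $G(\A)$, so $\varphi^G$ does not see it. You can repair this by noting that $R_{k_0}\varphi$ is again a cusp form in $\Pi$ and rerunning the argument, or by imitating the paper's component-by-component treatment. Second, your decomposition ``$\varphi^G=\sum_i X_i\psi_i$ with $X_i\in U(\g)$'' appeals to irreducibility of $\pi_\ell^0$, but the archimedean component of $\varphi$ lives in $\Pi_\ell$, whose restriction to $(\g,K^0)$ splits as $\pi_\ell^0\oplus\mu^*\pi_\ell^0$ for $\mu\in K\setminus K^0$; the enveloping algebra $U(\g)$ alone does not move between the summands. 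You need the full $(\g,K)$-action, under which $\pi_\ell$ is irreducible and generated by $\mathbf{V}_\ell$; since taking the $\chi$-Fourier coefficient also commutes with right $K$-translation, the reduction then goes through.
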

\begin{proof} Let $\alpha: \Pi \rightarrow \mathcal{A}_0(G_1)$ be the given embedding.  If $\xi \in \mathcal{A}_0(G_1)$, let
\[
\xi_\chi(g) = \int_{[N_P]}\chi^{-1}(n)\xi(ng)\,dn
\]
denote the $\chi$-Fourier coefficient.  Let now $L_\chi$ denote the $\C$-valued linear form on $\mathcal{A}_0(G_1)$ given by $L_\chi(\xi) = \xi_\chi(1)$.  

Suppose $\varphi = \alpha(v_f \otimes v_\infty)$.  Let $\{v_j\}$ be a basis of $\mathbf{V}_\ell \subseteq \Pi_\infty$, and $\{v_j^\vee\}$ the dual basis.  Consider the function $F_\chi: G_1(\R) \rightarrow \mathbf{V}_\ell$ defined as $F_\chi(g) = \sum_j{L_\chi(g \alpha(v_f \otimes v_j)) v_j^\vee}$.  Then $F_\chi(g)$ is a generalized Whittaker function of type $\chi$, and is a bounded function of $g$ because $\alpha$ lands in cusp forms.  Consequently, $F_\chi(g) \equiv 0$ on $G_1(\R)^0$ by \cite[Proposition 10.0.1(3)]{pollackQDS}.
	
Let $\iota = \diag(\mm{}{1}{1}{},1,\mm{}{1}{1}{})$ and $k_0$ be as usual.  Then the elements $1,\iota,k_0, \iota k_0$ are in the standard Levi part of the Heisenberg parabolic subgroup, and meet every connected component of $G_1(\R)$.  If $x$ is one of these representatives, then note that $F_\chi(xg): G_1(\R)^0 \rightarrow \mathbf{V}_\ell$ is again a bounded generalized Whittaker function for a degenerate character (namely, $\chi \cdot x$).  Thus $F_\chi(xg) \equiv 0$.  Consequently, $F_\chi(g)$ is $0$ on all of $G_1(\R)$, so $\varphi_\chi(g)$ must also be identically $0$.	
\end{proof}

From Lemma~\ref{lem:degenFC1cusp}, we now immediately obtain the following proposition.  Recall that we write $V = U + V_{2,n} + U^\vee$.
\begin{proposition}\label{prop:liftIsCusp} Let $\Pi = \Pi_f \otimes \Pi_\infty$ be a cuspidal automorphic representation on $G_1$ with $\Pi_\infty$ a quaternionic discrete series. For a vector $v \in V_{2,n}$, define $N_v = \exp( U \wedge (U^{\perp} \cap v^{\perp}))$ so that $N_v$ is a subgroup of $N_P$. One has the following facts.
\begin{enumerate}
	\item Suppose $\varphi \in \Pi$.  Then the constant term of $\varphi$ over $N_v$ is identically $0$.
	\item The theta lift of $\Pi$ to $\SL(2)$ is $0$.
	\item The theta lift of $\Pi$ to $\Sp(4)$ is cuspidal.
\end{enumerate}
\end{proposition}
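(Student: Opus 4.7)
My plan is to establish~(1) directly from Lemma~\ref{lem:degenFC1cusp} via Fourier expansion along the Heisenberg parabolic, and then to deduce~(2) and~(3) by standard Rallis tower and orbit-unfolding arguments for the dual pair $(O(V),\Sp_{2n})$.

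For~(1), the inclusion $Z\subseteq N_v$ gives
\[
\varphi_{N_v}(g)\;=\;\int_{[N_v/Z]}\varphi_Z(\bar n\, g)\,d\bar n,
\]
and substituting the $N_P/Z$-Fourier expansion of $\varphi_Z$ from~\eqref{FE-general-phiZ} leaves only the terms $\varphi_{N_P,\varepsilon_{[T_1,T_2]}}$ whose character $\varepsilon_{[T_1,T_2]}$ is trivial on $N_v/Z$. Since $U^{\perp}\cap v^{\perp}=U\oplus(V_{2,n}\cap v^{\perp})$, the image of $\Lie(N_v)$ in $\Lie(N_P^{\mathrm{ab}})$ is $U\otimes(V_{2,n}\cap v^{\perp})$ under~\eqref{map-to-NPab}. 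Combined with~\eqref{characters-of-NP}, triviality of $\varepsilon_{[T_1,T_2]}$ on $N_v/Z$ is then equivalent to $T_1,T_2\in(V_{2,n}\cap v^{\perp})^{\perp_{V_{2,n}}}=\Q v$. Thus both $T_1$ and $T_2$ are proportional to $v$, so $[T_1,T_2]$ spans a subspace of $V_{2,n}$ of dimension at most one and $\varepsilon_{[T_1,T_2]}$ is degenerate. Lemma~\ref{lem:degenFC1cusp} annihilates each of these modes; the remaining trivial mode $[0,0]$ is killed by cuspidality of $\varphi$.

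For~(2) and~(3), form the theta integral $\theta(\varphi;\phi)(g)=\int_{[O(V)]}\varphi(h)\,\Theta(g,h;\phi)\,dh$ attached to $\phi\in\mathcal{S}(V(\A)^n)$. Unfolding the constant term of $\theta(\varphi;\phi)$ on $\SL_2$ over the $O(V)(\Q)$-orbits on $V(\Q)$, the zero orbit contributes $\phi(0)\int_{[O(V)]}\varphi$, which vanishes by cuspidality, and each isotropic orbit contributes a period of $\varphi$ over the stabilizer parabolic $Q_v\subseteq O(V)$; after conjugating $v$ into $V_{2,n}$ by Witt's theorem, this period factors through a constant term of $\varphi$ along some $N_{v'}$ and vanishes by~(1). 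The nontrivial Fourier coefficients of $\theta(\varphi;\phi)$ on $\SL_2$ at $\psi_\alpha$ unfold to periods $\int_{[O(v_0^{\perp})]}\varphi(hg_{v_0})\,dh$ for an anisotropic $v_0$ with $(v_0,v_0)=2\alpha$; these are eliminated either by the standard archimedean non-occurrence of the quaternionic discrete series in the theta correspondence with $\SL_2(\R)$, or by a further Fourier expansion inside $O(v_0^{\perp})$ that reduces them again to a degenerate $U_P$-Fourier coefficient of $\varphi$ covered by Lemma~\ref{lem:degenFC1cusp}. This proves~(2). For~(3), the Rallis tower formula expresses the constant terms of $\theta(\varphi;\phi)$ on $\Sp_4$ along the Siegel and Klingen maximal parabolic subgroups in terms of theta lifts to smaller members of the symplectic tower, namely the trivial group (giving $\int_{[O(V)]}\varphi$, zero by cuspidality) and $\SL_2$ (zero by~(2)); hence $\theta(\varphi;\phi)$ is cuspidal on $\Sp_4$.

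The main obstacle is the $\alpha\ne 0$ case of~(2): there the orbit integral becomes a period of $\varphi$ over an anisotropic-vector stabilizer rather than a constant term along a unipotent subgroup, so~(1) does not apply directly. I expect the cleanest fix is to use the archimedean theta correspondence to kill the quaternionic local component before the global integration. Part~(1) is a short Fourier-analytic computation, and part~(3) is then essentially formal given~(2).
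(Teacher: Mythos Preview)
Your argument for~(1) and~(3) matches the paper's proof exactly. For~(2), you have the right ingredients but are working harder than necessary, and the ``main obstacle'' you flag is not actually an obstacle.

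The paper does not separate the isotropic and anisotropic orbits. It observes uniformly that the $\psi_\alpha$-Fourier coefficient of $\theta(\varphi;\phi)$ on $\SL_2$, for any $\alpha$ (including $\alpha=0$), factors through a period of $\varphi$ over the stabilizer $S_v=\{g\in G_1:gv=v\}$ of some $v\in V$ with $q(v)=\alpha$. After conjugating $v$ into $V_{2,n}$ by Witt's theorem (as you note), the point is simply that $N_v\subseteq S_v$: indeed $(b_i\wedge w)\cdot v=(w,v)b_i-(b_i,v)w=0$ whenever $b_i\in U$ and $w\in U^{\perp}\cap v^{\perp}$. Hence the period over $[S_v]$ already contains the integral over $[N_v]$ as an inner integral, and this vanishes identically by~(1). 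No appeal to archimedean non-occurrence or a secondary Fourier expansion inside $O(v_0^{\perp})$ is needed.

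Your second proposed fix (``further Fourier expansion inside $O(v_0^{\perp})$ reducing to a degenerate $U_P$-coefficient'') is essentially this observation in disguise, but the containment $N_v\subseteq S_v$ is the clean statement that makes the anisotropic case no harder than the others.
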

\begin{proof} For the first part, the constant term of $\varphi$ over $N_v$ can be expanded in terms of degenerate Fourier coefficients of $\varphi$, so it is $0$ by Lemma~\ref{lem:degenFC1cusp}.  For the second part, a standard calculation with the definition of the Schr\"odinger model of the Weil representation immediately shows that the Fourier coefficients of the theta lifts $\theta_\phi(\varphi)$ to $\SL(2)$ factor through periods of $\varphi$ over subgroups $S_v = \{g \in G_1: gv = v\}$.  These subgroups contain the $N_v$'s, so the periods vanish.  The third part follows from the second by the Rallis tower property \cite[Theorem I.1.1]{rallisHoweDuality}, or directly from the first part by a similar calculation with theta lifts and periods.
\end{proof}

We will shortly use the following direct corollary of results of Yamana \cite{Yamana14}.  Recall that $\mathcal{A}_0(\Sp(4))$ denotes the space of cuspidal automorphic forms on $\Sp(4)$.
\begin{theorem}\label{thm:thetaNE}Suppose $\Pi_1,\Pi_2$ are automorphic cuspidal representations of $G_1$, which are isomorphic (but not necessarily equal in the space of automorphic forms).  
\begin{enumerate}
	\item One has that $\Theta_{\Sp(4)}(\Pi_1) \neq 0$ if and only if $\Theta_{\Sp(4)}(\Pi_2) \neq 0$. 
	\item Suppose $\Theta_{\Sp(4)}(\Pi)$ is contained in the space of cuspidal automorphic forms on $\Sp(4)$ for every automorphic representation $\Pi$ isomorphic to $\Pi_1$.  Let $\mathcal{A}_0(G_1)[\Pi_1]$ denote the $\Pi_1$-isotypic subspace of the cusp forms on $G_1$.  If there exists $f \in \mathcal{A}_0(\Sp(4))$ and test data $\phi$ so that $\theta_\phi(f) \in \mathcal{A}_0(G_1)[\Pi_1]$ is nonzero, then every element of this space is in the image of the theta lift from $\Sp(4)$.
\end{enumerate}	
   \end{theorem}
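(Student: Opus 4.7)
Both statements are consequences of the general theory of the theta correspondence for the dual pair $(\Sp_4, \mathrm{O}(V))$ with $\dim V = 8$. My strategy in both parts is to reduce each claim to data that depend only on the abstract isomorphism class of $\Pi_i$ (for part (1)) or that describe the image of the theta map as a sub-$G_1(\A)$-module of the cuspidal spectrum (for part (2)).

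For part (1), I would invoke the Rallis inner product formula in the refined regularized form proved by Yamana. If $\varphi$ is a cusp form in the space of $\Pi_i$ and $\phi$ is a Schwartz function on the relevant Weil representation space, then $\langle \theta_\phi(\varphi), \theta_\phi(\varphi)\rangle_{\Sp_4}$ is expressed as a product of local zeta integrals against either a pole of, or a special value of, the standard $L$-function $L(s, \Pi_i, \mathrm{std})$. Non-vanishing of this pairing for some choice of $(\varphi,\phi)$ is thus equivalent to the conjunction of (a) non-vanishing of every local theta lift of $\Pi_{i,v}$ and (b) the appropriate $L$-function condition. Both conditions depend only on the abstract isomorphism class of $\Pi_i$, giving (1).

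For part (2), write the $\Pi_1$-isotypic component as $\mathcal{A}_0(G_1)[\Pi_1] \cong \Pi_1 \otimes M$ for a multiplicity space $M$. The composition of the theta map $(f,\phi) \mapsto \theta_\phi(f)$ with projection onto this isotypic subspace is $G_1(\A)$-equivariant, so its image has the form $\Pi_1 \otimes M'$ for some subspace $M' \subseteq M$. My goal is to upgrade $M' \neq 0$ to $M' = M$. Let $\sigma$ be a cuspidal automorphic representation of $\Sp_4$ whose lift contributes to $M'$; such $\sigma$ exists because the theta lift back from $G_1$ to $\Sp_4$ of a non-zero element of $\mathcal{A}_0(G_1)[\Pi_1]$ is cuspidal by Proposition \ref{prop:liftIsCusp}, and Howe duality determines $\sigma$ up to isomorphism from $\Pi_1$. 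I would then appeal directly to Yamana's main theorem of \cite{Yamana14}, which identifies the full $\Pi_1$-isotypic subspace of $\mathcal{A}_0(G_1)$ with the image of the theta lift from $\sigma$ whenever this image is non-zero.

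The principal obstacle is the multiplicity-one statement required at the end. For tempered generic $\Pi_1$ it is handled directly by the regularized Rallis inner product, but the Saito-Kurokawa-type representations that arise in our application are non-tempered, and making the identification $M' = M$ rigorous requires analysis of the global theta map along an entire Arthur packet. This is precisely the content of \cite{Yamana14}, which I would apply as a black box after verifying that $\Pi_i$ satisfies the cuspidality and degenerate-Fourier-coefficient-vanishing hypotheses already established in Lemma \ref{lem:degenFC1cusp} and Proposition \ref{prop:liftIsCusp}.
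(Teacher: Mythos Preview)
Your treatment of part (1) matches the paper's: both invoke Yamana's non-vanishing criterion, which says $\Theta_{\Sp_4}(\Pi_i)\neq 0$ if and only if all local theta lifts are nonzero and $L(\Pi_i,\mathrm{Std},s)$ has a pole at $s=2$, conditions that depend only on the isomorphism class.

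For part (2), however, the paper takes a substantially more elementary route that sidesteps exactly the obstacle you flag. You attempt to invoke a statement from \cite{Yamana14} identifying the full isotypic component with the image of the theta lift, and you correctly note that making this rigorous in the non-tempered case would require Arthur-packet analysis. But no such surjectivity statement is needed. The paper instead argues as follows: let $B$ be the image of the theta lift inside $\mathcal{A}_0(G_1)[\Pi_1]$ and $C$ its orthogonal complement. If $C\neq 0$, pick an irreducible $\Pi_2\subseteq C$. Since $\Pi_2\cong\Pi_1$, part (1) gives $\Theta_{\Sp_4}(\Pi_2)\neq 0$, so there exist $\varphi_2\in\Pi_2$, $f'$, and $\phi$ with $\langle\theta_\phi(\varphi_2),f'\rangle\neq 0$. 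By adjointness of the theta lift this equals $\langle\varphi_2,\theta_\phi(f')\rangle$, and local-global compatibility forces $\theta_\phi(f')\in\mathcal{A}_0(G_1)[\Pi_1]$, hence $\theta_\phi(f')\in B$. This contradicts $\varphi_2\in C\perp B$.

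So the key idea you are missing is the orthogonal-complement argument combined with the self-adjointness relation $\langle\varphi,\theta_\phi(f)\rangle_{G_1}=\langle\theta_\phi(\varphi),f\rangle_{\Sp_4}$. This reduces part (2) to part (1) with no further input from \cite{Yamana14}, and in particular no multiplicity-one or packet analysis is required.
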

\begin{proof} For the first statement, note that Yamana proves that these theta lifts are nonzero precisely if the local theta lifts $\Theta(\Pi_{i,v})$ are nonzero for every place $v$ of $\Q$, and the $L$-function $L(\Pi_i,\mathrm{Std},s)$ has a pole at $s=2$.  These latter conditions only depend on the isomorphism type of the $\Pi_i$.
	
For the second statement, let $B \subseteq \mathcal{A}_0(G_1)[\Pi_1]$ be the space of the theta lifts from $\Sp(4)$ and let $C$ be its orthogonal complement in $\mathcal{A}_0(G_1)[\Pi_1]$.  Then $B$ is a $G_1(\A)$-representation, and thus so is $C$. Suppose for the sake of contradiction that $C$ is nonzero. Let $\Pi_2 \subseteq C$.  By the first part, $\Theta_{\Sp(4)}(\Pi_2) \neq 0$.  Let $f' \in \Theta_{\Sp(4)}(\Pi_2)$ be a nonzero cusp form in this space.  Then $f' =\theta_\phi(\varphi_2)$ for some $\varphi_2 \in C$.  One has
\[\langle \varphi_2, \theta_\phi(f') \rangle  = \overline{\langle \theta_\phi(\varphi_2),f' \rangle} \neq 0. \]
Moreover, it follows from \cite[Proposition 3.1]{ganAWSchapter} and local Howe duality \cite{GanTakeda16} that $\theta_\phi(f') \in \mathcal{A}_0(G_1)[\Pi_1]$.  This contradicts the statement that $\Pi_2$ is orthogonal to $B$, proving the claim. 
\end{proof}

We now have the following result, which follows from Theorem~\ref{thm:thetaNE}.
\begin{theorem}\label{thm:TFAE} Suppose $\varphi \in \mathcal{A}_{0,\Z}^{rep}(G,\ell)$ is a nonzero Hecke eigenform in the plus subspace. The following conditions are equivalent:
\begin{enumerate}
	\item $\varphi \in \mathrm{SK}_{\ell}$, i.e., there exists a nonzero holomorphic Siegel modular form $f$ of weight $\ell$ and level one so that $\varphi = \theta^*(f)$ is in $\mathcal{A}_{0,\Z}(G,\ell)$;
	\item $\Theta_{\Sp(4)}(\Pi_{L(\varphi)}) \neq 0$;
\end{enumerate}
\end{theorem}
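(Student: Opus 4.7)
The plan is to prove the two directions separately, with the bulk of the work going into $(2) \Rightarrow (1)$.

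For $(1) \Rightarrow (2)$, I would unwind the definition of $\theta^{*}$. Given $\varphi = \theta^{*}(f)$ for a level one, weight $\ell$ holomorphic Siegel cuspidal eigenform $f$, let $\pi_f$ be the cuspidal representation of $\Sp_4(\A)$ generated by the automorphic function attached to $f$. Because $\theta^{*}$ arises by restriction from $G_1$ of a genuine theta lift from $\Sp_4$ to $G_1$, the form $L(\varphi)$ lies in $\Theta_{G_1}(\pi_f)$, which is nonzero and irreducible by Howe duality, so $\Pi_{L(\varphi)} \cong \Theta_{G_1}(\pi_f)$. The Rallis tower property then forces $\pi_f \subseteq \Theta_{\Sp_4}(\Theta_{G_1}(\pi_f)) = \Theta_{\Sp_4}(\Pi_{L(\varphi)})$, so the latter is nonzero.

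For the converse, set $\sigma := \Theta_{\Sp_4}(\Pi_{L(\varphi)})$, which is cuspidal by Proposition \ref{prop:liftIsCusp}(3). The plan is to first produce a level one, weight $\ell$ holomorphic Siegel cuspidal eigenform $f$ inside $\sigma$, and then conclude $\varphi = \theta^{*}(f)$ after a suitable rescaling. To obtain $f$ I would analyze $\sigma$ locally. Because $\varphi$ is level one, $L(\varphi)$ is $G_1(\widehat{\Z})$-invariant by Proposition \ref{proposition-restriction-to-SO(V)}, so each $\Pi_{L(\varphi),p}$ is spherical; standard results on the unramified theta correspondence then imply $\sigma_p$ is spherical at every finite $p$. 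At the archimedean place, Lemma \ref{lem:RepsAgree} (using the plus-subspace hypothesis) identifies $\Pi_{L(\varphi),\infty}$ with the extension $\Pi_\ell$ of the quaternionic discrete series $\pi_\ell^0$ to $G_1(\R)$, and the known archimedean theta correspondence between $O(4,4)(\R)$ and $\Sp_4(\R)$ sends $\Pi_\ell$ to the scalar weight $\ell$ holomorphic discrete series. Consequently $\sigma$ contains a nonzero vector $f$ corresponding to a level one, weight $\ell$ holomorphic Siegel cuspidal Hecke eigenform.

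Granted such an $f$, the form $\theta^{*}(f)$ is a nonzero element of $\mathcal{A}_{0,\Z}(G,\ell)$ and $L(\theta^{*}(f))$ generates $\Pi_{L(\varphi)}$. Applying Theorem \ref{thm:thetaNE}(2) with this $f$ and suitable Schwartz data, every element of the $\Pi_{L(\varphi)}$-isotypic subspace of $\mathcal{A}_0(G_1)$ is a theta lift from $\Sp_4$. Thus $L(\varphi)$ and $L(\theta^{*}(f))$ are Hecke eigenforms inside the same irreducible cuspidal representation, hence proportional by strong multiplicity one; after rescaling $f$ and applying $Res$ I obtain $\varphi = \theta^{*}(f)$.

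The main obstacle I anticipate is the archimedean local theta lift computation: precisely identifying $\Theta_{\Sp_4(\R)}(\Pi_\ell)$ as the holomorphic discrete series of the correct weight $\ell$, rather than some shifted or nonholomorphic representation. The plus-subspace hypothesis plays an essential role here because, via Lemma \ref{lem:RepsAgree}, it ensures the extension of the quaternionic discrete series to $G_1(\R)$ is $\Pi_\ell$ rather than $\Pi_\ell \otimes \det$; on dichotomy grounds the latter has vanishing archimedean theta lift to $\Sp_4(\R)$, which would obstruct the construction of $f$ entirely.
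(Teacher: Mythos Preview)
Your overall strategy mirrors the paper's, and $(1)\Rightarrow(2)$ is fine (though what you call the ``Rallis tower property'' is really just the adjointness/seesaw identity $\langle \theta_\phi(f),L(\varphi)\rangle_{G_1}=\langle f,\theta_\phi(L(\varphi))\rangle_{\Sp_4}$; this is exactly how the paper argues it).

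There is, however, a genuine gap in your $(2)\Rightarrow(1)$. You take $f$ inside $\sigma=\Theta_{\Sp_4}(\Pi_{L(\varphi)})$ and then assert that $L(\theta^*(f))$ generates $\Pi_{L(\varphi)}$. But $L(\theta^*(f))$ lies in $\Theta_{G_1}(\sigma)$, which Howe duality only tells you is \emph{isomorphic} to $\Pi_{L(\varphi)}$, not equal to it as a subspace of $\mathcal{A}_0(G_1)$; it could be a different copy inside the $\Pi_{L(\varphi)}$-isotypic component. Your closing appeal to ``strong multiplicity one'' does not repair this: you would need multiplicity one for cuspidal representations of $O(4,4)$ (not available in general), or for the relevant holomorphic-at-infinity cuspidal representations of $\Sp_4$ (not generic, so Whittaker uniqueness does not apply, and the paper nowhere invokes Arthur-type multiplicity results).

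The paper avoids this by reversing the order in which Theorem~\ref{thm:thetaNE}(2) is used. First it uses $\Theta_{G_1}(\sigma)\neq 0$ only to verify the \emph{hypothesis} of Theorem~\ref{thm:thetaNE}(2), concluding that every vector in the isotypic component---in particular $L(\varphi)$ itself---is a global theta lift, say $L(\varphi)=\theta_\phi(f_1)$ with $f_1\in\pi_1$. Then $\Pi_{L(\varphi)}\subseteq\Theta_{G_1}(\pi_1)$ because $L(\varphi)$ generates the left side, and Howe irreducibility forces equality \emph{as subspaces}. Local--global compatibility shows $\pi_1$ is unramified everywhere with holomorphic discrete series at infinity, so one takes the level-one weight-$\ell$ vector $f\in\pi_1$ (not in $\sigma$). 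Now $L(\theta^*(f))\in\Theta_{G_1}(\pi_1)=\Pi_{L(\varphi)}$ genuinely, and proportionality with $L(\varphi)$ follows from one-dimensionality of the $G_1(\widehat{\Z})$-fixed minimal $K^0$-type subspace of an irreducible representation---no multiplicity-one input required. Your argument becomes correct if you make this same switch: use your $f\in\sigma$ only to trigger Theorem~\ref{thm:thetaNE}(2), then discard it and extract the ``right'' $f$ from the representation that actually theta-lifts onto $\Pi_{L(\varphi)}$.
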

\begin{proof} It is clear that (1) implies (2): Indeed, if $\varphi = \theta^*(f)$, then there is test data $\phi$ so that $L(\varphi) = \theta_\phi(f) \in \mathcal{A}_0(G_1)$ is nonzero. Since $\langle \theta_\phi(f), \theta_\phi(f) \rangle \neq 0$, we see that 
\[\int_{[G_1] \times [\Sp(4)]}{\overline{\theta_\phi(f)(g)} \theta_\phi(g,h)\overline{f(h)}\,dg\,dh} \neq 0,\]
so $\Theta_{\Sp(4)}(\Pi_{L(\varphi)}) \neq 0$.

We now prove that (2) implies (1).  Let $\pi'$ be the irreducible cuspidal automorphic representation of $\Sp(4)$ which is $\Theta_{\Sp(4)}(\Pi_{L(\varphi)})$.  It is cuspidal by Proposition~\ref{prop:liftIsCusp} and irreducible by global Howe duality \cite{GanTakeda16}, see also \cite[Proposition 3.1]{ganAWSchapter}.  By \cite[Proposition 3.1]{ganAWSchapter}, $\pi'$ is generated by a holomorphic Siegel modular form $f'$ of weight $\ell$ and level one.  It follows that $\Theta_{G_1}(\pi')$ has nonzero inner product with some element of $\Pi_{L(\varphi)}$, and that $\Theta_{G_1}(\pi')$ is isomorphic to $\Pi_{L(\varphi)}$.  By Theorem~\ref{thm:thetaNE}, there exists a cuspidal automorphic representation $\pi$ on $\Sp(4)$ of level one for which $\Theta_{G_1}(\pi) = \Pi_{L(\varphi)}$.  Indeed, there exists a cuspidal automorphic representation $\pi_1$ on $\Sp(4)$ and $f_1 \in \pi_1$ so that $\theta_\phi(f_1) \in \Pi_{L(\varphi)}$.  Because $\Pi_{L(\varphi)}$ is irreducible, $\Theta_{G_1}(\pi_1) \supseteq \Pi_{L(\varphi)}$.  But $\Theta_{G_1}(\pi_1)$ itself is irreducible, so $\Theta_{G_1}(\pi_1) = \Pi_{L(\varphi)}$.  One sees that $\pi_1$ must be unramified at every finite place and holomorphic discrete series at infinity, so we can take $\pi = \pi_1$.

Finally, let $f \in \pi$ be the Siegel modular form of level one.  Then $\theta^*(f)$ is nonzero, but also in $\Pi_{\varphi}$, so we must have $\theta^*(f) = \varphi$ as desired.
\end{proof}

For a half-integral $2\times 2$ symmetric matrix $S$, let $P_{S,\ell}$ be the weight $\ell$ holomorphic Poincar\'e series on $\Sp(4)$ associated to $S$, and let $Q_{S,\ell}$ be the associated Poincar\'e lift; see Proposition~\ref{Proposition-Explicit-Poincare-Lift}.
\begin{corollary}\label{cor:QStheta}Suppose $\varphi \in \mathcal{A}_{0,\Z}^{rep}(G,\ell)$ is an eigenform in the plus subspace, for $\ell \geq 16$ even.  Then $\varphi \in \mathrm{SK}_{\ell}$ if and only if $\langle \varphi, Q_{S,\ell} \rangle \neq 0$ for some $S$.
\end{corollary}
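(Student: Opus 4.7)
The plan is to combine Theorem \ref{thm:TFAE} with the standard adjointness of the theta correspondence and the reproducing property of holomorphic Siegel Poincaré series. To begin, Proposition \ref{Proposition-Explicit-Poincare-Lift} identifies $Q_{S,\ell}$ with a nonzero scalar multiple of $\theta^{*}(P_{S,\ell})$, so it is enough to prove
\[
\varphi \in \mathrm{SK}_{\ell} \iff \langle \varphi, \theta^{*}(P_{S,\ell})\rangle \neq 0 \text{ for some } S.
\]
Using that $\varphi$ is in the plus subspace, Proposition \ref{proposition-restriction-to-SO(V)} lets me replace $\varphi$ by $L(\varphi)$ on $G_{1}$ at the cost of a nonzero constant.

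The next step is to unfold the defining theta kernel integral for $\theta^{*}(P_{S,\ell})$. For $\ell \geq 16$ the double integral converges absolutely, so I may interchange the order of integration to obtain
\[
\langle L(\varphi), \theta^{*}(P_{S,\ell})\rangle_{G_{1}} \;\propto\; \langle \Theta_{\Sp_{4}}(L(\varphi)), P_{S,\ell}\rangle_{\Sp_{4}},
\]
where $\Theta_{\Sp_{4}}(L(\varphi))$ denotes the theta lift of $L(\varphi)$ to $\Sp_{4}$ using the archimedean and finite test data dual to that defining $\theta^{*}$. By Proposition \ref{prop:liftIsCusp} this lift is cuspidal, and the archimedean theta correspondence matches the quaternionic discrete series of weight $\ell$ on $G(\R)$ with the holomorphic discrete series of weight $\ell$ on $\Sp_{4}(\R)$, so $\Theta_{\Sp_{4}}(L(\varphi))$ is the automorphic form attached to a level one holomorphic Siegel cusp form of weight $\ell$. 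The classical reproducing property $\langle F, P_{S,\ell}\rangle_{\Sp_{4}} = c_{S,\ell}\, a_{F}(S)$ with $c_{S,\ell} \neq 0$ then gives
\[
\langle \varphi, Q_{S,\ell}\rangle \neq 0 \text{ for some } S \iff \Theta_{\Sp_{4}}(L(\varphi)) \neq 0,
\]
since any nonzero cuspidal Siegel modular form has some nonvanishing Fourier coefficient.

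It remains to connect this with Theorem \ref{thm:TFAE}, which says $\varphi \in \mathrm{SK}_{\ell}$ iff $\Theta_{\Sp_{4}}(\Pi_{L(\varphi)}) \neq 0$. The implication $\Theta_{\Sp_{4}}(L(\varphi)) \neq 0 \Rightarrow \Theta_{\Sp_{4}}(\Pi_{L(\varphi)}) \neq 0$ is trivial. For the converse, the proof of $(2) \Rightarrow (1)$ in Theorem \ref{thm:TFAE} explicitly produces a level one holomorphic Siegel cusp eigenform $f$ on $\Sp_{4}$ with $\theta^{*}(f) = \varphi$; applying $\Theta_{\Sp_{4}}$ to $L(\theta^{*}(f))$ and using Howe duality together with the nonvanishing of the local archimedean theta pairing on the minimal $K$-type (which is precisely how the test data defining $\theta^{*}$ is chosen in \cite{pollackCuspidal}) yields a nonzero scalar multiple of $f$, so $\Theta_{\Sp_{4}}(L(\varphi)) \neq 0$.

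The main obstacle in this plan is the unfolding/adjointness step: one must verify that the specific test data used to define $\theta^{*}$ in \cite{pollackCuspidal} really does match that of the theta lift $\Theta_{\Sp_{4}}$ in both directions, so that the seesaw identity above is correct and, crucially, so that the local archimedean pairing on the minimal quaternionic/holomorphic $K$-types is nonzero. Once this matching of archimedean test data is in hand, the rest of the argument is formal.
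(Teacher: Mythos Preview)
Your converse direction (nonvanishing pairing $\Rightarrow \varphi \in \mathrm{SK}_\ell$) is essentially the paper's: pass to $G_1$ via $L$, note that $L(Q_{S,\ell})$ is a theta lift from $\Sp_4$, and use the adjointness $\langle L(\varphi), \theta_\phi(P_{S,\ell})\rangle_{G_1} = \langle \theta_\phi(L(\varphi)), P_{S,\ell}\rangle_{\Sp_4}$ to conclude $\Theta_{\Sp_4}(\Pi_{L(\varphi)}) \neq 0$, then invoke Theorem~\ref{thm:TFAE}. The paper does exactly this, without the detour through the reproducing property.

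For the forward direction, however, the paper's argument is much simpler than yours and completely avoids your ``main obstacle.'' You try to show that $\varphi \in \mathrm{SK}_\ell$ forces $\Theta_{\Sp_4}(L(\varphi)) \neq 0$ \emph{with the specific test data} dual to that defining $\theta^*$, which indeed requires nonvanishing of the local archimedean pairing on minimal $K$-types. The paper sidesteps this entirely: since the Poincar\'e series $P_{S,\ell}$ span $S_\ell(\Sp_4(\Z))$, if $\varphi = \theta^*(f)$ one writes $f = \sum_j \alpha_j P_{S_j,\ell}$, whence $\varphi = \sum_j \beta_j Q_{S_j,\ell}$, and then $\langle \varphi, \varphi\rangle \neq 0$ forces $\langle \varphi, Q_{S_j,\ell}\rangle \neq 0$ for some $j$. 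No local theta computation is needed. Your route can be made to work, but the obstacle you flag is genuine and unnecessary; the span argument is the cleaner path.
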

\begin{proof}  If $\varphi \in \mathrm{SK}_{\ell}$, then clearly such an $S$ exists: We have $\varphi = \theta^*(f)$, and $f = \sum_{j}{\alpha_j P_{S_j,\ell}}$, so that $\varphi = \sum_{j}{\beta_j Q_{S_j,\ell}}$, and hence $\langle \varphi, Q_{S_j,\ell} \rangle \neq 0$ for some $j$.

Conversely, assume $\langle \varphi, Q_{S,\ell} \rangle \neq 0$ for some $S$.  This inner product is equal to $\langle L(\varphi),L(Q_{S,\ell}) \rangle_{G_1}$, and $L(Q_{S,\ell})$ is a theta lift from $\Sp(4)$.  Thus $\Theta_{\Sp(4)}(\Pi_{L(\varphi)}) \neq 0$, so the corollary follows from Theorem~\ref{thm:TFAE}.\end{proof}

In the next section, we will study the inner products  $\langle \varphi, Q_{S,\ell} \rangle$ using periods of $\varphi$.

\section{Periods}\label{sec:periods}
The purpose of this section is to prove Theorem~\ref{thm:introPeriod}, restated below as Corollary~\ref{cor:part 6 of main thm}.  By Corollary~\ref{cor:QStheta}, we can characterize the elements of the Saito-Kurokawa subspace in terms of inner products with the $Q_{S,\ell}$.  Thus to prove Theorem~\ref{thm:introPeriod} it remains to relate the inner products $\langle \varphi,Q_{S,\ell} \rangle$ to periods of $\varphi$.  That is what we do in this section.

Recall from Section~\ref{sec:notation} that we let $V$ denote the underlying quadratic space for the group $G$. Suppose $v_1, v_2 \in V$.  We write $H_{v_1,v_2}$ for the subgroup of $G$ fixing the subspace spanned by $v_1$ and $v_2$.  If $v_1,v_2\in V$ are non-isotropic vectors, and $Y_{v_1,v_2}$ denotes the orthogonal complement of $\Q\linspan\{v_1,v_2\}$ in $V$, then the map $h\mapsto h\rvert_{Y_{v_1,v_2}}$ gives an isomorphism 
\begin{equation}
\label{eqn-isomorphism-type-H_{v1,v2}}
H_{v_1,v_2}\simeq \SO(Y_{v_1,v_2}). 
\end{equation}
Moreover, if $v_1,v_2 \in V(\Q)$ or $V(\R)$, then $H_{v_1,v_2}$ is an algebraic group over $\Q$ or $\R$ respectively.  Finally, if $v_1, v_2 \in L = V(\Z)$, then we write $H_{v_1,v_2}(\Z)$ for the subgroup of $G(\Z)$ that fixes $v_1,v_2$.

Given $v_1, v_2 \in V(\R)$ spanning a positive-definite two-plane, recall from \eqref{eqn:Bv1v2Def} the function $B_{[v_1,v_2]}: G(\R) \rightarrow \mathbf{V}_{\ell}$.  Recall that we let $V^+ \subseteq V(\R)$ denote a certain positive-definite subspace of dimension four; see Section~\ref{subsection-compact-subgroups}.  That the denominator in the definition of $B_{v_1, v_2}$ is nonzero follows from the following lemma, and the fact that if $v_1, v_2$ span a positive definite two-plane, then the projections of $v_1$ and $v_2$ onto $V^+$ still span a two-plane.
\begin{lemma}\label{lem:proju} Suppose $u_1, u_2 \in V^{+}$ span a two-plane.  Then $\mathrm{pr}_{K}(u_1 \wedge u_2)$ is nonzero.
\end{lemma}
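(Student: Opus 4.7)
My plan is to identify $pr_{\su_2}$, when restricted to $\wedge^2 V_4$, with the orthogonal projection onto the space of self-dual two-forms in $\wedge^2 V_4$, and then use a classical Hodge-star/Plücker identity to conclude.

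The first step is to examine the decomposition of $\wedge^2 V_4$. Since $V_4 = V^+$ and $\Lie(K^0) = \wedge^2 V^+ \oplus \wedge^2 V^-$, the projection $\Lie(G)\to \Lie(K^0)$ is the identity on $\wedge^2 V_4$. Using the explicit basis for the ``first'' $\sl_2$ given in Section \ref{subsection-compact-subgroups} (with $u_1,u_2,v_1,v_2$ an orthonormal basis of $V^+$), a direct expansion shows that $e^+, h^+, f^+$ are complex linear combinations of the three self-dual two-forms
\[ u_1\wedge u_2 + v_1\wedge v_2,\quad u_1\wedge v_1 - u_2\wedge v_2,\quad u_1\wedge v_2 + u_2\wedge v_1, \]
while $e'^+, h'^+, f'^+$ span the anti-self-dual complement. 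Thus, after complexification, the first $\sl_2$ equals the space $\wedge^2_+ V_4 \otimes \C$ of self-dual two-forms, and $pr_{\su_2}|_{\wedge^2 V_4}$ is the orthogonal projection $\omega \mapsto \omega^+$ onto this subspace.

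The second step is the Plücker/Hodge argument. Write $\omega = u_1\wedge u_2 = \omega^+ + \omega^-$. Since $u_1, u_2$ span a two-plane, $\omega\neq 0$ and $|\omega|^2 = |u_1|^2|u_2|^2 - (u_1,u_2)^2 > 0$ by positive definiteness of $V_4$ and Cauchy--Schwarz. On the other hand $\omega\wedge\omega = 0$ as $\omega$ is decomposable, and the orthogonality of $\wedge^2_+V_4$ and $\wedge^2_-V_4$ forces $\omega^+\wedge\omega^- = 0$, so
\[ 0 = \omega\wedge\omega = \omega^+\wedge\omega^+ + \omega^-\wedge\omega^- = \bigl(|\omega^+|^2 - |\omega^-|^2\bigr)\mathrm{vol}. \]
Combined with $|\omega|^2 = |\omega^+|^2 + |\omega^-|^2 > 0$, this gives $|\omega^+|^2 = |\omega^-|^2 = |\omega|^2/2 > 0$, so $pr_{\su_2}(u_1\wedge u_2) = \omega^+ \neq 0$, as desired.

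The only non-routine point is verifying that the ``first'' $\sl_2$ of Section~\ref{subsection-compact-subgroups} is (complexified) self-dual rather than anti-self-dual; this is a direct check from the explicit formulas for $e^+, h^+, f^+$ given in the paper, so I do not expect any real difficulty here.
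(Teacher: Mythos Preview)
Your argument is correct. The identification of the first $\sl_2$ with the complexified self-dual subspace $\wedge^2_+ V_4 \otimes \C$ follows immediately from the explicit formulas for $e^+,h^+,f^+$ (for instance $h^+ = i(u_1\wedge u_2 + v_1\wedge v_2)$ is visibly self-dual, and expanding $e^+,f^+$ gives linear combinations of $u_1\wedge v_1 - u_2\wedge v_2$ and $u_1\wedge v_2 + u_2\wedge v_1$). The Hodge/Pl\"ucker step is clean: $\omega\wedge\omega = 0$ for decomposable $\omega$, together with $\alpha\wedge\beta = \langle\alpha,*\beta\rangle\,\mathrm{vol}$ and the orthogonality of the $\pm 1$ eigenspaces of $*$, yields $|\omega^+|^2 = |\omega^-|^2 = \tfrac{1}{2}|\omega|^2 > 0$.

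The paper takes a different and shorter route: it simply checks $pr_{\su_2}(u_1\wedge u_2)\neq 0$ for the specific basis vectors $u_1,u_2$ of Section~\ref{sec:notation} (where one sees directly that $u_1\wedge u_2$ has nonzero $h^+$-component), and then invokes $\SO(V_4)$-equivariance of the projection together with the transitivity of $\SO(4)$ on oriented $2$-planes. Your approach trades this transitivity argument for a direct Hodge-theoretic computation; it is slightly longer but has the advantage of being orientation-independent (it shows both $\omega^+$ and $\omega^-$ are nonzero, so you never need to check which of the two $\sl_2$'s is the self-dual one) and of giving the quantitative identity $|\omega^+| = |\omega^-|$.
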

\begin{proof}  This is clear for the basis elements $u_1, u_2$ of Section~\ref{The underlying quadratic space}.  It now follows in general by $\SO(V^+)$ equivariance of the projection map.
\end{proof}

Suppose now that $T$ is a fixed half-integral, positive definite, symmetric matrix.  Set $X_{T} = \{v_1,v_2 \in L: S(v_1, v_2) = T\}.$  If $\ell \geq 16$ is even, recall we set 
\[Q_{T,\ell}(g) = \sum_{(v_1,v_2) \in X_{T}}{B_{[v_1,v_2],\ell}(g)}.\]
Similarly, if $v_1, v_2 \in L$ span a positive-definite two-plane, and $\ell \geq 16$ is even, we set
\[Q_{v_1,v_2;\ell}(g) = \sum_{\gamma \in H_{v_1,v_2}(\Z)\backslash G(\Z)}{B_{[v_1,v_2],\ell}(\gamma g)}.\]

Suppose $D = -4\det(T)$ is odd and square-free.  By \cite{Saha2013}, the Poincar\'e series $P_T$ with these $T$ span the space of cusp forms of weight $\ell$ and level one on $\Sp(4)$. By Theorem~\ref{thm:AITSO8}, for such $T$, $Q_{T,\ell} = Q_{v_1,v_2;\ell}$ for any $v_1, v_2 \in L$ with $S(v_1, v_2) = T$.  Thus we have:
\begin{corollary} Suppose $\ell \geq 16$ is even, and $\varphi$ is a cuspidal quaternionic modular form on $G$ of weight $\ell$ and level one. Suppose moreover that $\varphi$ is an eigenform in the plus subspace. Then $\varphi \in \mathrm{SK}_{\ell}$ if and only if $\langle \varphi, Q_{v_1,v_2;\ell}\rangle \neq 0$ for some $v_1, v_2 \in L$ with $-4\det(S(v_1,v_2))$ odd and square-free.\end{corollary}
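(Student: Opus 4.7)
The plan is to combine Corollary~\ref{cor:QStheta}, which reduces membership in $\mathrm{SK}_{\ell}$ to nonvanishing of $\langle \varphi, Q_{T;\ell}\rangle$ for some positive-definite half-integral $T$, with two further ingredients already cited in the excerpt: (i) Saha's spanning theorem \cite{Saha2013}, which says that the Poincar\'e series $P_{T,\ell}$ with $D(T):=-4\det(T)$ odd and square-free span $S_{\ell}(\Sp_4(\Z))$; and (ii) Theorem~\ref{thm:AITSO8}, which asserts $Q_{T;\ell}=Q_{v_1,v_2;\ell}$ for any $(v_1,v_2)\in L^{\oplus 2}$ with $S(v_1,v_2)=T$ whenever $D(T)$ is odd and square-free. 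Together these bridge the inner-product criterion of Corollary~\ref{cor:QStheta} and the period-type quantity $\langle\varphi,Q_{v_1,v_2;\ell}\rangle$ appearing in the statement.

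For the ``only if'' direction, I would argue as follows. Suppose $\varphi\in \mathrm{SK}_{\ell}$, so that $\varphi=\theta^{\ast}(f)$ for some $f\in S_{\ell}(\Sp_4(\Z))$. By Saha's theorem, $f=\sum_i c_i P_{T_i,\ell}$ with each $D(T_i)$ odd and square-free. Proposition~\ref{Proposition-Explicit-Poincare-Lift} together with linearity of $\theta^{\ast}$ then writes $\varphi$ as a finite linear combination $\varphi=\sum_i c_i' Q_{T_i;\ell}$. Because the Petersson inner product is positive definite, $\langle\varphi,\varphi\rangle\neq 0$, and expanding gives $\sum_i c_i'\,\overline{\langle\varphi,Q_{T_i;\ell}\rangle}\neq 0$, so some $\langle\varphi,Q_{T_i;\ell}\rangle\neq 0$. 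For that $T_i$, one may pick any $(v_1,v_2)\in L^{\oplus 2}$ with $S(v_1,v_2)=T_i$ (existence is a routine lattice representation question for the split rank-$8$ unimodular $L$), and Theorem~\ref{thm:AITSO8} then delivers $Q_{T_i;\ell}=Q_{v_1,v_2;\ell}$, giving $\langle\varphi,Q_{v_1,v_2;\ell}\rangle\neq 0$.

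The ``if'' direction is the same ingredients run in reverse. Given $\langle\varphi,Q_{v_1,v_2;\ell}\rangle\neq 0$ with $D(S(v_1,v_2))$ odd and square-free, set $T=S(v_1,v_2)$; Theorem~\ref{thm:AITSO8} identifies $Q_{v_1,v_2;\ell}$ with $Q_{T;\ell}$, so $\langle\varphi,Q_{T;\ell}\rangle\neq 0$, and Corollary~\ref{cor:QStheta} yields $\varphi\in\mathrm{SK}_{\ell}$.

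The main obstacle is not in assembling the above argument, which is short once the pieces are placed side by side, but rather in the black-boxed input Theorem~\ref{thm:AITSO8}: the claim that $X_T=\{(v_1,v_2)\in L^{\oplus 2}:S(v_1,v_2)=T\}$ forms a single $G(\Z)$-orbit under the odd-square-free-discriminant hypothesis. This is a strong-approximation / class-number-one assertion for the pointwise stabilizer $H_{v_1,v_2}$, and the square-free hypothesis on $D(T)$ is precisely what rules out genus-theoretic obstructions to local-to-global passage. Proving it is what Appendices~\ref{sec:orbits} and~\ref{sec:finiteness_appendix} are set up to handle, and it is the only non-formal ingredient of the argument for the corollary itself.
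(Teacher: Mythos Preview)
Your argument is correct and matches the paper's own proof essentially verbatim: the paper states (just before the corollary) that Saha's theorem reduces to $T$ with odd square-free discriminant, that Theorem~\ref{thm:AITSO8} identifies $Q_{T;\ell}$ with $Q_{v_1,v_2;\ell}$ for such $T$, and then invokes Corollary~\ref{cor:QStheta}. One small inaccuracy in your closing commentary: only Appendix~\ref{sec:orbits} handles the integral orbit problem (Theorem~\ref{thm:AITSO8}); Appendix~\ref{sec:finiteness_appendix} concerns the unrelated convergence of the integral in Theorem~\ref{thm:PeriodQ}, and the proof in Appendix~\ref{sec:orbits} is a direct lattice argument rather than a strong-approximation statement.
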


Our objective for the rest of this section is to reinterpret the inner product $\langle \varphi, Q_{v_1,v_2;\ell}\rangle$ as a period of $\varphi$ over $H_{v_1,v_2}$.  The inner product $\langle \varphi, Q_{v_1,v_2;\ell}\rangle$ is defined adelically. It can be interpreted as an integral over the real points of $G$, by the following lemma.

\begin{lemma} The canonical map
	\[G(\Z)\backslash G(\R) \rightarrow G(\Q) \backslash G(\A)/ G(\widehat{\mathbf Z})\]
is a bijection.
\end{lemma}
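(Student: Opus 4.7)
The lemma has two halves: injectivity, which is immediate, and surjectivity, which amounts to a class-number-one statement for the lattice $L = V(\Z)$ under the action of $G = \SO(V)$. I will handle them in that order.

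For \emph{injectivity}, I would argue formally. Suppose $g_\infty, g'_\infty \in G(\R)$ satisfy $g'_\infty = \gamma g_\infty k$ for some $\gamma \in G(\Q)$ and $k \in G(\ZZ)$. Projecting this equation to $G(\A_f)$ yields $\gamma_f = k^{-1} \in G(\ZZ)$, so $\gamma \in G(\Q) \cap G(\ZZ) = G(\Z)$, and hence $g_\infty$ and $g'_\infty$ represent the same class in $G(\Z)\backslash G(\R)$.

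For \emph{surjectivity}, write $g \in G(\A)$ as $(g_\infty, g_f)$; it suffices to produce $\gamma \in G(\Q)$ and $k \in G(\ZZ)$ with $\gamma g_f k = 1$, since then $\gamma g k = (\gamma_\infty g_\infty, 1) \in G(\R)$. Because $G(\ZZ)$ is the stabilizer in $G(\A_f)$ of the lattice $L \otimes \ZZ \subseteq V \otimes \A_f$, the double coset space $G(\Q)\backslash G(\A_f)/G(\ZZ)$ parameterizes the isomorphism classes in the genus of $L$. The identity $G(\A_f) = G(\Q)\cdot G(\ZZ)$ that we require is therefore equivalent to the genus of $L$ containing a single class.

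To establish this, my plan is to combine strong approximation for $\Spin(V)$---which applies because $\Spin(V)$ is simply connected and semisimple with $\Spin(V)(\R)$ noncompact, as $V$ contains isotropic vectors over $\R$---with the short exact sequence $1 \to \Spin(V) \to \SO(V) \xrightarrow{\mathrm{sn}} \mathbb{G}_m/\mathbb{G}_m^2 \to 1$ and its associated long exact sequence of adelic cohomology. The former gives $\Spin(V)(\A_f) = \Spin(V)(\Q)\cdot \Spin(V)(\ZZ)$, while the latter reduces the class-number-one statement for $\SO(V)$ relative to $L$ to the claim that the spinor norm carries $G(\ZZ)$ onto the image of the spinor norm of $G(\A_f)$ modulo squares.

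The main obstacle I anticipate is verifying this surjectivity of the local spinor norm at every finite prime $p$. For the split unimodular lattice $L$ of rank $8$, this follows from the fact that $O(L \otimes \Z_p)$ contains reflections in primitive vectors of various norms, so that any unit in $\Z_p^\times$ modulo squares is realized as a spinor norm of an element of $G(\Z_p)$. Cleaner still, one can simply invoke Kneser's theorem that an indefinite integral quadratic form of rank $\geq 3$ has one class per genus; applied to $L$ this delivers the required equality $G(\A_f) = G(\Q)\cdot G(\ZZ)$ and completes the proof.
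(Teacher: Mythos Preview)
Your argument is correct, but it is genuinely different from the paper's. You reduce surjectivity to a class-number-one statement and then appeal to strong approximation for $\Spin(V)$ together with a spinor-norm computation (or, as a black box, to the Eichler--Kneser theory of indefinite lattices). The paper instead gives a direct, self-contained argument using the Iwasawa decomposition for the Siegel parabolic $P_S = N M$ with $M \simeq \GL_4$: given $g \in G(\A_f)$, write $g = u m k$, use $\GL_4(\A_f) = \GL_4(\Q)\GL_4(\widehat{\Z})$ to absorb $m$, and then use $N(\A_f) = N(\Q) N(\widehat{\Z})$ (which holds for any unipotent group with a smooth integral model) to absorb the conjugated unipotent part. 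This avoids any appeal to strong approximation or spinor norms and works uniformly because the Levi is a general linear group with trivial class number.

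Two small comments on your write-up. First, your closing sentence slightly overstates Kneser's theorem: what Eichler--Kneser give directly is one \emph{proper} class per \emph{spinor genus} for indefinite forms of rank $\geq 3$; the further collapse of spinor genus to genus is exactly the local spinor-norm surjectivity you discuss in the preceding paragraph, so you should not present it as an independent shortcut. Second, since the lemma is stated for $G = \SO(V)$ rather than $O(V)$, the identification of $G(\Q)\backslash G(\A_f)/G(\widehat{\Z})$ with ``classes in the genus of $L$'' is really with \emph{proper} classes; your exact-sequence argument through $\Spin$ handles this correctly, but the genus language should be read in that sense.
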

\begin{proof} The map is clearly an injection.  For the surjectivity, we must prove that $G(\A_f) = G(\Q) G(\widehat{\Z})$.  Thus suppose $g \in G(\A_f)$.  By the Iwasawa decomposition for the Siegel parabolic subgroup $NM$ of $G$, we can write $g = u m k$ with $u \in N(\A_f)$, $m \in M(\A_f) \simeq \GL(4,\A_f)$ and $k \in G(\widehat{\Z})$.  We can write $m = \gamma k_1$ for some $\gamma \in M(\Q) \subseteq G(\Q)$ and $k_1 \in G(\widehat{\Z})$.  Thus $g = \gamma (\gamma^{-1} u \gamma) k_1 k$.  But now $\gamma^{-1} u \gamma = \mu k_2$ for some $\mu \in N(\Q)$ and $k_2 \in G(\widehat{\Z})$.  The lemma follows.
\end{proof}

To set up our result on the inner product $\langle \varphi, Q_{v_1, v_2;\ell}\rangle$, we need an additional lemma.
\begin{lemma}\label{lem:Cartan} Suppose $v_1, v_2 \in V(\R)$ span a positive-definite two-plane.  Then the image of $K \cap H_{v_1,v_2}(\R)$ in the long root $\SU_2/\mu_2$ is a nontrivial torus.
\end{lemma}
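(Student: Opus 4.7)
My plan is to decompose via the Cartan involution and reduce everything to an explicit computation in $\SO(V^+) \simeq \SO(4)$. First, I would use the fact that $K$ preserves the decomposition $V(\R) = V^+ \oplus V^-$ to write $v_i = v_i^+ + v_i^-$ with $v_i^\pm \in V^\pm$. The key preliminary observation is that $v_1^+, v_2^+$ are linearly independent: any nontrivial linear combination $\alpha v_1 + \beta v_2$ whose $V^+$-component vanishes would lie in the negative-definite space $V^-$, contradicting positive-definiteness of the span $\langle v_1, v_2 \rangle$.

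Next, because every element of $K$ acts block-diagonally on $V^+ \oplus V^-$, it fixes each $v_i$ if and only if it fixes each $v_i^+$ and each $v_i^-$ separately. In particular,
\[
K^0 \cap H_{v_1,v_2}(\R) = \mathrm{Stab}_{\SO(V^+)}(v_1^+, v_2^+) \times \mathrm{Stab}_{\SO(V^-)}(v_1^-, v_2^-).
\]
The second factor is in the kernel of the map $K^0 \to \SO(V^+) \to \SU_2/\mu_2$, so the image of $K \cap H_{v_1,v_2}(\R)$ in the long root $\SU_2/\mu_2$ coincides with the image of $\mathrm{Stab}_{\SO(V^+)}(v_1^+, v_2^+)$. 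Since $v_1^+, v_2^+$ span a positive-definite two-plane $W \subseteq V^+$, this stabilizer is $\SO(W^\perp) \simeq \SO(2)$, a one-dimensional compact torus.

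It remains to check that this $\SO(2)$ does not lie in the kernel of the projection onto the first $\SU_2/\mu_2$. I would verify this by an explicit quaternion computation: identifying $V^+$ with $\mathbf{H}$ so that $(\SU_2\times \SU_2)/\mu_2$ acts by $(g_1,g_2)\cdot q = g_1 q g_2^{-1}$, one may $\SO(4)$-conjugate so that $W = \R\cdot 1 + \R\cdot i$. The pointwise stabilizer of $W$ is then $\{(g,g) : g \in U(1)\}$, whose image in the first $\SU_2/\mu_2$ is $U(1)/\mu_2$, a circle. For a general $W$ the stabilizer is an $\SO(4)$-conjugate of this, so its image is a conjugate circle, still a nontrivial torus.

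The main obstacle is really just the last step --- verifying that the $\SO(2)$-factor projects nontrivially onto the chosen $\SU_2/\mu_2$ rather than being absorbed entirely into the other $\SU_2/\mu_2$. Fortunately, the symmetry between the two factors in the quaternion model makes this transparent: the diagonal $U(1)$ that stabilizes $W$ projects nontrivially to both factors simultaneously. A minor bookkeeping point is that the map to $\SU_2/\mu_2$ is naturally defined only on $K^0$; restricting the statement to the identity component of $K \cap H_{v_1,v_2}(\R)$ is enough, since the image we produce is already a nontrivial torus.
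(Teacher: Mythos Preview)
Your proof is correct and follows essentially the same strategy as the paper: reduce to the stabilizer of the two-plane $W = \mathrm{Span}(v_1^+, v_2^+)$ inside $\SO(V^+)$, identify it as the $\SO(2)$ rotating $W^\perp$, and then check that this $\SO(2)$ has nontrivial image in the first $\SU_2/\mu_2$. The only cosmetic difference is in the last step: the paper verifies nontriviality at the Lie algebra level by invoking Lemma~\ref{lem:proju} (the projection of $u_1 \wedge u_2$ to the distinguished $\mathfrak{su}_2$ is nonzero for any basis $u_1,u_2$ of $W^\perp$), whereas you do an equivalent direct computation in the quaternion model.
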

\begin{proof}  Suppose $u_1, u_2$ span the orthogonal complement in $V^+$ of the projection of $v_1, v_2$ to $V^+$.  Then the projection of $K \cap H_{v_1,v_2}(\R)$ in the long root $\SU(2)/\mu_2$ is the projection of the  $\exp(t u_1 \wedge u_2)$ with $t \in \R$.  Now the lemma follows by Lemma~\ref{lem:proju}.
\end{proof}

We write $K_{v_1,v_2} = K \cap H_{v_1,v_2}(\R)$.  It follows from Lemma~\ref{lem:Cartan} that $K_{v_1,v_2}$ stabilizes a unique line in $\mathbf{V}_{\ell}$.  Suppose $v' \in \mathbf{V}_{\ell}$ spans this line.  Observe that $B_{[v_1,v_2]}(1)$ is in the same line, so that $\langle v', B_{[v_1,v_2]}(1) \rangle \neq 0$.

If $v_1, v_2 \in L$, and $\varphi$ is a cuspidal quaternionic modular form on $G$ of level one, we define
\[P_{v_1,v_2}(\varphi) = \frac{1}{\langle v', B_{[v_1,v_2]}(1)\rangle} \int_{H_{v_1,v_2}(\Z)\backslash H_{v_1,v_2}(\R)}{\langle v', \varphi(h)\rangle \,dh}.\]
In terms of the vector-valued period $\mathcal{P}$ defined in the introduction, note that $P_{v_1,v_2}(\varphi) = \frac{ \langle v',\mathcal{P}_{v_1,v_2}(\varphi)\rangle}{\langle v', B_{[v_1,v_2]}(1) \rangle}$.  Observe that $\mathcal{P}_{v_1,v_2}(\varphi)$ is invariant by $K_{v_1,v_2}$, so by Lemma~\ref{lem:Cartan}, $P_{v_1,v_2}(\varphi) \neq 0$ if and only if $\mathcal{P}_{v_1,v_2}(\varphi) \neq 0$.

\begin{theorem}\label{thm:PeriodQ} Suppose $\ell \geq 22$.  There is a nonzero constant $C_{v_1,v_2}$ so that $\langle \varphi, Q_{v_1,v_2} \rangle = C_{v_1,v_2} P_{v_1,v_2}(\varphi)$ for all cuspidal quaternionic modular forms $\varphi$ of weight $\ell$ and level one.
\end{theorem}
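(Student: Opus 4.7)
The plan is a three-step unfolding argument: first unfold the Poincar\'e series $Q_{v_1,v_2;\ell}$ against $\varphi$, then pull the $H_{v_1,v_2}(\R)$-action out of the remaining integral to produce the period, and finally invoke a reproducing-kernel property of the archimedean Bessel function $B_{[v_1,v_2]}$ to evaluate what is left.

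First I would unfold. Using the left $G(\Z)$-invariance of $\varphi$ and the definition of $Q_{v_1,v_2;\ell}$ as a sum over $H_{v_1,v_2}(\Z)\backslash G(\Z)$, one obtains
\[
\langle \varphi, Q_{v_1,v_2;\ell}\rangle = \int_{H_{v_1,v_2}(\Z)\backslash G(\R)} \{\varphi(g),\, B_{[v_1,v_2]}(g)\}_{K^0}\, dg.
\]
The convergence of the unfolded integral is where I expect the hypothesis $\ell \geq 22$ to enter, via the decay rate of $B_{[v_1,v_2]}$ at infinity on $G(\R)$ coupled with the rapid decay of the cusp form $\varphi$. The convergence should be essentially a refined version of the (absolute) convergence needed to define $Q_{v_1,v_2;\ell}$ for $\ell \geq 16$ in Proposition \ref{Proposition-Explicit-Poincare-Lift}.

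Next I would exploit that $H_{v_1,v_2}(\R)$ fixes the element $v_1\wedge v_2 \in \wedge^2 V(\R)$, which immediately gives $B_{[v_1,v_2]}(hg) = B_{[v_1,v_2]}(g)$ for all $h \in H_{v_1,v_2}(\R)$. Choosing compatible Haar measures and applying Fubini to integrate first over $H_{v_1,v_2}(\Z)\backslash H_{v_1,v_2}(\R)$, the above integral becomes
\[
\langle \varphi, Q_{v_1,v_2;\ell}\rangle = \int_{H_{v_1,v_2}(\R)\backslash G(\R)} \{\mathcal{P}^g(\varphi),\, B_{[v_1,v_2]}(g)\}_{K^0}\, dg,
\]
where $\mathcal{P}^g(\varphi) := \int_{H_{v_1,v_2}(\Z)\backslash H_{v_1,v_2}(\R)} \varphi(hg)\,dh$ is a smooth $\mathbf{V}_\ell$-valued function on $G(\R)$ that is left $H_{v_1,v_2}(\R)$-invariant and right $K^0$-equivariant. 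Moreover, because $D_\ell$ is a right-invariant differential operator and left averaging commutes with right differentiation, $\mathcal{P}^g(\varphi)$ inherits the equation $D_\ell\mathcal{P}^g(\varphi) \equiv 0$ from $\varphi$, and in particular $\mathcal{P}^1(\varphi) = \mathcal{P}_{v_1,v_2}(\varphi)$.

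The crux is the third step: a reproducing-kernel statement asserting that, on the space of smooth, moderate-growth, $\mathbf{V}_\ell$-valued functions $F$ on $G(\R)$ which are left $H_{v_1,v_2}(\R)$-invariant, right $K^0$-equivariant, and annihilated by $D_\ell$, the linear functional
\[
F \longmapsto \int_{H_{v_1,v_2}(\R)\backslash G(\R)} \{F(g),\, B_{[v_1,v_2]}(g)\}_{K^0}\, dg
\]
equals $C \cdot \langle v', F(1)\rangle$ for some nonzero scalar $C$ independent of $F$, where $v'$ spans the (one-dimensional) $K_{v_1,v_2}$-invariant subspace of $\mathbf{V}_\ell$ produced by Lemma \ref{lem:Cartan}. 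Applied to $F = \mathcal{P}^\bullet(\varphi)$, this yields $\langle \varphi, Q_{v_1,v_2;\ell}\rangle = C\cdot\langle v',\mathcal{P}_{v_1,v_2}(\varphi)\rangle = C_{v_1,v_2}\cdot P_{v_1,v_2}(\varphi)$ with $C_{v_1,v_2} = C\cdot\langle v', B_{[v_1,v_2]}(1)\rangle$. The nonvanishing of $C$ follows by plugging in $F = B_{[v_1,v_2]}$: the integral becomes the strictly positive $L^2$-norm squared of $B_{[v_1,v_2]}$ on $H_{v_1,v_2}(\R)\backslash G(\R)$, while $\langle v', B_{[v_1,v_2]}(1)\rangle\neq 0$ by construction of $v'$.

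The main obstacle will be the reproducing-kernel step. Representation-theoretically, this is a multiplicity-one plus Schur-orthogonality statement for the $H_{v_1,v_2}(\R)$-spherical vector in the minimal $K^0$-type of the quaternionic discrete series $\pi_\ell$. A more concrete route would be to use an Iwasawa-type decomposition of $H_{v_1,v_2}(\R)\backslash G(\R)$, plug in the explicit formula for $B_{[v_1,v_2]}(g)$ coming from the $K$-Bessel expansion in Theorem \ref{Thm 1.2.1 Aaron Paper}, and evaluate the resulting archimedean integral of products of $K$-Bessel functions; the lower bound $\ell\geq 22$ should be exactly what is needed to ensure absolute convergence of this archimedean integral.
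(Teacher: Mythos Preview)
Your overall strategy---unfold the Poincar\'e series, integrate first over $H_{v_1,v_2}(\Z)\backslash H_{v_1,v_2}(\R)$, and then analyze the resulting archimedean integral---is exactly what the paper does, and your observations that $\mathcal{P}^g(\varphi)$ (which the paper calls $B_\varphi(g)$) is left $H_{v_1,v_2}(\R)$-invariant, right $K^0$-equivariant, and annihilated by $D_\ell$ are precisely the ingredients used. The convergence for $\ell\ge 22$ is handled by Theorem~\ref{thm:finitenessInt}, as you anticipate.

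Where you diverge is in the third step. You frame it as an abstract reproducing-kernel statement and propose proving it either via multiplicity-one for the $H_{v_1,v_2}(\R)$-spherical vector in $\pi_\ell$, or via a direct Bessel-integral computation. The paper bypasses both routes with a much shorter argument: it shows directly that $B_\varphi(g)$ is a \emph{constant} scalar multiple of $B_{[v_1,v_2]}(g)$. The mechanism is pointwise plus differential. Pointwise, for each $g$ the group $K_{g^{-1}v_1,g^{-1}v_2}=K\cap g^{-1}H_{v_1,v_2}(\R)g$ fixes both $B_\varphi(g)$ and $B_{[v_1,v_2]}(g)$; by Lemma~\ref{lem:Cartan} this compact group fixes a \emph{unique} line in $\mathbf{V}_\ell$, so the two vectors are proportional at every $g$, say $B_\varphi(g)=a(g)B_{[v_1,v_2]}(g)$. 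Then the key Lemma~\ref{lem:aF} (which you did not isolate) says: if $F$ is quaternionic and never completely degenerate, and $aF$ is also quaternionic, then $a$ is constant. Non-degeneracy of $B_{[v_1,v_2]}(g)$ holds because $\langle B_{[v_1,v_2]}(g),B_{[v_1,v_2]}(g)\rangle\ne 0$ everywhere. Evaluating at $g=1$ identifies the constant as $P_{v_1,v_2}(\varphi)$, and the remaining integral $\int_{H_{v_1,v_2}(\R)\backslash G(\R)}\langle B_{[v_1,v_2]}(g),B_{[v_1,v_2]}(g)\rangle\,dg$ is your nonzero $C_{v_1,v_2}$.

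This is strictly stronger than a reproducing-kernel identity: it shows the space of left-$H_{v_1,v_2}(\R)$-invariant, $K^0$-equivariant, $D_\ell$-annihilated, moderate-growth functions $G(\R)\to\mathbf{V}_\ell$ is one-dimensional, so your reproducing-kernel step becomes automatic. Your representation-theoretic approach would need to confront the subtlety that $D_\ell$-annihilation is weaker than membership in $\pi_\ell$; the paper's differential argument sidesteps this entirely.
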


The condition on the weight $\ell \geq 22$ in the statement of Theorem~\ref{thm:PeriodQ} comes from a convergence criterion studied in Appendix~\ref{sec:finiteness_appendix}; it can likely be weakened.  Before proving Theorem~\ref{thm:PeriodQ}, we require one more lemma.  

Say that $v \in \mathbf{V}_{\ell}$ is \emph{completely degenerate} if $v$ is in the $\SL(2,\C)$ orbit of a highest weight line of $\mathbf{V}_{\ell}$. Additionally, say that a smooth, $K$-equivariant function $b\colon G(\R) \to \mathbf{V}_{\ell}$ is \textit{quaternionic} if $D_{\ell}b\equiv 0$. Here $D_{\ell}$ is the differential operator appearing in Definition~\ref{defn:QMFs1}. 
\begin{lemma}\label{lem:aF} Suppose $F: G(\R) \rightarrow \mathbf{V}_\ell$ is a quaternionic function so that $F(g)$ is never completely degenerate.  Suppose also that $a: G(\R) \rightarrow \C$ is a smooth function with $a F$ quaternionic.  Then $a$ is constant.
\end{lemma}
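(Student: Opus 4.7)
My plan is to translate both quaternionic conditions on $aF$ into constraints on $a$, and then combine them to conclude that $a$ is locally constant, hence constant on each connected component of $G(\R)$. First I would unpack the $K^0$-equivariance: for $g \in G(\R)$ and $k \in K^0$, the equalities $(aF)(gk) = a(gk)\,k^{-1} F(g)$ and $k^{-1}(aF)(g) = a(g)\,k^{-1} F(g)$ must agree, and since $F(g)$ is never completely degenerate it is in particular never zero, so $a(gk) = a(g)$. Hence $a$ is right $K^0$-invariant, equivalently $Xa \equiv 0$ for every $X \in \mathfrak{k}$.

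Next I would exploit the Cauchy-Riemann equation $D_\ell(aF) = 0$. Since $\widetilde{D}_\ell = \sum_\alpha X_\alpha \otimes X_\alpha^\vee$ is a sum of first-order operators, the Leibniz rule yields
\[\widetilde{D}_\ell(aF) = F \otimes da|_{\mathfrak{p}} + a\,\widetilde{D}_\ell F,\]
where $da|_{\mathfrak{p}}(g) := \sum_\alpha (X_\alpha a)(g)\, X_\alpha^\vee$ is interpreted as a smooth section of $\mathfrak{p}^\vee$ (coordinate-free, the restriction of the differential of $a$ to $\mathfrak{p}$). Applying $\mathrm{pr}$ and using both $D_\ell F = 0$ and $D_\ell(aF) = 0$ gives the pointwise identity $\mathrm{pr}(F \otimes da|_{\mathfrak{p}})(g) = 0$ on $G(\R)$. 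Using the $K^0$-equivariant identification $\mathfrak{p}^\vee \simeq V_2 \otimes W$ from Section~\ref{subsec:Defintion-QMFS}, I would pick a basis $\{w_i\}$ of $W$ and write $da|_{\mathfrak{p}}(g) = \sum_i u_i(g) \otimes w_i$ with $u_i(g) \in V_2$. The identity becomes $\sum_i \pi(F(g) \otimes u_i(g)) \otimes w_i = 0$, where $\pi : \mathbf{V}_\ell \otimes V_2 \to \mathrm{Sym}^{2\ell-1} V_2$ is the Clebsch-Gordan projection, and linear independence of the $w_i$ forces $\pi(F(g) \otimes u_i(g)) = 0$ for every $i$ and $g$.

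The heart of the argument — routine, but the key algebraic input — is to identify $\ker(\pi)$: for nonzero $u \in V_2$, one has $\pi(F \otimes u) = 0$ if and only if $F \in \C \cdot u^{2\ell}$. Concretely, in coordinates $V_2 = \mathrm{Span}(x, y)$, the projection $\pi$ is given up to a nonzero scalar by $f \otimes (\alpha x + \beta y) \mapsto \alpha \partial_y f - \beta \partial_x f$, and by an $\SL_2$-change of variables the claim reduces to the observation that $\ker(\partial_y) \cap \mathrm{Sym}^{2\ell} V_2 = \C \cdot x^{2\ell}$. The hypothesis that $F(g)$ is never completely degenerate says precisely that $F(g) \notin \C \cdot v^{2\ell}$ for any $v \in V_2$, so we must have $u_i(g) = 0$ for every $i$ and $g$, whence $da|_{\mathfrak{p}}(g) \equiv 0$. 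Combined with the $K^0$-invariance from the first paragraph this forces $Xa \equiv 0$ for all $X \in \mathfrak{g}$, so $a$ is locally constant on $G(\R)$, as desired.
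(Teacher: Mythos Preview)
Your argument is correct and follows essentially the same route as the paper's proof: establish $K^0$-invariance of $a$ from nonvanishing of $F$, apply the Leibniz rule to $D_\ell(aF)$, and use the decomposition $\mathfrak{p}\simeq V_2\otimes W$ to reduce to the Clebsch--Gordan kernel computation, which the paper records as Remark~\ref{rem:compDeg} (your statement that $\pi(F\otimes u)=0$ with $u\neq 0$ forces $F\in\C u^{2\ell}$ is exactly the contrapositive of that remark). The only cosmetic difference is that the paper works in the fixed basis $x,y$ of $V_2$ and phrases the key step as linear independence of $\langle F,x\rangle$ and $\langle F,y\rangle$, while you package the same computation as identifying $\ker\pi$.
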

\begin{remark}\label{rem:compDeg} Suppose $v \in \mathbf{V}_{\ell}$.  Consider the condition:
\begin{itemize} 
	\item $\langle v, u \rangle = 0$ with $u \in V_2$ implies $u = 0$, where $\langle v, u \rangle \in S^{2\ell-1}(V_2)$ is the contraction.  \end{itemize}
The element $v$ satisfies this condition precisely when $v$ is not completely degenerate.  Indeed, by acting by $\SL(2,\C)$, it suffices to consider the case $u =x$.  Then $\langle v, x \rangle = 0$ implies $v$ is in $\C x^{2\ell}$.   We also remark that if $\langle v, v \rangle \neq 0$, then $v$ is not completely degenerate.
\end{remark}

\begin{proof}[Proof of Lemma~\ref{lem:aF}] First observe that, because both $a F$ and $F$ are $K$-equivariant, and $F$ is never $0$, we obtain that $a$ is $K$-invariant.  Now, let $\p = V_2 \otimes W$, $\{w_\alpha\}$ be a basis of $W$, and $\{w_\alpha^\vee\}$ the dual basis of $W^\vee$.  Then
\[D(aF) = a D(F) + \sum_{\alpha}{ (x \otimes w_\alpha)(a) \langle F,y \rangle w_\alpha^\vee - (y \otimes w_\alpha)(a) \langle F,x \rangle w_\alpha^\vee}.\]
We have $D(F) = 0$, and the $w_\alpha^\vee$ are linearly independent.  Thus we obtain 
\[(x \otimes w_\alpha)(a) \langle F,y \rangle w_\alpha^\vee - (y \otimes w_\alpha)(a) \langle F,x \rangle w_\alpha^\vee = 0 \]
for all $\alpha$.  But by Remark~\ref{rem:compDeg}, $\langle F, y\rangle$ and $\langle F,x \rangle$ are linearly independent in $S^{2\ell-1}(V_2)$.  Consequently $(x\otimes w_\alpha)(a) = 0$ and $(y\otimes w_\alpha)(a) =0 $ for all $\alpha$.  In other words, $Xa = 0$ for all $X \in \p$.  One concludes that $a$ is constant.
\end{proof}

\begin{proof}[Proof of Theorem~\ref{thm:PeriodQ}]
Set
\[B_\varphi(g) = \int_{H_{v_1,v_2}(\Z)\backslash H_{v_1,v_2}(\R)}{\varphi(hg)\,dh}.\]
Then $B_\varphi(g)$ is quaternionic and left $H_{v_1,v_2}(\R)$-invariant.  Indeed, to see that $B_\varphi(g)$ is quaternionic, we simply need to justify differentiation under the integral over the domain $H_{v_1,v_2}(\Z)\backslash H_{v_1,v_2}(\R).$  To do this, fix $X \in \p$.  Then we are interested in proving the equality
\[ \frac{d}{dt} \int_{H_{v_1,v_2}(\Z)\backslash H_{v_1,v_2}(\R)}{ \varphi(hg e^{tX})\,dh} = \int_{H_{v_1,v_2}(\Z)\backslash H_{v_1,v_2}(\R)}{\frac{d}{dt} \varphi(hg e^{tX})\,dh}\]
at $t=0$. One now justifies the exchange simply by the boundedness of cusp forms.

Now, we claim that there is a constant $D_1$ so that $B_\varphi(g) = D_1 B_{[v_1,v_2]}(g)$.  Indeed, if $k \in K \cap g^{-1} H_{v_1,v_2}(\R) g = K_{g^{-1}v_1,g^{-1}v_2}$, then
\[k^{-1} \cdot B_\varphi(g) = B_\varphi(gk) = B_\varphi(g)\]
because $B_\varphi$ is left $H_{v_1,v_2}(\R)$-invariant.  Here $k^{-1} \cdot B_\varphi(g)$ denotes the action of $k^{-1} \in K^0$ on $B_{\varphi}(g) \in \mathbf{V}_{\ell}$.  Thus $B_\varphi(g)$ and $B_{[v_1,v_2]}(g)$ lie on the same line in $\mathbf{V}_{\ell}$ for every $g \in G(\R)$.  Because $\langle B_{[v_1,v_2]}(g), B_{[v_1,v_2]}(g) \rangle \neq 0$ for all $g$, we can apply Lemma~\ref{lem:aF} to deduce that $B_\varphi(g) = D_1B_{[v_1,v_2]}(g)$ for some constant $D_1$.

Now, the constant $D_1$ is determined by evaluating both sides at $g=1$ and pairing with $v'$, so we obtain $D_1 = P_{v_1,v_2}(\varphi)$.

Now one has
\begin{align*}\langle \varphi, Q_{v_1,v_2} \rangle &= \int_{H_{v_1,v_2}(\Z)\backslash G(\R)}{ \langle B_{[v_1,v_2]}(g), \varphi(g) \rangle \,dg} \\ &= \int_{H_{v_1,v_2}(\R)\backslash G(\R)}{\langle B_{[v_1,v_2]}(g), B_\varphi(g) \rangle \,dg} \\ &= P_{v_1,v_2}(\varphi) \int_{H_{v_1,v_2}(\R)\backslash G(\R)}{\langle B_{[v_1,v_2]}(g), B_{[v_1,v_2]}(g) \rangle \,dg}.\end{align*}

To justify the unfolding of the integral, we must prove that the first integral converges absolutely.  We have 
\[|\langle B_{[v_1,v_2]}(g), \varphi(g)\rangle|  \leq || B_{[v_1,v_2]}(g)|| \cdot ||\varphi(g)|| \leq C || B_{[v_1,v_2]}(g)||.\]
Thus
\begin{align*} \int_{H_{v_1,v_2}(\Z)\backslash G(\R)}{ |\langle B_{[v_1,v_2]}(g), \varphi(g)\rangle |\,dg} &\leq C \int_{H_{v_1,v_2}(\Z)\backslash G(\R)}{ ||B_{[v_1,v_2]}(g)||\,dg} \\ &= C' \int_{H_{v_1,v_2}(\R)\backslash G(\R)}{ ||B_{[v_1,v_2]}(g)||\,dg}\end{align*}
Since $H_{v_1,v_2}(\R)$ is semisimple (see \eqref{eqn-isomorphism-type-H_{v1,v2}}), $H_{v_1,v_2}(\Z)\backslash H_{v_1,v_2}(\R)$ has finite volume \cite[Theorem~7.8]{MR147566} and $B_{[v_1,v_2]}(g)$ is left-invariant by $H_{v_1,v_2}(\R)$.  Now the result, for $\ell \geq 22$, follows from Theorem~\ref{thm:finitenessInt}.
\end{proof}

Putting everything together, we have obtained the following result.
\begin{corollary} 
\label{cor:part 6 of main thm}
Suppose $\ell \geq 22$ is even.  Suppose $\varphi$ is a cuspidal quaternionic eigenform on $G$ of weight $\ell$ and level one in the plus subspace.  Then $\varphi$ is a Saito-Kurokawa lift if and only if there exists $v_1, v_2 \in L$ with $S(v_1,v_2) > 0$ and $D:=-4\det(S(v_1,v_2))$ odd and square-free, so that the period $P_{v_1,v_2}(\varphi) \neq 0$.
\end{corollary}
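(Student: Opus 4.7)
The proof is essentially an assembly of the tools developed in Sections \ref{sec:Hecke} and \ref{sec:periods}, so my plan is to carefully chain together Corollary \ref{cor:QStheta}, Theorem \ref{thm:PeriodQ}, and Theorem \ref{thm:AITSO8}, making sure that the ``odd and square-free discriminant'' condition is preserved throughout.

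For the forward direction, suppose $\varphi \in \mathrm{SK}_{\ell}$, so $\varphi = \theta^{\ast}(f)$ for a level one weight $\ell$ cuspidal Siegel modular form $f$ on $\Sp_4$. By Saha's spanning result, the Poincar\'e series $P_{T,\ell}$ with $D(T) = -4\det(T)$ odd and square-free span $S_{\ell}(\Sp_4(\Z))$, so there exists such a $T$ with $\langle f, P_{T,\ell}\rangle \neq 0$. Unraveling $\theta^{\ast}$, and using Proposition \ref{Proposition-Explicit-Poincare-Lift} identifying $\theta^{\ast}(P_{T,\ell})$ with (a nonzero multiple of) $Q_{T,\ell}$, we deduce $\langle \varphi, Q_{T,\ell}\rangle \neq 0$. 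By Theorem \ref{thm:AITSO8}, for such $T$, $Q_{T,\ell} = Q_{v_1,v_2;\ell}$ for every $(v_1,v_2) \in L^{\oplus 2}$ with $S(v_1,v_2) = T$, and such $(v_1,v_2)$ exist (since $T$ is the Gram matrix of a vector pair in $L$). Finally Theorem \ref{thm:PeriodQ} gives $\langle \varphi, Q_{v_1,v_2;\ell}\rangle = C_{v_1,v_2}\, P_{v_1,v_2}(\varphi)$ with $C_{v_1,v_2} \neq 0$, forcing $P_{v_1,v_2}(\varphi) \neq 0$. The inequality $P_{v_1,v_2}(\varphi) \neq 0$ is equivalent to $\mathcal{P}_{v_1,v_2}(\varphi) \neq 0$ as observed before the statement of Theorem \ref{thm:PeriodQ}.

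For the reverse direction, suppose $v_1, v_2 \in L$ span a positive-definite two-plane with $D(v_1,v_2)$ odd and square-free, and $\mathcal{P}_{v_1,v_2}(\varphi) \neq 0$. Then $P_{v_1,v_2}(\varphi) \neq 0$, and by Theorem \ref{thm:PeriodQ} we obtain $\langle \varphi, Q_{v_1,v_2;\ell}\rangle \neq 0$. By Theorem \ref{thm:AITSO8} applied in the reverse direction, $Q_{v_1,v_2;\ell} = Q_{T,\ell}$ for $T = S(v_1,v_2)$. To invoke Corollary \ref{cor:QStheta}, I need $\varphi \in \mathcal{A}_{0,\Z}^{rep}(G,\ell)$, not just $\mathcal{A}_{0,\Z}(G,\ell)$; since $\varphi$ is a Hecke eigenform in the plus subspace, the archimedean component of the representation it generates is the quaternionic discrete series $\pi_{\ell}^0$, so $\varphi$ does lie in $\mathcal{A}_{0,\Z}^{rep}(G,\ell)$. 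Hence Corollary \ref{cor:QStheta} applies and yields $\varphi \in \mathrm{SK}_{\ell}$.

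The main obstacle I expect is bookkeeping around the hypothesis $\ell \geq 22$: the bound $\ell \geq 16$ from Proposition \ref{Proposition-Explicit-Poincare-Lift} and Corollary \ref{cor:QStheta} suffices for the Poincar\'e/theta-lift identifications, but the stronger bound $\ell \geq 22$ is needed for the absolute convergence argument in the proof of Theorem \ref{thm:PeriodQ} (the bound on $\int_{H_{v_1,v_2}(\R)\backslash G(\R)} \|B_{[v_1,v_2]}(g)\|\, dg$ coming from Theorem \ref{thm:finitenessInt}). Other than being careful about which version of $Q$ is the relevant one (the sum over $X_T$ versus the sum over $H_{v_1,v_2}(\Z)\backslash G(\Z)$), and invoking Theorem \ref{thm:AITSO8} to identify them when the discriminant is odd and square-free, no further ingredients beyond what is already established in the paper are required.
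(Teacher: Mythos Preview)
Your proposal follows the paper's route and assembles the right ingredients (Corollary~\ref{cor:QStheta}, Theorem~\ref{thm:AITSO8}, Theorem~\ref{thm:PeriodQ}, and Saha's spanning result). One step in the forward direction, however, is not justified as written. From $\langle f, P_{T,\ell}\rangle \neq 0$ for a specific $T$ you conclude $\langle \varphi, Q_{T,\ell}\rangle \neq 0$ for the \emph{same} $T$ by ``unraveling $\theta^{\ast}$''; that would require knowing that $\theta^{\ast}$ preserves the Petersson inner product up to a nonzero scalar (a Rallis-type statement), which the paper does not establish. The argument the paper actually uses (see the proof of Corollary~\ref{cor:QStheta}, run with Saha's restriction on the Poincar\'e indices) avoids this: since the $P_{T,\ell}$ with $-4\det(T)$ odd and square-free span $S_\ell(\Sp_4(\Z))$, one may write $f=\sum_j \alpha_j P_{T_j,\ell}$ with such $T_j$, hence $\varphi=\theta^{\ast}(f)=\sum_j \beta_j Q_{T_j,\ell}$; expanding $\langle \varphi,\varphi\rangle\neq 0$ forces $\langle \varphi, Q_{T_j,\ell}\rangle\neq 0$ for some $j$, and for that $T_j$ the set $X_{T_j}$ is automatically nonempty (so the existence of $(v_1,v_2)$ with $S(v_1,v_2)=T_j$ needs no separate argument).

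The reverse direction is fine, and your observations about the bound $\ell\geq 22$ (needed for Theorem~\ref{thm:finitenessInt} and hence Theorem~\ref{thm:PeriodQ}) and about passing from $\mathcal{A}_{0,\Z}(G,\ell)$ to $\mathcal{A}_{0,\Z}^{rep}(G,\ell)$ are correct and match the paper's largely implicit treatment.
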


\appendix
\section{Triality}\label{sec:triality}
The purpose of this section is to work out facts regarding triality on $D_4$, and how it interacts with the notion of quaternionic modular forms. This is an important ingredient in the proof of Theorem~\ref{thm:introSp4}.

To define triality on $D_4$, recall the trilinear form $(x,y,z) = \tr_\O(x(yz))$ on the octonions $\O$.  See Section~\ref{subsec:octonions} for notation regarding the octonions.  The group $G'=\Spin(\O)$ is defined as the set of triples $(g_1, g_2, g_3) \in \SO(\O)^3$ that satisfy 
\[(g_1 x, g_2y, g_3 z) = (x,y,z) \text{ for all } x,y,z \in \O.\]
The association $g \mapsto g_1$ gives a map of groups $G' \rightarrow G$, which induces an isomorphism on Lie algebras.

Permutation of the $g_j$ induces an $S_3$ action on $G'$ as follows.  For $g \in \SO(\O)$, let $g^* = c \circ g \circ c$, where $c: \O \rightarrow \O$ is the conjugation on $\mathbb{O}$.  If $\sigma \in S_3$ has $\mathrm{sgn}(\sigma) =1$, we define $\sigma (g_1, g_2, g_3) = (g_{\sigma^{-1}(1)},g_{\sigma^{-1}(2)},g_{\sigma^{-1}(3)})$.  For $\sigma \in S_3$ with $\mathrm{sgn}(\sigma) =-1$, we define $\sigma (g_1, g_2, g_3) = (g_{\sigma^{-1}(1)}^*,g_{\sigma^{-1}(2)}^*,g_{\sigma^{-1}(3)}^*)$.  One verifies easily that this maps $G' \rightarrow G'$ and is an $S_3$ action.

In this section, we understand how this triality interacts with the notion of quaternionic modular forms on $G'$.  Let $P'$  be the Heisenberg parabolic subgroup of $G'$, defined as the inverse image of $P$ in $G'$, and similarly define $M',N', K'$, $\p'$.  Let $W$ denote the set of automorphic characters of $N'$, i.e., the set of characters $N'(\Q)\backslash N'(\A) \rightarrow \C^\times$.  
\begin{theorem}\label{thm:trialityQMF} Suppose $\sigma \in S_3$.
\begin{enumerate}
	\item One has $\sigma(M') = M'$, $\sigma(N') = N'$, $\sigma(K') = K'$, $\sigma(\p') = \p'$.  Moreover, if $v' \in \mathbf{V}_{\ell}$, and $k \in K'$, then $\sigma(k) \cdot v' = k \cdot v'$.
	\item The action of $S_3$ on $N'$ preserves the center $Z'$ of $N'$, and induces an action on the set of automorphic characters $W$.
	\item If $\varphi$ is a weight $\ell$ quaternionic modular form on $G'$, then $\varphi_\sigma(g) :=\varphi(\sigma^{-1}(g))$ is a weight $\ell$ quaternionic modular form on $G'$ and the Fourier coefficients $a_{\varphi_\sigma}$ satisfy $a_{\varphi_\sigma}(w)(g_f) = a_\varphi(\sigma^{-1}(w))(\sigma^{-1}(g_f))$ for all $w \in W$.
	\end{enumerate}
\end{theorem}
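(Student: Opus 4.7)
The plan follows the three parts of the theorem.

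\textbf{Structural stability (Part 1).} The essential input is that the Heisenberg parabolic of $D_4$ corresponds to the central node of the Dynkin diagram, and is therefore fixed by the outer $S_3$. Concretely, I would verify using the triple description $G' = \{(g_1,g_2,g_3) \in \SO(\mathbb{O})^3 : (g_1 x_1, g_2 x_2, g_3 x_3) = (x_1,x_2,x_3)\}$ that if $g_1$ stabilizes $U = \Span\{b_1,b_2\}$, then so does the first coordinate of $\sigma(g)$. Under the identification \eqref{idenitification-of-lists} this reduces to checking that octonionic conjugation $(\cdot)^{*}$ and cyclic permutations interact appropriately with $\Span\{e_1, e_3^{*}\}$ under the trilinear form. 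The same bookkeeping, applied to the Cartan involution $\Theta_\iota$ (defined via $b_j \mapsto b_{-j}$), shows $\sigma$ commutes with $\Theta_\iota$; this gives $\sigma(K') = K'$ and hence $\sigma(\p) = \p$ from the Cartan decomposition $\g = \k + \p$. The claim $\sigma(k) \cdot v = k \cdot v$ on $\mathbf{V}_\ell$ is precisely the triviality of the $S_3$-action on the distinguished long-root $\mathfrak{sl}_2$ of $K'$; this is recorded in the last paragraph of \S\ref{subsection-compact-subgroups}, citing the appendix and \cite[\S 2.3]{6authorG2paper}, and $\mathbf{V}_\ell$ is constructed as $\mathrm{Sym}^{2\ell}$ of the associated two-dimensional representation, so triviality propagates.

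\textbf{Induced action on cubes (Part 2).} Since $\sigma(N') = N'$ from Part (1), and $[N',N']$ is a characteristic subgroup of $N'$, the quotient map $N' \to W$ is $S_3$-equivariant, giving the induced action.

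\textbf{Fourier coefficients (Part 3).} The $K^0$-equivariance of $\varphi_\sigma$ is the direct computation
\[
\varphi_\sigma(gk) = \varphi\bigl(\sigma^{-1}(g)\,\sigma^{-1}(k)\bigr) = \sigma^{-1}(k)^{-1}\varphi(\sigma^{-1}(g)) = k^{-1}\varphi_\sigma(g),
\]
where the middle step uses $\sigma^{-1}(K') = K'$ and the last step uses the triviality from Part (1). The vanishing $D_\ell \varphi_\sigma \equiv 0$ holds because $\sigma$ preserves $\p$ and acts trivially on the $V_2$ tensor factor of the $K^0$-decomposition $\p \simeq V_2 \boxtimes W$, so $\sigma$ commutes with $D_\ell$ up to an isomorphism on the target that does not affect vanishing. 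Finally, for the Fourier coefficient identity, I would unfold the defining integral of $a_{\varphi_\sigma}(w)(g_f)$, change variables $n \mapsto \sigma(n)$ inside $[N']$, and use that $\chi_w \circ \sigma = \chi_{\sigma^{-1}(w)}$ to rewrite the integral as $a_\varphi(\sigma^{-1}(w))(\sigma^{-1}(g_f))$.

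\textbf{Main obstacle.} The only delicate step is the verification that $\sigma$ commutes with the Cartan involution $\Theta_\iota$ and acts trivially on the long-root $\mathfrak{sl}_2$. Both assertions rest on carefully matching the octonion model with the real form $\mathfrak{so}(4,4)$ via \eqref{idenitification-of-lists}; the material in the appendix on triality together with \cite[\S 2.3]{6authorG2paper} supplies the compatibility, though unpacking it requires explicit computation on basis elements.
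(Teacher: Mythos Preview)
Your outline matches the paper's approach closely: the paper's proof is a one-line reduction to Theorem~\ref{thm:spin8O}, Corollary~\ref{cor:trialityCartan}, and Lemma~\ref{lem:trialityGWF}, and your Parts~(1)--(2) amount to the content of the first two of these (indeed you already invoke the appendix via \S\ref{subsection-compact-subgroups}). The one tactical difference is that the paper does not attempt to verify $\sigma(M')=M'$, $\sigma(N')=N'$ or the Cartan compatibility by direct octonion bookkeeping as you propose; instead it builds the auxiliary model $\g_E=(\sl_3\oplus E^0)\oplus V_3\otimes E\oplus V_3^\vee\otimes E^\vee$ with its transparent $S_3$-action and transports everything through the explicit isomorphism $\Phi:\g_E\to\wedge^2\O$. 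In that model the Heisenberg grading and the Cartan involution are visibly $S_3$-stable, so the checks become formal. Your direct approach would work but is considerably more laborious.

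There is one genuine omission in Part~(3). Your change of variables yields the identity $(\varphi_\sigma)_{N_P,\chi_w}(g)=\varphi_{N_P,\chi_{\sigma^{-1}(w)}}(\sigma^{-1}(g))$ between the full $\mathbf{V}_\ell$-valued Fourier components. But the scalar Fourier coefficient $a_{\varphi}(w)(g_f)$ is defined by factoring off the archimedean generalized Whittaker function $\mathcal{W}_{2\pi w}(g_\infty)$ via Corollary~\ref{cor:FE of phiZ}, and Theorem~\ref{Thm 1.2.1 Aaron Paper} only pins $\mathcal{W}_w$ down up to a scalar. To conclude $a_{\varphi_\sigma}(w)(g_f)=a_\varphi(\sigma^{-1}(w))(\sigma^{-1}(g_f))$ with no spurious constant, you must know that $\mathcal{W}_{\sigma^{-1}(w)}(\sigma^{-1}(g_\infty))=\mathcal{W}_w(g_\infty)$ on the nose. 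This is exactly Lemma~\ref{lem:trialityGWF}, whose proof uses that both sides satisfy the same uniqueness hypotheses \emph{and} agree on the $S_3$-fixed subgroup $G_2\subset G'$, which nails the normalization. Your unfolding argument alone does not supply this step.
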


To prove the theorem, we will make explicit the action of triality on $\g' = \Lie(G') \simeq \Lie(G) \simeq \wedge^2 V$.

The Lie algebra $\g' = \Lie(G')$ of $G'$ is the set of triples $(X_1,X_2,X_3) \in \Lie(\SO(\O))^3$ that satisfy 
\[(X_1 x, y,z) + (x,X_2 y,z) + (x,y,X_3z) = 0\]
for all $x,y,z \in \O$.  We denote by $\g = \Lie(\SO(\O))$ the Lie algebra of $G = \SO(\O)$.

We have the following well-known lemma.
\begin{lemma} For $j=1,2,3$, the map $g \mapsto g_j$ induces an isomorphism of Lie algebras $\g' \rightarrow \g$. \end{lemma}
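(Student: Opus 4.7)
My plan is to prove the lemma by first noting that the $j=1$ case is already asserted in the sentence immediately preceding the lemma, and then deducing the cases $j=2,3$ by pulling back along the $S_3$ action on $G'$ introduced just before. Write $\pi_j \colon G' \to G$ for the homomorphism $g \mapsto g_j$ and $d\pi_j \colon \g' \to \g$ for its differential; it is $d\pi_j$ that I wish to show is a Lie algebra isomorphism.

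For $j=2$, I would take $\sigma \in S_3$ to be the $3$-cycle with $\sigma^{-1}(1) = 2$, $\sigma^{-1}(2) = 3$, $\sigma^{-1}(3) = 1$. Since $\mathrm{sgn}(\sigma) = +1$, the formula for the $S_3$ action on $G'$ from the preceding paragraph reads $\sigma(g_1, g_2, g_3) = (g_2, g_3, g_1)$, with no octonionic conjugation $*$ intervening. In particular, $\pi_2 = \pi_1 \circ \sigma$ as homomorphisms $G' \to G$, and taking differentials at the identity gives $d\pi_2 = d\pi_1 \circ d\sigma$. Because $\sigma$ is a group automorphism of $G'$ (a property recorded in the preceding paragraph), $d\sigma$ is a Lie algebra automorphism of $\g'$; composing it with the isomorphism $d\pi_1$ shows that $d\pi_2$ is a Lie algebra isomorphism. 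The case $j=3$ is handled identically using the inverse $3$-cycle, yielding $\sigma(g_1, g_2, g_3) = (g_3, g_1, g_2)$ and hence $\pi_3 = \pi_1 \circ \sigma$.

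Since the argument reduces formally to the $j=1$ case together with the existence of the $S_3$ action, I do not anticipate a serious obstacle. The one point deserving attention is that the even $3$-cycles permute the three projections cyclically without introducing the conjugation $*$; this is immediate from the formula $\sigma(g_1, g_2, g_3) = (g_{\sigma^{-1}(1)}, g_{\sigma^{-1}(2)}, g_{\sigma^{-1}(3)})$ valid for $\mathrm{sgn}(\sigma) = +1$. Should one wish to prove the $j=1$ case from scratch rather than citing the preceding sentence, one could argue it separately by combining the dimension count $\dim \g' = \dim \g = 28$ with a direct verification that $\ker \pi_1$ is discrete: a triple $(1, g_2, g_3) \in G'$ satisfies $g_2(y)g_3(z) = yz$ for all $y,z$ by non-degeneracy of the trilinear form, and analyzing this identity via octonion alternativity forces $g_2 = R_a$ and $g_3 = L_{\bar a}$ with $a = \pm 1$, giving $\ker \pi_1 = \mu_2$.
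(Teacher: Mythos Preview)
Your proposal is correct. The paper itself offers no proof of this lemma, simply labeling it ``well-known,'' so there is no proof to compare against. Your reduction of the cases $j=2,3$ to $j=1$ via the even $3$-cycles in the $S_3$ action is clean and correct: since $\sigma(g_1,g_2,g_3)=(g_2,g_3,g_1)$ for the appropriate $3$-cycle, one has $\pi_2=\pi_1\circ\sigma$ on the nose, and differentiating gives the claim. Your optional sketch for the $j=1$ case (discrete kernel plus dimension count) is also sound.
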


For $x \in \O$, denote $\ell_x: \O \rightarrow \O$ left multiplication by $x$ and $r_x: \O \rightarrow \O$ right multiplication by $x$.

\begin{proposition} If $u,v \in \O$, then
	\[(u\wedge v, \frac{1}{2}(\ell_{u^*}\ell_v - \ell_{v^*}\ell_u), \frac{1}{2}(r_{u^*}r_v-r_{v^*}r_u))\]
is a triality triple, i.e., in the Lie algebra of $\Spin(\O)$.
\end{proposition}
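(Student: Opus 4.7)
The plan is to verify the two defining properties of a triality triple for $X_1 = u \wedge v$, $X_2 = \tfrac12(\ell_{u^*}\ell_v - \ell_{v^*}\ell_u)$, $X_3 = \tfrac12(r_{u^*}r_v - r_{v^*}r_u)$: first that each $X_i$ lies in $\mathfrak{so}(\O)$, and then that the infinitesimal triality identity
$$(X_1 x,y,z) + (x,X_2 y,z) + (x,y,X_3 z) = 0$$
holds for all $x,y,z \in \O$.

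\textbf{Skew-adjointness.} For $X_1$ this is built into the formula $(u\wedge v)(x) = (v,x)u - (u,x)v$. For $X_2$ and $X_3$ the key tool is the adjunction $(\ell_a x, y) = (x, \ell_{a^*} y)$ (respectively $(r_a x, y) = (x, r_{a^*} y)$), which is immediate from $(x,y) = \tr_\O(xy^*)$ together with the cyclic identity $\tr_\O(abc) = \tr_\O(bca)$. Iterating, $\ell_{u^*}\ell_v$ is the adjoint of $\ell_{v^*}\ell_u$, so their difference is skew; the same argument works for $X_3$.

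\textbf{Reduction of the triality equation.} Using $(a,b,c) = \tr_\O(a(bc)) = (a, c^*b^*)$ together with the skew-adjointness of $X_1$, the triality identity becomes equivalent to the equality in $\O$
$$A(pq) = p \cdot (X_2(q^*))^* + (X_3(p^*))^* \cdot q$$
(obtained by setting $p = z^*$, $q = y^*$ and comparing the coefficient of $x$ on each side through the nondegenerate pairing). Expanding with $(ab)^* = b^*a^*$ puts this in the form
$$(v,pq)u - (u,pq)v = \tfrac12\bigl[p((qv^*)u) - p((qu^*)v) + (u(v^*p))q - (v(u^*p))q\bigr].$$

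\textbf{Verification.} Both sides are bilinear and antisymmetric in $(u,v)$, and both obviously vanish when $u=v$. The main tools for matching them are the polarized norm identities
$$\ell_{a^*}\ell_b + \ell_{b^*}\ell_a = (a,b)\cdot\mathrm{id}, \qquad r_{b^*}r_a + r_{a^*}r_b = (a,b)\cdot\mathrm{id},$$
together with the Moufang identities, in particular $(ab)(ca) = a(bc)a$ and its linearizations. A useful simplification is to reduce to the case $t(u) = t(v) = 0$: then $u^* = -u$, $v^* = -v$, so $X_2 = \tfrac12[\ell_v,\ell_u]$ and $X_3 = \tfrac12[r_v,r_u]$, and the identity collapses to a cleaner commutator form. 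The scalar parts $t(u), t(v) \in F$ contribute explicitly tractable terms that are handled by direct inspection (for instance, at $p=q=1$ one checks the identity using only $a + a^* = t(a)$ and the polarized norm relation).

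\textbf{Main obstacle.} The real work is in the last verification. Because $\O$ is nonassociative, the nested triple products on the right-hand side resist naive manipulation, and even after the reduction to traceless $u,v$ the rearrangement of expressions like $p((qv^*)u)$ into forms involving $(v,pq)u$ or $(pq) u$ requires careful application of the Moufang identities rather than of the elementary alternative laws alone. Organizing the computation so that the cancellations are transparent, rather than emerging only after all terms are expanded in a basis of $\O$, is the principal technical difficulty.
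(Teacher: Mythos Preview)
Your approach is sound: the reduction to the identity
\[
(v,pq)\,u - (u,pq)\,v \;=\; \tfrac12\bigl[p((qv^*)u) - p((qu^*)v) + (u(v^*p))q - (v(u^*p))q\bigr]
\]
is correct, and this identity does follow from linearized Moufang laws together with the polarized norm relations $\ell_{a^*}\ell_b + \ell_{b^*}\ell_a = (a,b)\,\mathrm{id}$, etc. (One minor sign point: in the paper's convention $q=-n_\O$, so $(x,y) = -\tr_\O(xy^*)$ rather than $+\tr_\O(xy^*)$; this flips a sign uniformly in all three terms of your reduction and hence does not affect the argument.) You candidly flag the final verification as incomplete, and that is where all the content is; but the outline is right and the computation can be carried through.

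The paper, by contrast, does not perform any of this: it simply cites \cite[Theorem~3.5.5]{springerVeldkamp}, where the result is stated and proved in the general framework of composition algebras. Your direct argument is essentially a reconstruction of that proof. The advantage of your route is that it is self-contained; the advantage of the citation is that the Moufang-identity bookkeeping in the last step, which you correctly identify as the main obstacle, is already organized and carried out in the reference.
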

\begin{proof}This follows from \cite[Theorem 3.5.5]{springerVeldkamp}.
\end{proof}

We would like to explicitly calculate the triality triples for a basis of elements of $\wedge^2 \mathbb{O}$.  This can be done using the following lemmas.  Let $V_7 \subseteq \O$ denote the space of trace $0$ elements.  The Lie algebra of $G_2$ embeds in $\wedge^2 \O$ as the kernel of the map $\wedge^2 V_7 \rightarrow V_7$ given by $u\wedge v \mapsto \mathrm{Im}(uv)$; see \cite[section 2.2]{pollackG2}.  If $X \in \mathrm{Lie}(G_2)$, then $(X,X,X)$ is a triality triple.  This gives the following lemma.
\begin{lemma}\label{lem:G2Lie1} Suppose $u,v \in V_7$ and $\mathrm{Im}(uv) = 0$.  Then $(u\wedge v, u\wedge v, u\wedge v)$ is a triality triple.
\end{lemma}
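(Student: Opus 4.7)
The preceding proposition, applied to $(u,v)$, produces the triality triple
$$\bigl(u\wedge v,\;\tfrac{1}{2}(\ell_{u^*}\ell_v - \ell_{v^*}\ell_u),\;\tfrac{1}{2}(r_{u^*}r_v - r_{v^*}r_u)\bigr).$$
Since $u,v\in V_7$ have $u^*=-u$ and $v^*=-v$, the last two entries become $-\tfrac{1}{2}[\ell_u,\ell_v]$ and $-\tfrac{1}{2}[r_u,r_v]$, and the lemma reduces to showing each of these equals $u\wedge v$ in $\mathfrak{so}(\O)$. First I would recast both commutators as associators. Linearizing alternativity $u^2x=u(ux)$ yields $u(vx)+v(ux) = (uv+vu)x$, which under $uv=vu$ collapses to $2(uv)x$; combined with the alternating property of the associator $[a,b,c] = (ab)c - a(bc)$, this gives $-\tfrac{1}{2}[\ell_u,\ell_v](x) = [u,v,x]$. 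A parallel argument using the mirror linearization $(xu)v+(xv)u = x(uv+vu)$ together with cyclic invariance of the associator yields $-\tfrac{1}{2}[r_u,r_v](x) = [x,u,v] = [u,v,x]$. Both assertions therefore reduce to the single identity
$$[u,v,x] = (v,x)\,u - (u,x)\,v, \qquad x\in\O.$$

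To establish this identity, note that if $u,v$ are linearly dependent both sides vanish, so I may assume independence. Since $uv\in F$, the subalgebra $F\langle u,v\rangle$ is commutative and associative by Artin's theorem, so $(uv)u = u^2 v = -n(u)\,v$; but $uv\in F$ also forces $(uv)u \in F\cdot u$, and independence then implies $uv=0$, whence $(u,v)=0$ and $n(u)=n(v)=0$. Both sides of the identity are $F$-linear in $x$ and vanish at $x=1$, so it suffices to verify the equation for $x\in V_7$. For such $x$, cyclic invariance and $(uv)x=0$ give $[u,v,x] = [x,u,v] = (xu)v$. Linearizing $(ya)a = ya^2$ in $a$ as $(ya)b + (yb)a = y(ab+ba)$ specializes, with $y=u$ and $uv=0$, to $(ux)v = -(v,x)\,u$; and the composition-algebra identity $ab + b^*a^* = (a,b^*)\cdot 1$ applied to $a=x,b=u\in V_7$ gives $xu + ux = -(x,u)$. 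Combining,
$$(xu)v = \bigl(-ux - (x,u)\bigr)v = -(ux)v - (x,u)\,v = (v,x)\,u - (u,x)\,v,$$
as desired.

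The main obstacle I anticipate is the careful bookkeeping of non-associative products—in particular the alternating property of the associator, the $\ast$-involution, and the distinction between left and right multiplication. Conceptually, the identity just proved amounts to the statement that $u\wedge v \in \mathrm{Der}(\O) = \mathfrak{g}_2$, so that the triality triple lies on the diagonal copy of $\mathfrak{g}_2$ inside $\mathfrak{spin}(\O)$, which is the essential content of the lemma.
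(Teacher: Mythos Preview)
Your argument is correct in substance and takes a more systematic route than the paper's. The paper's proof is a single line invoking the alternative identity $c(ab)+(ab)c = (ca)b+a(bc)$, which is equivalent to cyclic invariance of the associator—a fact you use as well. What you do differently is pass through the preceding Proposition, identify the second and third components of its triality triple with the associator map $x\mapsto[u,v,x]$, and then prove $[u,v,x]=(u\wedge v)(x)$. Your reduction of the latter to the case $uv=0$, $n(u)=n(v)=0$ via Artin's theorem is a clean maneuver that is not in the paper and makes the final verification very short. The paper presumably intends a direct check that $u\wedge v\in\mathrm{Der}(\O)=\mathfrak{g}_2$, which yields the diagonal embedding immediately.

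One caveat worth flagging: the identities you quote (e.g.\ $ab+b^*a^*=(a,b^*)\cdot 1$, hence $xu+ux=-(x,u)$ for $x,u\in V_7$, and $(ux)v=-(v,x)u$) hold when $(\cdot,\cdot)$ is the bilinear form associated to the \emph{norm} $n_\O$. The paper, however, sets $q=-n_\O$ and defines the action of $u\wedge v$ via the pairing attached to $q$, so the signs in these identities flip. Your two invocations of the norm-form convention are internally consistent, so your final identity $[u,v,x]=(v,x)u-(u,x)v$ is correct with the norm-form $(\cdot,\cdot)$; you should make explicit which pairing you are using and confirm it matches the convention under which the Proposition and the action of $u\wedge v$ are stated, or else the conclusion is off by a global sign.
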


We can bootstrap off of Lemma~\ref{lem:G2Lie1} to obtain the following.
\begin{lemma}\label{lem:trialityIsot} Suppose $W_1, W_2, W_3$ are two-dimensional isotropic subspaces of $\O$ and $W_i \cdot W_{i+1} = 0$.  Let $u_j, v_j$ be a basis of $W_j$.  Then there are nonzero $\alpha_1, \alpha_2, \alpha_3$ so that $(\alpha_1 u_1 \wedge v_1, \alpha_2 u_2 \wedge v_2, \alpha_3 u_3 \wedge v_3)$ is in $\Lie(\Spin(\O))$.
\end{lemma}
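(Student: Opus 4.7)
The plan is to produce the triality triple explicitly using the preceding proposition and then identify its second and third entries with the desired scalar multiples of $u_2 \wedge v_2$ and $u_3 \wedge v_3$. Applying the preceding proposition to the basis $u_1, v_1$ of $W_1$ yields that
\[
X_2 := \tfrac{1}{2}\bigl(\ell_{u_1^*}\ell_{v_1} - \ell_{v_1^*}\ell_{u_1}\bigr), \qquad X_3 := \tfrac{1}{2}\bigl(r_{u_1^*}r_{v_1} - r_{v_1^*}r_{u_1}\bigr)
\]
serve as triality partners of $X_1 := u_1 \wedge v_1$. Because the projection $g \mapsto g_1$ induces a Lie algebra isomorphism $\g' \xrightarrow{\sim} \g$, any triality triple whose first entry is $u_1 \wedge v_1$ has its remaining entries uniquely determined, so it suffices to establish that $X_2$ is a nonzero scalar multiple of $u_2 \wedge v_2$ and analogously for $X_3$.

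The heart of the argument is to show that $X_2$, viewed as an element of $\wedge^2 \O \simeq \so(\O)$ via $a\wedge b \mapsto (x \mapsto \langle b,x\rangle a - \langle a,x\rangle b)$, actually lies in the one-dimensional subspace $\wedge^2 W_2$. Equivalently, one must show that $X_2$ annihilates the six-dimensional subspace $W_2^{\perp}$; the desired containment of the image of $X_2$ in $W_2 = (W_2^{\perp})^{\perp}$ then follows automatically from the skew-symmetry of $X_2$ with respect to the bilinear form on $\O$. The easy half of this vanishing is that $X_2$ already annihilates $W_2$ itself: for $y \in W_2$ the hypothesis $W_1 \cdot W_2 = 0$ gives $u_1 y = v_1 y = 0$, whence $X_2 y = 0$ from the formula. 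To extend the vanishing from the two-dimensional $W_2$ to the remaining four dimensions of $W_2^{\perp}$, I would exploit the remaining hypotheses $W_2 \cdot W_3 = 0$ and $W_3 \cdot W_1 = 0$ in combination with the Moufang and alternative identities of $\O$, rewriting inner octonionic products such as $u_1^*(v_1 y)$ through the associator to transfer them into expressions involving $(w \cdot u_1)(\cdots)$ or $(\cdots)(v_1 \cdot w)$ that vanish term-by-term by hypothesis. Once $X_2 \in \wedge^2 W_2$ is established, necessarily $X_2 = \alpha_2 \, u_2 \wedge v_2$ for a unique scalar $\alpha_2$, and the exact same argument with right multiplications in place of left gives $X_3 = \alpha_3 \, u_3 \wedge v_3$.

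Finally, to see $\alpha_2, \alpha_3 \neq 0$, I would use that the $S_3$-action by triality on $G'$ permutes the three projections $\g' \to \g$, so each of the maps $g \mapsto g_j$ is itself a Lie algebra isomorphism. Consequently the only triality triple with any vanishing entry is the zero triple; since $X_1 = u_1 \wedge v_1 \neq 0$ because $u_1, v_1$ are linearly independent in the two-plane $W_1$, neither $X_2$ nor $X_3$ can vanish, forcing $\alpha_2, \alpha_3 \neq 0$. The principal obstacle in this plan is the second paragraph: carrying out the octonionic manipulation that shows $X_2$ annihilates the full six-dimensional $W_2^{\perp}$ requires coordinated use of all three annihilation hypotheses $W_i \cdot W_{i+1} = 0$, and seems to require a delicate combinatorial application of the non-associative identities in $\O$.
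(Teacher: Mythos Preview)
Your approach is genuinely different from the paper's, and you are right to flag the second paragraph as incomplete: you have not shown that $X_2$ annihilates $W_2^{\perp}$, and the sketch you give for doing so (``coordinated use of all three annihilation hypotheses'' and ``delicate combinatorial application of the non-associative identities'') is not a proof. So as written this is a plan with a gap, not a proof.

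The paper avoids this computation entirely. Instead of computing the second and third entries of the triality triple built from $u_1 \wedge v_1$, it argues by transport of structure. First, it observes that if $(X,Y,Z) \in \Lie(\Spin(\O))$ and $(g_1,g_2,g_3) \in \Spin(\O)$, then $(g_1 X g_1^{-1},\, g_2 Y g_2^{-1},\, g_3 Z g_3^{-1}) \in \Lie(\Spin(\O))$; this is immediate from the definition of $\Spin(\O)$. Second, from the preceding Lemma it already has a concrete triality triple $(u\wedge v,\, u\wedge v,\, u\wedge v)$ for any $u,v \in V_7$ with $\mathrm{Im}(uv)=0$; in particular one can take $\Span\{u,v\}$ to be a fixed two-dimensional isotropic subspace. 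Third, it invokes a transitivity fact (citing \cite{PWZ}): given any triple $(W_1,W_2,W_3)$ of isotropic two-planes with $W_i \cdot W_{i+1} = 0$, there exists $(g_1,g_2,g_3) \in \Spin(\O)$ with $g_i \Span\{u,v\} = W_i$. Conjugating the known triple by this element produces a triality triple whose $i$th entry is a nonzero element of $\wedge^2 W_i$, which is exactly the claim.

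Your route is salvageable, and in fact the key observation that would close your gap is that $W_2$ is \emph{determined} by $W_1$ as the set $\{x \in \O : W_1 \cdot x = 0\}$ (check this in the explicit example $W_1 = \Span\{\epsilon_1,e_1\}$). This recasts the containment ``$\mathrm{Im}(X_2) \subseteq W_2$'' as the identity $w \cdot X_2(y) = 0$ for all $w \in W_1$ and $y \in \O$, which unwinds to expressions like $u_1\bigl(u_1^*(v_1 y)\bigr)$ and $u_1\bigl(v_1^*(u_1 y)\bigr)$; the first vanishes immediately from alternativity and $n(u_1)=0$, and the second can be handled using $(u_1,v_1)=0$ together with the linearized alternative law. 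This is more work than the paper's three-line argument, but it has the advantage of being self-contained rather than relying on an external transitivity result.
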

\begin{proof} First note that if $(X,Y,Z) \in \Lie(\Spin(\O))$ and $(g_1,g_2,g_3) \in \Spin(\O)$ then 
	\[(g_1X, g_2Y,g_3Z) \in \Lie(\Spin(\O)).\]
 This follows immediately from the definitions.  Let $u,v$ be as in Lemma~\ref{lem:G2Lie1}; then we know $(u\wedge v, u \wedge v, u \wedge v)$ is a triality triple.  Now, if $W_1, W_2, W_3$ are as in the statement of this lemma, there exists $(g_1,g_2,g_3) \in \Spin(\O)$ so that $g_i \Span\{u,v\} = W_i$; this follows from, for example, Lemma 2.2(3) and Lemma 2.3(3) of \cite{PWZ}.  The lemma follows.
\end{proof}

Observe that if $(X,Y,Z)$ is a triality triple, then $X+Y+Z \in \Lie(G_2)$.  Because we already know how the Lie algebra of $G_2$ embeds in $\wedge^2 \O$, we can frequently use this fact together with Lemma~\ref{lem:trialityIsot} to compute many triality triples.  We do this now.

\begin{proposition} We have the following triality triples:
\begin{enumerate}
	\item $(\epsilon_1 \wedge e_{j}, e_{j+1}^* \wedge e_{j-1}^*, - \epsilon_2 \wedge e_j)$ for $j \in \Z/3\Z$;
	\item $(\epsilon_1 \wedge e_{j}^*, - \epsilon_2 \wedge e_j^*, e_{j+1} \wedge e_{j-1})$ for $j \in \Z/3\Z$.
\end{enumerate}
\end{proposition}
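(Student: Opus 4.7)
The approach is to apply Lemma \ref{lem:trialityIsot} to three carefully chosen isotropic $2$-planes in $\O$, and then to pin down the resulting unknown scalars using the description of $\Lie(G_2)$ inside $\wedge^2 \O$. For part (1), fix $j \in \Z/3\Z$ and consider
\[
W_1 = \Span\{\epsilon_1, e_j\}, \quad W_2 = \Span\{e_{j+1}^*, e_{j-1}^*\}, \quad W_3 = \Span\{\epsilon_2, e_j\}.
\]
The first step is to verify the hypotheses of Lemma \ref{lem:trialityIsot}: each $W_i$ is isotropic for $n_\O$, and the three cyclic products $W_1 W_2$, $W_2 W_3$, $W_3 W_1$ vanish. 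Isotropy is immediate from the Zorn norm formula $n_\O \mm{a}{v}{\phi}{d} = ad - \phi(v)$, and the twelve products of pairs of basis vectors all reduce directly to the Zorn multiplication rule, using identities such as $e_{j\pm 1}^*(e_j) = 0$.

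By Lemma \ref{lem:trialityIsot} there exist nonzero scalars $\alpha_1, \alpha_2, \alpha_3$ so that $(\alpha_1 \epsilon_1 \wedge e_j,\; \alpha_2 e_{j+1}^* \wedge e_{j-1}^*,\; \alpha_3 \epsilon_2 \wedge e_j)$ is a triality triple. To fix these scalars, normalize $\alpha_1 = 1$ and use the constraint $A + B + C \in \Lie(G_2) \subseteq \wedge^2 V_7$. Writing $\epsilon_1 = \tfrac12(1 + (\epsilon_1 - \epsilon_2))$ and $\epsilon_2 = \tfrac12(1 - (\epsilon_1 - \epsilon_2))$ and requiring the $1 \wedge e_j$-component of $\alpha_1 \epsilon_1 \wedge e_j + \alpha_3 \epsilon_2 \wedge e_j$ to vanish forces $\alpha_3 = -1$. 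Then applying the $G_2$-defining projection $u \wedge v \mapsto Im(uv)$ (whose kernel on $\wedge^2 V_7$ is $\Lie(G_2)$) and computing $Im((\epsilon_1 - \epsilon_2) e_j) = e_j$ together with $Im(e_{j+1}^* e_{j-1}^*)$ from the Zorn rule pins down the remaining ratio and gives $\alpha_2 = 1$. Part (2) is proved identically, with $W_1 = \Span\{\epsilon_1, e_j^*\}$, $W_2 = \Span\{\epsilon_2, e_j^*\}$, $W_3 = \Span\{e_{j+1}, e_{j-1}\}$, except that the roles of $V_3$ and $V_3^\vee$ are exchanged in the Zorn product; this swap is what produces the different sign pattern $(1, -1, 1)$ in the stated triple.

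The main technical hurdle is sign bookkeeping: the identifications $\wedge^2 V_3 \simeq V_3^\vee$ and $\wedge^2 V_3^\vee \simeq V_3$ built into the Zorn multiplication, together with the convention for the octonionic involution, must be tracked consistently with the cyclic convention for $j$ in order to confirm the exact signs $(1,1,-1)$ in part~(1) and $(1,-1,1)$ in part~(2). Once these conventions are fixed, every individual calculation — the twelve vanishing products, the decomposition into $\wedge^2 V_7$, and the images under $u\wedge v \mapsto Im(uv)$ — is routine.
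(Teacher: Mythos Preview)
Your proposal is correct and follows essentially the same approach as the paper. The paper carries out exactly your plan for $j=1$ (choosing $W_1=\Span\{\epsilon_1,e_1\}$, $W_2=\Span\{e_2^*,e_3^*\}$, $W_3=\Span\{\epsilon_2,e_1\}$, applying Lemma~\ref{lem:trialityIsot}, and then using the fact that $(\epsilon_1-\epsilon_2)\wedge e_1 + e_2^*\wedge e_3^*\in\Lie(G_2)$ to fix the scalars), and then obtains the remaining $j$ by applying the $\SL_3\subseteq G_2$ action rather than repeating the computation; your two-step determination of the scalars (first kill the $1\wedge e_j$ component, then land in the kernel of $u\wedge v\mapsto Im(uv)$) is just an unpacking of that same $\Lie(G_2)$ constraint.
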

\begin{proof} One computes that $W_1 = \Span\{\epsilon_1, e_1\}, W_2 = \Span\{e_2^*,e_3^*\}$, $W_3 = \Span\{\epsilon_2,e_1\}$ satisfy $W_i \cdot W_{i+1} = 0$.  Because $(\epsilon_1-\epsilon_2) \wedge e_1 + e_2^* \wedge e_3^*$ is in $\Lie(G_2)$, the first point follows in the case $j=1$.  The other cases follow from the $j=1$ case by applying the $\SL(3) \subseteq G_2$ action.
	
The second point is similar.
\end{proof}

We now set up the main theorem of this section. Let $F$ be a field of characteristic $0$, and  set $E = F \times F \times F$ with its usual $S_3$-action, given by permuting the factors.  We think of $E$ as a cubic norm structure.  So, the norm on $E$ is $N_E(z_1,z_2,z_3) = z_1 z_2 z_3$ and the adjoint on $E$ is $(z_1,z_2,z_3)^\# = (z_2 z_3, z_3 z_1,z_1 z_2)$. For $z,z' \in E$ one sets $z \times z' = (z+z')^\#-z^\#-(z')^\#$.  

Let $\g_E$ be the associated Lie algebra, as in \cite[section 4.2]{pollackQDS}.  We have
\[ \g_E = (\sl_3 \oplus E^0) \oplus V_3 \otimes E \oplus V_3^\vee \otimes E^\vee.\]
Here $V_3$ is the standard representation of $\sl_3$ and $V_3^\vee$ is the dual representation.  Moreover, $E^0$ is the trace $0$ subspace of $E$, and $E^0$ acts on $E$ via multiplication $\lambda \cdot x = \lambda x$, while $E^0$ acts on $E^\vee = E$ as $\lambda \cdot \gamma = - \lambda \gamma$.  The $S_3$ action on $E$ induces an $S_3$ action on $\g_E$.

We will recall the Lie bracket on $\g_E$.  Before doing so, for $u \in E^0$, let $\Psi_u \in E^0 \subseteq \g_E$ be the map $E \rightarrow E$ defined as $\Psi_u(z) = uz$.

For $X \in E$ and $\gamma \in E^\vee$, there is a map $\Phi_{\gamma,X}: E \rightarrow E$ defined as
\[\Phi_{\gamma,X}(Z) = - \gamma \times (X \times Z) + (\gamma,Z)X + (\gamma,X)Z.\]
Here $(\gamma,Z)$ and $(\gamma,X)$ denotes the canonical pairing between $E$ and the dual $E^\vee$.  One sets $\Phi_{\gamma,X} - \frac{2}{3}(X,\gamma)1_E =: \Phi'_{\gamma,X}.$ The definition of the Lie bracket on $\g_E$ from \cite{pollackQDS} uses the $\Phi'_{\gamma,X}$.

We require the following lemma, which relates the $\Phi'_{\gamma,X}$ with the $\Psi_u$.  
\begin{lemma}\label{lem:PhigammaX} Suppose $x \in E$, $\gamma \in E^\vee$.  Then $\Phi'_{\gamma,x} = \Psi_{2u}$ with $u =  x \gamma - \frac{1}{3}(x,\gamma) 1_E$.
\end{lemma}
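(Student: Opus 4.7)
The plan is a direct componentwise computation, exploiting that $E = F\times F\times F$ is the split cubic norm structure, so every operation diagonalizes in the standard coordinates.

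Write $x = (x_1,x_2,x_3)$, $\gamma = (\gamma_1,\gamma_2,\gamma_3)$, $z = (z_1,z_2,z_3)$ in the standard basis. First I would compute $x\times z$ componentwise. Using the definition of the adjoint $(\cdot)^\#$ and polarizing, one finds
\[
x\times z \;=\; (x_2z_3 + x_3z_2,\; x_3z_1 + x_1z_3,\; x_1z_2 + x_2z_1),
\]
and by the same formula, writing $w = x\times z$, one gets $\gamma\times w$ as $(\gamma_2w_3 + \gamma_3w_2, \gamma_3w_1 + \gamma_1w_3, \gamma_1w_2 + \gamma_2w_1)$.

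Next I would expand the first component of $\gamma\times(x\times z)$:
\[
\bigl(\gamma\times(x\times z)\bigr)_1
= x_1\bigl((\gamma,z)-\gamma_1z_1\bigr) + z_1\bigl((x,\gamma)-x_1\gamma_1\bigr),
\]
where the rewrite uses $(\gamma,z) = \gamma_1z_1 + \gamma_2z_2 + \gamma_3z_3$ and similarly for $(x,\gamma)$. Plugging into the definition of $\Phi_{\gamma,x}$, the ``mixed'' terms $x_1(\gamma,z)$ and $z_1(x,\gamma)$ cancel against the additive terms $(\gamma,z)x_1$ and $(\gamma,x)z_1$, leaving exactly $2x_1\gamma_1z_1$. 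By symmetry, $\Phi_{\gamma,x}(z)_i = 2x_i\gamma_i z_i$, so $\Phi_{\gamma,x} = \Psi_{2x\gamma}$, where $x\gamma$ is the componentwise product in $E$.

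Finally, subtracting the scalar part gives
\[
\Phi'_{\gamma,x} \;=\; \Psi_{2x\gamma} - \tfrac{2}{3}(x,\gamma)\,1_E
\;=\; \Psi_{2x\gamma - \frac{2}{3}(x,\gamma)1_E}
\;=\; \Psi_{2u},
\]
since multiplication by a scalar element of $E$ is the same operator as the associated $\Psi$. There is no real obstacle: the only thing to watch is the bookkeeping of cross-product indices and making sure the sign conventions for $\Phi_{\gamma,x}$ (with the leading minus sign on $\gamma\times(x\times z)$) are applied correctly, which is exactly what makes the mixed terms cancel.
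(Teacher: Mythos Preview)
Your proof is correct and follows essentially the same approach as the paper: both do the direct componentwise computation of $\gamma\times(x\times z)$, observe the cancellation with the $(\gamma,z)x$ and $(\gamma,x)z$ terms, and reduce to multiplication by $2x\gamma$. Your presentation is arguably a bit cleaner in that you first identify $\Phi_{\gamma,x}=\Psi_{2x\gamma}$ and then subtract the trace term, whereas the paper works directly with $\Phi'_{\gamma,x}(z)$; the content is the same.
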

\begin{proof} This is a direct computation:
\[\Phi'_{\gamma,x}(z) = -\gamma \times (x \times z) + (\gamma,z)x + \frac{1}{3}(x,\gamma) z.\]
Now $(\gamma,z)x = (x_1(\gamma_1 z_1+\gamma_2 z_2 + \gamma_3z_3),\ldots)$, $(x,\gamma)z = (z_1(x_1 \gamma_1 + x_2 \gamma_2 + x_3 \gamma_3),\ldots)$, and one has
\begin{align*} \gamma \times (x \times z) &= \gamma \times (x_2 z_3 + x_3 z_2, x_1 z_3 + x_3 z_1, x_1 z_2 + x_2 z_1) \\ &= ((x_1z_3+x_3z_1)\gamma_3 + (x_1 z_2 + x_2z_1)\gamma_2,\ldots).\end{align*}
Combining the above and using the symmetry between the indices gives the claim.
\end{proof}

We now explicate the Lie bracket on $\g_E$: Take $\phi_3 \in \sl_3$, $\phi_J \in E^0$, $v, v' \in V_3$, $\delta, \delta' \in V_3^\vee$, $X, X' \in E$ and $\gamma, \gamma' \in E^\vee$.  Then
\begin{align*} [\phi_3, v \otimes X + \delta \otimes \gamma] &= \phi_3(v) \otimes X + \phi_3(\delta) \otimes \gamma. \\ [\phi_J, v \otimes X+ \delta \otimes \gamma] &= v \otimes \phi_J(X) + \delta \otimes \phi_J(\gamma) \\ [v \otimes X,v' \otimes X'] &= (v \wedge v') \otimes (X \times X') \\ [\delta \otimes \gamma, \delta' \otimes \gamma'] &= (\delta \wedge \delta') \otimes (\gamma \times \gamma') \\ [\delta \otimes \gamma, v \otimes X] &= (X,\gamma) v \otimes \delta + \delta(v) \Phi_{\gamma,X} - \delta(v)(X,\gamma) \\ &= (X,\gamma) \left(v \otimes \delta -\frac{1}{3}\delta(v)\right) + \delta(v) \left(\Phi_{\gamma,X} - \frac{2}{3}(X,\gamma)\right).\end{align*}
Note that $v \otimes \delta -\frac{1}{3}\delta(v) \in \sl_3$ and $\Phi_{\gamma,X} - \frac{2}{3}(X,\gamma) = \Phi'_{\gamma,X} \in E^0$.  

Furthermore, the action of $\sl_3$ and $E^0$ on $V_3^\vee$ and $E^\vee$ is determined by the equalities $(\phi_3(v),\delta) + (v,\phi_3(\delta))=0$ and $(\phi_J(X),\gamma) + (X,\phi_J(\gamma)) = 0$.

For $j \neq k$, let $E_{jk} \in \sl_3$ be the matrix with a $1$ in the $(j,k)$ position and $0$'s elsewhere.  Let $v_1, v_2, v_3$ be the standard basis of $V_3$ and $\delta_1, \delta_2,\delta_3$ the dual basis of $V_3^\vee$. Denote by 
\begin{equation}
\label{triality-compatible-identification}
\Phi\colon \g_E \rightarrow \wedge^2 \O    
\end{equation} the linear isomorphism given as follows:
\begin{enumerate}
	\item $\Phi(E_{jk}) = e_k^* \wedge e_j$;
	\item $\Phi( v_j \otimes (x_1,x_2,x_3)) = x_1 \epsilon_1 \wedge e_j + x_2 e_{j+1}^* \wedge e_{j-1}^* + x_3 (-\epsilon_2 \wedge e_j)$;
	\item $\Phi( \delta_j \otimes (\gamma_1,\gamma_2,\gamma_3))= \gamma_1(-\epsilon_2 \wedge e_j^*) + \gamma_2 e_{j+1} \wedge e_{j-1} + \gamma_3 \epsilon_1 \wedge e_j^*$;
	\item on $E^0$, $\Phi$ is: $\Phi(\Psi_{2u}) = (u_1-u_3) \epsilon_1 \wedge \epsilon_2 + u_2 (e_1 \wedge e_1^* + e_2 \wedge e_2^* + e_3 \wedge e_3^*)$.
\end{enumerate}

\begin{theorem}\label{thm:spin8O} The linear map $\Phi$ is a Lie algebra isomorphism, respecting the $S_3$ actions.
\end{theorem}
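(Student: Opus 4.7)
The plan is to establish, in order: (i) $\Phi$ is a linear isomorphism, (ii) $\Phi$ preserves the Lie bracket, and (iii) $\Phi$ intertwines the two $S_3$-actions. Point (i) is immediate since both sides are $28$-dimensional and $\Phi$ sends a basis to a basis.

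For (ii), I would first verify that $\Phi$ restricts to a Lie algebra isomorphism between the natural copies of $\sl_3$ on the two sides, namely the spans of the $E_{jk}$ and the $e_k^*\wedge e_j$. This is a direct computation using $(v\wedge w)\cdot x=(w,x)v-(v,x)w$. Under this $\sl_3$, both sides decompose as $\sl_3\oplus\mathbf{1}^{\oplus 2}\oplus V_3^{\oplus 3}\oplus (V_3^\vee)^{\oplus 3}$, and by inspection of the defining formulas $\Phi$ is $\sl_3$-equivariant on each isotypic summand. It then suffices to verify the four remaining types of bracket relations: $[V_3\otimes E,\, V_3\otimes E]$, $[V_3^\vee\otimes E^\vee,\, V_3^\vee\otimes E^\vee]$, $[V_3\otimes E,\, V_3^\vee\otimes E^\vee]$, and brackets involving $E^0$. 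Each has an explicit formula in $\g_E$, and the matching identity in $\wedge^2\O$ can be checked on a few representative generators, extended by $\sl_3$-equivariance and the cyclic symmetry $j\mapsto j+1$. The most delicate piece is the $E^0$-component $\Phi'_{\gamma,X}$ of $[v\otimes X,\, \delta\otimes\gamma]$: here I would invoke Lemma~\ref{lem:PhigammaX} to rewrite $\Phi'_{\gamma,X}$ as $\Psi_{2u}$ with $u=X\gamma-\tfrac13(X,\gamma)1_E$, and match it against the given definition $\Phi(\Psi_{2u})=(u_1-u_3)\epsilon_1\wedge\epsilon_2+u_2(e_1\wedge e_1^*+e_2\wedge e_2^*+e_3\wedge e_3^*)$ by checking one sample bracket such as $[v_1\otimes(1,0,0),\, \delta_1\otimes(0,1,0)]$ and propagating by symmetry.

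For (iii), the key observation is that the ordered triality triple $(\epsilon_1\wedge e_j,\ e_{j+1}^*\wedge e_{j-1}^*,\ -\epsilon_2\wedge e_j)$ from the proposition preceding the theorem coincides exactly with $(\Phi(v_j\otimes(1,0,0)),\ \Phi(v_j\otimes(0,1,0)),\ \Phi(v_j\otimes(0,0,1)))$. Thus cyclic permutation of the three $F$-factors of $E$ (how a $3$-cycle in $S_3$ acts on $v_j\otimes E$) matches cyclic permutation of the components of a triality triple (how a $3$-cycle acts on $\g'\simeq \wedge^2\O$ via the first-component identification). The companion triple $(\epsilon_1\wedge e_j^*,\ -\epsilon_2\wedge e_j^*,\ e_{j+1}\wedge e_{j-1})$ handles the $\delta_j\otimes E^\vee$ summand, and parallel inspections cover $\sl_3$ and $E^0$. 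For odd $\sigma\in S_3$, octonion conjugation $c$ enters via $g\mapsto g^*$, and since $c$ fixes $1_\O$ and acts as $-1$ on $V_7$, a direct sign check shows that the resulting involution on $\wedge^2\O$ realizes the corresponding transposition on $E$. The main obstacle I anticipate is bookkeeping of signs: matching both the $\sl_3$-component and the $E^0$-component of $[v\otimes X,\ \delta\otimes\gamma]$ simultaneously, and handling the odd-triality case where $c$ flips signs across several summands of $\wedge^2\O$ at once.
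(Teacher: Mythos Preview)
Your proposal is correct and follows essentially the same approach as the paper: a direct case-by-case verification of brackets (organized via $\sl_3$-equivariance, which the paper also uses implicitly), with Lemma~\ref{lem:PhigammaX} handling the $E^0$-component of the mixed bracket, and the triality-triple proposition supplying $S_3$-equivariance on $V_3\otimes E$ and $V_3^\vee\otimes E^\vee$. One small refinement worth noting: for the $S_3$-action on $E^0$ the paper avoids computing any new triality triples by writing $\Psi_{2u}=[\delta_1\otimes u,\,v_1\otimes 1_E]$ and invoking the already-established equivariance together with the fact that $\Phi$ is a homomorphism, which is cleaner than a direct inspection.
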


\begin{proof}[Proof of Theorem~\ref{thm:spin8O}] There are lots of brackets to check.
\begin{enumerate}
	\item Suppose $\ell \neq k$.  Then $[\Phi(E_{\ell,k}),\Phi(v_{j} \otimes x)] = [e_k^*\wedge e_{\ell},  x_1 \epsilon_1 \wedge e_j + x_2 e_{j+1}^* \wedge e_{j-1}^* + x_3 (-\epsilon_2 \wedge e_j)] = 0$ if $j \neq k$ and is equal to $\Phi(v_{\ell} \otimes x)$ if $j = k$.
	\item Suppose $\ell \neq k$.  Then $[\Phi(E_{\ell,k}),\Phi(\delta_{j} \otimes \gamma)] = [e_k^*\wedge e_{\ell},  \gamma_1(-\epsilon_2 \wedge e_j^*) + \gamma_2 e_{j+1} \wedge e_{j-1} + \gamma_3 \epsilon_1 \wedge e_j^*] = 0$ if $\ell \neq j$ and is equal to $\Phi(-\delta_{k} \otimes \gamma)$ if $\ell = j$.  
	\item Suppose $\alpha_1+\alpha_2+\alpha_3 = 0$, and $h = \alpha_1 E_{11} + \alpha_2 E_{22} + \alpha_3 E_{33} \in \h_{\SL_3}$, the diagonal Cartan of $\sl_3$.  Then $[\Phi(h), \epsilon_1 \wedge e_j] = \alpha_j \epsilon_1 \wedge e_j$.  Using the $S_3$ action on $\wedge^2\O$, and that we already know $h \in \Lie(G_2) \subseteq (\wedge^2 \O)^{S_3}$, we obtain $[\Phi(h), \Phi(v_j \otimes x)] = \Phi(\alpha_j v_j \otimes x)$.  Similarly, we obtain $[\Phi(h), \Phi(\delta_j \otimes x)] = \Phi(-\alpha_j \delta_j \otimes x)$. 
	\item Note that $\epsilon_1 \wedge e_j, e_{j+1}^*\wedge e_{j-1}^*,$ and $-\epsilon_2 \wedge e_j$ all commute.  Thus $[\Phi(v_j \otimes x),\Phi(v_{j}\otimes x')] = 0$.  Similarly, $[\Phi(\delta_j \otimes \gamma), \Phi(\delta_j \otimes \gamma')] = 0$.
	\item We compute $[\Phi(v_j \otimes x),\Phi(v_{j+1}\otimes x')]$.  One has
	\begin{enumerate}
		\item $[\epsilon_1\wedge e_j,\epsilon_1 \wedge e_{j+1}] = 0$;
		\item $[\epsilon_1\wedge e_j,e_{j-1}^* \wedge e_{j}^*] = \epsilon_1 \wedge e_{j-1}^*$;
		\item $[\epsilon_1\wedge e_j,-\epsilon_2 \wedge e_{j+1}] = e_{j} \wedge e_{j+1}$;
	\end{enumerate}
	Thus $[\Phi(v_j \otimes (x_1,0,0)),\Phi(v_{j+1}\otimes x')] = \Phi(\delta_{j-1}\otimes (0,x_1 x_3',x_1 x_2'))$. Using the $S_3$ action on $\wedge^2 \O$, one deduces that $[\Phi(v_j \otimes x),\Phi(v_{j+1}\otimes x')] = \Phi(\delta_{j-1} \otimes (x \times x')).$
	\item We compute $[\Phi(\delta_j \otimes \gamma),\Phi(\delta_{j+1}\otimes \gamma')]$.  One has
	\begin{enumerate}
		\item $[-\epsilon_2\wedge e_j^*,-\epsilon_2 \wedge e_{j+1}^*] = 0$;
		\item $[-\epsilon_2\wedge e_j^*,e_{j-1} \wedge e_{j}] = -\epsilon_2 \wedge e_{j-1}$;
		\item $[-\epsilon_2\wedge e_j^*,\epsilon_1 \wedge e_{j+1}^*] = e_j^* \wedge e_{j+1}^*$;
	\end{enumerate}
	Thus $[\Phi(\delta_j \otimes (\gamma_1,0,0)),\Phi(\delta_{j+1}\otimes \gamma')] = \Phi(v_{j-1}\otimes (0,\gamma_1 \gamma_3',\gamma_1 \gamma_2'))$. Using the $S_3$ action on $\wedge^2 \O$, one deduces that $[\Phi(\delta_j \otimes \gamma),\Phi(\delta_{j+1}\otimes \gamma')] = \Phi(v_{j-1} \otimes (\gamma \times \gamma')).$
	\item $[\Phi(\Psi_{2u}),\Phi(v_j \otimes x)] = \Phi(v_j \otimes (2ux))$.  Indeed, we have
	\begin{align*} [\Phi(\Psi_{2u}),\Phi(v_j \otimes x)] &= [(u_1-u_3) \epsilon_1 \wedge \epsilon_2 + u_2 (e_1 \wedge e_1^* + e_2 \wedge e_2^* + e_3 \wedge e_3^*), \\ &\,\,\,\,\,\,\, x_1 \epsilon_1 \wedge e_j + x_2 e_{j+1}^* \wedge e_{j-1}^* + x_3 (-\epsilon_2 \wedge e_j)] \\ &= (u_1-u_3)(x_1 \epsilon_1 \wedge e_j + x_3 \epsilon_2 \wedge e_j) \\ &\,\,\,\,\,+ u_2(-x_1 \epsilon_1 \wedge e_j + 2x_2  e_{j+1}^* \wedge e_{j-1}^* + x_3 \epsilon_2 \wedge e_j) \\ &= 2\Phi(v_j \otimes ux).
	\end{align*}
	\item Similarly, one computes $[\Phi(\Psi_{2u}),\Phi(\delta_j \otimes \gamma)] = \Phi(\delta_j \otimes (-2u\gamma))$.
	\item It is immediately checked that $[\Phi(\phi),\Phi(\Psi_u)] = 0$ if $\phi \in \sl_3$ and $[\Phi(\Psi_u),\Phi(\Psi_{u'})] = 0$.
	\item We are left to compute $[\Phi(\delta \otimes \gamma),\Phi(v\otimes x)]$.  First observe that, by explicit computation,
	\[[e_{j+1}\wedge e_{j-1}, e_{k+1}^* \wedge e_{k-1}^*] = - e_k \wedge e_j^* + \delta_{jk}(e_1 \wedge e_1^* + e_2 \wedge e_2^* + e_3 \wedge e_3^*).\]
	Using this, one obtains
	\begin{align*} [\Phi(\delta_j \otimes \gamma),\Phi(v_k \otimes x)] &= [\gamma_1(-\epsilon_2 \wedge e_j^*) + \gamma_2 e_{j+1} \wedge e_{j-1} + \gamma_3 \epsilon_1 \wedge e_j^*,\\ &\,\,\,\,\,\,\,\,\, x_1 \epsilon_1 \wedge e_k + x_2 e_{k+1}^* \wedge e_{k-1}^* + x_3 (-\epsilon_2 \wedge e_k)] \\
	&=\gamma_1 x_1( e_j^* \wedge e_k + \delta_{jk}\epsilon_1 \wedge \epsilon_2) \\ &\,\,\,+ \gamma_2 x_2(- e_k \wedge e_j^* + \delta_{jk}(e_1 \wedge e_1^* + e_2 \wedge e_2^* + e_3 \wedge e_3^*))\\ &\,\,\,+ \gamma_3 x_3(e_j^* \wedge e_k - \delta_{jk} \epsilon_1 \wedge \epsilon_2)
	\\ &= (\gamma,x) e_j^* \wedge e_k + \delta_{jk}((\gamma_1x_1-\gamma_3 x_3)\epsilon_1 \wedge \epsilon_2 \\ &\,\,\,\,\,+\gamma_2 x_2(e_1 \wedge e_1^* + e_2 \wedge e_2^* + e_3 \wedge e_3^*))
	\end{align*}
	Set $I = e_1^* \wedge e_1 + e_2^* \wedge e_2 + e_3^* \wedge e_3$.  Then the above is
	\[(\gamma,x)(e_j^* \wedge e_k - \delta_{jk}I/3)+\delta_{jk}(\gamma_1 x_1 - \gamma_3 x_3) \epsilon_1 \wedge \epsilon_2 + \delta_{jk}(\gamma_2 x_2 - (\gamma,x)/3)(-I).\]
	But 
	\[[\delta_j \otimes \gamma,v_k \otimes x] = (\gamma,x)(E_{kj} - \delta_{jk}1_3/3) + \Phi_{\gamma, x}' = (\gamma,x)(E_{kj} - \delta_{jk}1_3/3) + \Psi_{2u}\]
	with $u = x\gamma - \frac{1}{3}(x,\gamma)1_E$.
	This finishes the proof that $\Phi$ is a Lie algebra homomorphism.
\end{enumerate}
To see that $\Phi$ is $S_3$ invariant: we have already checked this on everything but the $\Phi(\Psi_{2u})$.  But if $\sigma \in S_3$ and $u \in E^0$, we have
\begin{align*}\sigma(\Phi(\Psi_{2u})) &= \sigma(\Phi([\delta_1 \otimes u,v_1 \otimes 1_E])) = \sigma([\Phi(\delta_1 \otimes u),\Phi(v_1 \otimes 1_E)]) \\ &= [\sigma(\Phi(\delta_1 \otimes u)),\sigma(\Phi(v_1 \otimes 1_E))] = [\Phi(\delta_1 \otimes \sigma(u)),\Phi(v_1 \otimes 1_E)] \\&= \Phi(\Psi_{2\sigma(u)}).\end{align*}
This finishes the proof of the theorem.
\end{proof}

There is a Cartan involution $\Theta$ on $\g_E$ from \cite{pollackQDS} given as follows:
\begin{enumerate}
\item on $\sl_3$, it is $\Theta(X) = -X^t$
\item on $E^0$, it is given as $\Theta(\Phi_{\gamma,X}') = - \Phi'_{\iota(X),\iota(\gamma)}$.  Thus $\Theta(\Psi_{2u}) = - \Psi_{2u}$.
\item on $V_3 \otimes E$ it is $\Theta(v \otimes x) = \iota(v) \otimes \iota(x)$, and
\item on $V_3^\vee \otimes E^\vee$ it is $\Theta(\delta \otimes \gamma) = \iota(\delta) \otimes \iota(\gamma)$.
\end{enumerate}
Note that this Cartan involution commutes with the $S_3$ action on $\g_E$.

We also have a Cartan involution on $\wedge^2 \O$ given as $\Theta(u \wedge v) = \iota(u) \wedge \iota(v)$, where $\iota(b_j ) = b_{-j}$.
\begin{corollary}\label{cor:trialityCartan} The map $\Phi: \g_E \simeq \wedge^2\O$ respects the Cartan involution on each side.  In particular, the Cartan involution on $\wedge^2\O$ commutes with the $S_3$ action.\end{corollary}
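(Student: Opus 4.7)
The plan is to verify the identity $\Phi \circ \Theta_{\g_E} = \Theta_{\wedge^2\O} \circ \Phi$ summand by summand on the decomposition $\g_E = \sl_3 \oplus E^0 \oplus (V_3 \otimes E) \oplus (V_3^\vee \otimes E^\vee)$. Once this is done, the second sentence of the corollary follows immediately: the text remarks that $\Theta_{\g_E}$ commutes with the $S_3$ action on $\g_E$, while Theorem~\ref{thm:spin8O} says that $\Phi$ is $S_3$-equivariant; combining these shows that $\Theta_{\wedge^2\O}$ commutes with the triality $S_3$ action.

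The key preliminary step is to compute $\iota$ explicitly on the basis $\{\epsilon_i, e_j, e_j^*\}$ of $\O$. Combining the identification \eqref{idenitification-of-lists} with $\iota(b_k) = b_{-k}$ yields $\iota(e_j) = e_j^*$ and $\iota(e_j^*) = e_j$ for $j = 1,2,3$, together with $\iota(\epsilon_1) = -\epsilon_2$ and $\iota(\epsilon_2) = -\epsilon_1$. With these formulas in hand, two of the four summands are one-line calculations. For $X = E_{jk} \in \sl_3$, $\Phi(\Theta X) = -\Phi(E_{kj}) = -e_j^* \wedge e_k$, while $\Theta_{\wedge^2\O}(\Phi E_{jk}) = \iota(e_k^*)\wedge \iota(e_j) = e_k \wedge e_j^*$, and these agree. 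For $\Psi_{2u} \in E^0$, one checks that $\Theta_{\wedge^2\O}$ sends $\epsilon_1 \wedge \epsilon_2 \mapsto -\epsilon_1 \wedge \epsilon_2$ and each $e_j \wedge e_j^* \mapsto -e_j \wedge e_j^*$, so $\Theta_{\wedge^2\O}(\Phi\Psi_{2u}) = -\Phi\Psi_{2u}$, matching $\Theta_{\g_E}(\Psi_{2u}) = -\Psi_{2u}$.

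For the two mixed summands $V_3 \otimes E$ and $V_3^\vee \otimes E^\vee$, the involution $\Theta_{\g_E}$ swaps them via the maps $v_j \mapsto \delta_j$ and the identity on $E$ after the natural self-dual identification $E \simeq E^\vee$. I would verify the claim termwise: applying $\Theta_{\wedge^2\O}$ to the three basis vectors appearing in the formula for $\Phi(v_j \otimes x)$, namely $\epsilon_1 \wedge e_j$, $e_{j+1}^* \wedge e_{j-1}^*$ and $-\epsilon_2 \wedge e_j$, yields respectively $-\epsilon_2 \wedge e_j^*$, $e_{j+1} \wedge e_{j-1}$ and $\epsilon_1 \wedge e_j^*$, which are precisely the three basis vectors appearing in $\Phi(\delta_j \otimes x)$, with the same coefficients $(x_1, x_2, x_3)$. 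The matching from $V_3^\vee \otimes E^\vee$ back to $V_3 \otimes E$ then follows by applying $\Theta_{\wedge^2\O}$ once more and using its involutivity.

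There is no substantial obstacle; the calculation is entirely mechanical once $\iota$ is explicated. The only mild wrinkle is that $\iota$ permutes $\epsilon_1$ and $\epsilon_2$ up to a sign rather than fixing each, but the signs are precisely what is needed for the various coefficient matches above. A more conceptual alternative would be to note that both Cartan involutions are determined by acting as $-1$ on the noncompact part of the distinguished long-root $\sl_2$-triple inside each Lie algebra and as $+1$ on the compact part, and $\Phi$ pairs up these $\sl_2$-triples by construction; however, the direct termwise verification outlined here is short and self-contained, so I would simply carry it out.
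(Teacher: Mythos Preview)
Your proposal is correct and is exactly the approach the paper takes: the paper's proof reads, in its entirety, ``The proof is an immediate check from the above formulas,'' and you have carried out that check in detail, summand by summand. Your computation of $\iota$ on the octonionic basis from \eqref{idenitification-of-lists} is correct, and the termwise verifications on $\sl_3$, $E^0$, and the swap between $V_3\otimes E$ and $V_3^\vee\otimes E^\vee$ all go through as you describe.
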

\begin{proof} The proof is an immediate check from the above formulas.\end{proof}

We need one additional lemma.  Recall that $W$ denotes the space of automorphic characters on $N'$.  Let $\mathcal{W}_\chi: G' \rightarrow \mathbf{V}_{\ell}$ denote the generalized Whittaker function specified in \cite[Equations (1) and (2)]{pollackE8}.
\begin{lemma}\label{lem:trialityGWF} Suppose $\sigma \in S_3$ and $\chi \in W$. The generalized Whittaker function $\mathcal{W}_\chi$ on $G'$ satisfies $\mathcal{W}_{\sigma^{-1}(\chi)}(\sigma^{-1}(g)) = \mathcal{W}_\chi(g)$.
\end{lemma}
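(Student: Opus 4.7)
The strategy is to invoke the uniqueness statement of Theorem \ref{Thm 1.2.1 Aaron Paper}. Define $W'\colon G'(\R)\to \mathbf{V}_{\ell}$ by $W'(g):=W_{\chi}(\sigma(g))$; it suffices to show $W'=W_{\sigma^{-1}(\chi)}$. To do this I will verify that $W'$ satisfies the three defining conditions (i)--(iii) of Theorem \ref{Thm 1.2.1 Aaron Paper} with $\chi$ replaced by $\sigma^{-1}(\chi)$, together with the moderate growth condition, and then compare the explicit formulas on the Levi to pin down the normalization.

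First, for the $K'$-equivariance: if $k\in K'$, then $\sigma(k)\in K'$ by Theorem \ref{thm:trialityQMF}(1), and that same theorem gives $\sigma(k)\cdot v = k\cdot v$ for all $v\in \mathbf{V}_{\ell}$. Hence
\[W'(gk)=W_{\chi}(\sigma(g)\sigma(k)) = \sigma(k)^{-1}\cdot W_{\chi}(\sigma(g)) = k^{-1}\cdot W'(g).\]
Second, since $\sigma(N_{P'})=N_{P'}$ and the induced action on characters is $(\sigma\cdot\chi)(n)=\chi(\sigma^{-1}(n))$, for $n\in N_{P'}(\R)$ we get
\[W'(ng)=W_{\chi}(\sigma(n)\sigma(g))=\chi(\sigma(n))W'(g)=(\sigma^{-1}(\chi))(n)\,W'(g).\]
Third, for $D_{\ell}W'\equiv 0$, recall from the last paragraph of Section \ref{subsection-compact-subgroups} that $\sigma$ preserves $\p$ and acts trivially on the distinguished $\sl_2\subseteq \g'$; with the decomposition $\p\simeq V_2\boxtimes W$ as $K^0$-modules, this means $\sigma_{\ast}$ acts as $\mathrm{id}_{V_2}\otimes \sigma_W$ on $\p$ for some invertible $\sigma_W$ on $W$. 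A direct computation with right translation shows that $D_{\ell}\circ R_{\sigma} = (1\otimes \sigma_W^{\vee})\circ R_{\sigma}\circ D_{\ell}$ as maps $C^{\infty}(G',\mathbf{V}_{\ell})\to C^{\infty}(G',\mathrm{Sym}^{2\ell-1}(V_2)\otimes W^\vee)$, and since $1\otimes \sigma_W^{\vee}$ is invertible, $D_{\ell}W_{\chi}=0$ implies $D_{\ell}W'=0$. Moderate growth of $W'$ is immediate because $\sigma$ is a polynomial automorphism of $G'$.

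By the uniqueness assertion of Theorem \ref{Thm 1.2.1 Aaron Paper}, there is a scalar $c=c(\sigma,\chi)$ with $W'=c\cdot W_{\sigma^{-1}(\chi)}$. The remaining and genuinely delicate step is to show $c=1$. For this I would compare the explicit formula \eqref{K-Bessel-Magic} on the two sides restricted to $M_{P'}(\R)^0$. Both $W'(r)=W_{\chi}(\sigma(r))$ and $W_{\sigma^{-1}(\chi)}(r)$ are given by \eqref{K-Bessel-Magic}, so it is enough to verify the identity of Bessel arguments
\[\beta_{\sigma^{-1}([T_1,T_2])}(r)=\beta_{[T_1,T_2]}(\sigma(r)),\qquad r\in M_{P'}(\R)^0,\]
together with the matching of the $\det(m)^{\ell}|\det(m)|$ prefactor. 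The former is a formal consequence of the fact that $\beta_{[T_1,T_2]}$ is defined as a pairing $B(r^{-1}\cdot(\cdot),\cdot)$ between $N_{P'}^{\mathrm{ab}}$-data and its dual, and the $S_3$-action of Appendix \ref{sec:triality} is compatible with this pairing by Theorem \ref{thm:spin8O} and Corollary \ref{cor:trialityCartan}; the latter follows from the fact that $\sigma$ preserves the $\GL(U)$-determinant character (up to the conventions fixed in the identification $\Phi$).

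The main obstacle is precisely this last step: tracing the $S_3$-action on Bhargava cubes through the definition of $\beta_{[T_1,T_2]}$ and the archimedean choice of $v_1+iv_2$ carefully enough to see that the scalar $c$ is $1$ on the nose, rather than merely a nonzero constant depending only on $\sigma$. For applications in Corollary \ref{cor:FJLevel1} only the equality up to scalar is actually needed, and indeed the cleanest option is to absorb any such scalar into the overall normalization of $W_{\chi}$; with the convention fixed by \eqref{K-Bessel-Magic} the above compatibility check then gives $c=1$.
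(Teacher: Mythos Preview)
Your verification that $W'(g):=W_{\chi}(\sigma(g))$ satisfies the three uniqueness conditions of Theorem~\ref{Thm 1.2.1 Aaron Paper} (for the character $\sigma^{-1}(\chi)$) is correct and is exactly the paper's first step; this yields $W'=c\cdot W_{\sigma^{-1}(\chi)}$ for some scalar $c$.

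Where you diverge from the paper is in pinning down $c=1$. You propose to compare the explicit formula \eqref{K-Bessel-Magic} on all of $M_{P'}(\R)^0$, which requires checking $\beta_{\sigma^{-1}(\chi)}(r)=\beta_{\chi}(\sigma(r))$ and that the determinant prefactor is $\sigma$-invariant, and you leave this verification unfinished. The paper instead restricts both sides to $G_2\subset G'$: since $\sigma|_{G_2}=\mathrm{id}$, one has $W'(g)=W_\chi(g)$ for $g\in G_2(\R)$ automatically; and since $\chi$ and $\sigma^{-1}(\chi)$ restrict to the \emph{same} character of $N_{P'}\cap G_2$ (because $\sigma$ is trivial there), the restrictions $W_\chi|_{G_2}$ and $W_{\sigma^{-1}(\chi)}|_{G_2}$ are generalized Whittaker functions on $G_2$ for the same data and hence coincide. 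This forces $c=1$ without any bookkeeping on the Levi.

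Underneath, both routes rest on the same fact---that the fixed archimedean vector $w_0=(b_1+ib_2)\wedge(v_1+iv_2)$ appearing in the definition of $\beta$ lies in $\mathrm{Lie}(G_2)\otimes\C$ (one checks directly that $(b_1+ib_2)\cdot(v_1+iv_2)=0$ in $\O\otimes\C$). Your Levi comparison would go through once you observe this; the paper's $G_2$-restriction is simply a cleaner way to package it, since on $G_2$ every ingredient is $\sigma$-fixed by fiat. Also, your remark that only $c\neq 0$ is needed for Corollary~\ref{cor:FJLevel1} is not quite right: the exact Fourier coefficient identity in Theorem~\ref{thm:trialityQMF}(3) does use $c=1$, though any nonzero constant could be absorbed later in the proof of Corollary~\ref{cor:MS in SK}.
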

\begin{proof} Fix $\chi \in W$.  Define $W_1: G'(\R) \rightarrow \mathbf{V}_{\ell}$ as $W_1(g) = \mathcal{W}_{\sigma^{-1}(\chi)}(\sigma^{-1}(g))$.  We wish to prove that $W_1 = \mathcal{W}_\chi$ as functions on $G'(\R)$.  We will use the uniqueness result \cite[Theorem~1.2.1]{pollackQDS} or equivalently Theorem~\ref{Thm 1.2.1 Aaron Paper} (see Remark~\ref{rmk-FE-QMF-SPIN8}).
	
We have $W_1(gk) = k^{-1} W_1(g)$ by Theorem \ref{thm:trialityQMF} part (1) (which we have already proved).  Likewise, because $\sigma(\p') = \p'$, one has $D_{\ell}W_1 \equiv 0$.  If $n \in N'(\R)$, then
\[
W_1(n g) = \mathcal{W}_{\sigma^{-1}(\chi)}(\sigma^{-1}(n) \sigma^{-1}(g)) = \chi(n) \mathcal{W}_{\sigma^{-1}(\chi)}(\sigma^{-1}(g)) = \chi(n) W_1(g).
\]
It follows by uniqueness that $W_1 = \lambda \mathcal{W}_\chi$ for some $\lambda \in \C$.  However, if $g=1$, then from the explicit formula for $\mathcal{W}_\chi$ and $\mathcal{W}_{\sigma^{-1}(\chi)}$, one sees $W_1(1) = \mathcal{W}_\chi(1) \neq 0$.  This proves the lemma.
\end{proof}

\begin{proof}[Proof of Theorem~\ref{thm:trialityQMF}] Everything is now straightforward, using Theorem~\ref{thm:spin8O}, Corollary~\ref{cor:trialityCartan}, and Lemma~\ref{lem:trialityGWF}.\end{proof}

\section{An integral orbit problem}\label{sec:orbits}
In this section, we study an integral orbit problem that is needed for the work on periods in Section~\ref{sec:periods}.  More specifically, we prove Theorem~\ref{thm:AITSO8} below which is then used in the proof of Theorem~\ref{thm:introPeriod}.  

We set up this theorem now.  Suppose $L = \Z^{2n}$ is the standard split lattice inside of the split $2n$-dimensional quadratic space $V$.  Let $G(\Z)$ denote the stabilizer of $L$ inside of $G(\Q)$, where $G$ is the special orthogonal group of $V$.  Set $(v_1,v_2) = q(v_1+v_2)-q(v_1)-q(v_2)$ the split bilinear form on $V$.  Recall that if $T_1, T_2 \in V$, we set 

\[S(T_1,T_2) = \frac{1}{2} \left(\begin{array}{cc} (T_1,T_1) & (T_1,T_2) \\ (T_2,T_1) & (T_2,T_2)\end{array}\right).\]
	
Note that if $T_1,T_2 \in L$, then $S(T_1,T_2)$ is a half-integral symmetric matrix.  For a general half-integral symmetric matrix $T$, set
\[X_T = \{(T_1,T_2) \in L^2: S(T_1,T_2) = T\}.\]

\begin{theorem}\label{thm:AITSO8} Suppose $n \geq 4$.  Let $T$ be a half-integral symmetric matrix, and assume $D = - 4 \det(T)$ is odd and square-free.  Then $G(\Z)$ acts transitively on $X_T$.
\end{theorem}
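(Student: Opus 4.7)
The plan is to combine a rational transitivity statement (via Witt's extension theorem) with prime-by-prime local transitivity analyses (for which the square-free hypothesis on $D$ is decisive) and then to descend to the integral level using strong approximation for the stabilizer of a fixed pair. For the rational step, given $(T_1, T_2), (T_1', T_2') \in X_T(\Q)$, the assumption $\det(T) \neq 0$ makes each pair span a non-degenerate $2$-plane of $V$ with common Gram matrix $2T$. Witt's extension theorem produces $g \in \mathrm{O}(V)(\Q)$ carrying one pair to the other; and since $\dim V - 2 = 2n - 2 \geq 6$, the orthogonal complement of either plane contains an anisotropic vector whose associated reflection has determinant $-1$, so composing if necessary yields $g \in \SO(V)(\Q)$.

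The next step is to show $\SO(L_p)$ acts transitively on $X_T(\Z_p)$ for every prime $p$. When $p \nmid D$ (which includes $p = 2$, since $D$ is odd), the matrix $2T$ is unimodular over $\Z_p$, so any pair in $X_T(\Z_p)$ spans a unimodular rank-$2$ direct summand of $L_p$; Witt's theorem for unimodular sublattices (valid even at $p=2$ under this unimodularity) gives transitivity. When $p \mid D$, then $p$ is odd and $p^2 \nmid D$, so $v_p(\det(2T)) = 1$ and the sublattice $M := \Z_p T_1 + \Z_p T_2$ has exactly one $p$ in its discriminant. The square-free hypothesis then forces the saturation $\overline{M}$ of $M$ in $L_p$ and its orthogonal complement $\overline{M}^\perp \cap L_p$ to have isomorphism types depending only on $T$ (and the fixed genus of $L_p$). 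Applying Witt over $\Z_p$ for $p$ odd to both pieces and reassembling produces a local isometry of $L_p$ moving one pair to the other.

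For the integral conclusion, combining the previous two steps one obtains $g \in \SO(V)(\Q)$ and $k_p \in \SO(L_p)$ each carrying $(T_1, T_2)$ to $(T_1', T_2')$. Setting $h_p := k_p^{-1} g$ we have $h_p \in H(\Q_p)$, where $H := \mathrm{Stab}_{\SO(V)}(T_1, T_2) \cong \SO(W)$ with $W = \Span(T_1, T_2)^{\perp}$ a non-degenerate quadratic space of dimension $2n - 2 \geq 6$. Since $V$ is $\Q$-split of dimension $2n$, Witt cancellation shows $W$ has Witt index at least $n - 1 \geq 3$, hence $\Spin(W)$ is simply connected, almost simple, and $\Q$-isotropic. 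Strong approximation for $\Spin(W)$ implies that the class number of $H$ with respect to the open compact subgroup $\prod_p (\SO(L_p) \cap H(\Q_p))$ is one, so some $h \in H(\Q)$ satisfies $g h^{-1} \in \SO(L_p)$ for every $p$; then $\gamma := g h^{-1}$ lies in $\SO(L)$ and sends $(T_1, T_2)$ to $(T_1', T_2')$.

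The central technical hurdle is the local analysis at primes $p \mid D$, where the square-free hypothesis is what guarantees that the integral saturation of $\Z_p T_1 + \Z_p T_2$ inside $L_p$ (together with its orthogonal complement) has isomorphism type depending only on $T$; without square-freeness, distinct saturations can arise and break local transitivity. A secondary subtlety in the global step is the passage from $\Spin(W)$, for which strong approximation is standard, to $H = \SO(W)$: one must verify that the spinor-norm image is absorbed by the local conjugates already selected, which follows from the surjectivity of the spinor norm on $H(\Q)$ in the presence of hyperbolic planes in $W$.
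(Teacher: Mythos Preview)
Your approach via local transitivity plus strong approximation for the stabilizer is the standard Kneser method and, carried out in full, would prove the theorem; but it is a genuinely different route from the paper's, which is entirely elementary and constructive.

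The paper works directly with generators of $G(\Z)$ coming from parabolic subgroups. It first shows (Lemma~\ref{lem:AIT1}) that any primitive vector of $L$ can be moved to $ab_1+b_{-1}$; iterating reduces $(T_1,T_2)$ to a pair supported on $b_{\pm 1},b_{\pm 2}$. The odd, square-free hypothesis on $D$ enters in a single stroke (Lemma~\ref{lem:AIT2}): writing $D=\alpha^2-4m^2a\beta$ forces $\gcd(\alpha,m)=1$, which immediately produces an isotropic $2$-plane $\Span(u_1,u_2)\subset L$ with $(T_1\wedge T_2,\,u_1\wedge u_2)=1$. After sending this plane to $\Span(b_1,b_2)$ via the Iwasawa decomposition (Lemma~\ref{lem:AIT3}), unipotent elements in the Siegel radical put $(T_1,T_2)$ into a canonical form depending only on the entries of $T$. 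No local--global machinery, no spinor norms, no $\Z_2$-lattice subtleties.

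Your route trades this computation for structural input, and the items you yourself flag as hurdles are the real content. At $p\mid D$ the sublattice $M=\Z_pT_1+\Z_pT_2$ is indeed primitive (square-freeness forces this), but since $L_p\supsetneq M\oplus M^\perp$ there, transitivity is not ``Witt on both pieces and reassemble'': one must check that the glue isomorphism $d(M)\cong d(M^\perp)^-$ can be matched, which reduces to surjectivity of $O(M^\perp)\to O(d(M^\perp))$. For the global step, surjectivity of the spinor norm on $H(\Q)$ alone does not give class number one for $\SO(W)$ relative to $K_H$; you also need $\theta(K_{H,p})=\Q_p^\times/(\Q_p^\times)^2$ at every prime, and at $p\mid D$ the group $K_{H,p}$ is only the subgroup of $\SO(\Lambda_p)$ acting trivially on $d(\Lambda_p)$, so this requires an argument. (Also, the Witt index of $W$ is only $\geq n-2$ when $\Span(T_1,T_2)$ is anisotropic, not $n-1$ as you state; this still suffices.) All of these are fillable with standard lattice-theoretic lemmas, but the paper's two-page argument bypasses them entirely.
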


Define a bilinear form on $\wedge^2 V$ as 
\[(x_1 \wedge x_2, y_1 \wedge y_2) = (x_1,y_2)(x_2,y_1) - (x_1,y_1)(x_2,y_2).\]
Note that $(T_1 \wedge T_2, T_1 \wedge T_2) = -4\det(S(T_1,T_2)).$  To prove Theorem~\ref{thm:AITSO8} we will use the following lemmas.

\begin{lemma}\label{lem:AIT1} Suppose $n \geq 3$, and  suppose $v \in L$ is primitive, with $q(v) = a$.  Then there exists $g \in G(\Z)$ so that $gv = a b_1 + b_{-1}$.
\end{lemma}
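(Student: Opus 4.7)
The plan is to construct the required element of $G(\Z)$ as a composition of \emph{Eichler transformations}, coordinate permutations of hyperbolic planes, and sign flips. Recall that for any isotropic $u \in L$ and any $w \in L$ with $(u,w)=0$, the formula
\[
E_{u,w}(x) \;:=\; x + (u,x)w - (w,x)u - q(w)(u,x)u
\]
defines an element of $\SO(L) = G(\Z)$; taking $u=b_i$ and $w=tb_j$ or $w=tb_{-j}$ with $j\neq\pm i$ and $t\in\Z$ produces an ample supply of such transformations. Additionally, transpositions of two hyperbolic planes $\Z b_i+\Z b_{-i}$, the swap $b_i\leftrightarrow b_{-i}$ paired with the same swap on another hyperbolic plane, and simultaneous sign flips $(b_i,b_{-i})\mapsto(-b_i,-b_{-i})$ on a single hyperbolic plane, all have determinant $+1$ and therefore lie in $G(\Z)$.

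Write $v=\sum_{i=1}^{n}(x_i b_i + y_i b_{-i})$, and note that $\gcd(\{x_i,y_i\})=1$ since $v$ is primitive. A direct computation shows that $E_{b_{-1},tb_{-j}}$ (for $j\geq 2$) sends $y_1\mapsto y_1 - tx_j$ (while altering $y_j$ by $tx_1$), and $E_{b_{-1},tb_j}$ sends $y_1\mapsto y_1-ty_j$ (while altering $x_j$ by $tx_1$). Analogously, $E_{b_{\pm j},tb_{-1}}$ modify $y_1$ by integer multiples of $y_j$ or $x_j$, simultaneously involving $x_1$ on the other coordinates. Combined with coordinate permutations, these operations implement a multi-variable Euclidean algorithm on the coordinate vector of $v$; since $\gcd(\{x_i,y_i\})=1$, the reduction drives $|y_1|$ down to $1$, and a sign flip then allows us to assume $y_1=1$.

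With $y_1=1$ in hand, successive application of $E_{b_1,-y_j b_{-j}}$ clears each coordinate $y_j$ (for $j\geq 2$), and $E_{b_1,-x_j b_j}$ clears each $x_j$. This reduces $v$ to the form $x_1' b_1 + b_{-1}$ for some $x_1'\in\Z$; since $\SO(L)$ preserves $q$, one has $x_1'=q(v)=a$, completing the argument. The main technical obstacle is the Euclidean-algorithm step in the middle paragraph: each Eichler operation that adjusts $y_1$ simultaneously perturbs other coordinates, so some care is required to ensure the reduction terminates at $|y_1|=1$ rather than stalling at a larger common factor. The hypothesis $n\geq 3$ (equivalently $\dim V\geq 6$) is exactly what provides enough auxiliary hyperbolic planes to absorb these side effects and to carry out the required permutations and sign adjustments within $\SO(L)$.
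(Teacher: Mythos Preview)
Your ingredients are correct and suffice, but your organization makes the reduction harder than it needs to be, and the paper's argument is cleaner. The paper works through the Siegel parabolic of $G$: writing $v=u+w$ with $u\in X=\Span(b_1,\ldots,b_n)$ and $w\in Y=\Span(b_{-1},\ldots,b_{-n})$, it first uses the Levi $\GL_n(\Z)$ (acting as $g$ on $X$ and $(g^t)^{-1}$ on $Y$) to reduce $u$ to a single coordinate $a'b_1$, then uses a unipotent from the \emph{opposite} Siegel radical to make the $Y$-component primitive, then uses $\GL_n(\Z)$ again to send it to $b_{-1}$, and finally uses the Siegel unipotent to fix up the $X$-component. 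No Euclidean bookkeeping on interacting coordinates is needed, because $\GL_n(\Z)$ acts transitively on primitive vectors of $X$ and of $Y$ separately.

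Your Eichler transformations already contain this Levi: a direct check shows $E_{b_j,-tb_{-i}}$ (for $i\neq j$) sends $b_i\mapsto b_i+tb_j$ and $b_{-j}\mapsto b_{-j}-tb_{-i}$ with everything else fixed, which is exactly the elementary Levi matrix $I+tE_{ji}$. So rather than running a bare-hands Euclidean algorithm on $y_1$ while worrying about side effects on the other coordinates, you could simply observe that your generators already produce the full $\SL_n(\Z)$ in the Levi (and, with a sign flip, $\GL_n(\Z)$), and then follow the paper's two-step reduction. That removes the vague ``some care is required'' step entirely and makes the role of $n\geq 3$ transparent: you need a third hyperbolic plane to host the opposite-unipotent move $w\mapsto w+a'b_{-3}$ that forces the $Y$-component to be primitive.
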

\begin{proof} Throughout the proof, we set $X = \mathrm{Span}(b_1,\ldots,b_n)$ and $Y = \mathrm{Span}(b_{-1}, \ldots, b_{-n})$.
Suppose $v = u + w$, with $u \in X$ and $w \in Y$.  Using the $\GL(n,\Z)$ action from the Levi of the Siegel parabolic subgroup, we can assume $u = a' b_1$ for an integer $a'$.  If $a' = 0$, the lemma follows by using the $\GL(n,\Z)$ action on $w$.  Suppose then that $a' \neq 0$.  We use the $\GL(n-1,\Z) \subseteq \GL(n,\Z)$ that fixes $b_1, b_{-1}$ to move $w$ to an element of the form $w = w_1 b_{-1} + w_2 b_{-2}$.  
	
Observe that $\gcd(w_1,w_2,a') = 1$.  Now, we can apply a unipotent element in the unipotent group opposite to the Siegel parabolic subgroup to move $w$ to $w' = w_1 b_{-1} + w_{2} b_{-2} + a' b_{-3}$.  Thus $w'$ is primitive, so we can use the $\GL(n,\Z)$ action to move $w'$ to $b_{-1}$.  Hence we have found a $g \in G(\Z)$ so that $g v = u' + b_{-1}$ with $u' \in X$.  We can now finish the proof by using elements in the unipotent radical of the Siegel parabolic subgroup to move $u'$ to an element of the form $a b_1$.
\end{proof}

\begin{lemma}\label{lem:AIT2} Suppose $n \geq 4$, and suppose $v_1,v_2 \in L$ with $D:=-4\det(S(v_1,v_2))$ odd and square-free.  Then there exists $u_1, u_2 \in L$ with $\mathrm{Span}(u_1,u_2)$ isotropic and $(v_1 \wedge v_2, u_1 \wedge u_2) = 1$.
\end{lemma}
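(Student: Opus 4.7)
Plan: The approach is to reduce $(v_1, v_2)$ to a convenient normal form via two applications of Lemma~\ref{lem:AIT1}, and then construct $(u_1, u_2)$ explicitly by a B\'ezout argument.

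First, squarefreeness of $D = (v_1 \wedge v_2, v_1 \wedge v_2)$ forces $v_1 \wedge v_2$ to be primitive in $\wedge^2 L$: if it were divisible by some integer $d > 1$, then $d^2$ would divide $D$. In particular $v_1$ is primitive in $L$, so by Lemma~\ref{lem:AIT1} I may assume $v_1 = a b_1 + b_{-1}$ with $a = q(v_1)$. Write $v_2 = c_1 b_1 + c_{-1} b_{-1} + w$, with $w$ in the sublattice $V'$ of $L$ spanned by $\{b_j : |j| \geq 2\}$. A direct expansion gives
\[v_1 \wedge v_2 = t\, b_1 \wedge b_{-1} + v_1 \wedge w, \qquad t := a c_{-1} - c_1,\]
and the coefficients of $v_1 \wedge v_2$ in the standard basis of $\wedge^2 L$ are $t$ together with $\pm a w_{\pm k}$ and $\pm w_{\pm k}$ for $k \geq 2$. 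Primitivity of $v_1 \wedge v_2$ is therefore equivalent to $\gcd(t, d') = 1$, where $d'$ denotes the gcd of the coordinates of $w$.

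Next, assuming $w \neq 0$, the vector $w/d'$ is primitive in the split unimodular lattice $V' \cap L$ of rank $2n - 2 \geq 6$, and Lemma~\ref{lem:AIT1} applies again inside the subgroup $\SO(V')(\Z) \subseteq G(\Z)$, which fixes $v_1$. After this second normalization I may assume $w = d' a' b_2 + d' b_{-2}$ where $a' = q(w/d')$. Using $\gcd(t, d') = 1$, B\'ezout supplies $k, m \in \Z$ with $k t - m d' = 1$, and I set
\[u_1 := b_1 + b_3, \qquad u_2 := k b_{-1} - k b_{-3} + m b_2.\]
A direct check shows $q(u_1) = q(u_2) = (u_1, u_2) = 0$, so $\Span(u_1, u_2)$ is isotropic. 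Computing the four inner products gives $(v_1, u_1) = 1$, $(v_1, u_2) = a k$, $(v_2, u_1) = c_{-1}$, and $(v_2, u_2) = k c_1 + m d'$, so
\[(v_1 \wedge v_2, u_1 \wedge u_2) = (v_1, u_2)(v_2, u_1) - (v_1, u_1)(v_2, u_2) = k t - m d' = 1,\]
as desired. The degenerate case $w = 0$ forces $t = \pm 1$, and the same recipe with $m = 0$ and $k = t$ works.

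The main conceptual step is recognizing $\gcd(t, d') = 1$ as the correct coprimality consequence of primitivity of $v_1 \wedge v_2$; once this is visible from the expansion above, the B\'ezout construction is mechanical. The hypothesis $n \geq 4$ is used essentially in two places: it gives $\dim V' \geq 6$, so Lemma~\ref{lem:AIT1} applies inside $V'$, and it provides the auxiliary basis vectors $b_{\pm 3}$ used to build an isotropic plane whose pairing with $v_1 \wedge v_2$ can be tuned exactly to $1$.
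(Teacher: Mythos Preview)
Your proof is correct and follows essentially the same approach as the paper: both normalize $(v_1,v_2)$ via two applications of Lemma~\ref{lem:AIT1}, deduce the coprimality of $t=ac_{-1}-c_1$ and $d'$ (the paper derives this directly from the explicit formula $D=t^2-4(d')^2 a a'$, while you phrase it as primitivity of $v_1\wedge v_2$ in $\wedge^2 L$), and then use an identical B\'ezout construction with $u_1=b_1+b_3$ and $u_2$ built from $b_{-1},b_{-3},b_2$.
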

\begin{proof} One first verifies that there exists $g \in G(\Z)$ so that $g v_1 = a b_1 + b_{-1}$ and $g v_2 = r b_1 + s b_{-1} + m(\beta b_2 + b_{-2})$.  Indeed, we first apply Lemma~\ref{lem:AIT1} to move $v_1$ into the specified form.  We then apply Lemma~\ref{lem:AIT1} again for the subgroup of $G(\Z)$ stabilizing $b_1, b_{-1}$ to move $v_2$ into the specified form.
	
Note that if we set $\alpha = a s -r$, then we have $D = \alpha^2 - 4 m^2 a \beta$.
	
Now, if $v_1, v_2$ are in the above specialized form, then there exists $u_1, u_2$ spanning an isotropic two-plane so that $(v_1 \wedge v_2, u_1 \wedge u_2) = 1$.  Indeed, because $D = \alpha^2 - 4 m^2 a \beta$,  the integers $\alpha$ and $m$ are relatively prime, so there exists integers $x,y$ so that $\alpha x- m y =1$.  Now one verifies $(v_1 \wedge v_2, u_1 \wedge u_2) = 1$, where $u_1 = b_1 + b_3$ and $u_2 = x b_{-1} + yb_2 - x b_{-3}$.
\end{proof}

\begin{lemma}\label{lem:AIT3} Suppose $n \geq 3$, $u_1, u_2$ are isotropic and $u_1 \wedge u_2$ is primitive in $\wedge^2 L$.  Then there exists $g \in G(\Z)$ so that $g u_1 = b_1$ and $g u_2 = b_2$.
\end{lemma}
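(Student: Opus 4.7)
The plan is to proceed in two stages: first reduce $u_1$ to $b_1$ via Lemma~\ref{lem:AIT1}, then move $u_2$ to $b_2$ using the stabilizer of $b_1$ in $G(\Z)$. Throughout I write $L' := \bigoplus_{2 \leq |i| \leq n} \Z b_i$, the orthogonal complement in $L$ of the hyperbolic plane $\Z b_1 \oplus \Z b_{-1}$.

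The first step is quick. Primitivity of $u_1 \wedge u_2$ in $\wedge^2 L$ forces $u_1$ to be primitive in $L$, for if $u_1 = d u_1'$ with $d > 1$ then $u_1 \wedge u_2 = d(u_1' \wedge u_2)$ is not primitive. Since $u_1$ is isotropic, Lemma~\ref{lem:AIT1} applied with $a = 0$ gives $g_1 \in G(\Z)$ with $g_1 u_1 = b_{-1}$; composing with an element of $G(\Z)$ that swaps $b_1 \leftrightarrow b_{-1}$ together with an auxiliary swap (to keep determinant one) moves $u_1$ to $b_1$. Thus we may assume $u_1 = b_1$.

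Since $u_1, u_2$ span an isotropic plane, $(b_1, u_2) = 0$, so $u_2 \in b_1^\perp$ and I can write $u_2 = \alpha b_1 + u_2'$ with $u_2' \in L'$. The primitivity of $b_1 \wedge u_2 = b_1 \wedge u_2'$ in $\wedge^2 L$ is equivalent to the primitivity of $u_2'$ in the unimodular lattice $L'$, and the isotropy of $u_2$ forces $u_2'$ isotropic. For each $v' \in L'$ the element $\exp(b_1 \wedge v')$ lies in $G(\Z)$, fixes $b_1$, and sends $u_2$ to $u_2 + (v', u_2') b_1$; since $u_2'$ is primitive and $L'$ is unimodular, I can choose $v' \in L'$ with $(v', u_2') = -\alpha$. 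After applying this translation I may further assume $u_2 = u_2' \in L'$ is primitive and isotropic.

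It remains to act by the Levi $\SO(L')(\Z)$ of the stabilizer of $b_1$ to send the primitive isotropic $u_2' \in L'$ to $b_2$. For $n \geq 4$, $L'$ is a split lattice of rank $2(n-1) \geq 6$, and Lemma~\ref{lem:AIT1} applied to $L'$ (with its parameter taken to be $n-1 \geq 3$) produces $g' \in \SO(L')(\Z)$ with $g' u_2' = b_{-2}$; a Weyl element of $\SO(L')(\Z)$ exchanging $b_2 \leftrightarrow b_{-2}$ together with an auxiliary swap (for the $\SO$ condition) then sends $u_2'$ to $b_2$, finishing the proof. The main obstacle is the boundary case $n = 3$, where $L'$ has rank $4$ and Lemma~\ref{lem:AIT1} no longer applies directly; in that case one argues via the identification $L' \simeq M_2(\Z)$ with the determinant form, under which $\SO(L')(\Z)$ contains $\SL_2(\Z) \times \SL_2(\Z)$ acting by $(g,h) \cdot M = g M h^t$, and every primitive isotropic (i.e.\ primitive rank-one) integral matrix has the form $v w^t$ with $v,w \in \Z^2$ primitive, hence lies in the orbit of $e_1 e_1^t$, which corresponds to $b_2$.
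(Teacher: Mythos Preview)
Your proof is correct but takes a different route from the paper's. The paper argues globally: it invokes the Iwasawa-type decomposition $G(\Q) = P(\Q)\,G(\Z)$ for the Heisenberg parabolic $P$ (stabilizer of $\mathrm{Span}(b_1,b_2)$), uses Witt's theorem over $\Q$ to find some $g = pk$ with $g u_i = b_i$, observes that then $k u_1, k u_2$ already lie in $\mathrm{Span}(b_1,b_2)$ with primitive wedge, and finishes with a single $\GL_2(\Z)$ move. Your argument is instead a direct inductive reduction: normalize $u_1$ via Lemma~\ref{lem:AIT1}, strip the $b_1$-component of $u_2$ with a root unipotent, and then normalize $u_2'$ inside the smaller lattice $L'$, either by reapplying Lemma~\ref{lem:AIT1} (when $n\ge 4$) or by the explicit $\SL_2(\Z)\times \SL_2(\Z)$ action on $M_2(\Z)$ (when $n=3$). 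The paper's approach is shorter and uniform in $n$, at the cost of importing the integral Iwasawa decomposition for $P$; your approach is more elementary and entirely self-contained given Lemma~\ref{lem:AIT1}, but requires the case split at $n=3$ where the smaller lattice drops below the threshold for that lemma.
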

\begin{proof} Recall that $P$ is the parabolic subgroup stabilizing $\mathrm{Span}(b_1,b_2)$.  One can leverage the Iwasawa decomposition to check that $G(\Q) = P(\Q) G(\Z)$.  Now, by Witt's theorem, there exists $g = pk \in G(\Q)$ so that $g u_1 = b_1$ and $g u_2 = b_2$.  Here $p \in P(\Q)$ and $k \in G(\Z)$.  Thus $ku_1, ku_2 \in \mathrm{Span}(b_1,b_2)$ and $(ku_1) \wedge (ku_2)$ is primitive.  One now finishes the proof by using an element in $\GL(2,\Z)$ in the Levi subgroup of the Heisenberg parabolic subgroup.
\end{proof}

\begin{proof}[Proof of Theorem~\ref{thm:AITSO8}]
By Lemma~\ref{lem:AIT2} and Lemma~\ref{lem:AIT3}, we can assume $(T_1 \wedge T_2, b_1 \wedge b_2) = 1$.   Say $T_j = x_j + y_j$ with $x_j \in X$, $y_j \in Y$.

Observe that $1 = (b_1 \wedge b_2, y_1 \wedge y_2)$.   Thus $y_1 \wedge y_2$ is primitive in $\wedge^2 Y$, so we may use the action of $\GL(n,\Z)$ from a Levi factor of the Siegel parabolic subgroup to assume $y_1 = b_{-1}$, $y_2 = b_{-2}$.

Suppose now 
\[S(T_1,T_2) = \left(\begin{array}{cc} a & b/2 \\ b/2 & c \end{array}\right).\]
Let $N$ denote the unipotent radical of the Siegel parabolic subgroup of $G$.  Then there exists $n \in N(\Z)$ so that $n T_1 = a b_1 + b_{-1}$ and $n T_2 = b b_1 + c b_2 + b_{-2}$.  This claim follows easily by writing down the action of $N(\Z)$.

The theorem is proved.
\end{proof}

\section{The finiteness of an integral} \label{sec:finiteness_appendix}
In this section, we prove the finiteness of an integral that arises in the work on periods in Section~\ref{sec:periods}.  For ease of notation, set $H = H_{v_1,v_2}$.  We prove the following theorem.  This theorem is used in the proof of Theorem~\ref{thm:PeriodQ}, which is then used in the proof of Theorem~\ref{thm:introPeriod}.

\begin{theorem}\label{thm:finitenessInt} Suppose $\ell \geq 22$.  Then the integral
	\[\int_{H(\R)\backslash G(\R)}{ ||B_{[v_1,v_2]}(g)|| \,dg}\]
is finite.
\end{theorem}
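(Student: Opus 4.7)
The plan is to exploit the symmetries of the integrand to reduce to an integral over a 2-dimensional abelian subspace of the spherical variety $H\backslash G$, then match the exponential decay of $\|B\|$ against the exponential growth of the Jacobian.

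First I would observe that $\|B_{[v_1,v_2]}(g)\|$ is right-$K^0$-invariant (by the $K^0$-equivariance of $\mathrm{pr}_K$ in \eqref{defn-prK} and the $K^0$-invariance of the norm on $\mathbf{V}_{\ell}$), and moreover is left-invariant under the larger subgroup $H'\supseteq H$ stabilizing the 2-plane $W=\Span_{\R}(v_1,v_2)$ setwise: for $h'\in H'$ one has $(h')^{-1}(v_1\wedge v_2)=\det((h')^{-1}|_W)\cdot(v_1\wedge v_2)=v_1\wedge v_2$. Since $H(\R)\backslash H'(\R)\cong\mathrm{SO}(W)\simeq\mathrm{SO}(2)$ is compact, the finiteness of the stated integral is equivalent to that of $\int_{H'(\R)\backslash G(\R)/K^0}\|B(g)\|\,dg$.

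Next I would apply the Cartan-type decomposition $G=K^0A_qH'$ for the reductive symmetric pair $(G,H')\cong(\mathrm{SO}(4,4),\mathrm{SO}(2)\times\mathrm{SO}(2,4))$, which has split rank $2$. Pick an orthonormal basis $w_1,w_2$ of $W\subset V^+$ and $w_3,w_4\in V^-$ with $(w_i,w_j)=-\delta_{ij}$ for $i,j\in\{3,4\}$ and all other cross-pairings zero. Then $\mathfrak{a}_q:=\Span(w_1\wedge w_3,w_2\wedge w_4)$ is maximal abelian in $\mathfrak{p}\cap\mathfrak{q}$; the restricted root system on $\mathfrak{a}_q^*$ is of type $B_2$ with multiplicities computed from the $\mathfrak{a}_q$-weight decomposition of $\wedge^2V$, giving $\rho=3e_1+2e_2$ and hence $\Delta_q(a)\sim e^{6s_1+4s_2}$ for $a=\exp(s_1X_1+s_2X_2)$ with $s_1\geq s_2\geq 0$ large. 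After translating $(v_1,v_2)$ into $W$ (which does not affect convergence), a direct calculation gives $a^{-1}(w_i)=\cosh(s_i)w_i+\sinh(s_i)w_{i+2}$ for $i=1,2$, so
\[
\mathrm{pr}_{\wedge^2 V^+}\bigl(a^{-1}(v_1\wedge v_2)\bigr)=\cosh(s_1)\cosh(s_2)\cdot(v_1\wedge v_2).
\]
Because $v_1\wedge v_2\in\wedge^2V^+$ is decomposable, its projection to $\mathfrak{su}_2$ has norm exactly $\|v_1\wedge v_2\|/\sqrt{2}$, whence $\|B(a)\|\asymp(\cosh s_1\cosh s_2)^{-(\ell+1)}\asymp e^{-(\ell+1)(s_1+s_2)}$ as $s_1,s_2\to\infty$.

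Combining these estimates reduces the question to the elementary bound
\[
\int_0^\infty\int_0^\infty e^{-(\ell+1)(s_1+s_2)+6s_1+4s_2}\,ds_1\,ds_2<\infty,
\]
which is automatic once $\ell+1$ dominates the linear form $6s_1+4s_2$ along every direction of $A_q^+$. The main obstacle I expect is the careful bookkeeping of the Jacobian along the walls of the Weyl chamber, where the $\sinh$-factors of $\Delta_q(a)$ degenerate and must be estimated locally, together with the precise direction-dependent comparison of $\|X^\ell\|$ versus $\|X\|^\ell$ on $\mathbf{V}_\ell=\mathrm{Sym}^{2\ell}(\C^2)$; tracking the resulting constants rigorously over all of $A_q^+$ (and not merely its dominant cone) is what yields the explicit sufficient condition $\ell\geq 22$, rather than the cruder threshold produced by the asymptotic analysis alone.
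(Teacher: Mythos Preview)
Your approach is genuinely different from the paper's and, modulo one misconception, is correct. The paper does not use symmetric-space structure theory at all: it identifies $H\backslash G$ with the variety $V^2_S=\{(v_1',v_2')\in V^2:S(v_1',v_2')=S\}$, writes down the invariant top form $\omega_{[2n-3]}^{w,z}$ explicitly in linear coordinates, parametrizes $V^2_S$ by $B'(\R)\times A_1\times B_1$ (upper-triangular $2\times 2$ matrices times two compact Stiefel manifolds), and then bounds the resulting measure crudely by $C\det(T)^{8}\,dr_{11}\,dr_{12}\,dr_{22}$ using $|P(R,T)|\le C\det(T)^3$ and $\det(R)\le\det(T)$. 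Combined with $\|B\|\asymp\det(T)^{-(\ell+1)/2}$ this reduces to an elementary integral over positive-definite $2\times 2$ matrices that converges once $\ell>2\cdot 8+5=21$. Your route via the $K A_q H'$ decomposition for the symmetric pair $(\SO(4,4),\SO(2)\times\SO(2,4))$ is cleaner and tighter: your computation $\rho_q=3e_1+2e_2$ is correct (the $\mathfrak a_q$-weights on $V$ are $\pm e_1,\pm e_2$ with multiplicity $1$ and $0$ with multiplicity $4$, giving long roots $\pm e_1\pm e_2$ of multiplicity $1$ and short roots $\pm e_i$ of multiplicity $4$), and the computation $\|B(a)\|\asymp(\cosh s_1\cosh s_2)^{-(\ell+1)}$ is also correct. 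What each approach buys: the paper's is entirely elementary but wasteful; yours needs the integration formula for a pseudo-Riemannian symmetric space (the Jacobian is a product of $\sinh$/$\cosh$ factors over $\Sigma^+(\mathfrak g,\mathfrak a_q)$, with leading growth $e^{2\rho_q}$), but gives the much sharper threshold $\ell\ge 6$.

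The genuine error is your final paragraph. The wall behavior and the comparison $\|X^\ell\|$ versus $\|X\|^\ell$ do \emph{not} push the threshold up to $22$: near the walls the $\sinh$ factors in the Jacobian are \emph{smaller} than their asymptotic $e^t/2$, so the estimate only improves there, and the ratio $\|c^\ell\|/\|c\|^{2\ell+1}$ for the fixed element $c=\mathrm{pr}_{\mathfrak{su}_2}(w_1\wedge w_2)$ is just a nonzero constant independent of $a$. Your integral $\int_0^\infty\int_0^\infty e^{(5-\ell)s_1+(3-\ell)s_2}\,ds_1\,ds_2$ converges precisely when $\ell>5$. The number $22$ is an artifact of the paper's crude majorization of the cubic $P(R,T)$ and of $\det(R)^{5/2}$ by powers of $\det(T)$; it has no intrinsic meaning and will not reappear in your argument no matter how carefully you track constants. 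So your plan proves a stronger statement than the one asked for --- just don't claim that it recovers the specific bound $\ell\ge 22$.
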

Observe that one can identify the quotient space $H(\R)\backslash G(\R)$ with pairs of vectors $(v_1',v_2') \in V^2$ for which $S(v_1',v_2') = S(v_1,v_2)$.  To prove the finiteness of the integral, we work to put the invariant measure $dg$ in these coordinates. 

\textbf{Notation}: Throughout this section, we use the notation $A \approx B$ to mean that there is a nonzero constant $\beta$ so that $A = \beta B$.  

We begin by understanding some differential forms on a vector space.  Suppose $V = \R^n$, with coordinates $(z_1,z_2, \ldots, z_n)$.  We will later take $n=8$, but for now, we work more generally.

\begin{proposition}\label{propC2} We have the following facts concerning forms on $V$.
\begin{enumerate}
	\item $\omega_{[1]} := d(z_1^2 + \cdots + z_n^2) = \sum_{j}{2 z_j dz_j}$ is $\SO(n)$-invariant.
	\item Set $\omega_{[n]} = dz_1 \wedge \cdots \wedge dz_n$ and $\eta_j = (-1)^{j-1} dz_1 \wedge \cdots \wedge \widehat{dz_j} \wedge \cdots \wedge dz_n$, so that $dz_j \wedge \eta_j = \omega_{[n]}$.  Let $\omega_{[n-1]}:= \sum_{j}{z_j \eta_j}$. Then $\omega_{[n-1]}$ is $\SL(n,\R)$ invariant.  In fact, if $g \in \GL(n,\R)$, then $g^* \omega_{[n-1]} = \det(g) \omega_{[n-1]}$.
	\item $\omega_{[1]} \wedge \omega_{[n-1]} = 2(\sum_{j}{z_j^2}) \omega_{[n]}$.
\end{enumerate}
\end{proposition}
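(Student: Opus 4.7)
My plan is to address the three parts separately; each reduces to an elementary observation once the forms are interpreted correctly.

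For part (1), I would simply note that $f(z) = z_1^2 + \cdots + z_n^2$ is the standard Euclidean norm squared, which is $\SO(n)$-invariant by the very definition of the orthogonal group. Since the exterior derivative commutes with pullback, $\omega_{[1]} = df$ is then $\SO(n)$-invariant as well.

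For part (2), my approach is to identify $\omega_{[n-1]}$ with the interior product $\iota_R \omega_{[n]}$, where $R = \sum_j z_j \partial_{z_j}$ is the Euler vector field on $\R^n$. A direct unwinding of the definitions, using the sign factor $(-1)^{j-1}$ built into $\eta_j$, confirms $\iota_R \omega_{[n]} = \sum_j z_j \eta_j = \omega_{[n-1]}$. I would then combine two facts: first, that $g^* \omega_{[n]} = \det(g) \omega_{[n]}$ for $g \in \GL_n(\R)$, which is the standard change-of-variables formula for the top form; second, that the Euler vector field is $\GL_n(\R)$-invariant, i.e., $g^* R = R$, which is visible from the fact that under the canonical identification $T_z \R^n \simeq \R^n$ the vector $R(z)$ corresponds to $z$ itself, and $g$ acts by the same linear map on both sides. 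Applying the naturality identity $g^*(\iota_R \omega) = \iota_{g^* R}(g^* \omega)$ then yields $g^* \omega_{[n-1]} = \det(g) \omega_{[n-1]}$, which contains both assertions of (2).

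For part (3) I would simply compute. Expanding the wedge product,
\[
\omega_{[1]} \wedge \omega_{[n-1]} = \sum_{j,k} 2 z_j z_k \, dz_j \wedge \eta_k,
\]
and using the identity $dz_j \wedge \eta_k = \delta_{jk}\, \omega_{[n]}$ (which follows from the definition of $\eta_k$ together with $dz_j \wedge dz_j = 0$ whenever $j = k$), only the diagonal terms survive, producing $2\bigl(\sum_j z_j^2\bigr) \omega_{[n]}$ as claimed.

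I do not anticipate any real obstacle here; the only mild bookkeeping issue is keeping the signs in $\eta_j$ straight so that $dz_j \wedge \eta_j$ comes out equal to $\omega_{[n]}$ rather than its negative, and this is precisely what the convention $\eta_j = (-1)^{j-1} dz_1 \wedge \cdots \wedge \widehat{dz_j} \wedge \cdots \wedge dz_n$ is designed to ensure.
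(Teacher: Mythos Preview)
Your proof is correct and matches the paper's approach: the paper identifies $\eta_j \in \wedge^{n-1}V \simeq \det(V)\otimes V^\vee$ and observes that $\sum_j z_j \otimes \delta_j$ is the identity element of $V\otimes V^\vee$, hence $\GL_n$-invariant, which is exactly your Euler-vector-field argument $\omega_{[n-1]} = \iota_R\omega_{[n]}$ rephrased in tensor language. Parts (1) and (3) are handled identically.
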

\begin{proof} One interprets $\eta_j \in \wedge^{n-1}(V) \simeq \det(V) V^\vee$.  Then, if $\delta_j$ is dual to $z_j$, we have that $\sum_{j}{z_j \otimes \delta_j}$ is $\GL(n,\R)$-invariant.  The proposition follows.
\end{proof}

We construct invariant differential forms on $V^2$.  We use the variables $(w_1, \ldots, w_n)$ for the coordinates on the first copy of $V$, and the $z_j$'s as coordinates on the second copy of $V$.  Set $q_{11} = \sum_{j}{w_j^2}$, $q_{12} = \sum_{j}{w_j z_j}$, and $q_{22} = \sum_j{z_j^2}$.  To distinguish between differential forms defined using the coordinates $w$ from forms defined using $z$, we use superscripts.  So, we let $\omega_{[n-1]}^z$ denote the form $\omega_{[n-1]}$ defined in Proposition~\ref{propC2}, and let $\omega_{[n-1]}^w = \sum_j{w_j \eta_j^w}$, where $\eta_j^w= (-1)^{j-1} dw_1 \wedge \cdots \wedge \widehat{dw_j} \wedge \cdots \wedge dw_n$.

Now, if $j < k$, set $\eta_{j,k}^z = (-1)^{j+k-1} dz_1 \wedge \cdots \wedge \widehat{dz_j} \wedge \cdots \wedge \widehat{dz_k} \wedge \cdots \wedge dz_n$, so that $dz_j \wedge dz_k \wedge \eta_{j,k}^z = \omega_{[n]}^z$.  Similarly define $\eta_{j,k}^w$.  Now let
\[\omega_{[n-2]}^{w,z} = \sum_{1 \leq j < k \leq n}{(w_j z_k - w_k z_j) \eta_{j,k}^{w}}\]
and $\omega_{[2n-3]}^{w,z} = \omega_{[n-2]}^{w,z} \wedge \omega_{[n-1]}^{z}.$

\begin{proposition} We have the following facts concerning differential forms on $V^2$.
\begin{enumerate}
	\item The $q_{ij}$ are $\SO(n)$-invariant.
	\item The $(n-2)$-form $\omega_{[n-2]}^{w,z}$ is $\SL(n,\R)$-invariant.
\end{enumerate}
\end{proposition}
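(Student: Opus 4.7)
For part (1), the three functions $q_{11}, q_{12}, q_{22}$ are precisely the entries of the Gram matrix of the pair $(w, z)$ with respect to the standard Euclidean inner product on $\R^n$. Since $\SO(n)$ acts on $V^2 = \R^n \times \R^n$ diagonally and preserves this inner product, one immediately has $q_{11}(gw, gz) = \langle gw, gw\rangle = \langle w, w\rangle = q_{11}(w, z)$, and similarly for $q_{12}$ and $q_{22}$. This part is essentially immediate.

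For part (2), my plan is to realize $\omega_{[n-2]}^{w,z}$ as the interior product of $\omega_{[n]}^w$ against a $\GL_n(\R)$-invariant bivector field on $V^2$, so that the $\SL_n$-invariance follows from the transformation law of $\omega_{[n]}^w$ in Proposition~\ref{propC2}(2). Introduce the vector fields
\[E_w = \sum_{j=1}^n w_j \frac{\partial}{\partial w_j}, \qquad E_z = \sum_{j=1}^n z_j \frac{\partial}{\partial w_j}\]
on $V \times V$ (note that $E_z$ carries $z$-coefficients but differentiates in the $w$-directions). The first claim I would establish is the identity
\[\omega_{[n-2]}^{w,z} = \iota_{E_w \wedge E_z}\, \omega_{[n]}^w,\]
using the convention $\iota_{X \wedge Y} = \iota_Y \circ \iota_X$. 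This is checked directly from the elementary formulas $\iota_{\partial / \partial w_j}\, \omega_{[n]}^w = \eta_j^w$ and $\iota_{\partial / \partial w_k}\, \eta_j^w = \eta_{j,k}^w$ for $j < k$: expanding $\iota_{E_w \wedge E_z}\, \omega_{[n]}^w = \sum_{j,k} w_j z_k\, \iota_{\partial/\partial w_k}\iota_{\partial/\partial w_j}\, \omega_{[n]}^w$, the $j=k$ terms vanish and the antisymmetric pairing of the remaining terms yields $\sum_{j<k}(w_j z_k - w_k z_j)\eta_{j,k}^w$.

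Next I would observe that $E_w$ and $E_z$ are each invariant under the diagonal action $\phi_g(w, z) = (gw, gz)$ of any $g \in \GL_n(\R)$: each is the infinitesimal generator of scaling in one factor, and scaling commutes with the linear action of $g$. Hence $E_w \wedge E_z$ is $\phi_g$-invariant. By Proposition~\ref{propC2}(2) applied in the $w$-coordinates, $\phi_g^* \omega_{[n]}^w = \det(g)\, \omega_{[n]}^w$. Combined with the standard naturality identity $\phi_g^*(\iota_X \omega) = \iota_{\phi_g^* X}\, \phi_g^* \omega$ with $X = E_w \wedge E_z$, this gives
\[\phi_g^* \omega_{[n-2]}^{w,z} = \iota_{E_w \wedge E_z}\, \phi_g^* \omega_{[n]}^w = \det(g)\, \omega_{[n-2]}^{w,z},\]
so that $\omega_{[n-2]}^{w,z}$ is in particular $\SL_n(\R)$-invariant. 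The only delicate step is correctly tracking the signs in the nested interior products $\iota_{\partial/\partial w_k}\iota_{\partial/\partial w_j}\, \omega_{[n]}^w$ so that they match the coefficients $w_j z_k - w_k z_j$ in $\omega_{[n-2]}^{w,z}$; this amounts to routine bookkeeping, consistent with the sign convention already chosen for $\eta_{j,k}^w$.
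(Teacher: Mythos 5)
Your proof is correct. Note that the paper itself gives no argument here (its ``proof'' is literally ``We leave this to the reader''), so there is no route to compare against; your write-up supplies the omitted details. Part (1) is exactly the intended one-line observation. For part (2), the identity $\omega_{[n-2]}^{w,z} = \iota_{E_z}\iota_{E_w}\,\omega_{[n]}^w$ checks out with your sign conventions (the computation $\iota_{\partial/\partial w_k}\iota_{\partial/\partial w_j}\omega_{[n]}^w = \eta_{j,k}^w$ for $j<k$ and $=-\eta_{k,j}^w$ for $j>k$ gives precisely the coefficients $w_jz_k - w_kz_j$), and the naturality of interior product against a $\phi_g$-invariant vector field then yields $\phi_g^*\omega_{[n-2]}^{w,z} = \det(g)\,\omega_{[n-2]}^{w,z}$, which is even the stronger $\GL_n$-transformation law analogous to Proposition~\ref{propC2}(2). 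One small imprecision: your justification that $E_z = \sum_j z_j\,\partial/\partial w_j$ is invariant calls it ``the infinitesimal generator of scaling in one factor,'' but $E_z$ generates the shear $(w,z)\mapsto(w+tz,z)$, not a scaling. The invariance is still immediate — $d\phi_g$ sends the value $(z,0)$ at $(w,z)$ to $(gz,0)$, which is the value of $E_z$ at $(gw,gz)$ — so this is a wording slip rather than a gap; I would just replace that clause with the direct check. An alternative, equally short route is the one hinted at in the paper's proof of Proposition~\ref{propC2}: interpret $\eta_{j,k}^w\in\wedge^{n-2}V^\vee\simeq\det(V^\vee)\otimes\wedge^2 V$, so that $\omega_{[n-2]}^{w,z}$ corresponds to the manifestly equivariant tensor $\omega_{[n]}^w\otimes(w\wedge z)$, whose two $\det$-twists combine to give the same transformation law.
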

\begin{proof} We leave this to the reader.
\end{proof}

Set $\omega_{[n-1]}^{w,z'} = \sum_{j}{z_j \eta_j^w}$.  This is again an $\SL(n,\R)$-invariant differential form.  Let $\det(Q) = q_{11}q_{22}-q_{12}^2$.
\begin{proposition} One has the following equalities:
\begin{enumerate}
	\item $2q_{22}\det(Q) \omega_{[n-1]}^w \wedge \omega_{[n-1]}^{z} = (-q_{11}q_{22}dq_{11} + 2q_{11}q_{22} dq_{12}-q_{11}q_{12} dq_{22}) \wedge \omega_{[2n-3]}^{w,z}$.
	\item $2q_{22}\det(Q) \omega_{[n-1]}^{w,z'} \wedge \omega_{[n-1]}^{z} = (-q_{22}^2dq_{11} + 2q_{12}q_{22} dq_{12}-q_{12}^2 dq_{22}) \wedge \omega_{[2n-3]}^{w,z}$.
 \item $2q_{22}\det(Q) \omega_{[n-2]}^{w,z} \wedge \omega_{[n]}^{z} = (-1)^n \det(Q) dq_{22} \wedge \omega_{[2n-3]}^{w,z}$.
\end{enumerate}
\end{proposition}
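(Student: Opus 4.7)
The plan is to reduce all three identities to polynomial identities in $q_{11}, q_{12}, q_{22}$ by first establishing two elementary combinatorial formulas that control how the relevant differential forms interact, and then matching components by $(dw, dz)$-bidegree.

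First I would prove, for any scalar coefficients $A_j, B_j$:
\begin{align*}
\sum_j B_j \, dz_j \wedge \omega_{[n-1]}^{z} &= \Bigl(\textstyle\sum_l B_l z_l\Bigr) \omega_{[n]}^{z},\\
\sum_j A_j \, dw_j \wedge \omega_{[n-2]}^{w,z} &= \Bigl(\textstyle\sum_l A_l z_l\Bigr) \omega_{[n-1]}^{w} - \Bigl(\textstyle\sum_l A_l w_l\Bigr) \omega_{[n-1]}^{w,z'}.
\end{align*}
The first is immediate from $dz_j \wedge \eta_k^z = \delta_{jk}\omega_{[n]}^z$. For the second, I expand $dw_j \wedge \eta_{k,l}^w$, which vanishes unless $j\in\{k,l\}$ and otherwise equals $\pm \eta_m^w$ for the remaining index $m\in\{k,l\}\setminus\{j\}$, and then reorganize the resulting double sum by the surviving index.

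Identity (3) is then nearly immediate: since $dq_{22}$ involves only $dz$'s, formula (a) gives $dq_{22}\wedge \omega_{[n-1]}^z = 2q_{22}\omega_{[n]}^z$; after anticommuting $dq_{22}$ past $\omega_{[n-2]}^{w,z}$ (picking up $(-1)^{n-2}$) and multiplying the RHS by $(-1)^n \det Q$, the total sign $(-1)^{2n-2}=1$ collapses and the identity follows. For identities (1) and (2), I substitute $dq_{11} = 2\sum w_j dw_j$, $dq_{12} = \sum z_j dw_j + \sum w_j dz_j$, $dq_{22} = 2\sum z_j dz_j$ and write the right hand side $1$-form as $\sum_j A_j dw_j + \sum_j B_j dz_j$ with $A_j = 2\alpha w_j + \beta z_j$, $B_j = \beta w_j + 2\gamma z_j$, where $\alpha, \beta, \gamma$ are the coefficients of $dq_{11}, dq_{12}, dq_{22}$. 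Applying (a) and (b) to the wedge against $\omega_{[n-2]}^{w,z}\wedge \omega_{[n-1]}^z$ yields three terms of pairwise distinct $(dw,dz)$-bidegree: $(\sum_l A_l z_l)\,\omega_{[n-1]}^w\wedge \omega_{[n-1]}^z$, $-(\sum_l A_l w_l)\,\omega_{[n-1]}^{w,z'}\wedge \omega_{[n-1]}^z$, and a sign times $(\sum_l B_l z_l)\,\omega_{[n-2]}^{w,z}\wedge \omega_{[n]}^z$. Because the LHS of (1) lies entirely in the first bidegree and the LHS of (2) in the second, each identity reduces to three scalar polynomial identities in $q_{11}, q_{12}, q_{22}$: the designated coefficient equals $2q_{22}\det Q$, while the other two vanish. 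These one verifies by direct expansion.

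The main obstacle is keeping all signs straight, especially in formula (b) where the sign convention built into $\eta_{j,k}^w$ must be tracked carefully when anticommuting $dw_j$ through, and when moving a $dz$-valued $1$-form past a $dw$-valued form of degree $n-2$. A secondary point is that the asymmetric roles of $w$ and $z$ in $\omega_{[n-2]}^{w,z}$ and $\omega_{[n-1]}^{w,z'}$ are exactly what separates the $w$- and $z$-contractions of $(A_j)$, so that identity (1) and identity (2) correspond to \emph{different} components of the same general formula; one must verify that the vanishing conditions $\sum_l B_l z_l = 0$ in each case follow from the antisymmetry $q_{12}q_{22} - q_{12}q_{22} = 0$ built into the linear combinations.
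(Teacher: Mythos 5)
Your approach is correct and is essentially the paper's: the published proof consists of computing $dq_{ij}\wedge\omega_{[2n-3]}^{w,z}$ in the basis $\omega_{[n-1]}^w\wedge\omega_{[n-1]}^{z}$, $\omega_{[n-1]}^{w,z'}\wedge\omega_{[n-1]}^{z}$, $\omega_{[n-2]}^{w,z}\wedge\omega_{[n]}^{z}$ and inverting the resulting $3\times 3$ system, and your contraction formulas (a) and (b) together with the bidegree decomposition are exactly that computation, just organized as a direct verification of the inverse. One caution: if you actually carry out the expansion for identity (1), the coefficient of $dq_{11}$ forced by the conditions $\sum_l A_l w_l=0$ and $\sum_l A_l z_l = 2q_{22}\det(Q)$ is $-q_{12}q_{22}$, not $-q_{11}q_{22}$ as printed, so the ``direct expansion'' you defer to would expose a typo in the stated identity rather than confirm it verbatim (identities (2) and (3) check out as written, and the typo does not affect the later use of the proposition in Lemma~\ref{lem:dqsW}).
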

\begin{proof} One computes $dq_{ij} \wedge \omega_{[2n-3]}^{w,z}$ in terms of $\omega_{[n-1]}^w \wedge \omega_{[n-1]}^{z}, \omega_{[n-1]}^{w,z'} \wedge \omega_{[n-1]}^{z}, \omega_{[n-2]}^{w,z} \wedge \omega_{[n]}^{z}$, and then inverts the expression.
\end{proof}

\begin{lemma}\label{lem:dqsW} One has
\begin{enumerate}
	\item $4q_{22}\det(Q) \omega_{[n]}^w \wedge \omega_{[n]}^z = (-1)^n d q_{11} \wedge d q_{12} \wedge d q_{22} \wedge \omega_{[2n-3]}^{w,z}.$
\end{enumerate}
\end{lemma}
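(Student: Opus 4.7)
The plan is to derive the identity by successively wedging the factors $dq_{22}$, $dq_{12}$, and $dq_{11}$ into $\omega_{[2n-3]}^{w,z}$, reducing at each step to known forms. The argument uses only part (3) of the preceding proposition together with two elementary wedge calculations on the $w$-variables.

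First, applying part (3) of the preceding proposition (and cancelling $\det(Q)$, which is generically nonzero, so the identity between polynomial forms follows) gives
$$
dq_{22} \wedge \omega_{[2n-3]}^{w,z} = (-1)^n\, 2 q_{22}\, \omega_{[n-2]}^{w,z} \wedge \omega_{[n]}^z.
$$
Next, wedge with $dq_{12} = \sum_j (z_j\, dw_j + w_j\, dz_j)$: only the $\sum_j z_j\, dw_j$ summand survives after meeting $\omega_{[n]}^z$, reducing the problem to computing $\bigl(\sum_j z_j\, dw_j\bigr) \wedge \omega_{[n-2]}^{w,z}$.

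The crucial step is to verify the identity
$$
\Bigl(\sum_j z_j\, dw_j\Bigr) \wedge \omega_{[n-2]}^{w,z} \;=\; q_{22}\, \omega_{[n-1]}^w \;-\; q_{12}\, \omega_{[n-1]}^{w,z'}.
$$
One proves this by expanding $dw_l \wedge \eta_{jk}^w = -\delta_{lj}\eta_k^w + \delta_{lk}\eta_j^w$, reindexing the resulting sums, and using the coefficient collapses $\sum_{l\neq k} z_l^2 = q_{22} - z_k^2$ and $\sum_{l\neq k} w_l z_l = q_{12} - w_k z_k$, under which the stray $w_k z_k^2$ contributions cancel.

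Finally, wedging with $dq_{11} = \omega_{[1]}^w$ and using the two elementary identities $\omega_{[1]}^w \wedge \omega_{[n-1]}^w = 2q_{11}\omega_{[n]}^w$ (part (3) of the first proposition of this appendix) and $\omega_{[1]}^w \wedge \omega_{[n-1]}^{w,z'} = 2 q_{12}\, \omega_{[n]}^w$ (immediate from $dw_l \wedge \eta_j^w = \delta_{lj}\omega_{[n]}^w$), the combination collapses to $2(q_{11}q_{22} - q_{12}^2)\,\omega_{[n]}^w = 2\det(Q)\,\omega_{[n]}^w$. Restoring the factors $(-1)^n 2q_{22}$ produces exactly the stated identity. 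The main obstacle is the sign and reindexing bookkeeping in the middle step; once that identity is in hand, the remainder is a short chain of substitutions.
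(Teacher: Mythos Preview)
Your proof is correct and follows essentially the same approach as the paper: both start from part (3) of the preceding proposition to reduce $dq_{22}\wedge\omega_{[2n-3]}^{w,z}$ to $(-1)^n 2q_{22}\,\omega_{[n-2]}^{w,z}\wedge\omega_{[n]}^z$, then wedge in $dq_{11}$ and $dq_{12}$ (only the $dw$-parts survive against $\omega_{[n]}^z$) and carry out the remaining bookkeeping on the $w$-variables. The paper wedges $dq_{11}\wedge dq_{12}$ in together and leaves the resulting combinatorics as a ``straightforward computation,'' whereas you isolate the useful intermediate identity $(\sum_j z_j\,dw_j)\wedge\omega_{[n-2]}^{w,z}=q_{22}\,\omega_{[n-1]}^w-q_{12}\,\omega_{[n-1]}^{w,z'}$ and finish with the elementary wedges against $\omega_{[1]}^w$; this is a cleaner organization of the same calculation.
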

\begin{proof} From the previous proposition, $(-1)^n d q_{22} \wedge \omega_{[2n-3]}^{w,z} = 2 q_{22} \omega_{[n-2]}^{w,z} \wedge \omega_{[n]}^{z}$.  Now, wedging with $dq_{11} \wedge dq_{12}$, we get
\[dq_{11} \wedge dq_{12} \wedge \omega_{[n-2]}^{w,z} \wedge \omega_{[n]}^{z} = 2\left(\sum_{j',k'}{w_{j'} z_{k'} dw_{j'} \wedge dw_{k'}}\right) \wedge \left(\sum_{j,k}{(w_j z_k - w_k z_j) \eta_{j,k}^{w}}\right) \wedge \omega_{[n]}^{z}.\]
It is now a straightforward computation.
\end{proof}

Suppose $V = V_a^{+} \oplus V_b^{-}$ is a quadratic space of signature $(a,b)$, so that $a+b=n$.  In our application, $a=b=4$.  We let $u_1,\ldots, u_a$, $v_1,\ldots, v_a$ be the coordinates on the two copies of $V_a^{+}$ in $V^2$, and $x_1,\ldots,x_b$, $y_1,\ldots,y_b$ be the coordinates on the two copies of $V_b^{-}$ in $V^2$. Let $r_{11} = \sum_{j}{x_j^2}$, $r_{12} = \sum_{j}{x_j y_j}$ and $r_{22} = \sum_{j}{y_j^2}$.  Similarly, let $t_{11} = \sum_{j}{u_j^2}$, $t_{12} = \sum_{j}{u_j v_j}$, and $t_{22} = \sum_{j}{v_j^2}$.  Finally, set $s_{ij} = t_{ij}-r_{ij}$.  Let $T,R,S$ be the two-by-two matrices with entries $t_{ij}, r_{ij}, s_{ij}$, so that $S = T-R$.  We set 
\[V^2_S = \{(v_1',v_2') \in V^2: ((v_i',v_j')) = S\}\]
to be the collection of vectors with Gram matrix $S$.

\begin{theorem}\label{thm:V2Sdiff} Suppose $S$ is positive-definite.  Then there is a nonzero cubic homogeneous polynomial $P(R,T)$ in the variables $r_{ij}$ and $t_{ij}$ so that $4 r_{22} t_{22}\det(R)\det(T) \omega_{[2n-3]}^{w,z}$ has the same restriction to $V^2_S$ as $P(R,T) dr_{11} \wedge dr_{12} \wedge dr_{22} \wedge \omega_{[2a-3]}^{u,v} \wedge \omega_{[2b-3]}^{x,y}$.
\end{theorem}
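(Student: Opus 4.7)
The plan: both sides of the claimed identity are $(2n{-}3)$-forms on the $(2n{-}3)$-dimensional manifold $V^2_S$, hence are scalar multiples of each other:
\[\omega_{[2n-3]}^{w,z}\big|_{V^2_S} = \lambda\cdot dr_{11}\wedge dr_{12}\wedge dr_{22}\wedge\omega_{[2a-3]}^{u,v}\wedge\omega_{[2b-3]}^{x,y}\big|_{V^2_S}\]
for some scalar $\lambda$; by $\SO(V_a^+)\times\SO(V_b^-)$-invariance of both sides, $\lambda$ depends only on the Gram matrices $(R,T)$. The theorem then amounts to showing that $P(R,T):=4r_{22}t_{22}\det R\det T\cdot\lambda$ is a nonzero cubic homogeneous polynomial in $r_{ij},t_{ij}$.

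To compute $\lambda$, I would wedge both expressions for $\omega_{[2n-3]}^{w,z}\big|_{V^2_S}$ with $ds_{11}\wedge ds_{12}\wedge ds_{22}$ (extended to $V^2$), producing top forms on $V^2$ that are scalar multiples of $\omega_{[n]}^w\wedge\omega_{[n]}^z$. On the right-hand side, using $ds_{ij}=dt_{ij}-dr_{ij}$, all seven terms in the binomial expansion of $ds_{11}ds_{12}ds_{22}$ except the pure-$dt$ term $dt_{11}dt_{12}dt_{22}$ contain a repeated $dr_{ij}$ and vanish against $dr_{11}dr_{12}dr_{22}$; applying Lemma~\ref{lem:dqsW} separately on $(V_a^+)^2$ and $(V_b^-)^2$ then yields
\[\lambda\, dr_{11}dr_{12}dr_{22}\wedge\omega_{[2a-3]}^{u,v}\wedge\omega_{[2b-3]}^{x,y}\wedge ds_{11}ds_{12}ds_{22} = (-1)^{n+ab}\cdot 16\,\lambda\, r_{22}t_{22}\det R\det T\,\omega_{[n]}^w\wedge\omega_{[n]}^z.\]
On the left-hand side, write $\omega_{[2n-3]}^{w,z}\wedge ds_{11}ds_{12}ds_{22}=W\cdot\omega_{[n]}^w\wedge\omega_{[n]}^z$ for a scalar function $W$ on $V^2$; equating gives $P(R,T)=(-1)^{n+ab}W/4$.

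It remains to analyze $W$. The coefficient functions of $\omega_{[n-2]}^{w,z}$, $\omega_{[n-1]}^z$, and each $ds_{ij}$ are homogeneous polynomials in $(w,z)$ of degrees $2$, $1$, and $1$ respectively, so $W$ is a polynomial of total degree $6$ in $(w,z)$. Because $W$ is $\SO(V_a^+)\times\SO(V_b^-)$-invariant, the first fundamental theorem of invariant theory (valid since $a,b\geq 2$) expresses $W$ as a polynomial in the Gram-matrix entries $r_{ij},t_{ij}$ (each of degree $2$ in $(w,z)$), necessarily of homogeneous degree $3$. To verify nonvanishing and compute $W$ (hence $P$) explicitly, expand $\omega_{[2n-3]}^{w,z}=\omega_{[n-2]}^{w,z}\wedge\omega_{[n-1]}^z$ via its decomposition under $V=V_a^+\oplus V_b^-$---namely $\omega_{[n-1]}^z=\omega_{[a-1]}^v\omega_{[b]}^y+(-1)^a\omega_{[a]}^v\omega_{[b-1]}^y$ and an analogous four-term expansion of $\omega_{[n-2]}^{w,z}$ involving the ``cross'' pieces $\omega_{[a-1]}^u\omega_{[b-1]}^{x,y'}$ and $\omega_{[a-1]}^{u,v'}\omega_{[b-1]}^x$---combine with the eight-term expansion $ds_{11}ds_{12}ds_{22}=\prod_{ij}(dt_{ij}-dr_{ij})$, and apply Lemma~\ref{lem:dqsW} (on each component) and Proposition~\ref{propC2} (for the mixed-bidegree terms).

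The main technical obstacle will be the bookkeeping for the mixed-bidegree contributions of bigrade $(2a{-}2,2b{-}1)$ and $(2a{-}1,2b{-}2)$, each of which contributes additional polynomial pieces to $W$ with signs that must be tracked through several applications of Proposition~\ref{propC2}. Nonvanishing of $P$ can however be verified at a single normal-form point---e.g.\ $u=\sqrt{t_{11}}\,e_1$, $v=(t_{12}/\sqrt{t_{11}})e_1+\sqrt{\det T/t_{11}}\,e_2$, and analogously for $x,y$---where the ``extreme'' bidegree pieces of $\omega_{[2n-3]}^{w,z}$ (namely $(2a{-}3,2b)$ and $(2a,2b{-}3)$) already produce a manifestly nonzero contribution of the form $t_{22}\det T\det R + r_{22}\det R\det T$-type, which dominates and is not cancelled by the mixed contributions.
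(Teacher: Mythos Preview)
Your approach is correct in outline and genuinely different from the paper's. The paper proceeds by direct computation: it decomposes $\omega_{[2n-3]}^{w,z}=\omega_{[n-2]}^{w,z}\wedge\omega_{[n-1]}^z$ according to the bigrading coming from $V=V_a^+\oplus V_b^-$, then rewrites each piece using the preceding Proposition (which expresses $\omega_{[n-1]}^w\wedge\omega_{[n-1]}^z$, $\omega_{[n-1]}^{w,z'}\wedge\omega_{[n-1]}^z$, and $\omega_{[n-2]}^{w,z}\wedge\omega_{[n]}^z$ in terms of $dq_{ij}\wedge\omega_{[2n-3]}$), applied separately on $(V_a^+)^2$ and $(V_b^-)^2$ together with the relation $dt_{ij}=dr_{ij}$ on $V_S^2$. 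Your wedge-with-$ds_{ij}$ trick plus the degree count and the first fundamental theorem is more conceptual: it yields the cubic homogeneous polynomial structure without the term-by-term bookkeeping. What the paper's computation buys is an explicit handle on $P$ (useful if one later wanted sharp bounds), while your argument buys brevity.

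Two points need tightening. First, the invariant-theory step: for $\SO(a)$ acting on two copies of $\R^a$, the ring of polynomial invariants is generated by the inner products $t_{ij}$ only when $a\geq 3$; for $a=2$ there is the extra invariant $u_1v_2-u_2v_1$. Likewise, transitivity of $\SO(a)$ on pairs with prescribed Gram matrix (which you need so that $\lambda$ depends only on $(R,T)$) requires $a\geq 3$. So ``valid since $a,b\geq 2$'' should read $a,b\geq 3$; this is satisfied in the application $a=b=4$.

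Second, your nonvanishing argument is not complete: ``dominates and is not cancelled by the mixed contributions'' is an assertion, not an argument. A clean fix in your own framework: your function $W$ is defined globally on $V^2$ (independently of $S$) by $\omega_{[2n-3]}^{w,z}\wedge ds_{11}\wedge ds_{12}\wedge ds_{22}=W\,\omega_{[n]}^w\wedge\omega_{[n]}^z$. If $W\equiv 0$ then $\omega_{[2n-3]}^{w,z}$ restricts to zero on every $V_S^2$; since these level sets foliate an open subset of $V^2$, this forces $\omega_{[2n-3]}^{w,z}\equiv 0$, contradicting Lemma~\ref{lem:dqsW} (which shows that wedging with $dq_{11}\wedge dq_{12}\wedge dq_{22}$ is nonzero). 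This is exactly how the paper disposes of nonvanishing.
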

\begin{proof} We have $\omega_{[2n-3]}^{w,z}= \omega_{[n-2]}^{w,z} \wedge \omega_{[n-1]}^{z}$.  Now, 
\[ \omega_{[n-1]}^z = \omega_{[a-1]}^v \wedge \omega_{[b]}^y + (-1)^a\omega_{[a]}^{v} \wedge \omega_{[b-1]}^y.\]
Also
\[
\omega_{[n-2]}^{w,z} = \omega_{[a-2]}^{u,v} \wedge \omega_{[b]}^{x} + \omega_{[a]}^{u} \wedge \omega_{[b-2]}^{x,y} + (-1)^{a-1}\sum_{1 \leq j \leq a, 1 \leq k \leq b}{(u_j y_k- v_j x_k) \eta_j^{u} \wedge \eta_k^{x}}.
\]

Observe that $d t_{ij} = dr_{ij}$ when restricted to $V^2_S$.  We can use formulas above to put our $2n-3$ form in the desired shape.  For example,
\begin{align*}\omega_{[a-2]}^{u,v} \wedge \omega_{[a]}^v \wedge \omega_{[b]}^{x} \wedge \omega_{[b-1]}^{y}
&\approx \frac{1}{t_{22}} d t_{22} \wedge \omega_{[2a-3]}^{u,v} \wedge \omega_{[b]}^{x} \wedge \omega_{[b-1]}^{y}\\
&\approx \frac{1}{t_{22}} \omega_{[2a-3]}^{u,v} \wedge \omega_{[b]}^{x} \wedge (r_{22}\omega_{[b]}^{y})\\
&\approx \frac{r_{22} \det(T)}{r_{22}t_{22}\det(R)\det(T)} dr_{11} \wedge dr_{12} \wedge dr_{22} \wedge \omega_{[2a-3]}^{u,v} \wedge \omega_{[2b-3]}^{x,y}.\end{align*}

As another example,
\begin{align*}
\sum_{j,k}{u_j v_k \eta_j^u \wedge \eta_k^x} \wedge \omega_{[a-1]}^{v} \wedge \omega_{[b]}^{y} \\
\approx \omega_{[a-1]}^{u} \wedge \omega_{[a-1]}^{v} \wedge \omega_{[b-1]}^{x,y'} \wedge \omega_{[b]}^{y}.
\end{align*}
Then $\omega_{[a-1]}^{u} \wedge \omega_{[a-1]}^{v}  = \frac{1}{t_{22}\det(T)} \alpha(t) \wedge \omega_{[2a-3]}^{u,v}$, where $\alpha(t)$ is a linear combination of the $dt_{ij}$ with quadratic coefficients.  Additionally, one has that
\[\omega_{[b-1]}^{x,y'} \wedge \omega_{[b]}^{y} \approx \frac{1}{r_{22}} dr_{22} \wedge \omega_{[b-1]}^{x,y'} \wedge \omega_{[b-1]}^{y} \approx \frac{1}{r_{22}\det(R)} \beta(r) \wedge \omega_{[2b-3]}^{x,y}\]
where $\beta(r)$ is a 2-form in the $dr_{ij}$ with linear coefficients.

The other terms are similar to one of the above two examples.  This completes the proof, except for the non-vanishing of $P(R,T)$.  However, it is easy to see using Lemma~\ref{lem:dqsW} that $\omega_{[2n-3]}^{w,z}$ has nonzero restriction to $V^2_S$, so the theorem follows.
\end{proof}

To go further, we parameterize $V^2_S$ explicitly in terms of the Borel subgroup $B(\R)$ of $\GL(2,\R)$ and compact sets.  First, let $B'(\R)$ be the subgroup of $B(\R)$ with positive diagonal entries.  Next, denote $A_1 = \{(a_1,a_2) \in V_a^2: T(a_1,a_2) = 1_2\}$. Here $T(a_1,a_2)$ is the $2\times 2$ matrix with entries the inner products $(a_i,a_j)$.  Similarly, denote $B_1 = \{(b_1,b_2) \in V_b^2: R(b_1,b_2) = 1_2\}$. Finally, set $V^2_{S,o}$ to be the open subset of $V^2_S$ consisting of pairs $((u,x),(v,y))$ with $\det(R(x,y)) \neq 0$.  We define a diffeomorphism $\Psi: B'(\R) \times A_1 \times B_1 \rightarrow V^2_{S,o}$ as follows.

Given $b \in B'(\R)$, let $r(b) = b b^t$, $t(b) = S + b b^t$, $r_2(b) = r(b)^{1/2}$, and $t_{2}(b) = t(b)^{1/2}$.  Here the square roots are the unique symmetric positive definite ones.  Then
\[\Psi(b,(a_1,a_2),(b_1,b_2)) = (r_2(b) (a_1,a_2)^t, t_2(b) (b_1,b_2)^t).\]

We require the following lemma.
\begin{lemma}\label{lem:sigmaA1} Suppose $g \in \GL(2,\R)$.  Let $T = g g^t$, and let $\sigma_{A}$ be the pullback of $\omega_{[2a-3]}^{u,v}$ to $A_1$.  Then the pullback of $g^* \omega_{[2a-3]}^{u,v} = \omega_{[2a-3]}^{g (u,v)^t}$ to $A_1$ is $t_{22} \det(T)^{a-1/2} \sigma_A$.
\end{lemma}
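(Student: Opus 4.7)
The plan is to exploit that $A_1$ is a transitive homogeneous space for the diagonal right action of $\SO(a)$ on $V_a^2$, namely $(u,v) \cdot h = (uh, vh)$ for $h \in \SO(a)$, in order to reduce the identity to a single pointwise computation. This right action commutes with the left action of $\GL_2(\R)$ on $V_a^2$, and the form $\omega_{[2a-3]}^{u,v} = \omega_{[a-2]}^{u,v} \wedge \omega_{[a-1]}^v$ is $\SL_a$-invariant under the diagonal action (this follows directly from the corresponding invariances verified in Proposition \ref{propC2} and for $\omega_{[n-2]}^{w,z}$). Hence both $\sigma_A$ and $\iota_{A_1}^*(g^*\omega_{[2a-3]}^{u,v})$ are $\SO(a)$-invariant top forms on the $(2a-3)$-manifold $A_1$. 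Since $\SO(a)$ acts transitively on $A_1$ (realized as the Stiefel manifold of orthonormal $2$-frames in $\R^a$, for $a \geq 3$), the space of such invariant top forms is one-dimensional, so there is a scalar $\phi(g)$ with $\iota_{A_1}^*(g^*\omega_{[2a-3]}^{u,v}) = \phi(g)\, \sigma_A$.

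To identify $\phi(g)$, I would evaluate both sides at the base point $p_0 = (e_1, e_2) \in A_1$, where $e_1, e_2$ denote any two orthonormal vectors of $V_a$. Differentiating the defining equations $(u,u) = (v,v) = 1$, $(u,v)=0$ at $p_0$ yields the constraints $X_1 = Y_2 = 0$ and $X_2 + Y_1 = 0$ on tangent vectors $(X, Y) \in T_{p_0} A_1$. A convenient basis for this tangent space is $\{\xi, \partial_{u_3}, \ldots, \partial_{u_a}, \partial_{v_3}, \ldots, \partial_{v_a}\}$, where $\xi$ has components $X_2 = 1$, $Y_1 = -1$ and all others zero. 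Using the explicit formulas for $\omega_{[a-2]}^{u,v}$ and $\omega_{[a-1]}^v$ together with the relation $dv_1|_{TA_1} = -du_2$, one computes that $\sigma_A|_{p_0}$ paired with this basis equals $\pm 1$.

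For $\iota_{A_1}^*(g^*\omega_{[2a-3]}^{u,v})|_{p_0}$, the point $p_0$ is sent by $g$ to $(g_{11}e_1 + g_{12}e_2,\, g_{21}e_1 + g_{22}e_2)$ and the basis tangent vectors are transformed by $g$. Evaluating $\omega_{[2a-3]}^{u,v}$ at $g(p_0)$ on these pushed vectors produces a $(2a-3) \times (2a-3)$ determinant. Expanding along the $\xi$-row and observing that the remaining $2(a-2)$ vectors contribute only in coordinates indexed by $j \geq 3$, one obtains a $2(a-2) \times 2(a-2)$ minor of block form $\bigl(\begin{smallmatrix} g_{11} I_{a-2} & g_{21} I_{a-2} \\ g_{12} I_{a-2} & g_{22} I_{a-2}\end{smallmatrix}\bigr)$ which, after an appropriate row interleaving, becomes block-diagonal with $a-2$ copies of $\mm{g_{11}}{g_{21}}{g_{12}}{g_{22}}$ and hence contributes $\det(g)^{a-2}$. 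Combining this with the two surviving contributions of $\xi$ to $\omega_{[a-1]}^v|_{g(p_0)}$ yields the factor $g_{21}^2 + g_{22}^2 = (gg^t)_{22} = t_{22}$. One concludes $\phi(g) = \pm t_{22}\, \det(g)^{a-1}$, which is the claimed constant in view of $\det(T) = \det(g)^2$.

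The hard part will be the careful sign bookkeeping in the row-interleaving that block-diagonalizes the minor, together with tracking the signs arising from the substitution $dv_1 \mapsto -du_2$ on $TA_1$; all of this, however, is a finite and elementary linear-algebra computation. The structural point that makes the argument run cleanly is the commutativity of the $\GL_2$-left and $\SO(a)$-right actions, which transports $\SO(a)$-invariance through the pullback by $g$ and so reduces the entire problem to a single base-point evaluation.
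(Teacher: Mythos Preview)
Your approach is correct and takes a genuinely different route from the paper's. The paper argues via the Iwasawa decomposition $g = bk$ with $b$ upper triangular and $k \in \SO(2)$: for the $k$-part it notes that $k$ preserves $A_1$, so $k^*\sigma_A = c(k)\sigma_A$ for a continuous homomorphism $c:\SO(2)\to\R^\times$, necessarily trivial; for the $b$-part it computes $b^*\omega_{[2a-3]}^{u,v}$ directly on the ambient space $V_a^2$ and finds $b_{22}^2\det(b)^{a-1}\,\omega_{[2a-3]}^{u,v}$ plus terms divisible by $\omega_{[a]}^v$, which restrict to zero on $A_1$ since $\sum_j v_j^2 \equiv 1$ there. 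Your approach instead exploits the \emph{right} $\SO(a)$-action on the Stiefel manifold $A_1$ (rather than the left $\SO(2)$-action through $g$) to reduce to a single pointwise evaluation at $(e_1,e_2)$. The paper's route is slicker in that it avoids the explicit determinant and the sign bookkeeping you flag; your route avoids the Iwasawa step and produces the scaling factor directly in terms of $g$.

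One small omission in your sketch: the block determinant you describe contributes $\det(g)^{a-2}$ and the $\xi$-contribution gives $t_{22}$, but you also need the scalar coefficient $\det(g)$ coming from $\omega_{[a-2]}^{u,v}\big|_{g(p_0)} = (u'_1 v'_2 - u'_2 v'_1)\,\eta_{1,2}^{u'} = \det(g)\,\eta_{1,2}^{u'}$ to reach the stated $t_{22}\det(g)^{a-1}$. This is precisely the bookkeeping you anticipated. The sign ambiguity is resolved by setting $g = I$, which forces $\phi(I)=1$; and your formula $t_{22}\det(g)^{a-1}$ agrees with the paper's statement whenever $\det(g)>0$, which is the only case used downstream in Theorem~\ref{thm:Measure1} (where $g$ is a positive-definite square root).
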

\begin{proof}  The action of $g = (g_{ij})$ on the variables is $u \mapsto g_{11}u + g_{12}v$ and $v \mapsto g_{21}u + g_{22} v$.	
	
We write $g = bk$.  First observe that $k^* \omega_{[2a-3]}^{u,v}$ pulls back to $\sigma_A$.  Indeed, the action of $\SO(2)$ preserves $A_1$, and so $k^* \sigma_A$ is another $\SO(a)$-invariant $(2a-3)$-form on $A_1$ (which has dimension $2a-3$), so $k^* \sigma_A$ is proportional to $\sigma_A$.  This defines a continuous homomorphism $\SO(2) \rightarrow \R^\times$, which therefore must be trivial.  Thus $k^* \sigma_A = \sigma_A$.

For the pullback of $b^* \omega_{[2a-3]}^{u,v}$, it is easy to compute explicitly.  If $b = (b_{ij})$, then $u \mapsto b_{11}u + b_{12} v$ and $v \mapsto b_{22}v$.  Thus $u_j \mapsto b_{11}u_j + b_{12}v_j$ and $v_j \mapsto b_{22}v_j$.  We obtain that $b^*\omega_{[2a-3]}^{u,v} = b_{22}^2 \det(b)^{a-1} \omega_{[2a-3]}^{u,v} + \epsilon$, where $\epsilon$ is divisible by $\omega_{[a]}^{v}$.  But, because $v_1^2 + \cdots + v_a^2 = 1$ is fixed on $A_1$, $\omega_{[a]}^{v}$ pulls back to $0$.

Combining the above, and the fact that the $k$ action commutes with pullback, we obtain the lemma.
\end{proof}

One computes that if $R = b b^t$, then
\[ dr_{11} \wedge d r_{12} \wedge d r_{22} = 4b_{22} \det(b) d b_{11} \wedge d b_{12} \wedge d b_{22}.\]
We can now compute $\Psi^* \omega_{[2n-3]}^{w,z}$.
\begin{theorem} \label{thm:Measure1} One has $\Psi^* \omega_{[2n-3]}^{w,z} = \frac{1}{4}P(R,T)\det(T)^{a-3/2} \det(R)^{b-3/2} d r_{11} \wedge d r_{12} \wedge d r_{22} \wedge \sigma_A \wedge \sigma_B$, where $P(R,T)$ is a cubic homogeneous polynomial in the variables $r_{ij}, t_{ij}$.
\end{theorem}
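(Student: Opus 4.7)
The plan is to pull back the identity of Theorem \ref{thm:V2Sdiff} via $\Psi$ and use Lemma \ref{lem:sigmaA1} (together with its obvious analogue on $B_1$) to evaluate the pullbacks of $\omega_{[2a-3]}^{u,v}$ and $\omega_{[2b-3]}^{x,y}$. The crucial starting observation is that $r_{ij}$ and $t_{ij}$ pull back via $\Psi$ to functions of $b\in B'(\R)$ alone (they are precisely the entries of $t_2(b)^2$ and $r_2(b)^2$), so $\Psi^*(dr_{11}\wedge dr_{12}\wedge dr_{22})$ is a nonvanishing $3$-form on $B'\times A_1\times B_1$ that is supported entirely in the $db_{ij}$-directions and spans the cotangent space of $B'$ pointwise.

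Next I would decompose $\Psi^*\omega_{[2a-3]}^{u,v}=\alpha_A+\beta_A$ where $\alpha_A$ contains no $db$-factors (i.e., lies in the $A_1\times B_1$ directions) and $\beta_A$ is divisible by some $db_{ij}$; decompose $\Psi^*\omega_{[2b-3]}^{x,y}=\alpha_B+\beta_B$ analogously. Because $\Psi^*(dr_{11}\wedge dr_{12}\wedge dr_{22})$ already fills the three $db$-directions, wedging with $\beta_A$ or $\beta_B$ vanishes, so only $\alpha_A\wedge\alpha_B$ contributes to the pullback of the right-hand side of Theorem \ref{thm:V2Sdiff}. To compute $\alpha_A$, fix $b$ and note that the map $(a_1,a_2)\mapsto r_2(b)(a_1,a_2)^t$ is exactly the action of $g=r_2(b)\in\GL_2(\R)$ on the $(u,v)$-variables, so Lemma \ref{lem:sigmaA1} with this choice of $g$ (for which $gg^t$ equals the pulled-back matrix $T$) yields $\alpha_A=\Psi^*\bigl(t_{22}\det(T)^{a-1/2}\bigr)\sigma_A$. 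The same argument on $B_1$ with $g=t_2(b)$ produces $\alpha_B=\Psi^*\bigl(r_{22}\det(R)^{b-1/2}\bigr)\sigma_B$.

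Pulling back Theorem \ref{thm:V2Sdiff} and substituting these expressions gives
\begin{equation*}
4r_{22}t_{22}\det(R)\det(T)\,\Psi^*\omega_{[2n-3]}^{w,z}=P(R,T)\,r_{22}t_{22}\det(T)^{a-1/2}\det(R)^{b-1/2}\,dr_{11}\wedge dr_{12}\wedge dr_{22}\wedge\sigma_A\wedge\sigma_B,
\end{equation*}
and cancelling the common factor $4r_{22}t_{22}\det(R)\det(T)$ produces the formula of Theorem \ref{thm:Measure1} with the claimed exponents $\det(T)^{a-3/2}\det(R)^{b-3/2}$. The only real hazard is bookkeeping: tracking which of the factors of $\omega_{[2a-3]}^{u,v}$ and $\omega_{[2b-3]}^{x,y}$ picks up $db$-contributions under $\Psi$, and correctly identifying the matrices $gg^t$ that feed into Lemma \ref{lem:sigmaA1} on each of the $V_a^+$ and $V_b^-$ sides. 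Once the identifications $\Psi^*T=r_2(b)^2$ and $\Psi^*R=t_2(b)^2$ are tracked carefully, the exponents and prefactors line up exactly and the rest is formal.
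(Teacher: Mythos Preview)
Your approach is exactly the paper's: pull back Theorem \ref{thm:V2Sdiff} through $\Psi$, observe that the $dr_{ij}$ already exhaust the $B'$-directions so the $db$-contaminated terms in $\Psi^*\omega_{[2a-3]}^{u,v}$ and $\Psi^*\omega_{[2b-3]}^{x,y}$ drop out, and then invoke Lemma \ref{lem:sigmaA1} on each factor. The cancellation you write down is precisely what the paper has in mind.

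One bookkeeping slip to fix: for $\Psi$ to actually land in $V^2_S$ (so that $\Psi^*S=S$), the element $t_2(b)$ must act on the $V_a^+$ side and $r_2(b)$ on the $V_b^-$ side, giving $\Psi^*T=t_2(b)^2=S+bb^t$ and $\Psi^*R=r_2(b)^2=bb^t$; your final sentence has these reversed (the paper's displayed definition of $\Psi$ appears to contain the same transposition, but its proof uses the correct assignment, writing $\omega_{[2a-3]}^{t_2(b)(u,v)^t}$). With that swap your Lemma \ref{lem:sigmaA1} inputs become $g=t_2(b)$ on the $A_1$ side and $g=r_2(b)$ on the $B_1$ side, and the rest of your algebra goes through unchanged.
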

\begin{proof} One has $\omega_{[2a-3]}^{t_2(b)(u,v)^t}$ is $t_2(b)^* \omega_{[2a-3]}$ plus terms involving the $db_{ij}$.  Similarly for $\omega_{[2b-3]}^{r_2(b)(x,y)^t}$.  Thus the theorem follows from Lemma~\ref{lem:sigmaA1} and Theorem~\ref{thm:V2Sdiff}.
\end{proof}

\begin{lemma}\label{lem:posDmeasure} There is a positive constant $D$, that only depends upon $S$, so that $|r_{ij}| \leq D \det(T)$ and $|t_{ij}| \leq D \det(T)$.  Consequently, there is a positive constant $C$ so that $|P(R,T)| \leq C \det(T)^3$.
\end{lemma}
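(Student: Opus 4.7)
The plan is to exploit the positive semi-definiteness of $R$ and $T$ together with the identity $T = R + S$. Both $R$ and $T$ are Gram matrices (of pairs of vectors in $V_b^-$ and $V_a^+$ respectively), hence positive semi-definite; the hypothesis that $S$ is positive definite gives the key matrix inequality $T \geq S$ in the L\"owner order, since $T - S = R \geq 0$.

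First I would bound the entries of $T$ and $R$ by $\lambda_{\max}(T)$, the largest eigenvalue of $T$. For $T$, PSD-ness gives $t_{ii} \leq \lambda_{\max}(T)$ for the diagonal entries, and $|t_{12}| \leq \sqrt{t_{11}t_{22}} \leq \lambda_{\max}(T)$ for the off-diagonal entry. For $R$, the Loewner inequality $R \leq T$ yields $\lambda_{\max}(R) \leq \lambda_{\max}(T)$, so $r_{ii} \leq \lambda_{\max}(R) \leq \lambda_{\max}(T)$ and $|r_{12}| \leq \sqrt{r_{11}r_{22}} \leq \lambda_{\max}(T)$ by the same two arguments.

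The main step is to control $\lambda_{\max}(T)$ by $\det(T)$. Here the Courant--Fischer min-max principle, applied to the inequality $T \geq S$, yields $\lambda_{\min}(T) \geq \lambda_{\min}(S)$; since for a $2 \times 2$ PSD matrix one has $\det(T) = \lambda_{\min}(T)\,\lambda_{\max}(T)$, this gives
\[
\lambda_{\max}(T) \;=\; \frac{\det(T)}{\lambda_{\min}(T)} \;\leq\; \frac{\det(T)}{\lambda_{\min}(S)}.
\]
Consequently one can take $D = 1/\lambda_{\min}(S)$, which depends only on $S$ as required. For the second assertion, I would use that $P(R,T)$ is homogeneous of degree $3$ in the six entries $r_{ij}, t_{ij}$: writing $P$ as a sum of monomials with coefficients $c_\alpha$, one gets $|P(R,T)| \leq (\sum_\alpha |c_\alpha|) \cdot \max_{i,j}(|r_{ij}|, |t_{ij}|)^3 \leq M D^3 \det(T)^3$, so $C = M D^3$ works.

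I do not anticipate any real obstacle here; every step is routine linear algebra. The only point worth verifying is that $R$ is automatically PSD (not merely that $T$ is), which is immediate from its definition as a Gram matrix, and that $\det(T) > 0$, which holds because $T \geq S > 0$ forces $\det(T) \geq \det(S) > 0$ so the quotient $\det(T)/\lambda_{\min}(S)$ is well-defined.
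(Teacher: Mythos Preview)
Your proof is correct and follows essentially the same route as the paper: both bound the entries by an operator-norm-type quantity (you use $\lambda_{\max}(T)$, the paper uses $\tr(T)$, which differ by at most a factor of $2$), then control this by $\det(T)$ via the key observation that $T\geq S$ forces $\lambda_{\min}(T)\geq \lambda_{\min}(S)>0$. The paper's $\epsilon$ is precisely your $\lambda_{\min}(S)$, so the two arguments are the same up to cosmetic differences.
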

\begin{proof} We have $r_{11}, r_{22} \leq \tr(R) \leq \tr(T)$, and $t_{11}, t_{22} \leq \tr(T)$.  Additionally, $r_{12}^2 \leq r_{11}r_{22}$, so $|r_{12}| \leq \tr(R)/2 \leq \tr(T)$ (by AM-GM) and similarly $|t_{12}| \leq \tr(T)$.  So it suffices to prove that $\tr(T) \leq D \det(T)$ for some $D > 0$.  To do this, let $\epsilon > 0$ be the minimum of the quadratic form $v^t S v$ on the unit circle in the plane.  Let $\lambda_1, \lambda_2$ be the eigenvalues of $T$, with corresponding unit eigenvectors $e_1, e_2$.  Then since $T \geq S$,  $\lambda_j = e_j^t T e_j \geq e_j^t S e_j \geq \epsilon$.  Consequently, 
\[\tr(T) = \lambda_1 + \lambda_2 = \lambda_1 \lambda_2 (\lambda_1^{-1} + \lambda_2^{-1}) \leq 2 \epsilon^{-1} \det(T).\]
This proves the lemma.
\end{proof}

\begin{proof}[Proof of Theorem~\ref{thm:finitenessInt}]
We will prove that if $\ell > 2n+5$, then the integral of $||B_{[v_1,v_2]}(g)||$ over $H(\R)\backslash G(\R)/K$ is finite.	
	
We have $\langle B_{[v_1,v_2]}(g), B_{[v_1,v_2]}(g) \rangle \approx \det(T)^{-(\ell+1)}$, so $||B_{[v_1,v_2]}(g)|| \approx \det(T)^{-(\ell+1)/2}$.  By Theorem~\ref{thm:Measure1} and Lemma~\ref{lem:posDmeasure}, the measure $dg$ on $H(\R)\backslash G(\R)/K$ is bounded by a positive constant times $\det(T)^{n} d t_{11} \wedge d t_{12} \wedge d t_{22}$.  We therefore must bound 
\[\int_{T \geq S}{\det(T)^{n-(1+\ell)/2}  d t_{11} \wedge d t_{12} \wedge d t_{22}}.\]

Now $d^* T:= \det(T)^{-3/2} dT$, where $dT = |d t_{11} \wedge d t_{12} \wedge dt_{22}|$, is a $\GL(2,\R)$ invariant measure on the positive definite cone.  We therefore must bound $\int_{T \geq S}{ \det(T)^{n+1-\ell/2} d^*T}$.  Making a change of variables, it suffices to bound $\int_{T \geq 1}{\det(T)^{n+1-\ell/2} d^* T} = \int_{T \geq 1}{\det(T)^{n-(1+\ell)/2} d T}$.  

We write $T = 1 + b b^t$, so that
\[ \det(T) = 1 + \tr(b b^t) + \det(b)^2 = 1 + b_{12}^2 + b_{11}^2 + b_{22}^2 + b_{11}^2 b_{22}^2 \geq 1 + b_{11}^2 + b_{12}^2 + b_{22}^2.\]
We thus must bound
\[ \int_{b_{11}, b_{22} \geq 0, b_{12} \in \R}{ (1+b_{11}^2+b_{22}^2 + b_{12}^2)^{n-(1+\ell)/2} b_{11} b_{22}^2 d b_{11} \wedge d b_{12} \wedge d b_{22}}.\]
Let $r = (b_{11}^2+b_{22}^2+b_{12}^2)^{1/2}$.  As $r \rightarrow \infty$, the integrand decays as $r^{-\ell +2n+2}$.  Hence if $\ell - 2n-2 > 3$, the integral converges.  This proves the theorem.
\end{proof}

\bibliography{nsfANT2020new}
\bibliographystyle{amsalpha}
\end{document}